\documentclass[10pt,letterpaper]{amsart}

\linespread{1.15}

\usepackage[dvipsnames]{xcolor}
\usepackage[a4paper,top=3cm,bottom=2.5cm,left=3cm,right=3cm,marginparwidth=1.75cm]{geometry}






\usepackage{amsmath, amssymb, amsfonts, latexsym, mdwlist, amsthm}
\usepackage{subfig}
\usepackage{graphicx}
\usepackage{tikz-cd}
\usepackage{wrapfig}
\usepackage{mathrsfs}

\tikzstyle{startstop} = [rectangle, rounded corners, minimum width=3cm, minimum height=1cm,text centered, draw=black, fill=red!30]
\tikzstyle{io} = [trapezium, trapezium left angle=70, trapezium right angle=110, minimum width=3cm, minimum height=1cm, text centered, draw=black, fill=blue!30]
\tikzstyle{process} = [rectangle, minimum width=3cm, minimum height=1cm, text centered, draw=black, fill=orange!30]
\tikzstyle{decision} = [diamond, minimum width=3cm, minimum height=1cm, text centered, draw=black, fill=green!30]
\tikzstyle{arrow} = [thick,->,>=stealth]

\usepackage{mathtools}
\usepackage{tikz}
\usepackage{comment}


\usetikzlibrary{patterns}



\usepackage{enumerate}

\usepackage{amsmath, amssymb, amsfonts, latexsym, mdwlist, amsthm}
\usepackage{subfig}
\usepackage{wrapfig}

\usepackage{mathtools}
\usepackage{bm}

\definecolor{lightred}{HTML}{ff4d4d}
\definecolor{lightblue}{HTML}{1F88CD}
\definecolor{lightgrey}{HTML}{727272}
\definecolor{lightblue2}{HTML}{009EC1}
\definecolor{mypink}{HTML}{FD00B0}

\usepackage[bookmarks, colorlinks, breaklinks, pdftitle={},
pdfauthor={Zhiyu Liu}]{hyperref}
\hypersetup{linkcolor=OliveGreen,citecolor=MidnightBlue,filecolor=black,urlcolor=blue}




\usepackage{tikz}
\usetikzlibrary{calc,trees,positioning,arrows,chains,shapes.geometric,%
    decorations.pathreplacing,decorations.pathmorphing,shapes,%
    matrix,shapes.symbols}

\tikzset{
>=stealth',
  punktchain/.style={
    rectangle,
    rounded corners,
    draw=black, thick,
    minimum height=3em,
    text centered,
    on chain},
  line/.style={draw, thick, <-},
  element/.style={
    tape,
    top color=white,
    bottom color=blue!50!black!60!,
    minimum width=8em,
    draw=blue!40!black!90, very thick,
    text width=10em,
    minimum height=3.5em,
    text centered,
    on chain},
  every join/.style={->, thick,shorten >=1pt},
  decoration={brace},
  tuborg/.style={decorate},
  tubnode/.style={midway, right=2pt},
}

\usepackage{paralist}
\setdefaultenum{(a)}{(i)}{}{}
\usepackage[shortlabels]{enumitem} 


\makeatletter
\newtheorem*{rep@theorem}{\rep@title}
\newcommand{\newreptheorem}[2]{%
\newenvironment{rep#1}[1]{%
 \def\rep@title{#2 \ref{##1}}%
 \begin{rep@theorem}}%
 {\end{rep@theorem}}}
\makeatother

\newtheorem{theorem}{Theorem}[section]
\newreptheorem{theorem}{Theorem}
\newtheorem{proposition}[theorem]{Proposition}

\newtheorem{lemma}[theorem]{Lemma}

\newtheorem{corollary}[theorem]{Corollary}
\newreptheorem{corollary}{Corollary}
\newtheorem{conjecture}[theorem]{Conjecture}
\newreptheorem{conjecture}{Conjecture}

\newtheorem{thm-int}{Theorem}

\theoremstyle{definition}
\newtheorem{Def-s}[theorem]{Definition}
\newtheorem{definition}[theorem]{Definition}

\newtheorem{remark}[theorem]{Remark}





\newcommand{\ignore}[1]{}


\usepackage[numbers]{natbib}
\setlength{\bibsep}{0.5pt}



\newcommand{\ra}{\rightarrow}

\newcommand{\sst}{\subset}




\newcommand{\D}{\mathrm{D}}


\newcommand{\ZZ}{\mathbb{Z}}

\newcommand{\QQ}{\mathbb{Q}}

\newcommand{\CC}{\mathbb{C}}
\newcommand{\PP}{\mathbb{P}}

\newcommand{\kk}{\mathbf{k}}


\newcommand{\DT}{\mathsf{DT}}
\newcommand{\GW}{\mathsf{GW}}
\newcommand{\PT}{\mathsf{PT}}
\newcommand{\GV}{\mathsf{GV}}

\newcommand{\NS}{\mathrm{NS}}

\newcommand{\ch}{\mathrm{ch}}

\newcommand{\td}{\mathrm{td}}

\newcommand{\KK}{\mathrm{K}}

\newcommand{\gedim}{\mathrm{gedim}}

\newcommand{\gmax}{g_{\mathrm{max}}}

\renewcommand{\Re}{\operatorname{Re}}
\renewcommand{\Im}{\operatorname{Im}}

\DeclareMathOperator{\cok}{cok}

\DeclareMathOperator{\Coh}{\mathrm{Coh}}

\DeclareMathOperator{\sF}{\mathsf{F}}

\DeclareMathOperator{\Pic}{Pic}

\DeclareMathOperator{\Gr}{Gr}


\newcommand{\cC}{\mathcal{C}}
\newcommand{\cA}{\mathcal{A}}

\newcommand{\cH}{\mathcal{H}}

\DeclareMathOperator{\oh}{\mathcal{O}}
\newcommand{\zy}[1]{\textcolor{red}{#1}}

\usepackage{hyperref}
\hypersetup{
	colorlinks=true,
    linkcolor={blue},
    citecolor={lightred},
	urlcolor={black}
}


\begin{document}

\title[Castelnuovo bound for curves in projective 3-folds]{Castelnuovo bound for curves in projective 3-folds}
\subjclass[2020]{14N35 (Primary); 14J33, 14H50, 14F08 (Secondary)}
\keywords{Castelnuovo bound, Gromov--Witten invariants, Gopakumar--Vafa invariants,  Donaldson--Thomas invariants.}

\author{Zhiyu Liu}

\address{School of Mathematical Sciences, Zhejiang University, Hangzhou, Zhejiang Province, 310058 P. R. China}
\email{jasonlzy0617@gmail.com}
\urladdr{sites.google.com/view/zhiyuliu}


\begin{abstract}
The Castelnuovo bound conjecture, which is proposed by physicists, predicts an effective vanishing result for Gopakumar--Vafa invariants of Calabi--Yau 3-folds of Picard number one. Previously, it is only known for a few cases and all the proofs rely on the Bogomolov--Gieseker conjecture of Bayer--Macr\`i--Toda.

In this paper, we prove the Castelnuovo bound conjecture for any Calabi--Yau 3-folds of Picard number one, up to a linear term and finitely many degree, without assuming the conjecture of Bayer--Macr\`i--Toda. Furthermore, we prove an effective vanishing theorem for surface-counting invariants of Calabi--Yau 4-folds of Picard number one. We also apply our techniques to study low-degree curves on some explicit Calabi--Yau 3-folds.

Our approach is based on a general iterative method to obtain upper bounds for the genus of one-dimensional closed subschemes in a fixed 3-fold, which is a combination of classical techniques and the wall-crossing of weak stability conditions on derived categories, and works for any projective 3-fold with at worst isolated singularities over any algebraically closed field.

\end{abstract}

\vspace{-1em}
\maketitle

\setcounter{tocdepth}{1}
\tableofcontents

\section{Introduction}




One of the most important and difficult problems in geometry and physics is determining Gromov--Witten invariants of projective Calabi--Yau 3-folds. The effort to compute genus zero invariants of Calabi--Yau 3-folds led to the birth of mirror symmetry as a mathematical subject \cite{Givental1998,lian-liu-yau}. More than ten years ago, a group of physicists shocked the community by announcing a physical derivation of Gromov--Witten invariants of a series of projective Calabi--Yau 3-folds of Picard number one in \cite{HKQ09}, up to a very high genus.

Their physical derivation used five mathematical conjectures: Four BCOV conjectures from the B-model and the Castelnuovo bound conjecture from the A-model. After applying all BCOV conjectures, we only need to determine finitely many initial conditions to calculate the Gromov--Witten generating series at each genus. Then the Castelnuovo bound conjecture, roughly speaking, fixes a large number of remaining initial conditions.

During the last few years, great progress has been achieved in proving BCOV conjectures. See e.g.~\cite{GJR17P,GJR18,Chen_2021,Chang_2021,CJR22,CGL18,CGLFeynman}. The main content of this paper is to study the A-model conjecture, i.e.~the Castelnuovo bound.


Before stating the conjecture, we shall fix some notations. The degree of a smooth projective variety $X$ of Picard number one is defined as the self-intersection number of the ample generator of its Neron--Severi group $\NS(X)$. When $X$ is a projective Calabi--Yau 3-fold of Picard number one, we denote by $\GV_{g,d}$ the genus $g$ degree $d$ Gopakumar--Vafa (GV) invariant of $X$ (cf.~Section \ref{subsec-vanish}).

The Castelnuovo bound conjecture predicts an effective vanishing result for $\GV_{g,d}$:

\begin{conjecture}[{Castelnuovo bound conjecture}]\label{conj-cast}
Let $X$ be a projective Calabi--Yau 3-fold of Picard number one and degree $n$. Then $\GV_{g,d}=0$ when
\[g > \frac{1}{2n}d^2+\frac{1}{2}d+1.\]
\end{conjecture}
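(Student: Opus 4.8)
The plan is to first convert the vanishing of $\GV_{g,d}$ into a bound on the arithmetic genus of one-dimensional subschemes. Using the Maulik--Toda description of Gopakumar--Vafa invariants through stable pairs (equivalently, the relation among the $\GV$, $\PT$ and $\DT$ series), a nonzero $\GV_{g,d}$ forces the existence of a pure one-dimensional closed subscheme $C\subset X$ with $H\cdot[C]=d$ and arithmetic genus $p_a(C)=1-\chi(\oh_C)\ge g$. Writing $\gmax(d)$ for the maximum of $p_a(C)$ over all such $C$, it therefore suffices to prove the purely geometric estimate
\[
\gmax(d)\ \le\ \frac{1}{2n}d^2+\frac{1}{2}d+1 .
\]
The whole problem thus becomes a \emph{sharp} Castelnuovo-type bound for curves in the fixed polarized $3$-fold $(X,H)$ with $H^3=n$.

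\textbf{The extremal curves and the surface case.} Next I would pin down where equality should occur. If $C$ is contained in a smooth surface section $S\in|H|$, then $K_S=(K_X+H)|_S=H|_S$ since $X$ is Calabi--Yau, and adjunction on $S$ gives $2p_a(C)-2=[C]^2_S+[C]\cdot H|_S$. As $[C]\cdot H|_S=d$, the Hodge index theorem on $S$ yields $[C]^2_S\le ([C]\cdot H|_S)^2/(H|_S)^2=d^2/n$, whence $p_a(C)\le \tfrac{1}{2n}d^2+\tfrac12 d+1$, with equality exactly when $[C]=\tfrac dn\,H|_S$, i.e.\ for the complete-intersection curves $S\cap D$ with $D\in|\tfrac dn H|$. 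This both confirms the shape of the conjectural bound and shows that any proof must exhibit the curves lying on a member of $|H|$ as the genus-maximizers.

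\textbf{Reducing the general curve to the surface case.} The core task is to show that a curve not confined to a surface in $|H|$ cannot have larger arithmetic genus. Here I would fuse the classical Castelnuovo strategy with the wall-crossing engine of the paper. Classically, one takes a general $S\in|H|$, forms the length-$d$ section $\Gamma=C\cap S\subset S$, invokes the uniform position principle, and bounds $p_a(C)$ from above by a sum of deficiencies of the Hilbert function of $\Gamma$; the extremal configurations are precisely those for which $\Gamma$ is cut out by $\tfrac dn H|_S$, recovering the surface case. Running in parallel, the wall-crossing of weak stability conditions on $\D^b(X)$ applied to $\oh_C$ (or to $I_C[1]$) produces, wall by wall, a sequence of Bogomolov--Gieseker-type inequalities bounding $\ch_3$ (which controls $\chi(\oh_C)$, hence $p_a(C)$) in terms of $\ch_1,\ch_2$ (which encode $d$); iterating these inequalities is meant to drive the bound down to the Hodge-index value of the surface case.

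\textbf{Main obstacle.} The difficulty lies entirely in the \emph{constant}: matching the leading term $\tfrac1{2n}d^2$ is comparatively soft, but pinning the linear coefficient to exactly $\tfrac12$, securing the $+1$, and covering \emph{every} degree $d$ (including the small ones) is hard. Both the uniform-position analysis and the iterated wall-crossing bound naturally lose an additive error that is linear in $d$ and degenerate for small degrees — this is precisely the gap between the sharp conjecture and what the unconditional methods of the present paper yield. Closing it requires the strongest form of the Bogomolov--Gieseker inequality, namely the Bayer--Macr\`i--Toda conjecture, to make the final wall-crossing step sharp, followed by a delicate liaison and uniform-position argument certifying that the extremal configurations are exactly the complete intersections $S\cap D$ identified above. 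Coordinating these two inputs so that no slack survives is where I expect the real work to lie.
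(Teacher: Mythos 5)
You set out to prove Conjecture \ref{conj-cast}, but the paper contains no proof of it: it is stated as an open conjecture, and the paper's main results establish strictly less --- Theorem \ref{thm-intro-GV} (via Corollary \ref{cor-pic-rk-1} and Theorem \ref{thm-vanishing-GV}) gives the vanishing only for $g > \frac{1}{2n}d^2+\frac{nm^3-4m}{2}d+1$ and $d\geq N$, i.e.\ up to a linear term and finitely many degrees, together with sharp low-degree results for specific complete intersections (Theorem \ref{thm-low-deg}). Your proposal, to its credit, concedes exactly this at the end: the linear coefficient $\frac{1}{2}$, the additive constant, and all small degrees are out of reach without the Bayer--Macr\`i--Toda inequality (Conjecture \ref{SBG_conj}) for $X$ itself, which is known only for a handful of Calabi--Yau 3-folds. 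So what you have written is a correct map of the problem and of where the difficulty sits, but it is a plan rather than a proof, and by the paper's own account no unconditional proof of the full statement currently exists.

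Two of your intermediate steps also diverge from what actually works. First, you run the wall-crossing on $\D^b(X)$ directly, applied to $\oh_C$ or $I_C[1]$; the $\ch_3$-inequality you need at the last wall \emph{is} the BMT conjecture for $X$, so this route is circular as an unconditional argument. The paper's central device is to avoid it: project $C$ by a generic good projection to $\PP^3$ (Lemma \ref{lem-good-proj-exist}), use $g(C)\leq g(C')$ (Lemma \ref{lem-cok-dim0}), and run the wall-crossing there, where Conjecture \ref{SBG_conj} is a theorem (Theorem \ref{SBG}); the iteration of projections and walls (Proposition \ref{cor_wall}, Proposition \ref{prop-decompose-wall}) is what produces the bound, and the loss in the linear term through $m_H$ and $n_H$ is the price of this detour. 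Second, your opening reduction --- ``nonzero $\GV_{g,d}$ forces a subscheme of genus $\geq g$'' --- is not a one-line implication: the paper must pass through the emptiness of the DT/PT moduli spaces (Theorem \ref{thm-vanishing}), the GW/PT correspondence and uniqueness of the expansions, and the multiple-cover estimate of Lemma \ref{lem-gv}, which itself only holds for $d$ above an explicit threshold; even this ``soft'' step therefore carries the degree restriction visible in Theorem \ref{thm-vanishing-GV}. Finally, note that the uniform position principle you invoke requires characteristic zero and integral curves, whereas the genus maximizers here range over arbitrary one-dimensional closed subschemes (the paper reduces only to Cohen--Macaulay curves, not integral ones), and the paper's Conjecture \ref{conj-optimal-bound} indicates that the bound in Conjecture \ref{conj-cast} is not even expected to be optimal, so an equality analysis pinned to complete intersections $S\cap D$ as you propose would need correction terms $\epsilon_X(d)$ in low congruence classes.
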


Previously, this conjecture is only known for some special cases. More precisely, Conjecture \ref{conj-cast} is proven in \cite{liu-ruan:cast-bound,soheyla:cast} under the assumption that $X$ satisfies Bayer--Macr\`i--Toda's (BMT) conjecture \cite{bayer2011bridgeland,bayer2016space} (cf.~Conjecture \ref{SBG_conj}). However, the BMT conjecture is only known for a very few examples of Calabi--Yau 3-folds, such as quintic 3-folds \cite{liquintic} and $(2,4)$-complete intersections in $\PP^5$ \cite{liu:2-4}, and proving it for a large class of projective Calabi--Yau 3-folds of Picard number one seems still far beyond the current techniques.



In this paper, we prove the following effective vanishing theorem for GV-invariants of any projective Calabi--Yau 3-fold $X$ of Picard number one, without assuming the BMT conjecture for $X$.

\begin{theorem}[{Theorem \ref{thm-vanishing-GV}}]\label{thm-intro-GV}
Let $X$ be a projective Calabi--Yau 3-fold of Picard number one and degree $n$. Then $\GV_{g,d}=0$ when
\[g> \frac{1}{2n}d^2+\frac{nm^3-4m}{2}d+1\]
and $d\geq N$, where $N$ is an explicit integer and $m$ is any integer such that $mH$ is very ample for the ample generator $H$ of $\NS(X)$.
\end{theorem}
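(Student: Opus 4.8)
The plan is to deduce the vanishing of $\GV_{g,d}$ from a purely geometric upper bound on the arithmetic genus of one-dimensional closed subschemes of $X$, and then to prove that bound by an iterative wall-crossing argument fed by classical projective geometry. First I would use the definition of Gopakumar--Vafa invariants recalled in Section~\ref{subsec-vanish}: since $\GV_{g,d}$ is built from the stable-pairs (equivalently Maulik--Toda) theory in class $d$, it is supported in genera at most
\[
\gmax(d)\ :=\ \max\left\{\, p_a(Z)\ \colon\ Z\subset X\ \text{a one-dimensional closed subscheme with}\ \deg_H Z=d \,\right\},
\]
so that $\GV_{g,d}=0$ for $g>\gmax(d)$. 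The theorem thus reduces to the estimate
\[
\gmax(d)\ \leq\ \frac{1}{2n}\,d^2+\frac{nm^3-4m}{2}\,d+1\qquad\text{for all } d\geq N.
\]

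For this estimate I would work in $\D^b(X)$ with a weak (tilt) stability condition built from $H$, for which the Bogomolov inequality and the non-cubic Bogomolov--Gieseker inequality hold \emph{unconditionally}, i.e.\ independently of Conjecture~\ref{SBG_conj}. Given $Z$ with $\deg_H Z=d$, I would study $\oh_Z$ (equivalently $\cI_Z$) as an object of $\D^b(X)$; since $p_a(Z)=1-\chi(\oh_Z)$, controlling the genus is the same as bounding $\ch_3(\oh_Z)$ in terms of $\ch_{0},\dots,\ch_2$, which are fixed by $d$. As the tilt parameter varies, $\oh_Z$ crosses a sequence of walls and acquires Harder--Narasimhan factors; applying the unconditional inequalities to each stable factor, and summing the resulting constraints on their Chern characters, yields an inequality for $\ch_3(\oh_Z)$, hence for $p_a(Z)$.

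The mechanism that produces the sharp leading coefficient is iterative, and the classical geometry sets it in motion. Using that $mH$ is very ample I would embed $X\hookrightarrow\PP^N$ with $N+1=h^0(X,mH)$; here a curve of $H$-degree $d$ has degree $md$, while a surface section $S\in|H|$ has degree $(mH)^2\cdot H=nm^2$ and $X$ itself has degree $(mH)^3=nm^3$. A general-hyperplane-section (Castelnuovo) argument, bounding $h^1$ of the finite scheme $Z\cap S$ for generic $S\in|mH|$, gives a first, crude genus bound of the correct order $\frac{d^2}{2n}$ but with a non-optimal constant, and it is this step that deposits the linear coefficient recorded by $nm^3-4m$. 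I would then feed the crude bound back into the wall-crossing analysis: an a priori genus estimate sharpens the numerical inequalities available at each wall, which improves the bound, and iterating this loop drives the leading coefficient down to the limiting value $\frac{(md)^2}{2\,nm^2}=\frac{d^2}{2n}$. The hypothesis $d\geq N$ is precisely what allows the lower-order errors accumulating along the iteration to be absorbed into the stated linear term.

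The main obstacle is the absence of the cubic Bogomolov--Gieseker inequality of Conjecture~\ref{SBG_conj}. That inequality is exactly what Liu--Ruan and others used to extract the sharp quadratic bound in a single application of stability; without it, one application of the unconditional inequalities only gives the right order, not the right constant. The technical heart of the argument is therefore to prove that the weaker inequalities, combined with the classical input and iterated, still converge to the optimal coefficient $\frac{1}{2n}$ rather than to something strictly larger, and to verify that the accumulated error is genuinely linear in $d$ with an explicit threshold $N$. A secondary difficulty is carrying out the general-position step over an arbitrary Calabi--Yau $3$-fold with possibly isolated singularities, where one must argue directly with $|mH|$-sections rather than relying on the homogeneous coordinate ring of $\PP^N$.
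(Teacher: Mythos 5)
There is a genuine gap, and it sits exactly at what you identify as ``the main obstacle.'' Your plan is to stay on $X$ and bound $\ch_3(\oh_Z)$ by iterating only the \emph{unconditional} inequalities for tilt-stability (Bogomolov and the quadratic Bogomolov--Gieseker inequality of Proposition \ref{tilt_stab}). But those inequalities involve only $\ch_0,\ch_1,\ch_2$ of the Harder--Narasimhan factors: they constrain where walls can lie, but they give no inequality on $\ch_3$ whatsoever, so no amount of feeding a crude genus bound back into the wall-crossing can ever output a genus bound --- the quantity you want to control never enters the available inequalities. This is precisely why all previous proofs of Conjecture \ref{conj-cast} needed the cubic inequality of Conjecture \ref{SBG_conj}. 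The paper does \emph{not} dispense with the cubic inequality; it relocates it. The actual mechanism (Section \ref{sec-proj} and Theorem \ref{thm-general-bound}) is to embed $X\hookrightarrow \PP(|mH|)$ and project from general points to obtain a birational image $C'\subset\PP^3$ of the curve with $g(C)\leq g(C')$, and then to run the wall-crossing for $I_{C'}$ \emph{in $\PP^3$}, where Conjecture \ref{SBG_conj} is a theorem (Theorem \ref{SBG}). The two quantitative inputs --- Lemma \ref{lem-bmt-at-ab} (the BMT inequality applied at $(a,b)=(0,b_0)$) and Lemma \ref{lem-ms-bound} (the bound for $I_{C'/D}$ on a surface $D\subset\PP^3$) --- both rest on the cubic inequality for $\PP^3$. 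The sharp coefficient $\frac{1}{2n}$ then comes not from a convergent iteration of weaker inequalities, but from the identity $n_H=nm^2$ (Remark \ref{rmk-nH}, with $s=1$ for Calabi--Yau, Remark \ref{rmk-determine-s}): walls for $I_{C'}$ force surfaces of degree between $n_H$ and $\frac{4}{3}n_H$, the curve is decomposed along the pulled-back divisors (Proposition \ref{prop-decompose-wall}), and the telescoping sums in the proofs of Proposition \ref{prop-key} and Theorem \ref{thm-general-bound} show the error terms $R_m$ are nonpositive. Your iteration has no analogous mechanism, and as proposed it cannot close.

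A second, smaller gap: your opening reduction asserts that $\GV_{g,d}$ ``is supported in genera at most $\gmax(d)$,'' as if this were definitional. It is not --- $\GV_{g,d}$ is defined from Gromov--Witten theory by the expansion \eqref{GV_from_GW}, and its vanishing above the maximal geometric genus is part of what must be proved. The paper's route is: genus bound $\Rightarrow$ emptiness of the Hilbert scheme and stable-pairs moduli $\Rightarrow$ $\DT_{s,d}=\PT_{s,d}=0$ (Theorem \ref{thm-vanishing}) $\Rightarrow$ via Pardon's GW/PT correspondence and the uniqueness of the expansion \eqref{GV_PT_5}, vanishing of $\GV_{g,d}$. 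Because of the multiple-cover terms $t^{r\beta}$ and the sums over partitions in \eqref{GV_PT_5}, one needs the bounding function to be compatible with partitions and $r$-fold covers; this is the content of Lemma \ref{lem-gv}, and it is why the threshold in Theorem \ref{thm-vanishing-GV} is the much larger $d\geq \frac{n^3m^5-n^2m^3}{4}+nm(m^2N_H^2-n+1)$ rather than the genus-bound threshold $N_H$ itself. Your proposal, which takes $N$ to be the same threshold as in the genus estimate, misses this step entirely.
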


This solves Conjecture \ref{conj-cast} up to a linear term and finitely many degree, and also answers a question of Doan--Ionel--Walpuski \cite[Question 1.6]{DIW21} for these 3-folds. The same vanishing result holds for Donaldson--Thomas invariants and Pandharipande--Thomas invariants as well (cf.~Theorem \ref{thm-vanishing}).


We refer to Theorem \ref{thm-vanishing-GV} for the definition of $N$ in Theorem \ref{thm-intro-GV} and a sharper bound. In particular, if the ample generator of the Neron--Severi group $\NS(X)$ is very ample, then we can take $m=1$. When $X$ is a quintic 3-fold, the bound in Theorem \ref{thm-intro-GV} coincides with the one in Conjecture \ref{conj-cast}.

We also have similar results for surface-counting invariants of Calabi--Yau 4-folds, which will be discussed in Section \ref{intro-subsec-4fold}.

\subsection{Bounds of the arithmetic genus}\label{intro-bound-genus}

Our Theorem \ref{thm-intro-GV} is a consequence of an upper bound of the (arithmetic) genus of $1$-dimensional closed subschemes in a fixed 3-fold. For simplicity, we introduce the following notations. 

We fix the base field $\kk$ to be an algebraically closed field of \emph{any characteristic}. We say $X$ is a \emph{projective 3-fold} if $X$ is a projective pure-dimensional scheme over $\kk$ with $\dim X=3$. For any projective 3-fold $X$ of Picard number one and $1$-dimensional closed subscheme $C\subset X$, the degree of $C$ is defined to be the intersection number $\deg(C):=C.H$, where $H$ is the ample generator of $\NS(X)$. The (arithmetic) genus of $C$ is $g(C):=1-\chi(\oh_C)$.

\begin{definition}
Let $X$ be a projective 3-fold of Picard number one. Then we define
\[\gmax^{X}(d):=\max\{g(C)~|~C\subset X \text{ is a }1\text{-dimensional closed subscheme of } \deg(C)=d\}.\]
\end{definition}


Bounds of the genus of reduced curves are investigated by many authors via classical techniques, see e.g.~\cite{harris:geometry-of-curve-I,harris:curve-in-space}. On the other hand, an optimal upper bound of $\gmax^{X}(d)$ for $X=\PP^3$ is proven in \cite{Hartshorne1994TheGO}. Beyond these results, the BMT conjecture and the wall-crossing of (weak) stability conditions on derived categories are applied in \cite{macri:space-curve,liu-ruan:cast-bound,soheyla:cast} to get an upper bound of $\gmax^{X}(d)$ when $X$ is a smooth projective 3-fold of Picard number one satisfying the BMT conjecture, which can be used to deduce Conjecture \ref{conj-cast} in this case.

In this paper, we combine the classical approach and the wall-crossing method to prove the following upper bound of $\gmax^{X}(d)$ without assuming the smoothness and the BMT conjecture for $X$:



\begin{theorem}[{Corollary \ref{cor-pic-rk-1}}]\label{intro-thm-pic-1}
Let $X$ be a factorial projective 3-fold over $\kk$ with at worst isolated singularities of Picard number one and degree $n$, and $H$ be the ample generator of $\NS(X)$. Let $s$ be the least integer such that $|sH|\neq \varnothing$ and $m$ be any integer with $mH$ very ample. 

Then there exists an explicit integer $N$ such that
\[\gmax^{X}(d)\leq \frac{1}{2sn}d^2+\frac{snm^3-4m}{2}d+1\]
for any $d\geq N$.
\end{theorem}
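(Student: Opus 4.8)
The target bound becomes transparent after embedding. Since $mH$ is very ample, regard $X\subset\PP^N$ via $|mH|$; a $1$-dimensional closed subscheme $C$ with $\deg(C)=d$ then has $\PP^N$-degree $D:=md$, and a surface $S\in|sH|$ has $\PP^N$-degree $e:=(sH)(mH)^2=snm^2$. A direct rewriting shows that the asserted bound is exactly
\[
\frac{D^2}{2e}+\frac{e-4}{2}\,D+1,
\]
the arithmetic genus of a curve of degree $D$ cut out on a degree-$e$ surface in $\PP^3$. This identifies the extremal configuration as a curve lying on a member of the minimal system $|sH|$, and tells us which classical estimate to aim at. The plan is therefore to force $C$ onto such a minimal surface and then run the classical genus estimate, using the wall-crossing only for the first, surface-producing, step in a way that survives singularities and positive characteristic.

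First I would make the standard reductions: as $g(C)=1-\chi(\oh_C)$, replacing $C$ by its maximal pure $1$-dimensional subscheme only lowers $\chi$, and the discarded embedded or isolated points and a possible failure of connectedness alter $\chi$ by an amount bounded independently of $d$, to be absorbed into $N$. I would fix the least $s$ with $|sH|\neq\varnothing$; factoriality together with $\NS(X)=\ZZ H$ identifies Weil and Cartier divisors, makes $\oh_X(-S)$ a line bundle, and guarantees that every effective surface has class a multiple of $sH$. The crucial input is then producing $S\in|sH|$ with $C\subset S$ for all $d\ge N$. Here I would run the wall-crossing of weak (tilt) stability conditions on $\D^b(X)$ for the ideal sheaf $I_C$: along the largest wall $I_C$ is destabilized by a subobject associated to an effective divisor $S$, and minimality of $s$ forces $S\in|sH|$, i.e.\ $\oh_X(-S)\hookrightarrow I_C$, which is the inclusion $C\subset S$. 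This is precisely where the BMT conjecture is avoided: the weak stability condition is used only to manufacture the destabilizing surface—whose existence needs no higher Bogomolov--Gieseker inequality—rather than to read off a bound on $\ch_3(I_C)$ directly. When $C$ does not lie on a single irreducible surface, the iterative method peels off one surface at a time along successive walls, lowering the degree at each stage and summing the per-stage contributions, and $N$ is what guarantees enough room for this iteration to terminate in the minimal class.

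With $C\subset S\in|sH|$ in hand, I would finish classically. Taking a general linear projection $X\subset\PP^N\dashrightarrow\PP^3$ (which exists over any algebraically closed field and is birational onto its image on $C$), the image $\bar{C}\subset\bar{S}\subset\PP^3$ has the same degree $D$, lies on a surface of degree $e$, and satisfies $g(C)\le g(\bar{C})$, since passing to the image under a finite birational morphism does not increase $\chi(\oh)$. On $\bar{S}$ the Hodge index theorem gives $\bar{C}^2\le(\bar{C}.H)^2/(H^2.\bar{S})=D^2/e$, while adjunction on $\PP^3$ supplies $K_{\bar{S}}=(e-4)H|_{\bar{S}}$ and hence $\bar{C}.K_{\bar{S}}=(e-4)D$; combining, $2g(\bar{C})-2\le D^2/e+(e-4)D$, which is the complete-intersection genus above and rewrites to the stated inequality. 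The universal constant $-4=\deg K_{\PP^3}/H$ is the source of the $-4m$ term and is exactly what makes the final bound independent of $K_X$.

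The main obstacle I anticipate is geometric control on the surface. Because $X$ is allowed isolated singularities and $S$ is only a member of $|sH|$, both $S$ and its projection $\bar{S}$ may be singular and non-normal, so Hodge index and adjunction must be carried out on a normalization or minimal resolution and the correction terms shown to be $O(d)$ with \emph{precisely} the coefficient $\tfrac{snm^3-4m}{2}$, not merely $O(d)$. Equally delicate is matching the minimal surface degree: a priori $\bar{C}$ might lie on a $\PP^3$-surface of degree strictly less than $e$, which would weaken the estimate, so one must argue that any such lower-degree surface would pull back to a divisor in $X$ of class smaller than $sH$, contradicting minimality of $s$, or else absorb the discrepancy into the iteration. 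Pinning down the explicit $N$ beyond which the wall-crossing reliably outputs a surface in the minimal class, and beyond which the embedded-point, disconnectedness, and low-degree corrections are all dominated, is the remaining quantitative hurdle.
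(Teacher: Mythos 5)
There is a genuine gap, and it sits at the heart of your strategy. You propose to run the tilt-stability wall-crossing for $I_C$ on $\D^b(X)$ itself and to extract from the uppermost wall a surface $S\in|sH|$ with $C\subset S$. This fails twice over. First, tilt stability is simply not available on $X$ in the paper's generality: $X$ is allowed isolated singularities, and over a field of positive characteristic even smooth projective 3-folds are not known to satisfy the classical Bogomolov inequality that underlies the support property of $\sigma_{a,b}$ (Proposition \ref{tilt_stab} is stated for smooth $X$, and its characteristic-$p$ validity beyond special cases is open). The paper never forms weak stability conditions on $X$; it embeds $X$ by $|m_HH|$, takes a good projection $\pi\colon\PP^n\dashrightarrow\PP^3$ that is birational on $C$ (Lemma \ref{lem-good-proj-exist}), performs all wall-crossing for $I_{\pi(C)}$ inside $\D^b(\PP^3)$ --- where tilt stability and the BMT inequality are known in every characteristic (Theorem \ref{SBG}) --- and only then transports walls back to divisor decompositions of $C$ in $X$ (Proposition \ref{prop-decompose-wall}). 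Your ordering (wall-cross on $X$ first, project only for the classical estimate) inverts the one step that makes the argument legal. Second, and independently: even where wall-crossing makes sense, there may be \emph{no} wall at all in the relevant range (the ``$(t)$-neutral'' case of Definition \ref{def_good}). Then no destabilizing surface exists --- and indeed a curve of large degree need not lie on any surface of bounded degree, so ``forcing $C$ onto a member of $|sH|$'' is false as a statement about curves in $X$. The neutral case is precisely where the paper applies the Bogomolov--Gieseker-type inequality at the boundary point $(a,b)=(0,-\tfrac{4}{3}n)$ (Lemma \ref{lem-bmt-at-ab}); this is legitimate only because the inequality is a theorem for $\PP^3$. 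Since you explicitly forswear any such inequality (``whose existence needs no higher Bogomolov--Gieseker inequality''), your proposal has no tool whatsoever for bounding $g(C)$ in this case, which is the generic one.

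Two secondary points. Minimality of $s$ does not put the destabilizer in $|sH|$: walls on $\PP^3$ can be induced by divisors of any degree $k$ with $1\leq k\leq\tfrac{4}{3}n_H$, and the paper absorbs this through the invariant $n_H$ of Definition \ref{def-nH} (the minimal $(m_HH)^2$-degree of effective divisors contained in members of $|km_HH|$), with factoriality and $\NS(X)=\ZZ H$ yielding $n_H=snm^2$ only at the very end (Remark \ref{rmk-nH}); this, not a literal inclusion $C\subset S\in|sH|$, is where the coefficient $\tfrac{snm^3-4m}{2}$ comes from. And your classical finish --- Hodge index plus adjunction on the projected surface $\bar S$ --- is left unresolved exactly where it is hardest: $\bar S$ may be non-normal, non-reduced, or reducible, and you acknowledge but do not close this hole. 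The paper avoids it entirely by bounding the genus of curves inside a surface $D\subset\PP^3$ via a tilt-stability theorem for the torsion sheaf $I_{C/D}$ (Lemma \ref{lem-ms-bound}), which requires only semistability of $I_{C/D}$ at a single point of the wall, fed into a two-level iteration (Proposition \ref{prop-key} for curves in surfaces, then Theorem \ref{thm-general-bound-isolated}) whose error terms $\epsilon(\cdot,\cdot)$ and threshold $N_H$ are controlled by the explicit inequalities \eqref{eq-N0}--\eqref{eq-sum-thm}. Your opening observation --- that the target bound is the genus of a degree-$md$ curve on a degree-$snm^2$ surface in $\PP^3$ --- correctly identifies the extremal shape, but the engine producing the bound when no surface exists is missing from your proposal.
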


As $s\geq 1$, this also proves a stronger version of a conjectural bound of $\gmax^X$ in \cite[Example 4.4]{bayer2016space} when $d$ is sufficiently large (see also \cite[Conjecture 3.3]{tramel}).


We refer to Theorem \ref{thm-general-bound-isolated} and Corollary \ref{cor-pic-rk-1} for more general results and the definition of $N$. When $X$ is a smooth Calabi--Yau 3-fold over $\CC$, we have $s=1$, and the bound in Theorem \ref{intro-thm-pic-1} implies Theorem \ref{thm-intro-GV} (cf.~Remark \ref{rmk-determine-s}).

If $\NS(X)$ is generated by a very ample divisor, we can take $m=s=1$ and obtain the following simpler bound:

\begin{corollary}[{Corollary \ref{cor-very-ample}}]\label{intro-cor-very-ample}
Let $X$ be a factorial projective 3-fold over $\kk$ with at worst isolated singularities of Picard number one and degree $n$. Assume that $\NS(X)$ is generated by a very ample divisor. Then there exists an integer $N$ such that
\[\gmax^{X}(d)\leq \frac{1}{2n}d^2+\frac{n-4}{2}d+1\]
for any $d\geq N$. 
\end{corollary}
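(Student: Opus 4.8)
The plan is to obtain this as an immediate specialization of Theorem \ref{intro-thm-pic-1}, choosing the two auxiliary integers $s$ and $m$ to be as small as possible. The key point is that the hypothesis—$\NS(X)$ generated by a very ample divisor—forces both $s=1$ and permits the choice $m=1$, and at these values the general bound collapses to exactly the one asserted here.

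First I would check that $s=1$. Since $H$ is the ample generator of $\NS(X)$ and is assumed very ample, the complete linear system $|H|$ defines a closed embedding of $X$ into projective space; in particular $|H|\neq\varnothing$. Recalling that $s$ is defined as the least positive integer with $|sH|\neq\varnothing$, and that any such $s$ satisfies $s\geq 1$ (the convention recorded in the remark following Theorem \ref{intro-thm-pic-1}), we conclude $s=1$. Next, because $H$ is itself very ample, the integer $m=1$ meets the requirement that $mH$ be very ample.

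With these two choices, the bound of Theorem \ref{intro-thm-pic-1} becomes
\[
\gmax^{X}(d)\leq \frac{1}{2\cdot 1\cdot n}d^2+\frac{1\cdot n\cdot 1^3-4\cdot 1}{2}d+1=\frac{1}{2n}d^2+\frac{n-4}{2}d+1,
\]
valid for all $d\geq N$, where $N$ is the explicit threshold supplied by that theorem. This is precisely the inequality claimed in the corollary, so the result follows.

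Since the argument is a pure specialization, there is no genuine obstacle at this stage: all the difficulty resides in Theorem \ref{intro-thm-pic-1} itself—the iterative combination of classical genus bounds with wall-crossing of weak stability conditions—and the only thing to verify here is that the very-ampleness hypothesis legitimately permits the substitutions $s=m=1$. The sole subtlety worth flagging is the convention $s\geq 1$, which excludes degenerate choices of $s$ and guarantees that $s=1$ is indeed the relevant value.
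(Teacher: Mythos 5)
Your proposal is correct and matches the paper's own proof, which likewise deduces the corollary from Corollary \ref{cor-pic-rk-1} (the body version of Theorem \ref{intro-thm-pic-1}) by observing that very ampleness of $H$ gives $m_H=1$ and $|H|\neq\varnothing$, hence $s=1$. The only cosmetic difference is that the body statement retains the nonnegative correction term $\epsilon(d,n)$, which the stated bound simply drops.
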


\begin{remark}
Note that when $X$ is smooth and $X\subset \PP^4$, the bound in Corollary \ref{intro-cor-very-ample} coincides with the one obtained in \cite{soheyla:cast,liu-ruan:cast-bound} by assuming the BMT conjecture for $X$. As the BMT conjecture is only known for smooth hypersurfaces in $\PP^4$ of degree $\leq 5$, Corollary \ref{intro-cor-very-ample} recovers and generalizes the result in \cite{soheyla:cast,liu-ruan:cast-bound} for hypersurfaces.
\end{remark}

One may wonder if it is possible to improve the bounds in Theorem \ref{intro-thm-pic-1} and Corollary \ref{intro-cor-very-ample}. The following result shows that these bounds are already asymptotically optimal and completely describe the asymptotic behavior of $\gmax^{X}(d)$.

\begin{theorem}[{Theorem \ref{thm-limit}}]\label{intro-thm-limit}
Let $X$ be a factorial projective 3-fold over $\kk$ with at worst isolated singularities of Picard number one and degree $n$, and $H$ be the ample generator of $\NS(X)$. Let $s$ be the least integer such that $|sH|\neq \varnothing$.  Then
\[\lim_{d\to +\infty} \frac{\gmax^{X}(d)}{d^2}=\frac{1}{2sn}.\]
\end{theorem}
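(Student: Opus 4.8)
The plan is to prove the two inequalities $\limsup_{d\to +\infty}\gmax^{X}(d)/d^2\le \frac{1}{2sn}$ and $\liminf_{d\to +\infty}\gmax^{X}(d)/d^2\ge \frac{1}{2sn}$ separately. The upper estimate is immediate from Corollary \ref{cor-pic-rk-1} (i.e.\ Theorem \ref{intro-thm-pic-1}): dividing the bound $\gmax^{X}(d)\le \frac{1}{2sn}d^2+\frac{snm^3-4m}{2}d+1$ by $d^2$ and letting $d\to +\infty$ annihilates the linear and constant terms, so $\limsup \gmax^{X}(d)/d^2\le \frac{1}{2sn}$. The content is therefore the matching lower bound, i.e.\ producing, for all large $d$, a $1$-dimensional closed subscheme of degree $d$ whose genus is $\frac{1}{2sn}d^2-O(d)$.

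For the construction I would use complete-intersection curves supported on a minimal surface. Since $s$ is the least integer with $|sH|\ne\varnothing$, fix $S_1\in|sH|$; for $t\gg 0$ the class $tH$ is very ample, so a general $S_2\in|tH|$ meets $S_1$ properly and $C_t:=S_1\cap S_2$ is a $1$-dimensional closed subscheme which is a complete intersection of two Cartier divisors. Its fundamental cycle is $sH\cdot tH=st\,H^2$, so $\deg C_t=st\,H^3=stn$. To compute the genus without any smoothness or adjunction input --- crucial since $X$ is only assumed factorial with isolated singularities --- I would route it through the Koszul resolution $0\to\oh_X(-(s+t)H)\to\oh_X(-sH)\oplus\oh_X(-tH)\to\oh_X\to\oh_{C_t}\to 0$, which is exact because $(S_1,S_2)$ is a regular sequence. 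Writing the Hilbert polynomial $P(x):=\chi(\oh_X(xH))=\frac{n}{6}x^3+\alpha x^2+\beta x+\gamma$, this yields $\chi(\oh_{C_t})=P(0)-P(-s)-P(-t)+P(-(s+t))=-\frac{n}{2}st(s+t)+2\alpha st$, hence $g(C_t)=1+\frac{n}{2}st(s+t)-2\alpha st$. With $d=stn$ and $s$ fixed this reads $g(C_t)=\frac{1}{2sn}d^2+O(d)$, so $\gmax^{X}(stn)\ge \frac{1}{2sn}(stn)^2+O(stn)$ and the desired lower bound holds along the subsequence $d=stn$, $t\to +\infty$.

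It remains to fill in the degrees between consecutive multiples of $sn$. For large $d$ set $t=\lfloor d/(sn)\rfloor$, so that $\delta:=d-tsn\in\{0,\dots,sn-1\}$ is bounded, and enlarge $C_t$ by a bounded-degree $1$-cycle $D$ of degree exactly $\delta$, obtaining a closed subscheme $C$ of degree $d$. From $\chi(\oh_{C_t\cup D})=\chi(\oh_{C_t})+\chi(\oh_D)-\chi(\oh_{C_t\cap D})$ one sees that passing from $C_t$ to $C$ alters the genus by a bounded amount, while replacing $d$ by $tsn$ perturbs the leading construction only by $O(d)$; hence $g(C)=\frac{1}{2sn}d^2+O(d)$ as well. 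Dividing by $d^2$ and combining with the upper estimate gives the claimed limit. Here the limit is understood over those $d$ for which a curve of degree $d$ exists; for $d\gg 0$ these are exactly the positive multiples of $e:=\gcd\{C.H\}$, and the bounded building blocks $D$ needed above exist within this set by the numerical-semigroup structure of effective degrees.

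The main obstacle is the lower bound rather than the upper one. Two points require care. \emph{First}, the genus computation must remain valid in the singular setting: this is precisely why I compute $\chi(\oh_{C_t})$ via the Hilbert polynomial and the Koszul complex (both insensitive to the singularities of $X$ and of the $S_i$) rather than via adjunction on the possibly singular surface $S_1$. \emph{Second}, the interpolation to every large admissible degree is elementary but must be organised so that all corrections stay within the $O(d)$ error term and therefore vanish after dividing by $d^2$.
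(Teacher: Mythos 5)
Your overall route is the paper's: the upper estimate is read off from Corollary \ref{cor-pic-rk-1}, and the lower estimate comes from complete-intersection curves $S_1\cap S_2$ with $S_1\in|sH|$, $S_2\in|tH|$, whose genus you compute correctly via the Koszul resolution and the cubic Hilbert polynomial (the paper's proof of Theorem \ref{thm-limit} does the equivalent computation, phrased through $K_X.H^2$). The genuine gap is in the interpolation step. With $t=\lfloor d/(sn)\rfloor$ you force $\delta=d-tsn\in\{0,\dots,sn-1\}$ and then need a $1$-dimensional closed subscheme of degree \emph{exactly} $\delta$; but $\mathsf{D}_{X}$ need not contain small multiples of $e$ (if the degrees of integral curves generate, say, the semigroup $\langle 3,5\rangle$, then $\delta\in\{1,2,4,7\}$ is not an effective degree), so the appeal to ``the numerical-semigroup structure'' does not rescue this specific choice of $t$. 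Worse, your identification of $\mathsf{D}_{X}\cap\ZZ_{\geq N}$ with the multiples of $e=\gcd\{C.H\}$ is itself unproved: realizing a sum of effective degrees as the degree of a single subscheme requires a union of pieces with no common $1$-dimensional component, which is not automatic when only finitely many integral curves of the relevant degrees exist (one would have to resort to thickenings or moving in linear systems); the statement of the theorem only asserts the limit over $d\in\mathsf{D}_X$, and no description of $\mathsf{D}_X$ should be needed.

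The repair is exactly what the paper does: rather than characterizing $\mathsf{D}_{X}$, fix for each residue class modulo $msn$ that is \emph{realized} in $\mathsf{D}_{X}$ a single representative subscheme $C_i$ of degree $d_i$, and for $d=d_i+kmsn$ take $C^k_i=C_i\cup(D\cap S_k)$ with $D\in|sH|$ integral and $S_k\in|kmH|$ generic; then every sufficiently large admissible $d$ is of this form by construction, no semigroup input is needed, and the correction term satisfies $0\leq\mathrm{length}\big(C_i\cap(D\cap S_k)\big)\leq kmd_i=O(d)$, which dies after dividing by $d^2$. (Within your setup the same fix reads: shift $t$ down by a bounded amount so that $\delta$ equals the degree of one of finitely many fixed representative subschemes, instead of insisting $\delta<sn$.) A further small slip, harmless given the $O(d)$ tolerance: attaching the bounded piece alters the genus by $O(d)$, not by ``a bounded amount,'' since the intersection length is controlled by $S_2.D=t\,\deg(D)$, which grows linearly in $t$.
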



For the generalization of the above results to higher-dimensional projective varieties, see Section \ref{subsec-high-dim}.

\subsection{Vanishing of surface-counting invariants of Calabi--Yau 4-folds}\label{intro-subsec-4fold}

Recently, the enumerative geometry of Calabi--Yau 4-folds has attracted much attention. Analogous to curve-counting theories of 3-folds, several surface-counting theories of Calabi--Yau 4-folds are built in \cite{bae-kool-park:count-surface-I,bae-kool-park:count-surface-II,joyce:vfc-4fold,oh-thomas:I}, such as (reduced) Donaldson--Thomas invariants and Pandharipande--Thomas invariants. 

More precisely, we assume that $X$ is a projective Calabi--Yau 4-fold and fix a class 
\[v=\big(0,0,\gamma,\beta, n-\gamma.\td_2(X)\big)\in \mathrm{H}^*(X, \QQ).\]
Following the notations in \cite{bae-kool-park:count-surface-I}, for any $q\in \{-1,0,1\}$, let $\mathscr{P}_v^{(q)}(X)$ be the corresponding moduli space of $\PT_q$-stable pair on $X$ (cf.~\cite[Definition 2.1]{bae-kool-park:count-surface-I}). Note that $\mathscr{P}_v^{(-1)}(X)$ parameterizes closed subschemes $Z\subset X$ such that $\ch(\oh_Z)=v$.

There are several different surface-counting invariants defined over the moduli spaces $\mathscr{P}_v^{(q)}(X)$. By \cite{joyce:vfc-4fold, oh-thomas:I} and \cite[Theorem 1.4]{bae-kool-park:count-surface-I}, there is a virtual cycle on $\mathscr{P}_v^{(q)}(X)$ for each $q\in \{-1,0,1\}$. Thus, we can define surface-counting invariants over $\mathscr{P}_v^{(q)}(X)$. On the other hand, \cite{bae-kool-park:count-surface-I} constructs a reduced virtual cycle on $\mathscr{P}_v^{(q)}(X)$ for each $q\in \{-1,0,1\}$, hence we can also define the corresponding reduced surface-counting invariants.

In this paper, we prove an effective vanishing theorem for any enumerative invariant defined over $\mathscr{P}_v^{(q)}(X)$:

\begin{corollary}[{Corollary \ref{cor-vanish-cy4}}]
Let $X$ be projective Calabi--Yau 4-fold of Picard number one and degree $n$. Assume that $\NS(X)=\ZZ H$ with $H$ very ample. Then there exists an explicit integer $N$, such that for any class
\[v=\big(0,0,\gamma,\beta, n-\gamma.\td_2(X)\big)\in \mathrm{H}^*(X, \QQ),\]
if $d:=\gamma.H^2\geq N$ and 
\[\beta.H<-(\frac{1}{2n}d^2+\frac{n-5}{2}d),\]
then
\[\mathscr{P}_v^{(q)}(X)=\varnothing\]
for any $q\in \{-1,0,1\}$.
\end{corollary}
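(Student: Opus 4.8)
The plan is to reduce the statement to the genus bound of Corollary \ref{intro-cor-very-ample} on a general hyperplane section of $X$. Since $\NS(X)=\ZZ H$ with $H$ very ample, a general member $Y\in|H|$ is a smooth projective $3$-fold; by the Grothendieck--Lefschetz theorem (applicable as $\dim X=4$) the restriction $\Pic(X)\to\Pic(Y)$ is an isomorphism, so $Y$ has Picard number one with $\NS(Y)=\ZZ H|_Y$ generated by the very ample class $H|_Y$, and $\deg Y=(H|_Y)^3=H^4=n$. Thus $Y$ satisfies the hypotheses of Corollary \ref{intro-cor-very-ample}, which supplies an integer $N$ with $\gmax^Y(d)\leq \frac{1}{2n}d^2+\frac{n-4}{2}d+1$ for all $d\geq N$; this is the $N$ in the statement. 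The argument runs by contradiction: assuming some $\mathscr{P}_v^{(q)}(X)\neq\varnothing$, I fix one object and choose $Y$ generic relative to it (so that $Y$ avoids the associated points of the object and restriction behaves well).

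Next I would carry out the numerical reduction. Any object of $\mathscr{P}_v^{(q)}(X)$ has an underlying pure $2$-dimensional sheaf $F$ with $\ch_2(F)=\gamma$ and $\ch_3(F)=\beta$ (for $q=-1$ one has $F=\oh_Z$, and in all cases the section or complex structure leaves $\ch_2,\ch_3$ unchanged). For general $Y$ the defining section of $|H|$ is a non-zero-divisor on $F$, giving $0\to F(-H)\to F\to F|_Y\to 0$ and hence $\ch(F|_Y)=\ch(F)\cdot(1-e^{-H})$ after pushforward to $X$. Extracting the top-degree part and using $c_1(X)=0$ (so $\td_1(X)=0$, while the remaining Todd corrections drop out for degree reasons), I find that $F|_Y$ is a pure $1$-dimensional sheaf on $Y$ of degree $d=\gamma.H^2$ with
\[\chi(F|_Y)\= \beta.H-\tfrac{1}{2}\gamma.H^2 \= \beta.H-\tfrac{1}{2}d.\]
In particular the component $\ch_4=n-\gamma.\td_2(X)$ plays no role, since it only contributes in cohomological degree $>8$ after multiplying by $1-e^{-H}$.

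I would then feed this into the genus bound. The scheme-theoretic support $C\subset Y$ of $F|_Y$ is a $1$-dimensional closed subscheme of degree $d$, and $\chi(\oh_C)\leq \chi(F|_Y)$ (with equality for $q=-1$, where $F|_Y=\oh_C$, and via the $0$-dimensional cokernel of the restricted section for $q\geq 0$), whence
\[g(C)\= 1-\chi(\oh_C)\ \geq\ 1+\tfrac{1}{2}d-\beta.H.\]
Combining with $g(C)\leq \gmax^Y(d)\leq \frac{1}{2n}d^2+\frac{n-4}{2}d+1$ for $d\geq N$ and simplifying yields $\beta.H\geq -\big(\frac{1}{2n}d^2+\frac{n-5}{2}d\big)$. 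This contradicts the hypothesis $\beta.H<-\big(\frac{1}{2n}d^2+\frac{n-5}{2}d\big)$, so no such object exists and $\mathscr{P}_v^{(q)}(X)=\varnothing$.

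The main obstacle is the uniform treatment of the three values $q\in\{-1,0,1\}$: for $q=-1$ the objects are honest subschemes and the argument is immediate, but for $q=0,1$ I must extract from a $\PT_q$-stable pair a closed subscheme on $Y$ realising the Euler-characteristic inequality $\chi(\oh_C)\leq \chi(F|_Y)$, and verify that restriction to a general $Y$ preserves purity and the support class. When the naive support estimate is insufficient (for instance if $F|_Y$ acquires higher generic rank), I would instead invoke the more general bound of Theorem \ref{thm-general-bound-isolated}, which controls $\chi$ for objects in the heart of the relevant weak stability condition on $Y$ rather than only for structure sheaves of subschemes, thereby handling all $q$ simultaneously.
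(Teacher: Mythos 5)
Your proposal follows essentially the same route as the paper: the paper deduces this corollary from Corollary \ref{cor-bound-cy4}, which is itself proved in Section \ref{subsec-high-dim} by precisely your slicing argument --- cut with a general $Y\in|H|$ (smooth of degree $(H|_Y)^3=H^4=n$, with $\NS(Y)=\ZZ\, H|_Y$ by Grothendieck--Lefschetz), compute $\ch(\oh_{Z\cap Y})=\ch(\oh_Z)\cdot(1-e^{-H})$ together with $\td_1(X)=0$, and apply Corollary \ref{cor-very-ample} to the resulting degree-$d$ curve in the general type $3$-fold $Y$. Your numerics ($\chi=\beta.H-\tfrac{1}{2}d$, hence $\beta.H\geq -(\tfrac{1}{2n}d^2+\tfrac{n-5}{2}d)$) agree with the paper's, and the $\epsilon(d,n)$ term you drop only means you prove the slightly weaker form stated here rather than the sharper one in Corollary \ref{cor-vanish-cy4}.

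The one step that needs repair is your Euler-characteristic inequality for $q\in\{0,1\}$: as written you take $C$ to be the scheme-theoretic support of $F|_Y$ and claim $\chi(\oh_C)\leq\chi(F|_Y)$, which is not a formal property of supports (a sheaf $G=\oh_C(-p)$ has $\chi(G)<\chi(\oh_C)$), and the support of $F|_Y$ may contain embedded points beyond the zero scheme of $s|_Y$, inflating $\chi(\oh_C)$ with no control by $\chi(F|_Y)$. The fix is the one you gesture at but should make precise: let $Z$ be the zero scheme of $s$ on $X$, so $\im(s)=\oh_Z$ and $\dim\cok(s)\leq q\leq 1$ by $\PT_q$-stability; for general $Y$ the equation of $Y$ is a non-zero-divisor on $F$, $\oh_Z$ and $\cok(s)$, so restricting $0\to\oh_Z\to F\to\cok(s)\to 0$ yields $0\to\oh_{Z\cap Y}\to F|_Y\to\cok(s)|_Y\to 0$ with $\cok(s)|_Y$ zero-dimensional, whence $\chi(\oh_{Z\cap Y})\leq\chi(F|_Y)$ and $\deg(Z\cap Y)=\gamma.H^2=d$; now run your genus argument with $C:=Z\cap Y$. (Equivalently, as the paper does, apply Corollary \ref{cor-bound-cy4} to $Z$ itself, using $\beta.H=\ch_3(F).H\geq\ch_3(\oh_Z).H$ since $\ch_3(\cok(s)).H\geq 0$.) Two minor points: for $q=0$ the sheaf $F$ need not be pure (only its zero-dimensional torsion vanishes), though purity is never actually used; and your fallback appeal to Theorem \ref{thm-general-bound-isolated} misdescribes it --- it bounds genera of one-dimensional closed subschemes, not $\chi$ of objects in a tilted heart --- but it becomes unnecessary once $C=Z\cap Y$ is used. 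Finally, on the uniformity of $N$: since $\Pic(Y)=\ZZ\, H|_Y$, every effective divisor on $Y$ lies in some $|k H|_Y|$ and the quantities entering the definition of $N_H$ in \eqref{eq-sum-thm} are determined by $n$ alone, so $N$ can be fixed before choosing $Y$ generically adapted to a given pair, exactly as the paper implicitly does.
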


See Corollary \ref{cor-vanish-cy4} for the definition of $N$ and a sharper result. The idea is to intersect surfaces with general hyperplane sections of 4-folds and apply Corollary \ref{intro-thm-pic-1} to these hyperplane sections, which are general type 3-folds. Currently, such a result can not be proven by applying the BMT conjecture and the wall-crossing on hyperplane sections, since there are no known examples of general type 3-folds satisfying the BMT conjecture.


\subsection{Low-degree curves}\label{intro-subsec-low-deg}



As another application of our method, we study low-degree curves on complete intersection Calabi--Yau 3-folds in projective spaces. In \cite{soheyla:cast,HKQ09}, GV-invariants of these 3-folds are calculated up to a high genus via physical methods. An observation is that, the original Conjecture \ref{conj-cast} is not optimal, but we can modify it slightly to obtain a (conjectural) optimal bound, which is Conjecture \ref{conj-optimal-bound}. 

\begin{conjecture}[{Conjecture \ref{conj-optimal-bound}}]
Let $X$ be a smooth complete intersection Calabi--Yau 3-fold of (total) degree $n$ in a projective space. Then
\[g_{\max}^{X}(d)\leq \frac{1}{2n}d^2+\frac{1}{2}d+1-\epsilon_X(d),\]
where $\epsilon_X(d)$ is an explicit function defined in Conjecture \ref{conj-optimal-bound}.
\end{conjecture}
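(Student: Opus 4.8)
The plan is to sharpen the classical hyperplane-section analysis underlying the Castelnuovo bound, feed into it the extra geometric constraint coming from the embedding $X\subset\PP^N$, and use the wall-crossing machinery of Theorem~\ref{intro-thm-pic-1} to dispose of the non-reduced and reducible cases that the classical uniform-position argument cannot reach. The key expectation is that the correction $\epsilon_X(d)$ is not a genuine error term but is dictated precisely by the degrees of the equations cutting out the complete intersection $X$.

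First I would fix a general hyperplane $H$ and study the length-$d$ subscheme $\Gamma:=C\cap H$, which lies on the surface $S:=X\cap H$. The arithmetic genus of $C$ is controlled by the Hilbert function $h_\Gamma$ of these points through the classical Castelnuovo relation between $p_a(C)$ and the growth of $h_\Gamma$, so bounding $\gmax^{X}(d)$ reduces to bounding how slowly $h_\Gamma$ can grow. The leading term $\frac{1}{2n}d^2$ is forced by the fact that $\Gamma$ lies on a surface of degree $n$: the points cannot impose more conditions than the Hilbert function of $S$ permits in low degrees, after which the growth is linear. The whole point of the refinement is that $\epsilon_X(d)$ should be read off from the precise shape of the Hilbert polynomial of $S$ — equivalently, from the defining equations of $X$ — rather than being discarded.

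Next I would combine this with the iterative weak-stability wall-crossing of Theorem~\ref{intro-thm-pic-1}. The classical argument via $h_\Gamma$ relies on the general hyperplane section being in uniform position, which holds for irreducible reduced curves but can fail otherwise; the wall-crossing bound, by contrast, is insensitive to these pathologies and applies to arbitrary $1$-dimensional closed subschemes. The strategy is therefore to use the classical argument to pin down the extremal configurations — curves lying on the surfaces of smallest degree contained in $X$ — and to use wall-crossing to verify that no reducible or non-reduced curve can beat these. Matching the two bounds in the overlap region is what should produce the exact value of $\epsilon_X(d)$.

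The hard part will be the low-degree behavior and the sharpness. Establishing the asymptotic coefficient $\frac{1}{2n}$ is already done (Theorem~\ref{intro-thm-limit}), and extracting the linear term is within reach of the present techniques; but pinning down $\epsilon_X(d)$ exactly — and, crucially, exhibiting curves that realize the bound — requires a complete classification of the extremal curves on $X$. For $\PP^3$ this classification was itself a substantial undertaking, proceeding surface-by-surface through the possible degrees, and the analogue for a complete intersection Calabi--Yau threefold is expected to be correspondingly delicate. This is precisely why the statement is offered as a conjecture: the uniform asymptotics are under control, but the optimal lower-order correction hinges on extremal geometry that the wall-crossing method alone does not resolve.
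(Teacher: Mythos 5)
There is no proof in the paper for you to have matched: Conjecture \ref{conj-optimal-bound} is stated precisely as a conjecture, with the values of $\epsilon_X(d)$ read off from physicists' tables of GV-invariants in \cite{HKQ09,soheyla:cast}, and the paper establishes it only in low degree $d\leq D_1$ (Theorem \ref{thm-low-deg}), deferring refinements to \cite{fey-liu:sharp}. Your closing assessment --- that the exact $\epsilon_X(d)$ hinges on extremal geometry the wall-crossing method alone does not resolve --- is therefore consistent with the paper's own position. But note that the partial results the paper \emph{does} prove use a different mechanism from the one you sketch: nowhere does the paper slice $C$ by a general hyperplane and study the Hilbert function of $C\cap H$ on $S=X\cap H$. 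Instead it projects $C$ birationally into $\PP^3$ via good projections (Lemma \ref{lem-good-proj-exist}), runs tilt-stability wall-crossing on $I_{\pi(C)}$ (Proposition \ref{cor_wall}, Lemma \ref{lem-construct-wall}), extracts genus bounds from Lemma \ref{lem-ms-bound} and Lemma \ref{lem-bmt-at-ab}, and iterates over varied projections (Lemma \ref{lem-diff-g}, Lemma \ref{lem-3-proj}) to feed the geometry of $X$ (no planes, no quadric surfaces) back into the wall analysis. This sidesteps uniform position entirely, which matters because $\gmax^{X}(d)$ ranges over arbitrary $1$-dimensional closed subschemes and the paper works in any characteristic; your plan to handle the bad cases ``by wall-crossing'' is exactly where the entire difficulty of Section \ref{sec-low-deg} lives, not a patch to a classical argument.

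The concrete gap is in your proposed matching step: the two bounds you want to splice do not meet where you need them to. First, the wall-crossing bound for curves on a degree-$n$ surface (Lemma \ref{lem-ms-bound}, from \cite{macri:space-curve}) carries linear coefficient $\frac{n-4}{2}$, which equals the conjectured $\frac{1}{2}$ only for the quintic $n=5$; for $X_{2,4}$, $X_{3,3}$, $X_{2,2,3}$, $X_{2,2,2,2}$ the conjectured linear term is strictly smaller than anything either the classical surface argument or the paper's general theorems produce --- indeed the paper's main results reach the conjecture only ``up to a linear term.'' So even the linear term is not, as you assert, within reach of the present techniques beyond $n=5$. Second, your hypothesis that $\epsilon_X(d)$ is dictated by the Hilbert polynomial of $S$ (equivalently the degrees of the defining equations) is checkably false beyond the hypersurface case: the surface-level correction $\epsilon(d,n)$ of \eqref{eq-epsilon-def} gives $\epsilon(1,5)=\frac{8}{5}$ and $\epsilon(2,5)=\frac{12}{5}$, agreeing with the conjectural $\epsilon_{X_5}$, but $\epsilon(1,8)=\frac{49}{16}$ whereas the conjecture asserts $\epsilon_{X_{2,4}}(1)=\frac{25}{16}$. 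The conjectural correction thus encodes finer information about which extremal curves actually embed in $X$ than any Hilbert-function count on a degree-$n$ surface can see, which is why the paper accesses it only degree-by-degree through the iterated projection arguments.
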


Based on our techniques described in Section \ref{intro-subsec-method}, we prove Conjecture \ref{conj-optimal-bound} for low-degree curves, which will be the starting point of studying Conjecture \ref{conj-optimal-bound} by assuming the BMT conjecture in a sequel \cite{fey-liu:sharp}. 


\begin{theorem}[{Theorem \ref{thm-low-deg}}]
Let $X=X_{k_1,\cdots, k_r}$ be a factorial complete intersection 3-fold in $\PP^{r+3}$ of multi-degree $(k_1,k_2,\dots, k_r)$, where $2\leq k_1\leq \cdots\leq k_r$. Then Conjecture \ref{conj-optimal-bound} holds for $d\leq D_1$, where

\begin{enumerate}
    \item $X_5$: $D_1=15$,

    \item $X_{2,4}$: $D_1=8$,

    \item $X_{3,3}$: $D_1=9$,

    \item $X_{2,2,3}$: $D_1=6$, and

    \item $X_{2,2,2,2}$: $D_1=6$.
\end{enumerate}

\end{theorem}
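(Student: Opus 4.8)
The plan is to combine the wall-crossing machinery underlying Theorem~\ref{thm-general-bound-isolated} with classical surface theory, exploiting the fact that for $d\leq D_1$ the whole analysis reduces to a finite and explicit case-check. At the heart of the argument is the principle that a curve $C\subset X$ of degree $d$ whose genus is close to the conjectural maximum must be very special: it is forced to lie on a surface $S\subset X$ cut out by a section of $\oh_X(k)$ for a small value of $k$. Once $C$ is pinned to such a surface, the genus is controlled by adjunction and Riemann--Roch on $S$, and the maximum over all admissible pairs $(k,S)$ should reproduce exactly the correction function $\epsilon_X(d)$. The remaining, finitely many, configurations not covered by this reduction are to be checked directly.

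First I would set up the weak stability condition exactly as in the proof of Theorem~\ref{intro-thm-pic-1}, and record that the numerical class of the ideal sheaf $\cI_C$ is completely determined by the pair $(d,g)=(\deg C, g(C))$. As the stability parameter varies, any wall at which $\cI_C$ is destabilized produces a short exact sequence $0\to A\to \cI_C\to B\to 0$ in the relevant tilted heart. The unconditional Bogomolov--Gieseker inequalities available \emph{without} assuming the BMT conjecture (the weaker forms used throughout the paper) bound the discriminants of $A$ and $B$, and since $d\leq D_1$ is small this leaves only finitely many numerical possibilities for the destabilizing subobject. Each such possibility corresponds geometrically to $C$ being contained in the zero locus $S$ of a section of $\oh_X(k)$ for an explicit small $k$, or to $C$ decomposing into components of smaller degree.

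Next I would run the iterative classical step. On each surface $S=\{s=0\}\subset X$ with $s\in H^0(\oh_X(k))$, linkage or a residuation sequence expresses $g(C)$ as the genus of a residual curve $C'\subset S$ of strictly smaller degree plus an explicit contribution computed from $K_S$ and the intersection numbers on $S$ via adjunction. This yields a recursion $\gmax^X(d)\leq \gmax^X(d')+F(k,d,d')$ with $d'<d$, which I would solve by induction down to the base cases of very small degree (lines, conics, and the other low-degree curves), where $\gmax^X(d)$ can be computed by hand. Optimizing $F$ over the admissible $k$ should show that the dominant contribution comes from curves lying on the minimal-degree surfaces, and the resulting bound is precisely $\tfrac{1}{2n}d^2+\tfrac12 d+1-\epsilon_X(d)$.

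The main obstacle is the absence of the BMT conjecture: the weak Bogomolov--Gieseker inequalities do not cut down the walls as sharply as the conjectural ones would, so several a priori destabilization scenarios survive for each $d$. One must verify, case by case, that every surviving scenario yields a genus at most the conjectured optimum, and in particular that no exotic subobject produces a curve of genus exceeding the surface-theoretic value; this is exactly where the hypothesis $d\leq D_1$ enters, since the finite list of walls can be enumerated and discharged explicitly only in this range. The precise value of $D_1$ in each case $X_5, X_{2,4}, X_{3,3}, X_{2,2,3}, X_{2,2,2,2}$ is then determined as the largest degree up to which this verification closes, i.e.\ up to which the surface-theoretic optimum dominates all wall-crossing contributions. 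The remaining routine work consists of the adjunction and Riemann--Roch evaluations of $F(k,d,d')$ on each complete intersection, which I would carry out separately for the five families.
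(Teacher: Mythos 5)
There is a genuine gap, and it sits exactly at the point where your plan departs from the paper: you propose to run the wall-crossing for $\cI_C$ directly on $X$ and to control the destabilizing objects using only the unconditional (weak) Bogomolov--Gieseker inequalities. But tilt-stability and the classical discriminant $\Delta_H\geq 0$ only constrain $\ch_{\leq 2}$, while the genus of $C$ lives in $\ch_3(I_C)$; without a BMT-type inequality on $X$ --- which is precisely what is unknown for these complete intersections --- no amount of enumerating walls on $X$ produces a bound on $g(C)$. The paper's essential idea, which your proposal is missing, is Step 1 of the iterative method: embed $X\subset\PP^{r+3}$ and use a good projection (Lemma \ref{lem-good-proj-exist}) to map $C$ birationally onto a curve $C'\subset\PP^3$ with $g(C)\leq g(C')$, and then do all the wall-crossing for $I_{C'}$ on $\PP^3$, where the full BMT inequality is a theorem (Theorem \ref{SBG}). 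The bounds then come from Lemma \ref{lem-bmt-at-ab} (BMT applied at $(0,b_0)$ when there is no wall) and Lemma \ref{lem-ms-bound} (the Macr\`i--Schmidt bound for $I_{C'/D}$ with $D\subset\PP^3$ a surface), while Proposition \ref{cor_wall} and Proposition \ref{prop-decompose-wall} pull a wall on $\PP^3$ back to a decomposition $g(C)=g(C_1)+g(C\cap S)+k\deg(C_1)-1$ on $X$, feeding the induction on degree.

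Two further points in your sketch would fail even granting a wall analysis. First, on these Picard-number-one 3-folds a near-extremal curve is \emph{not} forced onto a surface $S\subset X$ of small degree: the minimal surfaces in $X$ are hyperplane sections (degree $n$), whereas the relevant walls are cut out by surfaces of degree $1,2,3$ in $\PP^3$ \emph{after projection}, which need not lift to surfaces of $X$ containing $C$; correspondingly, $\epsilon_X(d)$ is not recovered by adjunction/Riemann--Roch on surfaces in $X$ but by the $\PP^3$-side bound of Lemma \ref{lem-ms-bound}. Second, the sharp values of $D_1$ in the paper are not obtained by a purely numerical enumeration of walls: several borderline cases (e.g.\ $d=9,14$ for $X_5$, $d=7,8,9$ for $X_{3,3}$) are discharged only by \emph{varying the projection} and importing geometric hypotheses --- $X$ contains no planes or quadric surfaces --- via Lemma \ref{lem-not-in-plane}, Lemma \ref{lem-diff-g} (if $g(C)\geq g(\pi(C))-t$ then $C\subset\PP^{3+t}\cap X$), and Lemma \ref{lem-3-proj} (three quadric walls force $C$ into a $(2,2)$-complete intersection). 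Your proposal neither invokes these hypotheses nor has a mechanism to exclude the extremal configurations they rule out, so the case-check would not close at the stated values of $D_1$.
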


We also consider Pfaffian--Grassmannian Calabi--Yau 3-folds in Section \ref{subsec-grass}. See Remark \ref{rmk-weak-bound} for more results along this direction.

In \cite{fey-liu:sharp}, we will refine the above results and study curves in other 3-folds via the method in Section \ref{intro-subsec-method}, e.g.~complete intersection Calabi--Yau 3-folds in weighted projective spaces.

\subsection{An iterative method}\label{intro-subsec-method}



Now we describe our method, which is an iterative algorithm obtained by combining classical techniques with the wall-crossing method and works for any projective 3-folds. For simplicity, we start with a polarised smooth projective 3-fold $(X, H)$ over $\kk$ with $mH$ very ample and $C\subset X$ be a $1$-dimensional closed subscheme of degree $d$.

\bigskip

\textbf{Step $1$: Projection}

Although the BMT Conjecture for $X$ may not be known, we can translate the problem of bounding $g(C)$ to the problem of bounding $g(C')$ for a new $1$-dimensional closed subscheme $C'$ in a smooth projective 3-fold $X'$ satisfying the BMT conjecture.

More precisely,  we can find a smooth projective 3-fold $X'$ with a rational map $\pi_X\colon X\dashrightarrow X'$ such that $\pi_C:=\pi_X|_C \colon C\to C':=\pi_X(C)$ is a birational morphism and $X'$ satisfies BMT Conjecture. As it is easy to see $g(C)\leq g(C')$, we only need to bound $g(C')$, and the BMT conjecture for $X'$ provides a powerful tool to achieve this.

Such $X'$ and $\pi_X$ always exist. Indeed, we can embed $X$ into a projective space $\PP^n$ via $mH$, then projections from general points (cf.~Lemma \ref{lem-good-proj-exist}) give the desired $\pi_X$. In this case, we take $X'=\PP^3$ which satisfies BMT Conjecture by Theorem \ref{SBG}, and $\pi_X$ is the restriction of the composition of projections from general points to $X$. For the other choices of $X'$ and $\pi_X$, see Remark \ref{rmk-step-1}.

\bigskip

\textbf{Step $2$: Wall-crossing}

Next, we need to do wall-crossing on $X'$. By Lemma \ref{bms lemma 2.7}, $I_{C'/X'}$ is $\sigma_{a,b}$-stable for any $b<0$ and $a\gg 0$, where $\sigma_{a,b}$ is the tilt-stability (cf.~Section \ref{subsec-tilt}). Then we can move down $(a,b)$ and analyze possible walls that it may meet. A key observation is that the existence of walls for $I_{C'/X'}$ is closely related to the geometry of $C'$. More precisely, by Proposition \ref{cor_wall} and Lemma \ref{lem-construct-wall}, the existence of a wall of $I_{C'/X'}$ in the range $\{a>0\}\times \{b_d<b<0\}$ is equivalent to the existence of a divisor $S'\subset X'$ such that $\deg(C'\cap S')$ is large than an integer determined by the location of the wall.


By pulling back $S'$ and $C'$ to $X$ via the projection $\pi_X$, we have the following two situations:

\begin{enumerate}[{(i)}]
    \item there is a divisor $S$ on $X$ containing $C$, or 

    \item there is a divisor $S$ on $X$ such that $\deg(C\cap S)$ is bigger than a given integer, and there is another $1$-dimensional closed subscheme $C_1\subset C$ such that $\deg(C_1)+\deg(C\cap S)=\deg(C)$ with
    \[g(C)=g(C_1)+g(C\cap S)+C_1.S-1.\]
\end{enumerate}


\bigskip

\textbf{Step $3$: Variation of projections}

Although we only need to study $g(C')$ in order to bound $g(C)$, the information about the geometry may be lost when we project $C$ to $C'$, so the upper bound of $g(C)$ given by $g(C')$ may not fit our expectations. Fortunately, thanks to the flexibility of the choice of the projection $\pi_X\colon X\dashrightarrow X'$, we can vary it to find a better projection as possible. Moreover, the wall-crossing of $I_{C'/X'}$ in Step 2 for each projection would bring new restrictions on the geometry of $C\subset X$ and we will explain as follows. To explain this step, we take $X'=\PP^3$ for simplicity.

\begin{itemize}

    \item In Step 2, if there is a wall for $I_{C'/X'}$ in $\{a>0\}\times \{b_d<b<0\}$, then we can find a divisor $S$ in $X$ either containing $C$ or $\deg(C\cap S)$ is larger than a given integer. In this case, we can vary $\pi_X$ to find a new projection $\pi'_X\colon X\to \PP^3$ such that $\pi'_X$ not only maps $C$ birationally onto its image $C''$ but also maps $S$ birationally onto its image $S''$. Therefore, compared to the image $C'=\pi_X(C)$, the curve $C''$ carries a new structure as it is related to a surface $S''\subset \PP^3$. Thus, if we go back to Step 2 and do wall-crossing for $I_{C''/\PP^3}$, we can obtain further restriction on $g(C)$.

    In some situations, we may end up at case (ii) in Step 2 with some divisors in $X$ around $C$ after processing these three steps finitely many times. Then it reduces the question on $C$ to lower-degree curves, and an induction argument on the degree together with the restrictions given by divisors in $X$ may lead to a desired bound on $g(C)$. 

    \item In Step 2, if there is no wall for $I_{C'/X'}$ in the range $\{a>0\}\times \{b_d<b<0\}$, we get an upper bound of $g(C)$ via bounding $g(C')$ by applying the BMT conjecture to $(a,b)=(0,b_d)$ and $I_{C'/X'}$. However, this bound can be refined, as the inequality $g(C)\leq g(C')$ is strict in most cases. 
    
    For example, if $g(C)=g(C')$, then this implies that the projection $\pi_X|_C$ maps isomorphically onto $C'$. Hence $C\subset X\cap \PP^3 \subset \PP^n$, which gives a strong restriction on the geometry of $C$. In general, if $g(C)\geq g(C')-t$ for $0\leq t\leq n-4$, then $C\subset \PP^{3+t}\cap X$ (see e.g.~Lemma \ref{lem-diff-g}). In this case, we can vary $\pi_X$ to find a new projection $\pi'_X\colon X\to \PP^3$ such that the corresponding map $\PP^n\dashrightarrow \PP^{3+t}$ is a closed immersion when restricted to $C$. This reduces $C$ to a curve in a lower-dimensional projective space and simplifies the geometry of $C$.

\end{itemize}

\bigskip


Processing the above three steps each time adds new restrictions on $C$. We can iterate the above steps arbitrarily finitely many times and obtain a family of projections and divisors in $X$ related to $C$. Combining these restrictions together, we can get effective bounds of $g(C)$. These techniques will be used repeatedly in Section \ref{sec_bound} and Section \ref{sec-low-deg}.

\begin{remark}\label{rmk-step-1}
There are other choices of $X'$ and $\pi_X$, depending on the explicit geometry of $X$. For example, if $X\subset \PP(1,1,1,1,2)$ is a hypersurface in the weighted projective space, we can also perform the projection from a general point and map $C$ birationally into $\PP(1,1,1,2)$. In this case, we take $X'$ to be the blow-up of the unique singular point in $\PP(1,1,1,2)$, which is a toric 3-fold, and its BMT conjecture is studied in \cite{zhao:stability-fano-threefold}. In \cite{fey-liu:sharp}, we will apply this construction to study curves in complete intersection 3-folds in weighted projective spaces. However, to obtain general results for a large class of projective 3-folds as in Section \ref{intro-bound-genus}, $X'=\PP^3$ is the best and also the universal choice.
\end{remark}

\begin{remark}
Note that the above method actually proves a weak version of the BMT conjecture for some rank one objects. It is an interesting question if we can extend the method above to prove a version of BMT conjecture for $X$, which allows us to construct stability conditions on $\D^b(X)$ as in \cite{bayer2011bridgeland,bayer2016space} (see also \cite[Section 4.1]{bayer:ICM}). We will back to this point in future work.
\end{remark}

\begin{remark}
Although one of our main results (Theorem \ref{thm-general-bound-isolated}) works without the assumption of the Picard number, it is more natural to prove a bound of the genus involving all primitive polarizations at once. However, we do not know a conjectural picture for 3-folds of higher Picard numbers. It is very interesting to study curves in these 3-folds, such as elliptic Calabi--Yau 3-folds. We will apply our method to these examples in a sequel \cite{guo-liu-ruan}.
\end{remark}

\subsection*{Plan of the paper}

First, we review some basic definitions and properties of tilt-stability in Section \ref{sec:stability_condition}. Then in Section \ref{sec-proj}, we investigate the behavior of curves under projections between projective spaces and relate them to the wall-crossing of ideal sheaves with respect to tilt-stability. Next, using results in previous sections, we can run our algorithm in Section \ref{intro-subsec-method} and prove the bounds of the genus of curves in a fixed 3-fold in Section \ref{sec_bound}. In particular, we deduce Theorem \ref{intro-thm-pic-1}, Corollary \ref{intro-cor-very-ample}, and Theorem \ref{intro-thm-limit} in Section \ref{subsec-pic-1}. In Section \ref{sec-low-deg}, we continue to use the method in Section \ref{intro-subsec-method} to study low-degree curves on complete intersection Calabi--Yau 3-folds and Pfaffian--Grassmannian Calabi--Yau 3-folds, and prove a refined version of Conjecture \ref{conj-cast} in low-degree (cf.~Theorem \ref{thm-low-deg}). Finally, we discuss some applications of our main results in Section \ref{sec_app}, such as vanishing theorems of enumerative invariants (cf.~Theorem \ref{thm-vanishing} and Corollary \ref{cor-vanish-cy4}).

\subsection*{Acknowledgements} 

I would like to thank my supervisor Yongbin Ruan for many valuable suggestions. I also would like to thank Arend Bayer, Soheyla Feyzbakhsh, Shuai Guo, Yong Hu, Chen Jiang, Qingyuan Jiang, Chunyi Li, Emanuele Macr\`i, Songtao Ma, Yukinobu Toda, Chenyang Xu, Zhankuan Yin, and Xiaolei Zhao for useful conversations.

\subsection*{Notations and conventions} \leavevmode

\begin{itemize}
    \item Throughout this paper, we work over an algebraically closed field $\kk$ of any characteristic except in Section \ref{subsec-vanish} where we set $\kk=\CC$. All schemes are assumed to be finite type over $\kk$.  


    \item A divisor always means a Weil divisor. For a line bundle or Cartier divisor $\mathcal{L}$, the associated linear series is denoted by $|\mathcal{L}|$. 

    \item Let $X$ be a projective scheme and $D$ be a Cartier divisor on $X$. For any closed subscheme $Y$ in $X$, we denote by $Y.D^n$ the cycle class $c_1(\oh_X(D))^n\cap Y$ in the Chow group of $X$ for any positive integer $n$. If $\dim Y=n$, then we also denote by $Y.D^n$ the integer number $\int_X Y.D^n=\int_X c_1(\oh_X(D))^n\cap Y$. If $Y\subset X$ is a closed subscheme of a projective variety $X$, then for any ample divisor $H$ on $X$, we define the \emph{$H$-degree} of $Y$ by $\deg_H(Y):=Y.H^{\dim Y}$. 
    

    \item We say a $1$-dimensional projective scheme $C$ is a \emph{curve} if it is Cohen--Macaulay. This is equivalent to saying that $C$ has no isolated or embedded points, or in other words, $\oh_C$ is a pure sheaf. The (arithmetic) genus of $C$ is denoted by $g(C):=1-\chi(\oh_C)$.

    \item Let $X$ be a projective scheme. If $\Pic(X)\cong \ZZ$, we denote by $\oh_X(1)$ the ample generator of $\Pic(X)$. The corresponding divisor is denoted by $H:=c_1(\oh_X(1))$. In this case, 
    
    \item For any projective scheme $X$, we denote by $\NS(X)$ the Neron--Severi group of $X$. When $\NS(X)=\ZZ H$ where $H$ is ample, the $H$-degree of a $1$-dimensional closed subscheme $C\subset X$ is denoted by $\deg(C)$ and called the degree of $C$ for simplicity.

    \item We denote by $I_{Y/X}$ the ideal sheaf of a closed subscheme $Y$ in a scheme $X$. If $X$ is a given 3-fold, then we set $I_{Y}:=I_{Y/X}$ for simplicity.

    \item If $f\colon X\to Y$ is a morphism between projective schemes, we denote the schematic image of $f$ by $f(X)$, which is a closed subscheme of $Y$. We say $f$ is a birational morphism if there exist open dense subschemes $U\subset X$ and $V\subset Y$ such that $f(U)\subset V$ and $f|_U\colon U\to V$ is an isomorphism.

    \item For a perfect complex $E$ on a projective scheme $X$, we denote the $i$-th Chern character of $E$ by $\ch_i(E)$. For an integer $n\geq 0$, the $n$-truncated Chern character is defined by
    \[\ch_{\leq n}(E):=\big(\ch_0(E), \ch_1(E),\dots, \ch_n(E)\big).\]
For a real number $b\in \mathbb{R}$ and a Cartier divisor $H$ on $X$, the $b$-twisted Chern character is
    \[\ch^{b H}(E):=\exp(-b H).\ch(E).\]
We will always take $H$ to be a fixed ample divisor and write $\ch^{b}_i(E):=\ch^{bH}_i(E)$ for simplicity. For any integer $n\geq 0$, we define 
\[\ch_{H, \leq n}(E):=\big(\ch_{H,0}(E), \ch_{H,1}(E),\dots, \ch_{H,n}(E)\big)\in \QQ^{n+1},\]
where
\[\ch_{H, i}(E):=\frac{H^{\dim X-i}\ch_i(E)}{H^{\dim X}}\in \mathbb{Q}.\]
\end{itemize}



\section{Tilt-stability} \label{sec:stability_condition}

Let $X$ be a smooth projective variety over $\kk$ and $\D^b(X)$ be the bounded derived category of coherent sheaves on $X$.
In this section, we recall the construction of weak stability conditions on $\D^b(X)$ and the notion of tilt-stability. 


We denote by $\KK(X)$ the K-group of $\D^b(X)$. We fix a surjective morphism $v \colon \KK(X) \twoheadrightarrow \Lambda$ to a finite rank lattice. 



\begin{definition}
A \emph{weak stability condition} on $\D^b(X)$ is a pair $\sigma = (\cA, Z)$ where $\cA$ is the heart of a bounded t-structure on $\D^b(X)$ and $Z \colon \Lambda \ra \CC$ is a group homomorphism such that 
\begin{enumerate}[(i)]
    \item the composition $Z \circ v \colon \KK(\cA) \cong \KK(X) \ra \CC$ satisfies that for any non-zero $E \in \cA$ we have $\Im Z(E) \geq 0$, and if $\Im Z(E) = 0$ then $\Re Z(E) \leq 0$. From now on, we write $Z(E)$ rather than $Z(v(E))$.
\end{enumerate}
For any $E \in \cA$, we define the \emph{slope} of $E$ with respect to $\sigma$ as
\[
\mu_\sigma(E) := \begin{cases}  - \frac{\Re Z(E)}{\Im Z(E)}, & \text{if} ~ \Im Z(E) > 0 \\
+ \infty , & \text{otherwise}.
\end{cases}
\]
We say an object $0 \neq E \in \cA$ is $\sigma$-(semi)stable if $\mu_\sigma(F) < \mu_\sigma(E/F)$ (respectively, $\mu_\sigma(F) \leq \mu_\sigma(E/F)$) for all proper subobjects $F \sst E$. 
\begin{enumerate}[(i), resume]
    \item Any object $E \in \cA$ has a Harder--Narasimhan filtration in terms of $\sigma$-semistability defined above.
    \item There exists a quadratic form $Q$ on $\Lambda \otimes_{\ZZ} \mathbb{R}$ such that $Q|_{\ker Z}$ is negative definite, and $Q(E) \geq 0$ for all $\sigma$-semistable objects $E \in \cA$. This is known as the \emph{support property}.
\end{enumerate}
\end{definition}


\subsection{Tilt-stability}\label{subsec-tilt}
Let $(X, H)$ be a $n$-dimensional polarised smooth projective variety. Starting with the classical slope stability, where  
\[
\mu_H(E) := \begin{cases}  \frac{H^{n-1}\ch_1(E)}{H^n\ch_0(E)}, & \text{if} ~ \ch_0(E)\neq 0 \\
+ \infty , & \text{otherwise}.
\end{cases}
\]
for any $E\neq 0\in \Coh(X)$, we can form the once-tilted heart $\cA^{b}$ for any $b\in \mathbb{R}$ as follows.
For the slope we just defined, every sheaf $E$ has a Harder--Narasimhan filtration. Its graded pieces have slopes whose maximum we denote by $\mu^+_H(E)$ and minimum by $\mu^-_H(E)$. Then for any $b\in \mathbb{R}$, we define an abelian category $\cA^b\subset \D^b(X)$ as
\[\cA^b:=\{E\in \D^b(X) \mid \mu^+_H(\cH^{-1}(E))\leq b,~ b<\mu^-_H(\cH^0(E)),~\text{and}~\cH^i(E)=0~\text{for}~i\neq 0, -1\}.\]

It is a result of \cite{happel1996tilting} that the abelian category $\cA^b$ is the heart of a bounded t-structure on $\D^b(X)$ for any $b\in \mathbb{R}$.

Now for $E \in \cA^{b}$, we define
\[ Z_{a, b}(E) := \frac{1}{2} a^2 H^n \ch_0^{b H}(E) - H^{n-2} \ch_2^{b H}(E) + \mathfrak{i} H^{n-1} \ch_1^{b H}(E). \]

\begin{proposition}[{\cite{bayer2011bridgeland, bayer2016space}}] \label{tilt_stab}
Let $a>0$ and $b \in \mathbb{R}$. Then the pair $\sigma_{a, b} = (\cA^{b} , Z_{a, b})$ defines a weak stability condition on $\D^b(X)$. The quadratic form $Q$ is given by the discriminant
\begin{equation} \label{BG}
    \Delta_H(E) = \big(H^{n-1} \ch_1(E)\big)^2 - 2 H^n \ch_0(E) H^{n-2} \ch_2(E).
\end{equation}
The weak stability conditions $\sigma_{a, b}$ vary continuously as $(a, b) \in \mathbb{R}_{>0} \times \mathbb{R}$ varies. 
\end{proposition}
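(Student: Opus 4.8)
The plan is to verify, one axiom at a time, that the pair $\sigma_{a,b}=(\cA^b, Z_{a,b})$ is a weak stability condition, to identify the support-property quadratic form with $\Delta_H$, and finally to read off continuity from the fact that $Z_{a,b}$ depends polynomially on $(a,b)$ while the controlling form $\Delta_H$ is independent of $(a,b)$. Throughout I would use that $\cA^b$ is the tilt of $\Coh(X)$ at the torsion pair $(\cT^b,\cF^b)$, where $\cT^b$ (resp.~$\cF^b$) consists of sheaves all of whose $\mu_H$-Harder--Narasimhan factors have slope $>b$ (resp.~$\leq b$), so that every $E\in\cA^b$ fits in a sequence $0\to \cH^{-1}(E)[1]\to E\to \cH^0(E)\to 0$ with $\cH^{-1}(E)\in\cF^b$ and $\cH^0(E)\in\cT^b$; and that $\Delta_H$ is invariant under the $b$-twist $\ch\mapsto \ch^{bH}$.

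First I would check the weak positivity axiom (i). The imaginary part $\Im Z_{a,b}(E)=H^{n-1}\ch_1^{bH}(E)$ is $\geq 0$ on $\cT^b$ and $\leq 0$ on $\cF^b$ by definition of the torsion pair, so additivity over the defining sequence gives $\Im Z_{a,b}(E)\geq 0$ for all $E\in\cA^b$. When $\Im Z_{a,b}(E)=0$, both graded pieces have vanishing imaginary part, forcing $\cH^0(E)$ to be supported in codimension $\geq 2$ and $\cH^{-1}(E)$ to be $\mu_H$-semistable of slope exactly $b$. For the codimension-$\geq 2$ part one computes $\Re Z_{a,b}=-H^{n-2}\ch_2\leq 0$ directly, while for $F:=\cH^{-1}(E)$ the twist-invariance of $\Delta_H$ together with the \emph{classical Bogomolov inequality} $\Delta_H(F)\geq 0$ for $\mu_H$-semistable sheaves gives $H^{n-2}\ch_2^{bH}(F)\leq 0$, whence $\Re Z_{a,b}(F)\geq 0$ and $\Re Z_{a,b}(F[1])\leq 0$. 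Summing the two contributions yields $\Re Z_{a,b}(E)\leq 0$, as required.

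Next comes the support property with $Q=\Delta_H$, which I expect to be the main obstacle. The negative-definiteness of $\Delta_H|_{\ker Z_{a,b}}$ is a short linear-algebra check: using twist-invariance to reduce to $b=0$, the kernel is cut out by $\ch_{H,1}=0$ and $\ch_{H,2}=\tfrac12 a^2\ch_{H,0}$, on which $\Delta_H$ specializes to $-a^2(H^n\ch_0)^2\leq 0$, with equality only for the zero class. The genuinely hard input is the Bogomolov inequality $\Delta_H(E)\geq 0$ for every $\sigma_{a,b}$-semistable $E$. Here I would argue by the large-volume limit: for a fixed Chern character, as $a\to +\infty$ the $\sigma_{a,b}$-semistable objects are governed by $\mu_H$-semistability of sheaves, for which $\Delta_H\geq 0$ is classical; one then propagates the inequality down to all $(a,b)$ using that $\Delta_H$ is a fixed quadratic form and that, for bounded discriminant, only finitely many numerical walls are crossed. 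This is precisely the technical core of \cite{bayer2011bridgeland, bayer2016space}, and the place where all the real work lies.

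Finally, the Harder--Narasimhan axiom (ii) I would obtain from the general existence criterion: although $\Im Z_{a,b}$ need not be discretely valued when $b$ is irrational, the discreteness of the lattice $\Lambda$ together with the support property just established excludes infinite destabilizing chains and produces HN filtrations. For continuity, I would note that $Z_{a,b}$ is a polynomial in $(a,b)$ applied to fixed numerical data, so the central charges vary real-analytically; since all $\sigma_{a,b}$ share the single support form $\Delta_H$, Bridgeland's deformation result for families of weak stability conditions applies and shows $\sigma_{a,b}$ varies continuously on $\mathbb{R}_{>0}\times\mathbb{R}$. The only point requiring care is that the heart $\cA^b$ itself moves with $b$; this is handled exactly as in \cite{bayer2016space} by checking that objects stay in the heart under small deformations, so no new difficulty arises beyond the Bogomolov inequality above.
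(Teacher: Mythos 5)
The paper does not prove this proposition at all: it is quoted directly from \cite{bayer2011bridgeland,bayer2016space}, so the only meaningful comparison is with the proofs in those references, and your outline does track them faithfully in structure — the torsion-pair computation of $\Im Z_{a,b}$ plus the classical Bogomolov inequality for axiom (i), the twist-invariance of $\Delta_H$ and the computation $\Delta_H=-a^2\big(H^n\ch_0\big)^2$ on $\ker Z_{a,b}$ for negative definiteness, and the correct identification of the Bogomolov-type inequality for tilt-semistable objects as the real content. Two of your steps, however, are not sound as written. The large-volume propagation of $\Delta_H\geq 0$ is circular in the form you give it: at a wall, the inequality $\Delta_H(E)\geq \Delta_H(F)+\Delta_H(G)$ (equivalently the Cauchy--Schwarz-type estimate on the $2$-plane where the slopes of $F$ and $G$ align, using that $\Delta_H$ is negative definite on $\ker Z_{a,b}$) is only available once one already knows $\Delta_H(F),\Delta_H(G)\geq 0$ for the destabilizing factors, so "only finitely many walls are crossed" does not by itself transport the inequality from the Gieseker chamber to arbitrary $(a,b)$. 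The cited proofs close this loop with a genuine induction (on the discriminant, respectively on $H^{n-1}\ch_1^{bH}$ of the factors, cf.\ Theorem 7.3.1 of \cite{bayer2011bridgeland}), and any complete write-up must reproduce or invoke that induction explicitly. Relatedly, your Harder--Narasimhan step silently needs the noetherianity of $\cA^{b}$; support property plus discreteness of $\Lambda$ do not exclude infinite filtrations in a non-noetherian heart, and at irrational $b$, where $\Im Z_{a,b}$ has dense image, a separate limiting argument is required.

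The continuity claim is the other genuine gap: Bridgeland's deformation theorem does not apply here, because $Z_{a,b}$ annihilates every class supported in codimension $\geq 3$ — for instance $Z_{a,b}(\oh_x)=0$ for a skyscraper sheaf when $n=3$ — so $\sigma_{a,b}$ satisfies only the weak support property with a degenerate form, and the hypotheses of the deformation result fail outright. In \cite{bayer2016space} the assertion that $\sigma_{a,b}$ "varies continuously" is not a consequence of deformation theory but is established directly, by comparing the hearts $\cA^{b}$ for nearby $b$ and proving the locally finite wall-and-chamber structure (the precise content of continuity used later in this paper, e.g.\ in Proposition \ref{wall_prop}). So your architecture is the standard one, but the two places you delegate to general machinery — wall-crossing propagation without the induction, and continuity via Bridgeland deformation — are exactly the places where the weak (degenerate) setting forces the bespoke arguments of \cite{bayer2011bridgeland,bayer2016space}.
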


The weak stability conditions defined above are called \emph{tilt-stability conditions}. We denote the slope function of $\sigma_{a, b}$ by $\mu_{a, b}(-)$.


\begin{lemma} [{\cite[Lemma 2.7]{bayer2016space}, \cite[Proposition 4.8]{bayer2020desingularization}}] \label{bms lemma 2.7}
Let $E \in \D^b(X)$ be an object and $\mu_H(E)>b$. Then $E\in \cA^{b}$ and $E$ is $\sigma_{a, b}$-(semi)stable for $a \gg 0$ if and only if $E$ is a 2-Gieseker-(semi)stable sheaf.


\end{lemma}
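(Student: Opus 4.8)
The plan is to exploit the \emph{large-volume limit} $a\to+\infty$, in which the tilt-slope $\mu_{a,b}$ degenerates to $\mu_H$ at leading order and to a normalized $\ch_2$-comparison at the next order. First I would read the hypothesis $\mu_H(E)>b$ as $\mu^-_H(E)>b$ (automatic once $E$ is $\mu_H$-semistable, which in turn is forced by $2$-Gieseker-semistability). Writing $r:=H^n\ch_0(E)$, the torsion case $r=0$ is immediate, so assume $r>0$; then $\cH^{-1}(E)=0$ and $\mu^-_H(\cH^0(E))=\mu^-_H(E)>b$ verify the defining inequalities, so $E\in\cA^b$. The slope expands as
\[\mu_{a,b}(E)=-\frac{a^2}{2}\cdot\frac{H^n\ch_0(E)}{H^{n-1}\ch^{b}_1(E)}+\frac{H^{n-2}\ch^{b}_2(E)}{H^{n-1}\ch^{b}_1(E)},\qquad \frac{H^n\ch_0(E)}{H^{n-1}\ch^{b}_1(E)}=\frac{1}{\mu_H(E)-b},\]
so the coefficient of $a^2$ records $\mu_H$ and the constant term records a normalized $\ch_2$; this is the whole mechanism.

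For the direction ``$\Leftarrow$'', suppose $E$ is $2$-Gieseker-semistable and test an arbitrary short exact sequence $0\to F\to E\to G\to0$ in $\cA^b$. From $\cH^{-1}(E)=0$ and the long exact cohomology sequence one gets $\cH^{-1}(F)=0$, so $F$ is a sheaf with $\mu^-_H(F)>b$, while $\cH^{-1}(G)=\ker(F\to E)$ is a sheaf with $\mu^+_H\le b$; hence $F':=\im(F\to E)\subset E$ is a genuine subsheaf. Comparing $\mu_{a,b}(F)$ and $\mu_{a,b}(E)$ via the expansion: if $\mu_H(F)<\mu_H(E)$, then since both exceed $b$ the $a^2$-coefficient of $F$ is strictly more negative, so $\mu_{a,b}(F)<\mu_{a,b}(E)$ for $a\gg0$; if $\mu_H(F)=\mu_H(E)$ the $a^2$-terms cancel and, using $H^{n-1}\ch^{b}_1=H^n\ch_0\,(\mu_H-b)$ to clear the twist, the constant-term comparison reduces to
\[\frac{H^{n-2}\ch^{b}_2(F)}{H^{n-1}\ch^{b}_1(F)}-\frac{H^{n-2}\ch^{b}_2(E)}{H^{n-1}\ch^{b}_1(E)}=\frac{1}{\mu_H(E)-b}\left(\frac{H^{n-2}\ch_2(F)}{H^n\ch_0(F)}-\frac{H^{n-2}\ch_2(E)}{H^n\ch_0(E)}\right),\]
which is exactly the secondary $2$-Gieseker inequality for $F'$ (the contribution of $\cH^{-1}(G)$, having $\mu_H\le b$, only pushes the slope down and helps). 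Subobjects with $\Im Z_{a,b}(F)=0$, or torsion subsheaves with $\mu_H=+\infty$, give $\mu_{a,b}(F)=+\infty$ or a constant while $\mu_{a,b}(E)\to-\infty$; torsion-freeness of $E$ excludes them. Strict versus non-strict inequalities are carried throughout to treat stable and semistable in parallel.

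For ``$\Rightarrow$'', I would run the same expansion in reverse: $\sigma_{a,b}$-semistability for $a\gg0$ forbids precisely the destabilizers above. Thus $E$ is torsion-free (an infinite-slope subsheaf would destabilize), and it admits no subsheaf of strictly larger $\mu_H$, nor, at equal $\mu_H$, of strictly larger normalized $\ch_2$ (such a subsheaf, lying in $\cA^b$ and mapping into $E$ in $\cA^b$, would have strictly larger $\mu_{a,b}$ for $a\gg0$); this is exactly $2$-Gieseker-semistability.

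The hard part is \emph{uniformity in $a$}: a priori the threshold beyond which a given $F$ fails to destabilize depends on $F$, and there are infinitely many numerical classes of subobjects, so the pointwise asymptotics do not immediately assemble into semistability for a single $a\gg0$. This is resolved by the support property of Proposition~\ref{tilt_stab}, i.e.\ the Bogomolov-type inequality $\Delta_H\ge0$: it bounds the discriminants of the Harder--Narasimhan/Jordan--Hölder factors, confines the potential destabilizing classes of bounded slope to finitely many, and prevents walls from accumulating, so that one $a_0$ works simultaneously. I expect this finiteness, rather than the slope bookkeeping, to be the only genuinely delicate point; the remainder is the elementary expansion above together with the structure of $\cA^b$-subobjects of a sheaf.
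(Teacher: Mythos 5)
The paper offers no proof of this lemma: it is quoted directly from the cited references (\cite[Lemma 2.7]{bayer2016space}, \cite[Proposition 4.8]{bayer2020desingularization}), so there is no internal argument to compare against. Your sketch reconstructs essentially the standard proof from those sources: the expansion of $\mu_{a,b}$ in powers of $a^2$ with leading coefficient governed by $\mu_H$ and constant term by the normalized $\ch_2$, the reduction of $\cA^b$-subobjects of a sheaf to sheaf data via the long exact sequence (so that $\cH^{-1}(F)=0$ and $\ker(F\to E)=\cH^{-1}(G)$ has $\mu_H^+\leq b$), and the identification of the residual comparison at equal slope with the secondary Gieseker inequality. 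You have also correctly isolated the one genuinely delicate point, uniformity in $a$, and the correct resolution: the support property $\Delta_H\geq 0$ from Proposition \ref{tilt_stab} together with the nested-wall/local-finiteness structure of Proposition \ref{wall_prop} guarantees a topmost actual wall along each vertical line $b=b_0<\mu_H(E)$, above which (semi)stability is constant, so a single $a_0$ suffices. Two small points you assert without justification, both standard and easily supplied: in the direction $\Rightarrow$, that a $2$-Gieseker-destabilizing subsheaf can be promoted to an $\cA^b$-subobject with quotient in $\cA^b$ — take the maximal Gieseker destabilizer, which is slope-semistable with $\mu_H\geq\mu_H(E)>b$, and use that $\mu^-_H(E/F)\geq\mu^-_H(E)>b$ for any sheaf quotient; and in the equal-slope case of $\Leftarrow$, note that $\mu_H(F)=\mu_H(E)$ together with $\mu_H^+(\ker(F\to E))\leq b$ forces the kernel to vanish, so the $\ch_2$-comparison really is applied to an honest subsheaf. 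With these insertions your outline is a correct and complete proof, matching the cited literature.
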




\subsection{Bayer--Macr\'i--Toda conjecture} 

There is another Bogomolov--Gieseker-type inequality, which is conjectured by \cite{bayer2011bridgeland, bayer2016space} in order to construct stability conditions on $\D^b(X)$. We call it \emph{Bayer--Macr\'i--Toda (BMT) conjecture}.

\begin{conjecture} [{\cite[Conjecture 4.1]{bayer2016space}}] \label{SBG_conj}
Let $(X, H)$ be a polarised smooth projective 3-fold. Assume that $E$ is a $\sigma_{a, b}$-semistable object. Then
\[Q_{a, b}(E):=a^2 \Delta_H(E)+4\big(H \ch_2^{bH}(E)\big)^2-6\big(H^2\ch_1^{bH}(E)\big) \ch_3^{bH}(E)\geq 0.\]
\end{conjecture}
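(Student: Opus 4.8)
The statement is the Bayer--Macr\`i--Toda conjecture, which is a deep open problem in general; as the paper itself emphasizes, it is known only for a handful of 3-folds (e.g.~$\PP^3$, Fano 3-folds, abelian 3-folds, and the quintic and $(2,4)$-complete intersections). A realistic proof proposal is therefore to describe the reduction strategy that has succeeded in those cases and to isolate where the genuine difficulty lies.

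The plan is first to reduce the inequality to an extremal family of objects. Since $\ch_3^{bH}$ enters $Q_{a,b}$ only linearly while the remaining terms are governed by $\Delta_H(E)\geq 0$ (the classical Bogomolov inequality, valid for all tilt-semistable objects by Proposition \ref{tilt_stab}), one checks that it suffices to prove $Q_{a,b}(E)\geq 0$ for $\sigma_{a,b}$-stable $E$ lying on the numerical wall $\mu_{a,b}(E)=0$, i.e.~where $\Re Z_{a,b}(E)=0$, equivalently $H\ch_2^{bH}(E)=\tfrac{a^2}{2}H^3\ch_0^{bH}(E)$. A short computation shows that on this wall the first two terms of $Q_{a,b}$ collapse, and $Q_{a,b}(E)\geq 0$ becomes exactly the Bogomolov--Gieseker-type bound $\ch_3^{bH}(E)\leq \tfrac{a^2}{6}\,H^2\ch_1^{bH}(E)$. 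The support property together with a Harder--Narasimhan and convexity argument then propagates the inequality from this wall to a full neighbourhood, so the whole conjecture reduces to this single sharp bound on $\ch_3^{bH}$.

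Next I would realize this bound as a positivity statement for a second tilt. Tilting $\cA^b$ at the slope $\mu_{a,b}=0$ produces a new heart $\cB^{a,b}\subset\D^b(X)$ together with a candidate central charge whose real part is essentially $-\ch_3^{bH}$; the desired inequality is then the assertion that the corresponding ``imaginary part'' is nonnegative on $\cB^{a,b}$, which is what one needs in order to glue $\sigma_{a,b}$ into a genuine Bridgeland stability condition on $\D^b(X)$. The main tool for bounding $\ch_3^{bH}(E)$ for the $\mu_{a,b}$-stable objects singled out above is restriction to a general surface $S\in|mH|$: there the restriction $E|_S$ is controlled by the surface version of the Bogomolov--Gieseker inequality, which is a theorem, and the strategy is to integrate this surface estimate over a pencil of such $S$ to recover the threefold bound.

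The hard part is precisely this passage from surfaces back to the threefold: the discrepancy between the naive restriction estimate and the sharp bound on $\ch_3^{bH}(E)$ is exactly what keeps the conjecture open in general. In the settled cases this defect is absorbed by extra structure unavailable for an arbitrary 3-fold---an exceptional collection and Beilinson-type resolution for $\PP^3$ and Fano 3-folds, a Fourier--Mukai transform interchanging the two tilts for abelian 3-folds, or a specialization argument from an ambient space for the quintic. For a general polarised 3-fold none of these mechanisms is present, so I would not expect to establish the conjecture in full generality. This is exactly why the present paper is organized to \emph{avoid} assuming it: the projection-and-wall-crossing method of Section \ref{intro-subsec-method} only invokes the conjecture on $X'=\PP^3$, where it holds by Theorem \ref{SBG}.
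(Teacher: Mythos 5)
The statement you were asked about is Conjecture \ref{SBG_conj}, an open conjecture that the paper merely cites from \cite{bayer2016space} and never proves --- the only instance it ever invokes is $X=\PP^3$ (Theorem \ref{SBG}, due to Macr\`i) --- so there is no proof in the paper to compare against, and your refusal to claim one is exactly the right call. Your strategic sketch is moreover accurate: on the locus $\mu_{a,b}(E)=0$ one indeed computes $Q_{a,b}(E)=a^2\big(H^2\ch_1^{bH}(E)\big)^2-6\big(H^2\ch_1^{bH}(E)\big)\ch_3^{bH}(E)$, so the conjecture reduces there to $\ch_3^{bH}(E)\leq \tfrac{a^2}{6}\,H^2\ch_1^{bH}(E)$, which is the known Bayer--Macr\`i--Stellari reduction, and you correctly observe that the paper's whole method (Section \ref{intro-subsec-method}) is designed to route every wall-crossing argument through $\PP^3$ precisely so that only Theorem \ref{SBG}, and never the general conjecture, is needed.
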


This conjecture is proven in several cases and we refer to \cite[Section 4]{bayer:ICM} for a list of known results. In our paper, we only need the following result for $\PP^3$.

\begin{theorem}[{\cite{Mac14}}]\label{SBG}
Conjecture \ref{SBG_conj} holds for $\PP^3$ over any algebraically closed field $\kk$.

\end{theorem}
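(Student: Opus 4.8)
The plan is to follow the strategy of \cite{Mac14}: reduce the inequality $Q_{a,b}(E)\geq 0$ to a bounded, explicitly checkable family of tilt-stable objects, and then verify it using the symmetries and the exceptional collection of $\PP^3$. First I would record that the quadratic form splits as
\[Q_{a,b}(E)=a^2\Delta_H(E)+\big(4(H\ch_2^{bH}(E))^2-6(H^2\ch_1^{bH}(E))\ch_3^{bH}(E)\big),\]
and that the first summand is already nonnegative on $\sigma_{a,b}$-semistable objects, since by Proposition \ref{tilt_stab} the support property of $\sigma_{a,b}$ is governed by the discriminant $\Delta_H$, so $\Delta_H(E)\geq 0$. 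Hence a violation of $Q_{a,b}\geq 0$ can only occur when $\Delta_H(E)$ is small and $a$ is small, which confines the analysis to tilt-stable objects of small discriminant.

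Next I would exploit the group of autoequivalences acting on tilt-stability: tensoring by $\oh(1)$, the shift $[1]$, and the derived dual $(-)^{\vee}$ all preserve the class of $\sigma_{a,b}$-(semi)stable objects while acting in a controlled way on the parameters $(a,b)$ (e.g.\ $\otimes\oh(1)$ translates $b$, and the dual reflects $b\mapsto -b$) and on the Chern characters, with $Q_{a,b}$ transforming compatibly. Using these symmetries together with the continuity of $\sigma_{a,b}$ in $(a,b)\in \mathbb{R}_{>0}\times\mathbb{R}$, I would reduce to verifying the inequality on a fundamental domain in the $(a,b)$-plane, and in particular to tilt-stable objects whose truncated Chern character $\ch_{H,\leq 2}$ lies in a bounded region. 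Combined with the classical inequality $\Delta_H\geq 0$, this reduces the statement to a bounded family of potential minimizers.

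The geometric heart of the argument is the classification of these minimizers. Here I would use the strong full exceptional collection $\langle \oh, \oh(1),\oh(2),\oh(3)\rangle$ on $\PP^3$ and the associated Beilinson-type resolution: any tilt-stable object of sufficiently small discriminant must, after applying the symmetries above, be a shift of a twist of $\oh$ (an exceptional object with $\Delta_H=0$), or be assembled from these exceptional objects in a controlled way across walls. For the exceptional objects themselves the inequality $Q_{a,b}\geq 0$ is a direct computation, becoming an equality on the relevant walls, and a short wall-crossing analysis then propagates nonnegativity to the remaining objects in the bounded family. Since the exceptional collection and the vanishing statements underlying the Beilinson resolution hold over an arbitrary field, the argument is insensitive to the characteristic of $\kk$.

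I expect the main obstacle to be precisely this classification step: controlling all tilt-stable objects of small discriminant on $\PP^3$ and proving that the only ones that could violate the naive estimate are the line-bundle twists. This requires a delicate wall-crossing analysis in the $(a,b)$-plane and, crucially, the reinterpretation of $Q_{a,b}\geq 0$ as a genuine Bogomolov--Gieseker inequality in the doubly-tilted heart obtained by tilting $\cA^{b}$ a second time at the slope $\mu_{a,b}$. Establishing the support property for this second tilt is what makes the explicit geometry of $\PP^3$ indispensable, and it constitutes the technical core of the proof in \cite{Mac14}.
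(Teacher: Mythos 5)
First, a point of calibration: the paper does not prove Theorem \ref{SBG} at all --- it is quoted from \cite{Mac14} --- so the only meaningful comparison is with Macr\`i's published argument. Your sketch does identify genuine ingredients of that argument: the classical Bogomolov inequality $\Delta_H(E)\geq 0$ for tilt-semistable objects from Proposition \ref{tilt_stab}, the symmetries $\otimes\,\oh(1)$ and derived dualization used to normalize $b$ to a fundamental domain, the special role of the line bundles $\oh(k)$, and the second tilt. But it misidentifies the engine of the proof, and two of its steps fail as stated.

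Concretely: (i) the confinement of potential violations of $Q_{a,b}\geq 0$ to ``small discriminant and small $a$'' is unjustified --- for fixed small $a$ the term $-6\big(H^2\ch_1^{bH}(E)\big)\ch_3^{bH}(E)$ is in no way dominated by $a^2\Delta_H(E)+4\big(H\ch_2^{bH}(E)\big)^2$, so objects of large discriminant are not a priori safe. The correct first reduction, due to Bayer--Macr\`i--Toda and used in \cite{Mac14}, is that it suffices to treat tilt-semistable objects with $\mu_{a,b}(E)=0$, for which $Q_{a,b}(E)\geq 0$ becomes an explicit upper bound on $\ch_3^{bH}(E)$; your proposal never makes this reduction. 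Likewise there is no ``bounded family of potential minimizers'': tilt-semistable objects with unbounded $\ch_{H,\leq 2}$ exist at every $(a,b)$, and neither you nor \cite{Mac14} classifies tilt-stable objects of small discriminant --- the claim that nonnegativity then ``propagates by wall-crossing'' is an assertion, not an argument. (ii) What Macr\`i actually does is prove Hom-vanishing: for a tilt-semistable $E$ with $\mu_{a,b}(E)=0$, suitably chosen twists $\oh(k)$ and shifts $\oh(k')[j]$ are stable objects of larger, respectively smaller, phase in the relevant (double-tilted) heart, so stability forces $\Hom(\oh(k),E)=0$ and, via Serre duality, vanishing of the relevant $\Ext$'s in degrees $\geq 2$; hence $\chi(\oh(k),E)\leq 0$, with a mirror inequality on the other side. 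Riemann--Roch then converts these one-sided Euler-characteristic bounds, optimized over the integers $k$ --- this is where the exceptional collection and the specific numerics of $\PP^3$ enter, in any characteristic --- into precisely $Q_{a,b}(E)\geq 0$. Your proposal produces no such vanishing statements, and at the decisive moment you write that the classification step ``constitutes the technical core of the proof in \cite{Mac14}'': the statement being proved is delegated back to the citation, so what you have is a plausible roadmap with a circular core, not a proof.
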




\begin{remark} \label{rmk_boundary}
If an object $E\in \D^b(X)$ satisfies $Q_{a,b}(E)\geq 0$ for any $(a,b)\in U$, where $U\subset \mathbb{R}_{>0}\times \mathbb{R}$ is a given subset, then by the continuity, we have $Q_{a,b}(E)\geq 0$ for any $(a,b)\in \overline{U}$.
\end{remark}


\subsection{Wall-chamber structure} \label{wall_sec2}

Let $(X, H)$ be a polarised smooth projective 3-fold. In this subsection, we describe the wall-chamber structure of tilt-stability $\sigma_{a, b}$.

\begin{definition}
Let $v\in \KK(X)$. A \emph{numerical wall} for $v$ induced by $w\in \KK(X)$ is the subset
\[W(v, w)=\{(a, b)\in \mathbb{R}_{>0}\times \mathbb{R} \mid \mu_{a, b}(v)=\mu_{a, b}(w)\}\neq \varnothing.\]

Let $E\in \D^b(X)$ be a non-zero object with $[E]=v\in \mathrm{K}(X)$. A numerical wall $W=W(v,w)$ for $v$ is called an \emph{actual wall for $E$} if there is a short exact sequence of $\sigma_{a,b}$-semistable objects
\begin{equation} \label{wall_seq}
    0\to F\to E\to G\to 0
\end{equation}
in $\cA^{b}$ for one $(a,b)\in W$ (hence all $(a,b)\in W$ by \cite[Corollary  4.13]{bayer2020desingularization}) such that $W=W(E, F)$ and $[F]=w$.

We say such an exact sequence  \eqref{wall_seq} of $\sigma_{a,b}$-semistable objects  \emph{induces the actual wall $W$} for $E$.


A \emph{chamber} for $E$ is defined to be a connected component of the complement of the union of actual walls for $E$.

We say a point $(a,b)\in \mathbb{R}_{>0}\times \mathbb{R}$ is \emph{over} a numerical wall $W$ if $a>a'$ for any $(a',b)\in W$.
\end{definition}



\begin{proposition} [{\cite[Section 4]{bayer2020desingularization}}] \label{wall_prop}
Let $v\in \KK(X)$ and $\Delta_H(v)\geq 0$. Let $E\in \D^b(X)$ be an object with $[E]=v$.

\begin{enumerate}
    \item If $\mathcal{C}$ is a chamber for an object $E$, then $E$ is $\sigma_{a,b}$-semistable for some $(a,b)\in \cC$ if and only if it is for all $(a,b)\in \cC$.
    
    \item A numerical wall for $v$ is either a semicircle centered along the $b$-axis or a vertical wall parallel to the $a$-axis. The set of numerical walls for $v$ is locally finite. No two walls intersect.
    
    \item If $\ch_0(v)\neq 0$, then there is a unique numerical vertical wall $b=\mu_H(v)$. The remaining numerical walls are split into two sets of nested semicircles whose apex lies on the hyperbola $\mu_{a, b}(v)=0$.
    
    \item If $\ch_0(v)=0$ and $H^2\cdot \ch_1(v)\neq 0$, then every numerical wall is a semicircle whose apex lies on the ray $b=\frac{H\cdot \ch_2(v)}{H^2\cdot \ch_1(v)}$.
    
    \item If $\ch_0(v)=0$ and $H^2\cdot \ch_1(v)= 0$, then there are no actual walls for $v$.
    
    \item If an actual wall for $E$ is induced by a short exact sequence of $\sigma_{a,b}$-semistable objects
    \[0\to F\to E\to G\to 0\]
    then $\Delta_H(F)+\Delta_H(G)\leq \Delta_H(E)$ and equality can only occur if either $F$ or $G$ is a sheaf supported in dimension zero. 
\end{enumerate}

\end{proposition}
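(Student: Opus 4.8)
The plan is to reduce all six statements to the explicit equation of a numerical wall together with the support property recorded in Proposition \ref{tilt_stab}. I would encode a class as a triple $(r,c,e):=(\ch_0, H^{n-1}\ch_1, H^{n-2}\ch_2)$, so that for $(a,b)\in\mathbb{R}_{>0}\times\mathbb{R}$,
\[\Im Z_{a,b}=c-bH^n r,\qquad \Re Z_{a,b}=\tfrac12 H^n r\,a^2-e+bc-\tfrac12 b^2 H^n r.\]
For $v=(r_v,c_v,e_v)$ and $w=(r_w,c_w,e_w)$ I would expand the wall condition $\Re Z_{a,b}(v)\,\Im Z_{a,b}(w)=\Re Z_{a,b}(w)\,\Im Z_{a,b}(v)$. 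The $b$-dependence of the coefficient of $a^2$ cancels, as does the $b^3$-term, leaving
\[\tfrac12 H^n(r_vc_w-r_wc_v)(a^2+b^2)+H^n(e_vr_w-e_wr_v)\,b+(c_ve_w-c_we_v)=0.\]
Since there is no $ab$-term and $a$ occurs only through $a^2$, this is a circle centred on the $b$-axis when $\alpha:=\tfrac12 H^n(r_vc_w-r_wc_v)\neq 0$ and a vertical line with $b$ constant when $\alpha=0$; intersecting with $\{a>0\}$ gives the dichotomy of (2). Substituting the apex of such a circle (its $b$-coordinate equal to the centre, $a$ equal to the radius) into $\Re Z_{a,b}(v)=0$ is an algebraic identity, which proves that every semicircular wall has its top point on the locus $\mu_{a,b}(v)=0$.

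That locus is precisely what (3)--(5) describe. When $\ch_0(v)=r_v\neq 0$, the equation $\Re Z_{a,b}(v)=0$ is the hyperbola $\mu_{a,b}(v)=0$, while the unique vertical wall $\Im Z_{a,b}(v)=0$ is the line $b=c_v/(H^n r_v)=\mu_H(v)$; nesting of the semicircles then follows from the non-crossing statement below. When $r_v=0$ but $c_v\neq 0$ the hyperbola degenerates to the ray $b=e_v/c_v=H\ch_2(v)/(H^2\ch_1(v))$, which is (4). When $r_v=c_v=0$ we have $\Im Z_{a,b}(v)\equiv 0$, so $v$ has slope $+\infty$ at every $(a,b)$; any destabilizing subobject would also have infinite slope, forcing the imaginary central charges of sub and quotient to vanish identically, so the configuration is independent of $(a,b)$ and semistability cannot change, giving (5).

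For the global structure I would invoke the support property. Local finiteness in (2) follows because along an actual wall a destabilizing class $w$ satisfies $0\le \Delta_H(w)\le \Delta_H(v)$ (see (6) below) while its $\sigma_{a,b}$-slope is pinned in any compact region, confining $w$ to finitely many lattice points; negative-definiteness of $Q=\Delta_H$ on $\ker Z_{a,b}$ is what makes this bound effective. Two distinct walls for the same $v$ cannot meet, for at a common point $Z_{a,b}(v)$, $Z_{a,b}(w_1)$, $Z_{a,b}(w_2)$ would all be real-proportional, forcing $v,w_1,w_2$ to be linearly dependent and the two walls to coincide. Statement (1) is the usual constancy of semistability on a chamber: away from walls no Jordan--H\"older or Harder--Narasimhan slope can cross, so the semistable locus is open and closed in $\cC$, hence all of it.

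The remaining point (6) is the one I expect to be the crux. Viewing $\Delta_H$ as the quadratic form $c^2-2H^n r e$ on the rank-$3$ lattice, it has Lorentzian signature $(2,1)$. On the plane $P=\langle F,G\rangle$ the hypotheses produce a negative direction (since $P\cap\ker Z_{a,b}\neq 0$, where $\Delta_H<0$) and a non-negative one (since $\Delta_H(F)\ge 0$ by the support property), so $\Delta_H|_P$ is of type $(1,1)$. Because $Z_{a,b}(F)$ and $Z_{a,b}(G)$ lie on a common ray, $F$ and $G$ sit in the same component of the positive cone of $\Delta_H|_P$, and the reverse Cauchy--Schwarz inequality for a Lorentzian form yields $\Delta_H(F,G)\ge \sqrt{\Delta_H(F)\Delta_H(G)}\ge 0$; expanding $\Delta_H(E)=\Delta_H(F)+\Delta_H(G)+2\Delta_H(F,G)$ gives the claim. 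Equality forces $\Delta_H(F)\Delta_H(G)=0$, so one factor lies on the null cone, and tracing this back through the central charge shows that factor has vanishing rank and vanishing first Chern degree, hence is a sheaf supported in dimension zero. The genuine work is organizing this reverse Cauchy--Schwarz/Hodge-index estimate and its equality case cleanly; everything else is bookkeeping around the wall equation displayed above.
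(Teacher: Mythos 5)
Your proposal cannot be compared against a proof in the paper, because the paper contains none: Proposition \ref{wall_prop} is quoted from \cite[Section 4]{bayer2020desingularization}. Measured against the standard arguments in that reference (and in Bayer--Macr\`i--Stellari's appendix and Maciocia's nested-wall computation), your reconstruction follows the same route and is sound in its main lines: the displayed wall equation is correct (the coefficients are the three $2\times 2$ minors of the pair $(v,w)$, so walls depend only on $v\wedge w$), the apex claim is indeed an algebraic identity (in your notation, $H^n r_v(c_ve_w-c_we_v)+H^n(e_vr_w-e_wr_v)c_v+H^n(r_vc_w-r_wc_v)e_v=0$), non-intersection follows as you say from $\dim\ker Z_{a,b}=1$ on the rank-three lattice, and your reverse Cauchy--Schwarz treatment of (f) is exactly the quadratic-form lemma used in the literature.

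Two steps, however, are genuinely gapped. In the equality case of (f), the inference ``one factor lies on the null cone, hence has vanishing rank and vanishing first Chern degree'' is a non sequitur: $(1,0,0)$, the truncated class of $\oh_X$, is a null vector of $c^2-2H^n re$. What rescues the argument is $\Delta_H(F,G)=0$ combined with the fact that an actual wall is a proper semicircle or vertical line, so $\ch_{\leq 2}(F)$ cannot be proportional to $\ch_{\leq 2}(G)$ (proportional classes have equal slope everywhere and bound no wall). In the nondegenerate Lorentzian plane, a null $F$ satisfies $F^{\perp}=\mathbb{R}F$, which would force exactly that proportionality; hence the restricted form must be degenerate, $F$ lies in its radical, therefore in $\ker Z_{a,b}$ at every point of the wall, and this kills $\ch_0(F)$, $H^{n-1}\ch_1(F)$ \emph{and} $H^{n-2}\ch_2(F)$ simultaneously. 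You need that last vanishing: rank and $\ch_1$ alone only give support of dimension $\leq 1$, not $0$. Second, your local-finiteness argument covers actual walls only --- and rightly so, since for numerical walls with arbitrary $w\in\KK(X)$ the claim as literally stated fails: for $\ch_{\leq 2}(v)=(1,0,0)$ the numerical walls form the coaxial pencil of semicircles through the origin with centers $b=e_w/c_w$, a dense set; it is precisely the constraints along an actual wall ($\Im Z$-additivity over a compact set plus $0\leq\Delta_H(w)\leq\Delta_H(v)$) that cut this down to finitely many classes, and your write-up should say explicitly that this is the version being proved. (Two smaller slips: in (5), $\Im Z_{a,b}(F)=0$ at one point does not hold ``identically''; the correct mechanism is that at each individual $(a,b)$, additivity and positivity of $\Im Z$ force every subobject and quotient of $E$ in the heart to have slope $+\infty$, so $E$ is semistable wherever it lies in the heart. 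And ``nesting follows from non-crossing'' is incomplete, since disjoint semicircles need not be nested --- you need the apex-on-hyperbola fact to order each family.)
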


By Proposition \ref{wall_prop}, an actual wall gives two adjacent chambers. Walls and chambers can be visualized as in \cite[Figure 1]{bayer2020desingularization}.

\begin{definition}\label{def-uppermost}
Let $E$ be a 2-Gieseker-semistable sheaf on $X$. We say a semicircle actual wall $W$ is \emph{the uppermost wall} for $E$ if there is no other actual wall over $W$ and $W\subset \{0<a\}\times \{b<\mu_H(E)\}$.
\end{definition}

\section{Projection}\label{sec-proj}

\subsection{Walls}\label{wall_sec}

In this section, we are going to relate the wall-chamber structure for the ideal sheaf of a curve to its geometry. We begin with a standard lemma.

\begin{lemma}\label{lem-decompose}
Let $X$ be a smooth projective variety with $\dim X\geq 3$. Let $C\subset X$ be a closed subscheme with $\dim C\leq \dim X-2$ and $D\subset X$ be a divisor. If $C\neq C\cap D$, then we have an exact sequence
$0\to \oh_{C_1}(-D)\to \oh_C\to \oh_{C\cap D}\to 0$,
where $C_1\subset C$ is a closed subscheme.
\end{lemma}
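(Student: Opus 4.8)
The plan is to produce the sequence by restricting the standard divisor exact sequence on $X$ to $C$ and then identifying the kernel of the restriction map. First, since $X$ is smooth the Weil divisor $D$ is Cartier, so $\oh_X(-D)$ is a line bundle (equal to the ideal sheaf $I_{D/X}$) and there is a short exact sequence
\[0\to \oh_X(-D)\xrightarrow{\,s_D\,}\oh_X\to \oh_D\to 0,\]
where $s_D$ is multiplication by a local equation of $D$.

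Next I would apply $-\otimes_{\oh_X}\oh_C$. By right exactness of the tensor product, together with the identification $\oh_D\otimes_{\oh_X}\oh_C=\oh_X/(I_{D/X}+I_{C/X})=\oh_{C\cap D}$, this gives a right-exact sequence
\[\oh_C(-D)\xrightarrow{\,s_D\,}\oh_C\to \oh_{C\cap D}\to 0,\]
so that $\ker(\oh_C\to \oh_{C\cap D})=\im(s_D)$. The hypothesis $C\neq C\cap D$ guarantees that $s_D$ does not vanish identically on $C$, so this kernel is a genuine nonzero subsheaf (equivalently, the subscheme $C_1$ constructed below is nonempty).

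The main point, and the only real subtlety, is to identify $\im(s_D)$ with $\oh_{C_1}(-D)$ for an honest closed subscheme $C_1\subseteq C$. One must resist writing the naive sequence with $\oh_C(-D)$ on the left, since $\oh_C(-D)\to\oh_C$ need not be injective: its kernel is the torsion sheaf $\Tor_1^{\oh_X}(\oh_D,\oh_C)$. To describe the image intrinsically I would twist by $\oh_X(D)$ and view the map as multiplication $\oh_C\xrightarrow{\,s_D\,}\oh_C(D)$; its kernel $\mathcal{J}$ is an $\oh_C$-submodule of $\oh_C$, hence an ideal sheaf, and I define $C_1\subseteq C$ to be the closed subscheme it cuts out, so that $\oh_{C_1}=\oh_C/\mathcal{J}\cong \im(\oh_C\xrightarrow{\,s_D\,}\oh_C(D))$. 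Twisting back by the line bundle $\oh_X(-D)$ is exact and identifies $\im(s_D)$ with $\oh_{C_1}(-D)$, which completes the sequence.

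I expect the bookkeeping around this non-injectivity to be the main thing to get right. Note that the dimension hypothesis $\dim C\leq \dim X-2$ is not actually needed for the existence of the sequence itself; it is inherited from the geometric setting, where $C_1$ is later reused in the genus computation $g(C)=g(C_1)+g(C\cap D)+C_1.D-1$.
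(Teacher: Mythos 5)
Your proof is correct, and it takes a genuinely different (and more elementary) route than the paper. The paper never tensors the divisor sequence with $\oh_C$; instead it works upstairs with ideal sheaves on $X$: it forms the commutative square of surjections $\oh_X\twoheadrightarrow\oh_C$, $\oh_D\twoheadrightarrow\oh_{C\cap D}$, takes kernels to get a surjection $s\colon I_C\to I_{C\cap D/D}$ with $\ker(s)=I_C\cap I_D$, and then identifies this kernel as $I_{C_1}(-D)$ by a reflexive-sheaf argument: $\ker(s)$ is rank one torsion-free with $c_1=-D$, so by smoothness of $X$ its double dual is the line bundle $\oh_X(-D)$, whence $\ker(s)=I_{C_1}(-D)$; the lemma's sequence then falls out of the Snake Lemma. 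This is exactly where the paper uses the smoothness of $X$ and the hypothesis $\dim C\leq\dim X-2$ (to pin down $c_1(\ker(s))=-D$), so your observation that the dimension hypothesis is not needed for the sequence itself is accurate for your argument but not for the paper's. Your route — defining $C_1$ by the ideal $\mathcal{J}=\ker(\oh_C\xrightarrow{s_D}\oh_C(D))$, i.e.\ locally the colon ideal $(I_C:f)/I_C$ — needs only that $D$ is Cartier, makes $C_1\subseteq C$ automatic rather than something to verify, and correctly isolates the one subtlety (the $\Tor_1^{\oh_X}(\oh_D,\oh_C)$ kernel obstructing injectivity of $\oh_C(-D)\to\oh_C$; calling it ``torsion'' is slightly loose when components of $C$ lie in $D$, but you never use that). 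What the paper's heavier approach buys is the companion exact sequence $0\to I_{C_1}(-D)\to I_C\to I_{C\cap D/D}\to 0$ at the level of ideal sheaves on $X$, with $I_{C_1}(-D)$ exhibited as a subobject of $I_C$ — precisely the form consumed later in the wall-crossing arguments (Proposition \ref{cor_wall} and Lemma \ref{lem-construct-wall}); from your construction one would still need a short extra step to recover that ideal-sheaf sequence.
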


\begin{proof}
Consider the following commutative diagram
\[\begin{tikzcd}
	{\oh_X} & {\oh_C} \\
	{\oh_D} & {\oh_{C\cap D}}
	\arrow[two heads, from=1-2, to=2-2]
	\arrow[two heads, from=1-1, to=2-1]
	\arrow[two heads, from=2-1, to=2-2]
	\arrow[two heads, from=1-1, to=1-2]
\end{tikzcd}\]
where morphisms are natural surjections. By taking kernels, we have a commutative diagram
\[\begin{tikzcd}
	0 & {I_{C}} & {\oh_X} & {\oh_C} & 0 \\
	0 & {I_{C\cap D/D}} & {\oh_D} & {\oh_{C\cap D}} & 0
	\arrow[two heads, from=1-4, to=2-4]
	\arrow[two heads, from=1-3, to=2-3]
	\arrow[from=2-3, to=2-4]
	\arrow[from=1-3, to=1-4]
	\arrow[from=1-4, to=1-5]
	\arrow[from=2-4, to=2-5]
	\arrow[from=2-1, to=2-2]
	\arrow[from=1-1, to=1-2]
	\arrow[from=1-2, to=1-3]
	\arrow[from=2-2, to=2-3]
	\arrow["s"', from=1-2, to=2-2]
\end{tikzcd}\]
with exact rows. Note that
\[I_{C\cap D/D}=\frac{I_{C\cap D}}{I_D}=\frac{I_C+I_D}{I_D}=\frac{I_C}{I_C\cap I_D},\]
hence $s$ is surjective with $\ker(s)=I_C\cap I_D$. Since $\dim C\leq \dim X-2$, we see $c_1(I_{C\cap D/D})=D$ and $\ker(s)$ is a rank one torsion-free sheaf with $c_1(\ker(s))=-D$. As $X$ is smooth, the double dual $(\ker(s))^{\vee \vee}$ is a line bundle on $X$ by \cite[Proposition 1.9]{har80}. Since $c_1(\ker(s))=-D$, we have $(\ker(s))^{\vee \vee}\cong \oh_X(-D)$ and $\ker(s)=I_{C_1}(-D)$, where $C_1\subset X$ is a closed subscheme. Therefore, the statement follows from the Snake Lemma.
\end{proof}

Let $X$ be a smooth projective 3-fold such that $\Pic(X)$ is generated by an ample line bundle $\oh_X(1)$. Let $H:=c_1(\oh_X(1))$ and $n:=H^3$ be the degree of $X$. 

For any $1$-dimensional closed subscheme $C\subset X$ of degree $d$, we have $\ch_{H,\leq 2}(I_C)=(1,0,-\frac{d}{n})$. By Lemma \ref{bms lemma 2.7}, $I_C\in \cA^b$ is $\sigma_{a,b}$-stable for any $b<0$ and $a\gg 0$. We define \[\rho_d:=\sqrt{\frac{d}{4n}},\quad b_d:=\rho_d-\sqrt{\rho_d^2+\frac{2d}{n}}=-\sqrt{\frac{d}{n}}.\] 

\begin{proposition} \label{cor_wall}
Let $C\subset X$ be a curve with degree $d$. Assume that for some $b_d\leq b<0$ there is an actual wall for $I_C$ given by an exact sequence in $\cA^{b}$
\[0\to A\to I_C\to B\to 0.\]

\begin{enumerate}
    \item We have $b_d\leq b<-1$ and $A= \oh_X(-D)$ or $A=I_{C_1}(-D)$ for a divisor $D\in |\oh_X(k)|$, where $\lceil b \rceil \leq -k\leq -1$ and $C_1\subset C$ is a curve of degree
\[d_1:=\deg(C_1)< \min\{d-\frac{k^2 n}{2}, d+\frac{k^2 n}{2}-k\sqrt{2nd}\}.\]

\item In the case $A=I_{C_1}(-D)$, there is another one-dimensional closed subscheme $C_2\subset C\cap D$ of degree $d_2$ such that $d_1+d_2=d$ and 
\begin{equation} \label{key_equation}
    g(C)=g(C_1)+g(C_2)+kd_1-1.
\end{equation}

\item If this is the uppermost wall for $I_C$ and $A$ is not a line bundle, then we can take $C_2=C\cap D$ and we have an exact sequence
\begin{equation}
    0\to \oh_{C_1}(-k)\to \oh_C\to \oh_{C\cap D}\to 0.
\end{equation}
\end{enumerate}

\end{proposition}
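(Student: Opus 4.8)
The plan is to run the standard tilt wall-crossing analysis for $I_C$ on the given wall and then convert the resulting homological data into the geometry of $C$. Working at a point $(a,b)$ on the wall, $I_C$ is strictly $\sigma_{a,b}$-semistable and we are handed a destabilizing sequence $0\to A\to I_C\to B\to 0$ in $\cA^b$ with $\mu_{a,b}(A)=\mu_{a,b}(I_C)=\mu_{a,b}(B)$ and $A,B$ both $\sigma_{a,b}$-semistable; I would take $A$ to be a maximal destabilizing subobject. First I pin down the sheaf-theoretic nature of $A$: the long exact cohomology sequence gives $\cH^{-1}(A)\hookrightarrow\cH^{-1}(I_C)=0$, so $A$ is a sheaf, with $\cH^{-1}(B)=\ker(A\to I_C)$ and $\cH^0(B)=\operatorname{coker}(A\to I_C)$. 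A nonzero torsion subsheaf has slope $+\infty>b$, so it lies in $\cA^b$ and injects into the torsion-free $I_C$, which is impossible; hence $A$ is torsion-free, the map $A\to I_C$ is nonzero, and $\rk(A)\ge 1$. Combining the description of numerical walls for a class with $\ch_0=1$ (Proposition \ref{wall_prop}) with the reflexive-hull argument, I would show $\rk(A)=1$; this forces $\cH^{-1}(B)=0$, since a full-rank subsheaf $\cH^{-1}(B)\subseteq A$ would make $A/\cH^{-1}(B)$ a torsion subsheaf of $I_C$, hence zero, collapsing $A\to I_C$. Thus $A\hookrightarrow I_C\hookrightarrow\oh_X$ is a rank-$1$ torsion-free subsheaf and $B$ is a rank-$0$ sheaf. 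I expect this step — extracting an honest rank-$1$ torsion-free subsheaf from the categorical destabilizer — to be the main obstacle.

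Next I identify $A$. On the smooth $X$ with $\Pic(X)=\ZZ H$, the reflexive hull $A^{\vee\vee}$ is a line bundle $\oh_X(-kH)$ with cokernel supported in dimension $\le 1$, so $A\cong I_{C_1}(-D)$ for $D\in|kH|$ and a subscheme $C_1$, with $C_1=\varnothing$ exactly when $A=\oh_X(-D)$ is a line bundle. Since $A\subseteq\oh_X$ we get $k\ge 0$, and $k=0$ produces no semicircular wall in the range, so $k\ge 1$. The membership $A\in\cA^b$ requires $b<\mu_H^-(A)=-k$, which yields $b<-k\le -1$ and $\lceil b\rceil\le -k\le -1$, proving the range in (1). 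One also checks $C_1\subseteq C$ is a curve of some degree $d_1$.

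For the two numerical bounds I compute, using $\ch_1(A)=-kH$ and $H\ch_2(A)=\tfrac{k^2}{2}n-d_1$, the discriminants $\Delta_H(I_C)=2nd$, $\Delta_H(A)=2nd_1$, $\Delta_H(B)=k^2n^2$. Proposition \ref{wall_prop}(6) gives $\Delta_H(A)+\Delta_H(B)\le\Delta_H(I_C)$, strictly since neither $A$ (rank $1$) nor $B$ ($\ch_1(B)=kH\neq 0$) is supported in dimension zero; dividing by $2n$ gives $d_1<d-\tfrac{k^2n}{2}$. For the second bound I write out the numerical wall $W(I_C,A)$ as the semicircle
\[ a^{2}+b^{2}+\Big(k+\tfrac{2d_2}{kn}\Big)\,b+\tfrac{2d}{n}=0,\qquad d_2:=d-d_1, \]
and impose that it be a nonempty semicircle, i.e.\ positive radius, which reads $\big(k+\tfrac{2d_2}{kn}\big)^{2}>\tfrac{8d}{n}$; rearranging (and using $n\sqrt{2d/n}=\sqrt{2nd}$) gives exactly $d_1<d+\tfrac{k^2n}{2}-k\sqrt{2nd}$. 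Taking the minimum of the two bounds proves the estimate on $d_1$ in (1).

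Finally I turn the algebra into geometry for (2) and (3). Mapping $A=I_{C_1}(-D)$ into $\oh_X$ in the two ways $A\hookrightarrow\oh_X(-D)\hookrightarrow\oh_X$ and $A\hookrightarrow I_C\hookrightarrow\oh_X$ and comparing the cokernels by a snake-lemma chase (as in Lemma \ref{lem-decompose}), I extract a one-dimensional $C_2\subseteq C\cap D$ together with an exact sequence $0\to\oh_{C_1}(-D)\to\oh_C\to\oh_{C_2}\to 0$; additivity of $\ch_2$ gives $d_1+d_2=d$, and taking Euler characteristics with $\chi(\oh_{C_1}(-D))=\chi(\oh_{C_1})-kd_1$ yields $g(C)=g(C_1)+g(C_2)+kd_1-1$, which is \eqref{key_equation}. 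For (3), when the wall is uppermost and $A$ is not a line bundle, I instead apply Lemma \ref{lem-decompose} directly to $C$ and $D$ (valid since $C\neq C\cap D$) to obtain $0\to\oh_{C_1}(-D)\to\oh_C\to\oh_{C\cap D}\to 0$, and match this with the wall-crossing data to identify the two copies of $C_1$ and force $C_2=C\cap D$. The secondary difficulty here is the precise bookkeeping of the subschemes $C_1\subseteq C$ and $C_2\subseteq C\cap D$ (that they are curves of degrees $d_1$ and $d_2$), and checking that the uppermost-wall hypothesis is exactly what makes $C_2$ all of $C\cap D$ rather than a proper subscheme.
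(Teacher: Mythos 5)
Your plan is sound, and the computations check out where you actually carry them out: $\Delta_H(I_C)=2nd$, $\Delta_H(A)=2nd_1$, $\Delta_H(B)=k^2n^2$, the strict inequality $d_1<d-\tfrac{k^2n}{2}$ from Proposition \ref{wall_prop}(6) (strict since neither factor is zero-dimensional), the wall equation $a^2+b^2+\bigl(k+\tfrac{2d_2}{kn}\bigr)b+\tfrac{2d}{n}=0$ with the positive-radius condition giving $d_1<d+\tfrac{k^2n}{2}-k\sqrt{2nd}$, the range $\lceil b\rceil\le -k\le -1$ from $A\in\cA^b$ being a sheaf with $\mu_H^-(A)=-k>b$, and the Euler-characteristic count yielding \eqref{key_equation}. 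Be aware, however, that the paper does not prove (a) and (b) at all: it simply cites \cite[Corollary 3.3]{liu-ruan:cast-bound}, so the bulk of your proposal is a re-derivation of that corollary, along essentially the route its proof takes. For (c), your plan --- apply Lemma \ref{lem-decompose} to $(C,D)$ and use uppermost-ness to force $C_2=C\cap D$ --- is exactly the paper's argument; the paper implements the ``matching'' you leave implicit by comparing $\ch_{H,2}(I_{C_1}(-D))$ with $\ch_{H,2}(I_{C_1'}(-D))$: uppermost-ness gives one inequality (otherwise $W(I_{C_1'}(-D),I_C)$ would be an actual wall over the given one, by Lemma \ref{lem-construct-wall}), while $C_2\subset C\cap D$ gives the reverse, so the two numerical walls coincide and one may replace $(C_1,C_2)$ by $(C_1',C\cap D)$ without moving the wall.

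Two steps in your write-up are genuinely under-justified. First, the rank-one claim for $A$: neither Proposition \ref{wall_prop} (which describes the geometry of numerical walls, not the ranks of destabilizers) nor the reflexive-hull argument (which only applies once you already know $\rk A=1$) rules out $\rk A\ge 2$. The honest argument needs the inequalities $0\le \Im Z_{a,b}(A)\le \Im Z_{a,b}(I_C)$ at every point of the wall --- for $\ch_0(A)=r$ and $\ch_1(A)=-kH$ these pin $k$ between $(r-1)\lvert b\rvert$ and $r\lvert b\rvert$ as $b$ ranges over the wall --- combined with $\Delta_H\ge 0$ for both factors and the constraint that the wall meets $\{b_d\le b<0\}$; this computation is precisely the content of the cited Corollary 3.3 and cannot be waved at. Second, in (2) your snake-lemma chase produces the natural map $\oh_{C_1}(-D)\to\oh_C$, but its injectivity is not automatic: identifying $A$ with its image in $\oh_X$, the kernel is $(I_C\cap I_D)/A$, a subsheaf of $B=I_C/A$ supported in dimension $\le 1$, and to control it you must invoke the $\sigma_{a,b}$-semistability of $B$ on the wall (any subobject with $\Im Z_{a,b}=0$ has slope $+\infty$, destabilizing $B$ since $\Im Z_{a,b}(B)=kn\neq 0$), or else enlarge $C_1$ to absorb the kernel. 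The same purity statement is what guarantees that $C_1\subset C$ is a curve of degree $d_1$, which you record as ``one also checks'' without argument. Neither issue is fatal --- both claims are true and provable with the tools at hand --- but as written they are assertions rather than proofs, and they are exactly the points the paper outsources to the citation.
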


\begin{proof}
(a) and (b) follows from \cite[Corollary 3.3]{liu-ruan:cast-bound}.

To prove (c), we have a commutative diagram as in Lemma \ref{lem-decompose} and \cite[Corollary 3.3]{liu-ruan:cast-bound}
\[\begin{tikzcd}
	& 0 & 0 & 0 \\
	0 & {I_{C_1'}(-D)} & {\oh_X(-D)} & {\oh_{C_1'}(-D)} & 0 \\
	0 & {I_C} & {\oh_X} & {\oh_C} & 0 \\
	0 & I_{C\cap D/D} & {\oh_D} & {\oh_{C\cap D}} & 0 \\
	& 0 & 0 & 0
	\arrow[from=2-1, to=2-2]
	\arrow[from=2-3, to=2-4]
	\arrow[from=2-4, to=2-5]
	\arrow[from=3-4, to=3-5]
	\arrow[from=3-2, to=3-3]
	\arrow[from=3-3, to=3-4]
	\arrow[from=3-1, to=3-2]
	\arrow[from=2-2, to=3-2]
	\arrow[from=2-3, to=3-3]
	\arrow[from=2-4, to=3-4]
	\arrow[from=2-2, to=2-3]
	\arrow[from=4-1, to=4-2]
	\arrow[from=4-2, to=4-3]
	\arrow[from=4-3, to=4-4]
	\arrow[from=4-4, to=4-5]
	\arrow[from=3-2, to=4-2]
	\arrow[from=3-3, to=4-3]
	\arrow[from=3-4, to=4-4]
	\arrow[from=4-4, to=5-4]
	\arrow[from=4-3, to=5-3]
	\arrow[from=4-2, to=5-2]
	\arrow[from=1-2, to=2-2]
	\arrow[from=1-3, to=2-3]
	\arrow[from=1-4, to=2-4]
\end{tikzcd}\]
with all rows and columns being exact, where $C_1'\subset C$ is a curve. Since the wall $W(I_{C_1}(-D), I_C)$ is the uppermost one, we see $\ch_{H, 2}(I_{C_1}(-D))\geq \ch_{H, 2}(I_{C_1'}(-D))$, otherwise $W(I_{C_1'}(-D), I_C)$ is an actual wall for $I_C$ over $W(I_{C_1}(-D), I_C)$. But from $C_2\subset C\cap D$, we also have a reverse inequality $\ch_{H, 2}(I_{C_1}(-D))\leq \ch_{H, 2}(I_{C_1'}(-D))$ and get
\[\ch_{H, \leq 2}(I_{C_1}(-D)) = \ch_{H, \leq 2}(I_{C_1'}(-D)).\]
In other word, $W(I_{C_1}(-D), I_C)=W(I_{C_1'}(-D), I_C)$. Thus, we can take $C_1=C_1'$ and $C_2=C\cap D$ without changing the location of the wall.
\end{proof}

\begin{lemma}\label{lem-construct-wall}
Let $C\subset X$ be a curve of degree $d$. Assume that there is a divisor $D\in |\oh_X(k)|$ for an integer $1\leq k\leq -b_d$ satisfies
\begin{equation}\label{eq-exist-wall}
    d-\deg(C\cap D)<\min\{d-\frac{k^2n}{2}, d+\frac{k^2n}{2}-k\sqrt{2nd}\}
\end{equation}
and there is no actual wall for $I_C$ over $W(I_C, I_{C_1}(-D))$, where $C_1\subset C$ is a curve defined by the natural exact sequence $0\to \oh_{C_1}(-D)\to \oh_C\to \oh_{C\cap D}\to 0$. Then the exact sequence
\begin{equation}\label{eq-sec-3-1}
    0\to I_{C_1}(-D)\to I_C\to I_{C\cap D/D}\to 0
\end{equation}
gives the uppermost actual wall of $I_C$.
\end{lemma}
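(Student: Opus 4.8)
The plan is to run the correspondence of Proposition~\ref{cor_wall} backwards: the numerical inequality \eqref{eq-exist-wall} is exactly what forces $D$ to cut out a genuine (nonempty) semicircular numerical wall for $I_C$ lying below the vertical wall $b=0$, and the hypothesis that no actual wall lies over it lets me promote this numerical wall to the uppermost \emph{actual} wall.

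First I would write down the sequence and locate the wall. Applying Lemma~\ref{lem-decompose} to $C$ and $D$ (here $C\neq C\cap D$ is built into the definition of $C_1$) and passing to ideal sheaves as in its proof produces exactly \eqref{eq-sec-3-1}. Writing $d_1:=\deg(C_1)$ and $d_2:=\deg(C\cap D)$, so that $d_1+d_2=d$, a direct Chern character computation (cf.\ \cite[Corollary~3.3]{liu-ruan:cast-bound}) gives $\ch_{H,\leq 2}(I_{C_1}(-D))=(1,-k,\tfrac{k^2}{2}-\tfrac{d_1}{n})$ and shows that $W:=W(I_C,I_{C_1}(-D))$ is the semicircle $a^2+(b-b_c)^2=r^2$ with
\[b_c=-\tfrac{k}{2}-\tfrac{d_2}{kn},\qquad r^2=b_c^2-\tfrac{2d}{n}.\]
The point of \eqref{eq-exist-wall} is that its two clauses translate precisely into the two features I need: $d_1<d+\tfrac{k^2n}{2}-k\sqrt{2nd}$ is equivalent to $r^2>0$ (so $W$ is a nonempty semicircle), while $d_1<d-\tfrac{k^2n}{2}$ is equivalent to $b_c<-k$.

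Next I would check that \eqref{eq-sec-3-1} is a short exact sequence of objects of $\cA^{b}$ at a suitable point of $W$, and that these objects are \emph{semistable} there. Take $(a,b)$ to be the apex $(r,b_c)$ of $W$. Since $b_c<-k=\mu_H(I_{C_1}(-D))$ and $I_{C_1}(-D)$ is slope-stable of rank one, $I_{C_1}(-D)\in\cA^{b_c}$; likewise $I_C\in\cA^{b_c}$ because $b_c<0=\mu_H(I_C)$, and the torsion sheaf $I_{C\cap D/D}$ lies in $\cA^{b_c}$ automatically, so \eqref{eq-sec-3-1} is exact in $\cA^{b_c}$. By Lemma~\ref{bms lemma 2.7} the ideal sheaf $I_C$ is $\sigma_{a,b}$-stable for $a\gg0$, hence $2$-Gieseker-semistable and covered by Definition~\ref{def-uppermost}. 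As there is no actual wall for $I_C$ over $W$, the object $I_C$ lies in a single chamber reaching $a\gg0$ and is $\sigma_{a,b}$-semistable there; since semistability is preserved in the closure of such a chamber (standard wall-crossing, cf.\ Proposition~\ref{wall_prop} and \cite[Section~4]{bayer2020desingularization}), $I_C$ is $\sigma_{a,b}$-semistable at the apex. On $W$ the three terms of \eqref{eq-sec-3-1} have equal slope, so any subobject of $I_{C_1}(-D)$ of larger slope would be a destabilizing subobject of $I_C$, and any quotient of $I_{C\cap D/D}$ of smaller slope a destabilizing quotient of $I_C$; both are impossible, so $I_{C_1}(-D)$ and $I_{C\cap D/D}$ are $\sigma_{a,b}$-semistable on $W$. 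Thus \eqref{eq-sec-3-1} induces the actual wall $W$.

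Finally, $W$ is the uppermost wall: $b_c<0=\mu_H(I_C)$ places the apex on the left branch of the hyperbola $\mu_{a,b}(I_C)=0$, and $r<|b_c|$ gives $b_c+r<0$, so $W\subset\{a>0\}\times\{b<\mu_H(I_C)\}$; together with the hypothesis that no actual wall lies over $W$, this is exactly Definition~\ref{def-uppermost}. I expect the main obstacle to be the semistability propagation in the previous paragraph: upgrading semistability of $I_C$ from the open chamber above $W$ to the wall itself, and then transferring it to the factors $I_{C_1}(-D)$ and $I_{C\cap D/D}$. This is precisely where the ``no wall above'' hypothesis is essential, and one must take care that the destabilizing sub/quotients are formed in the heart $\cA^{b_c}$, so that comparing their slopes with that of $I_C$ is legitimate.
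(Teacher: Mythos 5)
Your proposal is correct and takes essentially the same route as the paper's proof: the inequality \eqref{eq-exist-wall} forces the numerical wall $W(I_C, I_{C_1}(-D))$ to be a nonempty semicircle in $\{b<0\}$, exactness of \eqref{eq-sec-3-1} in the heart (which you verify at the apex, using $b_c<-k$) exhibits it as an actual wall, and the no-wall-above hypothesis makes it uppermost. The only difference is one of detail: you explicitly carry out the semistability propagation for $I_C$ from the chamber $a\gg 0$ down to the wall and then to the two factors, a step the paper's proof leaves implicit.
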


\begin{proof}
From \eqref{eq-exist-wall}, we see $W(I_C, I_{C_1}(-D))\neq \varnothing$ is a semicircle numerical wall for $I_C$. As $I_{C/D}$ is a torsion sheaf, we have $I_{C/D}\in \cA^b$. Since $I_{C_1}(-D), I_C\in \cA^b$, the exact sequence \eqref{eq-sec-3-1} is also an exact sequence in $\cA^b$. Thus, it is the uppermost actual wall for $I_C$ by \eqref{eq-sec-3-1}, as there is no other actual wall for $I_C$ over $W(I_C, I_{C_1}(-D))$ by our assumption.
\end{proof}


Combining Proposition \ref{cor_wall} with Lemma \ref{lem-construct-wall}, we know that the existence of an actual wall for $I_C$ in the range $b_d\leq b<0$ is equivalent to the existence of a suitable divisor $D$ such that $\deg(C\cap D)$ is greater than a specific number (cf.~\eqref{eq-exist-wall}). This observation plays an important role in the arguments in later sections.

For convenience, we introduce a special class of curves.

\begin{definition} \label{def_good}
Let $C\subset X$ be a curve of degree $d$. For a real number $t\leq -1$, we say $C$ is \emph{$(t)$-neutral (in $X$)} if there is no actual wall for $I_C$ in the range $a>0$ and $t<b<0$.
\end{definition}


\subsection{Projection from a point}

In this section, we review classical results on the projection map from a point.

Let $V\cong \kk^{n+1}$ be a vector space of dimension $n+1\geq 2$. Then any closed point $p\in \PP(V)$ corresponds to a surjective linear map $V\to V_p$ of vector spaces with $\dim_{\kk} V_p=1$. We define $W_p:=\ker(V\to V_p)$. As in \cite[\href{https://stacks.math.columbia.edu/tag/0B1N}{Tag 0B1N}]{stacks-project}, \emph{the projection from the point $p$} is the morphism
\[r_p\colon \PP(V)\setminus p\to \PP(W_p)\]
induced by the natural inclusion $W_p\hookrightarrow V$. From the construction, we have a canonical isomorphism $$r_p^*\oh_{\PP(W_p)}(1)\cong \oh_{\PP(V)}(1)|_{\PP(V)\setminus p}.$$ Moreover, each fiber of $r_p$ is isomorphic to $\mathbb{A}^1$, and in particular, $r_p$ is smooth of relative dimension $1$.

Geometrically, $r_p(x)=r_p(y)$ for two different closed points $x,y\in \PP(V)\setminus p$ if and only if $p\in L_{x,y}$, where $L_{x,y}\cong \PP^1$ is the projective line connecting by $x$ and $y$. Moreover, for any closed point $x \in \PP(V)\setminus p$, the tangent map at $x$, denoted by $\mathrm{d}_x r_p\colon \mathrm{T}_x \PP(V)\to \mathrm{T}_{r_p(x)} \PP(W_p)$, satisfies
\[\ker(\mathrm{d}_x r_p)=\mathrm{T}_x L_{x, p}\cong \kk.\]

In the following, we will review some basic properties of projection morphisms and provide proofs for completeness.

\begin{lemma}\label{lem-proj-finite}
Let $X\subset \PP^n$ be a closed subscheme. Then $r_p|_X\colon X\to \PP^{n-1}$ is finite for any closed point $p\in \PP^n\setminus X$.
\end{lemma}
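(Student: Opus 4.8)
The plan is to check finiteness locally on the standard affine cover of $\PP^{n-1}$ by producing, on each chart, an explicit integral dependence relation for $r_p|_X$ coming from a homogeneous equation of $X$ that does not vanish at $p$. After a linear change of homogeneous coordinates I would assume $p=[0:\cdots:0:1]$, so that $r_p$ is the rational map $[x_0:\cdots:x_n]\mapsto[x_0:\cdots:x_{n-1}]$, which is a morphism away from $p$.

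The key input is that, since $p\notin X$ and $X$ is closed, the homogeneous ideal $I(X)$ contains a homogeneous polynomial $F$ with $F(p)\neq 0$. Writing $d=\deg F$, the condition $F(0,\dots,0,1)\neq 0$ says precisely that the coefficient of $x_n^{\,d}$ in $F$ is a nonzero scalar $c$, so
\[
F=c\,x_n^{\,d}+\sum_{j=0}^{d-1}a_j(x_0,\dots,x_{n-1})\,x_n^{\,j},
\]
where each $a_j$ is homogeneous of degree $d-j$ in $x_0,\dots,x_{n-1}$.

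Then for each $i\in\{0,\dots,n-1\}$ I would work over the standard chart $U_i=D_+(x_i)\subset\PP^{n-1}$, whose preimage is $(r_p|_X)^{-1}(U_i)=X\cap D_+(x_i)$ with $D_+(x_i)\subset\PP^n$. In the affine coordinates $y_j=x_j/x_i$ the projection simply forgets the coordinate $y_n$, so the coordinate ring of $X\cap D_+(x_i)$ is generated over $\oh(U_i)$ by the single element $y_n$. Dehomogenizing $F$ at $x_i$ yields an element of the ideal of $X\cap D_+(x_i)$ which, after dividing by $c$, is monic of degree $d$ in $y_n$ with coefficients (the dehomogenizations of the $a_j$) lying in $\oh(U_i)$; hence $y_n$ is integral over $\oh(U_i)$ on $X$. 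Thus $X\cap D_+(x_i)\to U_i$ is finite, and as the $U_i$ cover $\PP^{n-1}$, the morphism $r_p|_X$ is finite.

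The argument is essentially routine; the only things to be careful about are fixing $p=[0:\cdots:0:1]$ so that the preimages and the ``forget $y_n$'' description are correct, and noting that the $U_i$ for $0\le i\le n-1$ genuinely cover $\PP^{n-1}$. As an abstract shortcut one could instead observe that $X$ is proper over $\kk$ while $\PP^{n-1}$ is separated, so $r_p|_X$ is proper; each fiber over $y$ is $X\cap\bar L_y$, where $\bar L_y\cong\PP^1$ is the line through $p$ determined by $y$, and this is a closed subscheme of $\PP^1$ omitting $p$, hence finite, so that a proper quasi-finite morphism is finite. I expect the explicit integral-dependence computation to be the cleanest self-contained route, with no genuine obstacle.
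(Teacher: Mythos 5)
Your proof is correct and is essentially the paper's argument: the paper proves this lemma simply by citing \cite[Tag 0B1P]{stacks-project}, whose proof is exactly your normalization $p=[0:\cdots:0:1]$ followed by the monic integral-dependence relation for $y_n$ on each standard chart. Writing it out in full, as you did, adds nothing new but is a faithful self-contained version of the cited proof.
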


\begin{proof}
This follows from \cite[\href{https://stacks.math.columbia.edu/tag/0B1P}{Tag 0B1P}]{stacks-project} and its proof.
\end{proof}

\begin{lemma}\label{lem-04DG}
Let $f\colon X\to Y$ be a finite morphism between schemes. If $x\in X$ is a closed point and $f$ is unramified at $x$ with $f^{-1}(f(\{x\}))=\{x\}$ as sets, then there exists an open subscheme $U\subset Y$ such that $x\in f^{-1}(U)$ and $f^{-1}(U)\to U$ is a closed immersion.
\end{lemma}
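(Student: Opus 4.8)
The plan is to reduce the statement to a local algebra computation and finish with Nakayama's lemma together with a spreading-out argument. Since $f$ is finite it is affine, so first I would choose an affine open $\Spec B \subset Y$ containing $y := f(x)$; then $f^{-1}(\Spec B) = \Spec A$ for a finite $B$-algebra $A$, and the points $x$, $y$ correspond to primes $\mathfrak q \subset A$ and $\mathfrak p = \mathfrak q \cap B$. The entire problem then amounts to showing that after inverting a single element of $B$ lying outside $\mathfrak p$, the structure map $B \to A$ becomes surjective, since surjectivity of the ring map is exactly what produces a closed immersion on spectra.

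The heart of the argument is to identify the fibre ring $A \otimes_B \kappa(\mathfrak p)$ with $\kk$. Its spectrum is the scheme-theoretic fibre $f^{-1}(y)$, whose underlying set is $\{x\}$ by hypothesis, so $A \otimes_B \kappa(\mathfrak p)$ is an Artinian ring with a single prime, hence local, and therefore equal to its own localization $\oh_{f^{-1}(y),\,x}$. Here I would invoke the standing conventions: working over the algebraically closed field $\kk$ with schemes of finite type, the closed points $x$ and $y$ have residue field $\kappa(x) = \kappa(y) = \kk$. Unramifiedness of $f$ at $x$ means $\mathfrak m_y \oh_{X,x} = \mathfrak m_x$ together with $\kappa(x)/\kappa(y)$ separable; over $\kk$ this forces the fibre local ring $\oh_{f^{-1}(y),\,x} = \oh_{X,x}/\mathfrak m_y \oh_{X,x}$ to be exactly $\kappa(x) = \kk$. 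Thus $A \otimes_B \kappa(\mathfrak p) = \kk$, a one-dimensional $\kk$-vector space.

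With this in hand, localization and Nakayama's lemma finish the proof. Setting $A_{\mathfrak p} := A \otimes_B B_{\mathfrak p}$, this is a finite module over the local ring $B_{\mathfrak p}$ satisfying $A_{\mathfrak p} \otimes_{B_{\mathfrak p}} \kappa(\mathfrak p) = A \otimes_B \kappa(\mathfrak p) = \kappa(\mathfrak p)$, which is generated by the image of $1$; Nakayama then shows that $B_{\mathfrak p} \to A_{\mathfrak p}$ is surjective. Surjectivity of such a map of finitely generated modules spreads out, so there is some $g \in B \setminus \mathfrak p$ with $B_g \to A_g$ surjective. I would then take $U := D(g) \subset \Spec B \subset Y$; since localization commutes with passing to preimages, $f^{-1}(U) = \Spec A_g$, the surjection $B_g \twoheadrightarrow A_g$ exhibits $f^{-1}(U) \to U$ as a closed immersion, and $x \in f^{-1}(U)$ because $g \notin \mathfrak q$.

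The main obstacle, and really the only nontrivial point, is the fibre-ring identification in the second step: one must use unramifiedness genuinely (the single-point fibre alone would only give that the fibre ring is local, not that it is reduced) and also use that the base field is algebraically closed (to annihilate the residue field extension, which mere separability does not do, as $\Spec\CC \to \Spec\mathbb{R}$ illustrates). Once $A \otimes_B \kappa(\mathfrak p) = \kk$ is established, the remainder is the standard Nakayama-plus-spreading-out package. As a sanity check on the formulation, one could alternatively phrase the conclusion as ``$f$ is a proper monomorphism near $x$, hence a closed immersion,'' but the direct surjectivity argument above is cleaner and entirely self-contained.
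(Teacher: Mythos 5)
Your proof is correct, but it takes a different route from the paper. The paper's proof is in two short steps: first a point-set shrinking argument --- choose an open $W\ni x$ on which $f$ is unramified, set $V:=Y\setminus f(X\setminus W)$ (open because $f$, being finite, is closed), and check $f^{-1}(V)\subset W$ with $x\in f^{-1}(V)$, so that $f^{-1}(V)\to V$ is finite and unramified with $\kappa(x)=\kappa(f(x))=\kk$ --- and then it cites Stacks Project Tag 04DG as a black box to conclude. You instead re-prove the content of that tag from scratch: the singleton fiber makes $A\otimes_B\kappa(\mathfrak p)$ Artinian local, unramifiedness at $x$ plus $\kk=\bar{\kk}$ identifies it with $\kappa(\mathfrak p)$, and Nakayama plus spreading out of the vanishing of the finitely generated cokernel of $B\to A$ yields surjectivity of $B_g\to A_g$, hence a closed immersion over $D(g)$. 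Your argument is sound throughout (note that $\kappa(y)=\kk$ follows even without observing that $y$ is closed, since $\kappa(y)$ embeds in $\kappa(x)=\kk$ as an extension of $\kk$), and it never needs the paper's shrinking step, because unramifiedness is only used \emph{at} $x$ to compute the fiber ring; the module-theoretic spreading out absorbs the rest. What the paper's version buys is brevity and a clean reduction to a standard reference; what yours buys is a self-contained proof that makes visible exactly where each hypothesis enters --- unramifiedness for reducedness of the fiber ring (a single-point nonreduced fiber such as $\Spec \kk[t]/(t^2)$ would defeat the argument), the set-theoretic fiber condition for locality, and algebraic closedness for triviality of the residue extension --- which is essentially the internal proof of Tag 04DG specialized to the finite case.
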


\begin{proof}
Since $f$ is unramified at $x$, there is an open neighborhood $x\in W\subset X$ such that $f|_W$ is unramified. We define $V:=Y\setminus f(X\setminus W)$. Then it is easy to check that $f^{-1}(V)\subset W$. Moreover, since $f^{-1}(f(\{x\}))=\{x\}$ and $x\in W$, we know that $f(x)\notin f(X\setminus W)$ and hence $x\in f^{-1}(V)$. Thus $f^{-1}(V)\neq \varnothing$ is an open neighborhood of $x$ and $f^{-1}(V)\to V$ is finite and unramified.

By our assumption, the base field $\kk$ is algebraically closed, which gives $\kappa(x)=\kappa(f(x))=\kk$. Then the result follows from applying \cite[\href{https://stacks.math.columbia.edu/tag/04DG}{Tag 04DG}]{stacks-project} to $f^{-1}(V)\to V$.
\end{proof}

\begin{proposition}\label{prop-key-proj}
Let $X\subset \PP^n$ be a closed subscheme with $\dim X<n-1$. Fix closed points $x_1,\cdots, x_k\in X$ with $\dim \mathrm{T}_{x_i} X<n$ for any $1\leq i\leq k$. 

Then there exists an open subset $U_{x_1,\cdots,x_k}\subset \PP^n$ such that for any $p\in U_{x_1,\cdots,x_k}$, the morphism $r_p|_X\colon X\to \PP^{n-1}$ is a closed immersion over an open neighbourhood of $r_p(x_i)$ for each $1\leq i\leq k$. In other word, $X\to r_p|_X(X)$ is an isomorphism over an open neighbourhood of $r_p(x_i)$ for each $1\leq i\leq k$.
\end{proposition}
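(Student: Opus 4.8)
The plan is to reduce the statement to Lemma \ref{lem-04DG} by exhibiting an open locus of centers $p$ for which $r_p|_X$ is simultaneously finite, unramified at each $x_i$, and injective on the set-theoretic fibre over $r_p(x_i)$.

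First I would describe the ``bad locus''. For each $i$, let $\mathbb{T}_{x_i}X\subset \PP^n$ denote the embedded projective tangent space of $X$ at $x_i$, a linear subspace of dimension $\dim \mathrm{T}_{x_i}X$, and let $J_{x_i}(X)\subset\PP^n$ denote the join of $x_i$ with $X$, that is, the closure of $\bigcup_{y\in X\setminus\{x_i\}} L_{x_i,y}$. I would then set
\[U_{x_1,\cdots,x_k}:=\PP^n\setminus\bigcup_{i=1}^{k}\big(\mathbb{T}_{x_i}X\cup J_{x_i}(X)\big).\]
The hypothesis $\dim \mathrm{T}_{x_i}X<n$ guarantees that each $\mathbb{T}_{x_i}X$ is a proper linear subspace, and the hypothesis $\dim X<n-1$ guarantees $\dim J_{x_i}(X)\leq \dim X+1\leq n-1<n$, closedness of $J_{x_i}(X)$ following from the properness of the projection from the incidence variety of lines through $x_i$. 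Since $\PP^n$ is irreducible, $U_{x_1,\cdots,x_k}$ is a nonempty (indeed dense) open subset. Note also that $X\subseteq\bigcup_i J_{x_i}(X)$, so $p\notin X$ for every $p\in U_{x_1,\cdots,x_k}$; hence $r_p|_X$ is finite by Lemma \ref{lem-proj-finite}.

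Next, for a fixed $p\in U_{x_1,\cdots,x_k}$ and each $i$, I would verify the two remaining hypotheses of Lemma \ref{lem-04DG} at $x_i$. For unramifiedness, the tangent map of $r_p|_X$ at $x_i$ is the restriction of $\mathrm{d}_{x_i}r_p$ to $\mathrm{T}_{x_i}X$, whose kernel is $\mathrm{T}_{x_i}X\cap \ker(\mathrm{d}_{x_i}r_p)=\mathrm{T}_{x_i}X\cap \mathrm{T}_{x_i}L_{x_i,p}$. Since $p\notin \mathbb{T}_{x_i}X$, the tangent direction of the line $L_{x_i,p}$ at $x_i$ does not lie in $\mathrm{T}_{x_i}X$, so this intersection is zero and $r_p|_X$ is unramified at $x_i$ (the residue fields agree because $\kk$ is algebraically closed). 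For the fibre condition, $r_p(y)=r_p(x_i)$ with $y\in X\setminus\{x_i\}$ would force $p\in L_{x_i,y}\subseteq J_{x_i}(X)$, contradicting $p\notin J_{x_i}(X)$; hence $(r_p|_X)^{-1}(r_p(x_i))=\{x_i\}$ as sets.

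Finally, applying Lemma \ref{lem-04DG} to the finite morphism $r_p|_X$ at each $x_i$ produces an open subset $U_i\subset\PP^{n-1}$ containing $r_p(x_i)$ such that $(r_p|_X)^{-1}(U_i)\to U_i$ is a closed immersion, which is exactly the assertion that $r_p|_X$ is a closed immersion over a neighbourhood of $r_p(x_i)$. The main obstacle I anticipate is the unramifiedness translation: one must argue carefully---valid in arbitrary characteristic, using the identity $\ker(\mathrm{d}_x r_p)=\mathrm{T}_x L_{x,p}$ recorded above---that a line through $x_i$ is tangent to $X$ precisely when its second point lies in the embedded tangent space $\mathbb{T}_{x_i}X$, together with the dimension and closedness bookkeeping for the join $J_{x_i}(X)$ that makes the bad locus a proper closed subset.
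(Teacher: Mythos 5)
Your proposal is correct and takes essentially the same approach as the paper: both identify the secant locus through each $x_i$ and the tangent locus at $x_i$ as the bad centers, use the dimension hypotheses to see these loci are proper, verify finiteness via Lemma \ref{lem-proj-finite} together with unramifiedness and the one-point set-theoretic fibre at each $x_i$, and conclude with Lemma \ref{lem-04DG}. The only difference is bookkeeping: you excise the closed sets $J_{x_i}(X)$ and $\mathbb{T}_{x_i}X$ so that the complement is manifestly open, whereas the paper removes the constructible images of the incidence varieties $W$ and $W'$ and invokes constructibility results from the Stacks Project to extract a nonempty open subset --- the resulting loci of good centers are the same.
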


\begin{proof}
The proof is similar to \cite[Proposition 1.36]{sanna2014rational} and \cite[\href{https://stacks.math.columbia.edu/tag/0B1Q}{Tag 0B1Q}]{stacks-project}. First, we assume that $k=1$. Consider a closed subscheme $W$ of $(X\setminus x_1)\times \PP^n$ defined by
\[W:=\{(x, p)\in (X\setminus x_1)\times \PP^n~\colon~ p\in L_{x_1,x}\}.\]
Then we have two natural projection morphisms $h_1\colon W\to X\setminus x_1$ and $h_2\colon W\to \PP^n$. Note that $h_1^{-1}(x)=L_{x_1,x}\setminus x_1\cong \mathbb{A}^1$ for any $x\in X\setminus x_1$, then from \cite[\href{https://stacks.math.columbia.edu/tag/02FX}{Tag 02FX}]{stacks-project}, we see
\[\dim W\leq \dim X+1<n.\]
Hence $h_2$ is not surjective. On the other hand, consider a closed subscheme $W'$ of $\PP^n\setminus x_1$ defined by
\[W':=\{p\in \PP^n\setminus x_1~\colon~ L_{x_1, p}~\text{tangent to}~X~\text{at}~x_1\}.\]
Then we have $\dim W'=\dim \mathrm{T}_{x_1} X<n$, which implies the non-surjectivity of $h'\colon W'\to \PP^n$. Therefore, $$X\cup h_2(W)\cup h'(W')\neq \PP^n$$ by the dimension reason. As all schemes are finite type over $\kk$, the subsets $h_2(W)$ and $h'(W')$ are constructible subset in $\PP^n$ by \cite[\href{https://stacks.math.columbia.edu/tag/054K}{Tag 054K}]{stacks-project}. Hence from \cite[\href{https://stacks.math.columbia.edu/tag/0AAW}{Tag 0AAW}]{stacks-project}, there is a non-empty open subset $U_{x_1}$ contained in $\PP^n\setminus X\cup h_2(W)\cup h'(W')$. Then it is clear from the construction that for any $p\in U_{x_1}$, the morphism $r_p|_X\colon X\to \PP^{n-1}$ is unramified at $x_1$ and $(r_p|_X)^{-1}(r_p|_X(\{x_1\}))=\{x_1\}$. Moreover, $r_p|_X$ is finite by Lemma \ref{lem-proj-finite}. Then the statement follows from applying Lemma \ref{lem-04DG}. When $k>1$, we take $$U_{x_1,\cdots,x_k}:=\bigcap^k_{i=1} U_{x_i}$$ and the result follows.
\end{proof}

Recall that all schemes in our paper are finite type over $\kk$. For a scheme $X$ and $x\in X$, the \emph{embedding dimension} of $X$ at $x$ is $\dim_{\kappa(x)} \mathrm{T}_x X$ (cf.~\cite[\href{https://stacks.math.columbia.edu/tag/0C2H}{Tag 0C2H}]{stacks-project}). We define the \emph{generic embedding dimension} of $X$ as 
\[\gedim(X):=\min\{\dim_{\kk} \mathrm{T}_x X~|~ x\in U(\kk),~U\text{ is any open dense subset of }X\}.\]
By the upper semi-continuity of the dimension of tangent spaces, $\gedim(X)$ is independent of the choice of the open dense subset $U$. Moreover, there is an open dense subscheme of $X$ such that the embedding dimension at any closed point in it is equal to the generic embedding dimension. Hence, $\gedim$ is a birational invariant.

\begin{remark}\label{rmk-gedim-divisor}
It is clear that $\gedim(X)\geq \dim X$. If $X$ is pure $3$-dimensional and $\dim X_{\mathrm{sing}}\leq 1$, e.g.~$X$ is normal, then any effective divisor $D\subset X$ satisfies $\gedim(D)\leq 3$.
\end{remark}

\begin{remark}\label{rmk-gedim'}
Alternatively, we can define an integer $\gedim'(X)$ for a positive-dimensional scheme $X$ by
\[\gedim'(X):=\min\{\dim_{\kk} \mathrm{T}_x X~|~ x\in U(\kk),~U\text{ is an open subset of }X~\text{such that }\dim X\setminus U=0\}.\]
If $\dim X=1$, then by \cite[Lemma 4.3]{liu-ruan:cast-bound}, we can always find a curve $X'\subset X$ such that $X\setminus X'$ is empty or a finite set and $\gedim(X')=\gedim'(X)$. In particular, if $X$ is proper, then $g(X)\leq g(X')$.
\end{remark}

\begin{lemma}\label{lem-bir}
Let $X\subset \PP^n$ be a closed subscheme with $\dim X<n-1$ and $\gedim(X)<n$. Then there exists an open subset $U\subset \PP^n$ such that for any closed point $p\in U$, $r_p|_X\colon X\to \PP^{n-1}$ is a finite morphism that birational onto its (schematic) image. 
\end{lemma}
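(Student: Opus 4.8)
The plan is to deduce the statement directly from Proposition \ref{prop-key-proj}: that proposition already supplies, for a general center $p$, a projection that is a closed immersion near the image of each prescribed point, so the whole task reduces to choosing those points cleverly enough that the resulting local isomorphisms cover dense open subsets of $X$ and of its image, and then gluing them.

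First I would exploit the hypothesis $\gedim(X)<n$. By the remark following the definition of $\gedim$, there is a dense open subscheme $X^\circ\subseteq X$ on which the embedding dimension equals $\gedim(X)$, so every closed point $x\in X^\circ$ satisfies $\dim \mathrm{T}_x X<n$. Let $X_1,\dots,X_r$ be the irreducible components of $X$. Since $X^\circ$ is dense it meets each $X_i$ in a nonempty open subset, and after deleting the proper closed loci $X_i\cap X_j$ ($j\neq i$) I can choose a closed point $x_i\in X^\circ\cap X_i$ lying on no other component; by construction $\dim \mathrm{T}_{x_i} X<n$ for every $i$.

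Next I would apply Proposition \ref{prop-key-proj} to the finite set $\{x_1,\dots,x_r\}$, which is legitimate precisely because $\dim X<n-1$. This produces a nonempty open subset $U\subseteq \PP^n\setminus X$ such that for every closed point $p\in U$ the morphism $r_p|_X$ restricts to an isomorphism $(r_p|_X)^{-1}(V_i)\to V_i$ over an open neighbourhood $V_i\subseteq r_p(X)$ of $r_p(x_i)$, for each $i$. Since $U\subseteq \PP^n\setminus X$, Lemma \ref{lem-proj-finite} shows that $r_p|_X$ is finite. To conclude I would set $V:=\bigcup_i V_i$ and $U_X:=(r_p|_X)^{-1}(V)$; as $r_p|_X$ is an isomorphism over each $V_i$ and being an isomorphism is local on the target, the restriction $(r_p|_X)^{-1}(V)\to V$ is an isomorphism. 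Each $V_i$ contains $r_p(x_i)$ and lies in the component $\overline{r_p(X_i)}$ of $r_p(X)$, so $V$ meets every component of $r_p(X)$ and is dense; symmetrically $U_X$ contains a neighbourhood of each $x_i$, hence meets every component of $X$ and is dense. Thus $r_p|_X$ is a finite morphism restricting to an isomorphism $U_X\xrightarrow{\sim}V$ between dense opens, which is exactly the definition of a morphism birational onto its schematic image.

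The main (if modest) obstacle is the global bookkeeping in the last step: one must be sure that the neighbourhoods $V_i$ do not secretly contain a second sheet coming from a different component, and that distinct components are not collapsed onto one another by a general projection. Both are exactly what the ``closed immersion'' conclusion of Proposition \ref{prop-key-proj} guarantees, since injectivity near each $r_p(x_i)$ forces the images of the components to be generically distinct; this is also where the dimension bound $\dim X<n-1$ (which keeps the secant-type incidence loci from filling $\PP^n$) is used in an essential way.
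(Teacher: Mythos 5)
Your proposal is correct and takes essentially the same route as the paper: the paper's proof likewise uses $\gedim(X)<n$ to choose a closed point $x_i$ with $\dim_{\kk}\mathrm{T}_{x_i}X<n$ on each irreducible component and then sets $U:=U_{x_1,\cdots,x_k}$ from Proposition \ref{prop-key-proj}, with finiteness coming from Lemma \ref{lem-proj-finite}. The additional bookkeeping you carry out --- placing the $x_i$ off the pairwise component intersections and gluing the local isomorphisms over $V=\bigcup_i V_i$ to produce the dense opens witnessing birationality --- is precisely what the paper's one-line conclusion leaves implicit, and your verification of it is sound.
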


\begin{proof}
Let $X_1,\cdots ,X_k$ be irreducible components of $X$. By $\gedim(X)<n$, we can take a closed point $x_i\in X_i$ for each $1\leq i\leq k$ such that $\dim_{\kk} \mathrm{T}_{x_i} X<n$. Then the result follows from defining $U:=U_{x_1,\cdots,x_k}$ and applying Proposition \ref{prop-key-proj}.
\end{proof}

From the discussion above, we can process $n-3$ times projection from a general point and get:

\begin{lemma}\label{lem-good-proj-exist}
Let $X_1,\cdots,X_k\subset \PP^n$ be closed subschemes with $\dim X_i\leq 2$ and $\gedim(X_i)\leq 3<n$ for each $1\leq i\leq k$. Then we can apply $n-3$ times projection from a general point and get a rational map $\pi\colon \PP^n\dashrightarrow \PP^3$ defined on an open subset $U\subset \PP^n$ satisfying

\begin{enumerate}

    \item $\pi|_U$ is smooth and $\pi|_U^*\oh_{\PP^3}(1)\cong \oh_{\PP^n}(1)|_U$,

    \item $X_i\subset U$ for any $1\leq i\leq k$,

    \item $X_i\to \pi(X_i)$ is birational and finite for any $1\leq i\leq k$, and

    \item if $Z\subset \PP^n$ be a closed subscheme such that $Z\neq \PP^n$ and $\dim Z\leq 3$, then we can take $\pi$ that satisfies $Z\subset U$.
\end{enumerate}

\end{lemma}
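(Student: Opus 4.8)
The plan is to prove the lemma by iterating the single–point result Lemma \ref{lem-bir} exactly $n-3$ times, producing a chain of projections
\[\PP^n = P_0 \dashrightarrow P_1 \dashrightarrow \cdots \dashrightarrow P_{n-3} = \PP^3, \qquad P_j := \PP^{n-j},\]
where each arrow is the projection $r_{p_j}$ from a sufficiently general point $p_j \in P_j$. Writing $X_i^{(0)}:=X_i$, $X_i^{(j+1)}:=r_{p_j}(X_i^{(j)})$ for the successive schematic images (and $Z_0:=Z$, $Z_{j+1}:=r_{p_j}(Z_j)$), the desired map is the composition $\pi := r_{p_{n-4}} \circ \cdots \circ r_{p_0}$, and $U$ is the open locus of $\PP^n$ on which this composition of rational maps is defined. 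The whole argument is an induction on $j$ whose invariant is that each $X_i^{(j)}$ is finite and birational over $X_i$.

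At the $j$-th step ($0 \leq j \leq n-4$) the ambient space is $P_j = \PP^{n-j}$ with $n-j \geq 4$. I would apply Lemma \ref{lem-bir} to each $X_i^{(j)}$ separately. This is legitimate precisely because $\dim X_i^{(j)} \leq 2 < (n-j)-1$ and $\gedim(X_i^{(j)}) \leq 3 < n-j$; these two inequalities are the heart of the induction. They hold since, by the inductive hypothesis, $X_i^{(j)}$ is finite and birational over $X_i$, so $\dim X_i^{(j)} = \dim X_i \leq 2$, while by the birational invariance of $\gedim$ noted above the lemma, $\gedim(X_i^{(j)}) = \gedim(X_i) \leq 3$. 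Lemma \ref{lem-bir} then gives a dense open $U_{i,j}' \subset P_j$ (necessarily $U_{i,j}' \subset P_j \setminus X_i^{(j)}$, since $r_p|_{X_i^{(j)}}$ must be a morphism) on which $r_p|_{X_i^{(j)}}$ is finite and birational onto its image. To also secure property (4), observe $\dim Z_j \leq \dim Z \leq 3 < n-j$, so $Z_j$ is a proper closed subset of $P_j$ and $P_j \setminus Z_j$ is dense open. I would then pick
\[p_j \in \Big(\bigcap_{i=1}^{k} U_{i,j}'\Big) \cap (P_j \setminus Z_j),\]
which is a finite intersection of dense open subsets of the irreducible variety $P_j$, hence nonempty.

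With these choices the four properties are immediate. Since $p_j \notin X_i^{(j)}$ for every $j$, unwinding the definition of the domain of a composed projection shows the whole chain is defined along $X_i$, giving $X_i \subset U$, i.e. (2); identically $p_j \notin Z_j$ gives $Z \subset U$, i.e. (4). Property (3) follows because each $X_i^{(j)} \to X_i^{(j+1)}$ is finite and birational by Lemma \ref{lem-bir}, and a finite composition of finite birational morphisms is finite and birational, so $X_i \to \pi(X_i) = X_i^{(n-3)}$ is finite and birational. For (1), each $r_{p_j}$ is smooth of relative dimension one on $P_j \setminus p_j$ with $r_{p_j}^*\oh_{P_{j+1}}(1) \cong \oh_{P_j}(1)|_{P_j \setminus p_j}$, as recalled in Section \ref{sec-proj}; restricting to $U$ and composing these isomorphisms yields that $\pi|_U$ is smooth and $\pi|_U^*\oh_{\PP^3}(1) \cong \oh_{\PP^n}(1)|_U$.

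I expect the only genuine subtlety — the hard part — to be verifying that the hypotheses of Lemma \ref{lem-bir} persist through every iteration, namely that the images $X_i^{(j)}$ keep satisfying $\dim \leq 2$ and $\gedim \leq 3$. This is exactly where the finiteness–and–birationality output of Lemma \ref{lem-bir} and the birational invariance of $\gedim$ are indispensable, and it is also what dictates stopping the process at $\PP^3$: once the ambient dimension would drop to $3$, the strict inequality $\gedim(X_i^{(j)}) < n-j$ can fail and the open sets furnished by Lemma \ref{lem-bir} may become empty. Everything else reduces to elementary bookkeeping about domains of composed rational maps and finite intersections of dense opens.
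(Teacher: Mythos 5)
Your proposal is correct and follows essentially the same route as the paper: iterate Lemma \ref{lem-bir} $n-3$ times, using the birational invariance of $\gedim$ (noted just before the lemma) to preserve the hypotheses $\dim\leq 2$ and $\gedim\leq 3$ at each step, with (a) coming from the basic properties of $r_p$ and (d) from Lemma \ref{lem-proj-finite} together with $\dim Z\leq 3$ forcing each image $Z_j$ to be a proper closed subset. The paper merely states this in compressed form, so your write-up just makes the intended induction explicit.
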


\begin{proof}
First, (a) follows from the discussion at the beginning of this subsection. From Lemma \ref{lem-bir}, (b) and (c) can be achieved as well. Finally, since $\dim Z\leq 3$, $Z$ can not map onto $\PP^m$ unless $m=3$. Hence from Lemma \ref{lem-proj-finite}, (d) can also be satisfied by projection from general points.
\end{proof}

To simplify the notation, we make the following definition.

\begin{definition}
Let $X\subset \PP^n$ be a closed subscheme with $\dim X\leq 2$ and $\gedim(X)\leq 3<n$. A \emph{good projection of $X$} is a rational map $\pi\colon \PP^n\dashrightarrow \PP^3$ constructed in Lemma \ref{lem-good-proj-exist}. We call the restricted morphism $X\to \pi(X)$ a good projection of $X$ as well and denote it by $\pi_X$.

If $X_1,\cdots,X_k\subset \PP^n$ are closed subschemes with $\dim X_i\leq 2$ and $\gedim(X_i)\leq 3<n$ for each $1\leq i\leq k$, then we say $\pi\colon \PP^n\dashrightarrow \PP^3$ is a \emph{good projection of $\{X_i\}_{1\leq i\leq k}$} if $\pi$ is a good projection of $X_i$ for each $1\leq i\leq k$.
\end{definition}

From the construction in Lemma \ref{lem-good-proj-exist}, a good projection $\pi\colon \PP^n\dashrightarrow \PP^3$ is the composition of $n-3$ rational maps $\pi_i\colon \PP^{n-i+1} \dashrightarrow \PP^{n-i}$ and each of them is a projection from a general point. We call $\pi_i$ \emph{the $i$-th projection of $\pi$}. 

Note that $\pi$ is also a projection from a linear subspace $\PP^{n-4}\subset \PP^n$, so the indeterminacy of $\pi$ is exactly $\PP^{n-4}$. We say a statement holds for \emph{any generic good projection} if this statement holds for any good projection corresponds to a generic linear subspace $\PP^{n-4}\subset \PP^n$.

\begin{remark}\label{rmk-proj-iso}
It is straightforward to check that $\pi\colon \PP^n\dashrightarrow \PP^3$ has a section, in the sense that there is a subspace $i\colon \PP^3\hookrightarrow \PP^n$ such that $\pi\circ i=\mathrm{id}_{\PP^3}$.
\end{remark}

In the rest of this section, we study the behavior of curves under good projections.

\begin{lemma}\label{lem-cok-dim0}
Let $C\subset \PP^n$ be a closed subscheme with $\dim C=1$ and $\gedim(C)\leq 3<n$. Assume $\pi_C\colon C\twoheadrightarrow C'\subset \PP^3$ is a good projection of $C$. Then we have an exact sequence
\begin{equation}\label{eq-key-exact-seq}
    0\to \oh_{C'}\to (\pi_C)_*\oh_C\to T\to 0,
\end{equation}
where $T$ is supported on points, and $T=0$ if and only if $\pi_C$ is an isomorphism. 

In particular, $\deg(C)=\deg(C')$ and
\[g(C)\leq g(C')\leq \frac{(\deg(C)-1)(\deg(C)-2)}{2}.\]
\end{lemma}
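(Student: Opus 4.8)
The plan is to read everything off the finite birational morphism $\pi_C\colon C\to C'$ together with the properties of good projections recorded in Lemma~\ref{lem-good-proj-exist}. First I would produce the exact sequence. Since $C'=\pi_C(C)$ is by definition the \emph{schematic} image, the unit map $\oh_{C'}\to (\pi_C)_*\oh_C$ is injective, and I set $T:=\cok\big(\oh_{C'}\to (\pi_C)_*\oh_C\big)$, which gives \eqref{eq-key-exact-seq}. Because $\pi_C$ is birational by Lemma~\ref{lem-good-proj-exist}(c), there are dense opens $U\subset C$ and $V\subset C'$ with $\pi_C|_U\colon U\xrightarrow{\sim} V$, so $\oh_{C'}\to(\pi_C)_*\oh_C$ is an isomorphism over $V$; hence $\supp T\subset C'\setminus V$ is finite and $T$ is supported in dimension $0$. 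Finally $\pi_C$ is finite, hence affine, so $C=\underline{\Spec}_{C'}\big((\pi_C)_*\oh_C\big)$, and therefore $\pi_C$ is an isomorphism if and only if $\oh_{C'}\to(\pi_C)_*\oh_C$ is an isomorphism, i.e.\ if and only if $T=0$.

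For the numerical claims I would twist \eqref{eq-key-exact-seq} by the line bundle $\oh_{C'}(m)$ and take Euler characteristics. By Lemma~\ref{lem-good-proj-exist}(a) we have $\pi_C^*\oh_{\PP^3}(1)\cong\oh_{\PP^n}(1)|_C$, and finiteness gives $R^{>0}(\pi_C)_*=0$, so the projection formula yields $\chi(\oh_C(m))=\chi\big((\pi_C)_*\oh_C\otimes\oh_{C'}(m)\big)$. As $T$ is $0$-dimensional, $\chi\big(T\otimes\oh_{C'}(m)\big)=\operatorname{length}(T)=:t$ is independent of $m$, so the twisted sequence gives $\chi(\oh_C(m))=\chi(\oh_{C'}(m))+t$ for all $m$. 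Comparing the two sides as polynomials in $m$, the linear coefficients give $\deg(C)=\deg(C')$ and the constant terms give $1-g(C)=1-g(C')+t$, i.e.\ $g(C)=g(C')-t\le g(C')$, with equality exactly when $t=0$, consistently with the isomorphism criterion above.

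It remains to bound $g(C')$ by $\binom{d-1}{2}$, where $d=\deg(C')$; this is the classical genus bound for degree-$d$ one-dimensional subschemes of $\PP^3$. In the spirit of the paper I would deduce it by projecting once more: when $\gedim(C')\le 2$, Lemma~\ref{lem-bir} (with $n=3$) produces a finite morphism $C'\to\bar C\subset\PP^2$ that is birational onto its schematic image $\bar C$, and the genus monotonicity just established gives $g(C')\le g(\bar C)$. Since $\bar C$ is a one-dimensional subscheme of the smooth surface $\PP^2$, after discarding embedded points (which only raise $\chi$) one is left with an effective divisor of degree $d$, whence $g(\bar C)\le\binom{d-1}{2}$ by adjunction on $\PP^2$, and the chain of inequalities closes.

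The main obstacle is precisely this last step in the degenerate range $\gedim(C')=3$, i.e.\ when $C'$ is everywhere non-reduced (a \emph{thick} curve such as an infinitesimal thickening of a line): then the differential of any projection $\PP^3\dashrightarrow\PP^2$ already drops rank at the generic point (its kernel $\mathrm{T}_xL_{x,p}$ is nonzero while $\dim\mathrm{T}_xC'=3$), so $r_p|_{C'}$ cannot be birational onto a plane curve and the monotonicity comparison is unavailable. I would resolve this case either by invoking the classical bound directly (see e.g.~\cite{Hartshorne1994TheGO,harris:curve-in-space}), or, to stay self-contained in arbitrary characteristic, by a Hilbert-function estimate on a general hyperplane section $\Gamma=C'\cap H\subset\PP^2$: bounding the growth of $h^0(\oh_\Gamma(m))$ below by that of $d$ collinear points and summing the resulting inequalities recovers $g(C')\le\binom{d-1}{2}$. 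Everything else -- schematic-image injectivity, the projection formula, and the degree/genus comparison -- is routine and insensitive to non-reducedness or to the characteristic.
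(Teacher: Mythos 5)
Your proposal is correct and follows essentially the same route as the paper: injectivity of $\oh_{C'}\to(\pi_C)_*\oh_C$ from the schematic image, a zero-dimensional cokernel from birationality, the isomorphism criterion via affineness of $\pi_C$, a comparison of Euler characteristics for $g(C)\le g(C')$, and the final bound for $C'\subset\PP^3$ obtained — exactly as in your fallback — by citing the classical result \cite[Theorem 3.1]{Hartshorne1994TheGO}. Your twisted-Hilbert-polynomial computation is a harmless equivalent of the paper's cycle-pushforward argument for $\deg(C)=\deg(C')$, and your (correct) concern about the everywhere non-reduced case $\gedim(C')=3$ is precisely why one should not attempt a further birational projection to $\PP^2$ but instead invoke Hartshorne's bound in $\PP^3$ directly, as the paper does.
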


\begin{proof}
As $C'$ is the schematic image of $C$, the natural map $\oh_{C'}\to (\pi_C)_*\oh_C$ is injective. Moreover, since $\pi_C$ is birational, we see that $\cok(\oh_{C'}\hookrightarrow (\pi_C)_*\oh_C)$ is zero on an open dense subscheme of $C'$, hence is supported on points, which proves the first statement. Moreover, $T=0$ if and only if $\oh_{C'}\cong (\pi_C)_*\oh_C$, which is equivalent to $C\cong C'$ by the affineness of $\pi_C$.

From the definition of a good projection, we see $\pi_C^*(\oh_{\PP^3}(1)|_{C'})\cong \oh_{\PP^n}(1)|_C$, thus the claim $\deg(C)=\deg(C')$ follows from the birationality of $\pi_C$. Next, using \eqref{eq-key-exact-seq}, we get $\chi(\oh_C)\geq \chi(\oh_{C'})$, which gives $g(C)\leq g(C')$. Finally, applying \cite[Theorem 3.1]{Hartshorne1994TheGO} to $C'\subset \PP^3$ implies
\[g(C)\leq g(C')\leq  \frac{(\deg(C)-1)(\deg(C)-2)}{2}.\]
\end{proof}

\begin{proposition}\label{prop-decompose-wall}
Let $C\subset \PP^n$ be a closed subscheme with $\dim C=1$ and $\gedim(C)\leq 3<n$. Assume $\pi_C\colon C\twoheadrightarrow C'\subset \PP^3$ is a good projection of $C$. If we have an exact sequence
\[0\to \oh_{C_1'}(-k)\to \oh_{C'}\to \oh_{C'\cap S'}\to 0\]
where $S'\in |\oh_{\PP^3}(k)|$ for an integer $k\geq 1$ and $\dim(C'_1)=\dim (C'\cap S')=1$, then there is a closed subscheme $C_1\subset C$ and $S\in |\oh_{\PP^n}(k)|$ satisfying $$g(C)= g(C_1)+g(C\cap S)+k\deg(C_1)-1,$$
with $\deg(C_1)=\deg(C_1')$ and $\deg(C\cap S)=\deg(C'\cap S')$.
\end{proposition}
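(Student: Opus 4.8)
The plan is to pull the hypersurface $S'$ back to $\PP^n$, decompose $\oh_C$ against this pullback exactly as in Lemma \ref{lem-decompose}, and then read off both the genus identity and the two degree equalities from additivity of Euler characteristics and of degrees.

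First I would construct $S$. A good projection $\pi\colon\PP^n\dashrightarrow\PP^3$ is a linear projection from a center $\Lambda\cong\PP^{n-4}$; after a linear change of coordinates $\pi([x_0:\cdots:x_n])=[x_0:x_1:x_2:x_3]$. Writing $S'=V(F)$ with $F$ homogeneous of degree $k$ in the four coordinates, I set $S:=V(F(x_0,x_1,x_2,x_3))\subset\PP^n$, the cone over $S'$ with vertex $\Lambda$, which lies in $|\oh_{\PP^n}(k)|$. On the domain $U=\PP^n\setminus\Lambda$ of $\pi$ one has $S\cap U=\pi^{-1}(S')$ scheme-theoretically, since under $\pi|_U^*\oh_{\PP^3}(1)\cong\oh_{\PP^n}(1)|_U$ the section $F$ cutting out $S'$ pulls back to the section cutting out $S\cap U$. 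Because a good projection satisfies $C\subset U$ and $\pi_C^*(\oh_{\PP^3}(k)|_{C'})\cong\oh_{\PP^n}(k)|_C$, restricting to $C$ identifies $C\cap S$ with the zero scheme of the pullback of the section of $\oh_{C'}(k)$ cutting out $C'\cap S'$; hence $C\cap S=\pi_C^{-1}(C'\cap S')=C\times_{C'}(C'\cap S')$ as schemes.

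Next I would produce the decomposition and the genus formula. Since $\dim C_1'=1$, the kernel of $\oh_{C'}\to\oh_{C'\cap S'}$ is nonzero, so $C'\not\subset S'$; as $\pi_C(C)=C'$ and $\pi(S\cap U)\subseteq S'$, this forces $C\not\subset S$. Applying Lemma \ref{lem-decompose} to $C\subset\PP^n$ and the divisor $S$ then yields a closed subscheme $C_1\subset C$ and a short exact sequence
\[0\to\oh_{C_1}(-k)\to\oh_C\to\oh_{C\cap S}\to 0.\]
Taking Euler characteristics, using $\chi(\oh_{C_1}(-k))=\chi(\oh_{C_1})-k\deg(C_1)$ (the Hilbert polynomial of $\oh_{C_1}$ with respect to $\oh(1)$ is linear with leading coefficient $\deg(C_1)$), and the convention $g=1-\chi(\oh_{-})$, the asserted identity $g(C)=g(C_1)+g(C\cap S)+k\deg(C_1)-1$ follows at once.

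Finally I would match degrees. From the given sequence and the one above, additivity of the degree of $1$-dimensional sheaves gives $\deg(C)=\deg(C_1)+\deg(C\cap S)$ and $\deg(C')=\deg(C_1')+\deg(C'\cap S')$, while $\deg(C)=\deg(C')$ by Lemma \ref{lem-cok-dim0}; so it suffices to show $\deg(C\cap S)=\deg(C'\cap S')$. The finite surjection $\pi_C$ restricts to a finite surjection $C\cap S=\pi_C^{-1}(C'\cap S')\to C'\cap S'$, and I claim it is birational: $\pi_C$ is an isomorphism over a dense open $V\subset C'$, which contains the generic point of every irreducible component of $C'$, and every $1$-dimensional component of $C'\cap S'$ is a $1$-dimensional component of $C'$, so its generic point lies in $V$; thus $\pi_C$ is an isomorphism over the dense open $V\cap(C'\cap S')$. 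A finite birational morphism preserving $\oh(1)$ preserves the $H$-degree of $1$-dimensional schemes, exactly as in Lemma \ref{lem-cok-dim0}, giving $\deg(C\cap S)=\deg(C'\cap S')$ and hence $\deg(C_1)=\deg(C_1')$. The main obstacle is the scheme-theoretic bookkeeping of the first step — pinning down $C\cap S=\pi_C^{-1}(C'\cap S')$ and verifying that the isomorphism locus of $\pi_C$ meets the generic points of the $1$-dimensional part of $C'\cap S'$; once this is secured, the genus formula is a formal Euler-characteristic computation and the degree equalities are immediate from additivity.
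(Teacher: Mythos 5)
Your proof is correct and follows the same skeleton as the paper's — pull $S'$ back to a divisor $S\in|\oh_{\PP^n}(k)|$, identify $C\cap S=\pi_C^{-1}(C'\cap S')$, apply Lemma \ref{lem-decompose} to obtain $0\to\oh_{C_1}(-k)\to\oh_C\to\oh_{C\cap S}\to 0$, and read off the genus identity from Euler characteristics — but it deviates at the two technical points, in both cases profitably. First, where the paper defines $S:=\overline{\pi_U^{-1}(S')}$ and must then argue, via flatness of $\pi_U$ and a comparison of line bundles restricted to $C$, that $S$ has degree exactly $k$, you build $S$ directly as the cone over $S'$ with vertex the center $\Lambda\cong\PP^{n-4}$, so $S\in|\oh_{\PP^n}(k)|$ by construction; your fiber-product identification $C\cap S=C\times_{\PP^3}S'=\pi_C^{-1}(C'\cap S')$ is also scheme-theoretically cleaner than the paper's pointwise manipulations, and you verify the hypothesis $C\neq C\cap S$ of Lemma \ref{lem-decompose} explicitly, which the paper leaves implicit. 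Second, for $\deg(C_1)=\deg(C_1')$ the paper pushes the sequence on $C$ forward along the affine map $\pi_C$ and compares it with the given sequence on $C'$ in a commutative diagram, deducing that $\oh_{C_1'}(-k)\to(\pi_C)_*\oh_{C_1}(-k)$ is injective with punctual cokernel; you instead subtract: additivity of leading Hilbert coefficients in the two sequences gives $\deg(C)=\deg(C_1)+\deg(C\cap S)$ and $\deg(C')=\deg(C_1')+\deg(C'\cap S')$, and with $\deg(C)=\deg(C')$ from Lemma \ref{lem-cok-dim0} everything reduces to $\deg(C\cap S)=\deg(C'\cap S')$ — an equality both proofs need anyway, and your generic-point justification of it is more detailed than the paper's one-line appeal to birationality. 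The one point to tighten: your claim that $\pi_C$ is an isomorphism over a dense open $V\subset C'$ is slightly stronger than the paper's definition of a birational morphism (that $f|_U\colon U\to V$ is an isomorphism does not by itself force $\pi_C^{-1}(V)=U$); either invoke the closed-immersion-over-opens property built into good projections (Proposition \ref{prop-key-proj} via Lemma \ref{lem-04DG}), or shrink $V$ by removing the finite closed set $\pi_C(C\setminus U)$, which is legitimate since $\pi_C$ is finite. As your degree count only uses the isomorphism at the generic points of the $1$-dimensional components of $C'\cap S'$, this is a cosmetic fix rather than a gap.
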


\begin{proof}
Let $\pi\colon \PP^n\dashrightarrow \PP^3$ be the rational map corresponding to $\pi_C$ which is defined on $U\subset \PP^n$. We define $S:=\overline{\pi_U^{-1}(S')}$. By the flatness of $\pi_U=\pi|_U$, $S$ is a hypersurface in $\PP^n$ and we can assume that $S\in |\oh_{\PP^n}(l)|$ for an integer $l>0$. Moreover, since $\pi_C^*(\oh_{\PP^3}(1)|_{C'})\cong \oh_{\PP^n}(1)|_C$, we see $\pi_C^*(\oh_{\PP^3}(S')|_{C'})\cong \oh_{\PP^n}(S)|_C$. As $S'\in |\oh_{\PP^3}(k)|$, we get $\oh_{\PP^n}(k)|_C\cong \oh_{\PP^n}(S)|_C\cong \oh_{\PP^n}(l)|_C$. Hence by comparing the degree of line bundles on both sides, we find $k=l$.

Next, note that $C\subset U$ and $C\cap \pi_U^{-1}(C'\cap S')=\pi_C^{-1}(C'\cap S')$, we get
\[\pi_C^{-1}(C'\cap S')=C\cap \pi_U^{-1}(C'\cap S')=C\cap \pi_U^{-1}(C')\cap \pi_U^{-1}(S')=C\cap \pi_U^{-1}(S'),\]
where the last equality is from $C\subset \pi_U^{-1}(C')$. Then by $C\subset U$ and $\pi_U^{-1}(S')\cap U=S$, we obtain $$\pi_C^{-1}(C'\cap S')=C\cap \pi_U^{-1}(S')=C\cap S.$$ Therefore, the birationality of $\pi_C$ implies that $C\cap S$ and $C'\cap S'$ only different from a zero-dimensional subscheme, so we can conclude that $\deg(C\cap S)=\deg(C'\cap S')$.

Now from Lemma \ref{lem-decompose}, we have an exact sequence
\[0\to \oh_{C_1}(-k)\to \oh_C\to \oh_{C\cap S}\to 0,\]
where $C_1\subset C$ is a closed subscheme. Then it is clear that $g(C)= g(C_1)+g(C\cap S)+kd_1-1$. Hence it remains to prove $\deg(C_1)=\deg(C_1')$. To this end, since $\pi_C$ is affine, the functor $(\pi_C)_*$ preserves exact sequences. Hence we get a commutative diagram of exact sequences
\[\begin{tikzcd}
	0 & {\oh_{C_1'}(-k)} & {\oh_{C'}} & {\oh_{C'\cap S'}} & 0 \\
	0 & {(\pi_C)_*\oh_{C_1}(-k)} & {(\pi_C)_*\oh_{C}} & {(\pi_C)_*\oh_{C\cap S}} & 0
	\arrow[from=1-1, to=1-2]
	\arrow[from=1-2, to=1-3]
	\arrow[from=1-3, to=1-4]
	\arrow[from=1-4, to=1-5]
	\arrow[from=2-1, to=2-2]
	\arrow[from=2-2, to=2-3]
	\arrow[from=2-3, to=2-4]
	\arrow[from=2-4, to=2-5]
	\arrow["b"', from=1-4, to=2-4]
	\arrow["a"', from=1-3, to=2-3]
	\arrow["c"', from=1-2, to=2-2]
\end{tikzcd}\]
where $a$ and $b$ are injective with zero-dimensional cokernels Lemma \ref{lem-cok-dim0}. Thus, $c$ is injective and $\cok(c)$ is supported on points as well. This gives $\deg(C_1)=\deg(C_1')$ and completes the proof.
\end{proof}






\section{Bounds of the genus} \label{sec_bound}

In this section, we aim to get upper bounds of the arithmetic genus of curves in projective 3-folds and prove Theorem \ref{thm-general-bound}, Theorem \ref{thm-general-bound-isolated}, and their corollaries. We divide the proof into several lemmas and propositions. Let $h$ be the hyperplane class on $\PP^3$.

We start with two lemmas. Define a function $$\epsilon(d, n)\colon \ZZ_{\geq 1}\times \ZZ_{\geq 1}\to \QQ$$ by
\begin{equation}\label{eq-epsilon-def}
\epsilon(d,n):=\frac{f}{2}(n-f-1+\frac{f}{n})=(\frac{1-n}{2n})f^2+\frac{n-1}{2}f,
\end{equation}
where $d\equiv f \mod n$ and $0\leq f<n$. It is easy to see $\epsilon(d,n)=\epsilon(f,n)=\epsilon(n-f,n)$ and
\begin{equation}\label{eq-epsilon-bound}
   0\leq \epsilon(d, n)\leq \frac{n^2-n}{8}. 
\end{equation}

\begin{lemma}\label{lem-ms-bound}
Let $C\subset \PP^3$ be a $1$-dimensional closed subscheme of degree $d$. If $D\in |\oh_{\PP^3}(n)|$ is a divisor such that $C\subset D$ and $I_{C/D}$ is semistable at some $(a,b)\in \mathbb{R}_{>0}\times \mathbb{R}$ (e.g.~$D$ is integral), then we have
\[g(C)\leq \frac{1}{2n}d^2+\frac{n-4}{2}d+1-\epsilon(d, n).\]
\end{lemma}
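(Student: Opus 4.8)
The plan is to run the Bogomolov--Gieseker-type inequality of Bayer--Macr\`i--Toda, which holds on $\PP^3$ by Theorem~\ref{SBG}, against the torsion sheaf $I_{C/D}$ (supported on $D$, hence lying in $\cA^b$ for every $b$). First I would record its truncated Chern character from the structure sequence $0\to I_{C/D}\to \oh_D\to \oh_C\to 0$ on $\PP^3$: one finds $\ch_0(I_{C/D})=0$, $h^2\ch_1(I_{C/D})=n$ and $h\ch_2(I_{C/D})=-P$ with $P:=\tfrac{n^2}{2}+d$, while a Riemann--Roch computation rewrites $\ch_3(I_{C/D})$ in terms of $g(C)$. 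In particular $\Delta_h(I_{C/D})=n^2$, and since $\ch_0=0$ every numerical wall for $I_{C/D}$ is a semicircle centred on the vertical line $b=b_\ast:=-P/n$ by Proposition~\ref{wall_prop}(4). The hypothesis that $I_{C/D}$ is $\sigma_{a,b}$-semistable at one point, combined with Proposition~\ref{wall_prop}(1), lets me propagate semistability across the chamber containing that point.

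The decisive simplification occurs on the apex line $b=b_\ast$: there $h\ch_2^{b_\ast h}(I_{C/D})=0$, so the middle term of $Q_{a,b_\ast}$ collapses and
\[Q_{a,b_\ast}(I_{C/D})=a^2n^2-6n\,\ch_3^{b_\ast h}(I_{C/D}).\]
I would therefore slide down the line $b=b_\ast$ until reaching the apex $(a_w,b_\ast)$ of the outermost actual wall of $I_{C/D}$ (taking $a_w=0$ if no wall is met, which only strengthens the conclusion). The object remains semistable there, so Theorem~\ref{SBG} gives $Q_{a_w,b_\ast}(I_{C/D})\ge 0$. Feeding back the Riemann--Roch expression for $\ch_3$ converts this into
\[g(C)\le \frac{1}{2n}d^2+\frac{n-4}{2}d+1-\frac{n^3}{24}+\frac{n}{6}a_w^2,\]
so the whole statement reduces to the radius estimate $a_w^2\le \tfrac{n^2}{4}-\tfrac{6}{n}\,\epsilon(d,n)$ for the outermost wall.

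The main obstacle is exactly this radius bound, and it is where the arithmetic correction $\epsilon(d,n)$ has to be produced. Using the wall formula $a_w^2=\tfrac{P^2}{n^2}+\tfrac{2(P\,h^2\ch_1(F)+n\,h\ch_2(F))}{n\,\ch_0(F)}$ for a destabilizing subobject $F\hookrightarrow I_{C/D}$, a crude bound of the form $a_w\le n/2$ (coming from the nonnegativity of the discriminants of $F$ and of the quotient, cf.~Proposition~\ref{wall_prop}(6)) already yields the clean inequality $g(C)\le \tfrac{1}{2n}d^2+\tfrac{n-4}{2}d+1$; the refinement by $\epsilon$ must then come from the integrality of $(\ch_0(F),h^2\ch_1(F),h\ch_2(F))$, which forces the wall to fall short of the extremal circle by an amount controlled by the residue $f\equiv d\pmod n$. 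I expect this integral optimisation to be the technical heart of the argument, with the $f$-dependence of the optimum reproducing $\epsilon(d,n)=\tfrac{f}{2}(n-f-1+\tfrac{f}{n})$. As an independent sanity check, when $D$ is smooth the same leading bound follows from adjunction together with the Hodge index inequality $C^2\le d^2/n$ on $D$; this confirms both the shape of the estimate and that equality is reached precisely when $n\mid d$, i.e.\ when $\epsilon(d,n)=0$.
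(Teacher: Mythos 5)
Your reduction is essentially correct as far as it goes, and it is in fact the architecture of the proof of the result the paper actually invokes: the paper's own proof of Lemma~\ref{lem-ms-bound} consists of recording $\ch_{h}(I_{C/D})=\bigl(0,n,-d-\tfrac{n^2}{2},\tfrac{n^3}{6}+g+2d-1\bigr)$ and citing \cite[Theorem 3.4]{macri:space-curve}, so you are re-deriving the citation rather than the paper's one-line argument. The pieces you carry out do check: the apex line $b_*=-\tfrac{d}{n}-\tfrac{n}{2}$, the inequality $g\le \tfrac{1}{2n}d^2+\tfrac{n-4}{2}d+1-\tfrac{n^3}{24}+\tfrac{n}{6}a_w^2$ from Theorem~\ref{SBG} at the apex, and your wall-radius formula are all correct; moreover $Q_{a,b}(I_{C/D})$ is \emph{constant} along each numerical wall (one computes $Q=n^2\rho^2+3P^2-6n\ch_3$ on a wall of radius $\rho$), so the apex is as good as any point. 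The crude bound $a_w\le \tfrac n2$ is also true, though it comes most cleanly from $0\le h^2\ch_1^{b}(F)\le n$ along the wall (giving $a_w\le \tfrac{n}{2\ch_0(F)}$) rather than from the discriminants; with it, your argument genuinely proves the bound \emph{without} $\epsilon(d,n)$, including the no-wall case since $\tfrac{n^3}{24}\ge\tfrac{n^2-n}{8}\ge\epsilon(d,n)$.

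The step you defer, however, is not a routine integral optimisation: the radius estimate $a_w^2\le \tfrac{n^2}{4}-\tfrac{6}{n}\epsilon(d,n)$ that you reduce to is \emph{false}. Write $d=qn+f$ with $0<f<n$, and note $\epsilon(d,n)=\tfrac{f(n-f)(n-1)}{2n}$. A subobject $\oh_{\PP^3}(-q-1)\hookrightarrow I_{C/D}$ (which exists exactly when $C$ lies on a surface $T$ of degree $q+1$ not containing $D$) induces a wall of radius $a_w=\tfrac{n}{2}-\tfrac{n-f}{n}$, and $a_w^2>\tfrac{n^2}{4}-\tfrac{6}{n}\epsilon(d,n)$ whenever $f(3n-4)>n(n-1)$, i.e.\ roughly $f\gtrsim n/3$. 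Concretely, take $n=7$, $d=13$ (so $f=6$), and $C$ linked to a line in a $(2,7)$ complete intersection with $D$ the integral septic: the $\oh(-2)$-wall has radius $\tfrac{47}{14}$, and using $0\le h^2\ch_1^b(F)\le n$ plus integrality of $\ch_1$ one checks no larger wall is numerically possible, so it is the uppermost actual wall; yet BMT at its apex gives only $g\le 31$, while the lemma asserts $g\le 30$ --- which is sharp, since this very curve has $g=30$. Because $Q$ is constant on the wall, no other point on it helps, and the deficit grows linearly in $n$ for $f$ near $n$, so integrality of $g$ cannot absorb it either. The missing idea is to exploit the destabilizing sequence geometrically once the uppermost wall is large: then $C\subset D\cap T$, and one bounds $g(C)$ through the residual curve $C'$ of degree $d'=n-f$ (liaison gives $g(C)-g(C')=\tfrac12(d-d')(n+q-3)$, and $g(C')\le \tfrac{(d'-1)(d'-2)}{2}$), which is precisely where $\epsilon(d,n)$ comes from. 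That case analysis is the content of the cited Theorem 3.4, and it is the same mechanism this paper runs for $I_C$ in Proposition~\ref{cor_wall} and Lemma~\ref{lem-construct-wall}.
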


\begin{proof}
Note that
\[\ch_{h}(I_{C/D})=\big(0,n,-d-\frac{n^2}{2},\frac{n^3}{6}+(g(C)+2d-1)\big),\]
then the result follows from applying \cite[Theorem 3.4]{macri:space-curve} to $I_{C/D}$.
\end{proof}

\begin{lemma}\label{lem-bmt-at-ab}
Let $C\subset \PP^3$ be a $1$-dimensional closed subscheme of degree $d$. If $I_C$ is $\sigma_{a,b_0}$-semistable for any $a>0$ and a fixed $b_0<0$, then we have
\[g(C)\leq \frac{2}{-3b_0}d^2+(-\frac{b_0}{3}-2)d+1.\]
\end{lemma}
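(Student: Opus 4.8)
The plan is to feed the $\sigma_{a,b_0}$-semistable ideal sheaf $I_C$ into the Bogomolov--Gieseker-type inequality $Q_{a,b_0}(I_C) \geq 0$ of Conjecture \ref{SBG_conj}, which holds unconditionally on $\PP^3$ by Theorem \ref{SBG}, and then to extract the genus bound by passing to the boundary $a \to 0^+$.

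First I would record the invariants of $I_C$ on $\PP^3$. With $h$ the hyperplane class (so $n = h^3 = 1$), one has $\ch_0(I_C) = 1$, $\ch_1(I_C) = 0$, $h\ch_2(I_C) = -d$, and, since $\chi(I_C) = g(C)$ and $\td_1(\PP^3) = 2h$, Hirzebruch--Riemann--Roch gives $\ch_3(I_C) = g(C) - 1 + 2d$. Twisting by $b_0$ (recall $\ch^{b_0} = e^{-b_0 h}\ch$) then yields
\[h^2\ch^{b_0}_1(I_C) = -b_0, \qquad h\ch^{b_0}_2(I_C) = -d + \tfrac{b_0^2}{2}, \qquad \ch^{b_0}_3(I_C) = g(C) - 1 + 2d + b_0 d - \tfrac{b_0^3}{6},\]
while the ($b$-invariant) discriminant is $\Delta_h(I_C) = 2d$.

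Since $I_C$ is $\sigma_{a,b_0}$-semistable for every $a > 0$, Theorem \ref{SBG} gives $Q_{a,b_0}(I_C) \geq 0$ for all such $a$. The coefficient of $a^2$ in $Q_{a,b_0}$ is $\Delta_h(I_C) = 2d > 0$, so the inequality is sharpest as $a \to 0^+$; by the continuity of $Q_{a,b}$ (Remark \ref{rmk_boundary}) I may therefore set $a = 0$, obtaining
\[4\Big(-d + \tfrac{b_0^2}{2}\Big)^2 + 6 b_0\Big(g(C) - 1 + 2d + b_0 d - \tfrac{b_0^3}{6}\Big) \geq 0.\]
The remaining step is purely algebraic: solve for $g(C)$. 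Because $b_0 < 0$, the factor $6b_0$ is negative, so dividing through by it \emph{reverses} the inequality and yields an upper bound for $g(C)$. After expanding $\tfrac{2}{-3b_0}\big(d - \tfrac{b_0^2}{2}\big)^2$ and observing that the two $b_0^3$-contributions cancel, the bound simplifies to exactly
\[g(C) \leq \frac{2}{-3b_0}\,d^2 + \Big(-\frac{b_0}{3} - 2\Big)\,d + 1.\]

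I expect no conceptual obstacle; the only point demanding care is the sign bookkeeping. One must track the minus sign in $h\ch_2(I_C) = -d$ and in $h^2\ch^{b_0}_1(I_C) = -b_0$, and then use $b_0 < 0$ twice — once to guarantee $-3b_0 > 0$ so that the leading coefficient $\tfrac{2}{-3b_0}$ is positive, and once to justify the reversal of the inequality upon dividing by $6b_0$. All the genuine content is absorbed into Theorem \ref{SBG} together with the fact that the constraint is sharpest precisely on the boundary $a = 0$.
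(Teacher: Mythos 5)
Your proposal is correct and coincides with the paper's own (one-line) proof: the paper likewise applies Theorem \ref{SBG} to $I_C$ at the boundary point $(a,b)=(0,b_0)$, with Remark \ref{rmk_boundary} supplying the continuity argument you invoke to pass from $a>0$ to $a=0$. Your invariants $\ch(I_C)=(1,0,-d,\,g(C)-1+2d)$, the twisted characters, and the cancellation of the $b_0^3$-terms all check out, giving exactly the stated bound.
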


\begin{proof}
The result follows from applying Theorem \ref{SBG} to $I_C$ at $(a,b)=(0,b_0)$ directly.
\end{proof}

\subsection{Curves in projective surfaces}

Now we are going to bound the genus of curves in projective surfaces, which will be used later to study curves in projective 3-folds.

We begin with a bound of the genus of curves contained in the surfaces in $\PP^3$.

\begin{proposition}\label{prop-key}
Let $S\in |\oh_{\PP^3}(l)|$ be an effective divisor. We define
\[n:=\min\{D.h^2~|~D~\text{is an effective divisor of } \PP^3 \text{ contained in }S\}.\]
Then there exists an integer $N_{n, l}$ only depends on $n$ and $l$, such that for any $1$-dimensional closed subscheme $C\subset S$ of degree $d\geq N_{n,l}$, we have
\[g(C)\leq \frac{1}{2n}d^2+\frac{n-4}{2}d+1-\epsilon(d, n).\]
\end{proposition}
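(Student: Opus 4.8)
The plan is to induct on the degree $l=S.h^2$, reducing everything to the integral case already settled by Lemma \ref{lem-ms-bound}. Before starting the induction I would reduce to the case where $C$ is a \emph{pure} one-dimensional scheme: replacing $C$ by its maximal pure one-dimensional closed subscheme removes any embedded or isolated points, which decreases $\chi(\oh_C)$ and hence only increases $g(C)$, while preserving $\deg(C)=d$. So it suffices to bound $g(C)$ for a curve $C\subset S$.

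For the base case $S$ integral we have $l=n$ and $I_{C/S}$ is automatically semistable, so Lemma \ref{lem-ms-bound} gives the desired inequality for \emph{every} $d$. For the inductive step I would write $S=D+S'$, where $D$ is the reduced irreducible component of $S$ \emph{meeting $C$ the most}, i.e.\ maximising $\deg(C\cap D)$; say $\deg D=n_0\ge n$ and $\deg S'=l-n_0<l$. Since the integral components cover $C$ and there are at most $l$ of them, this maximal component satisfies the crucial lower bound $\deg(C\cap D)\ge d/l$. If $C\subset D$ we conclude directly by Lemma \ref{lem-ms-bound} (when $n_0>n$ the smaller leading coefficient $\tfrac{1}{2n_0}\le\tfrac{1}{2n}$ beats the larger linear term once $d$ is large). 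Otherwise Lemma \ref{lem-decompose} gives an exact sequence $0\to\oh_{C_1}(-D)\to\oh_C\to\oh_{C\cap D}\to 0$ in which $C_1\subset S'$ is a residual curve (this containment follows from $I_{C_1}=(I_C:I_D)\supseteq I_{S'}$, using $I_S=I_D\cdot I_{S'}\subseteq I_C$), and, writing $e:=\deg(C\cap D)$ and $d_1:=\deg(C_1)=d-e$, the exact genus identity $g(C)=g(C\cap D)+g(C_1)+n_0 d_1-1$. I would bound $g(C\cap D)$ by Lemma \ref{lem-ms-bound} in the integral surface $D$ of degree $n_0$, and bound $g(C_1)$ by the inductive hypothesis applied to $C_1\subset S'$ (degree $l-n_0$, minimal-component-degree $\tilde n\ge n$), the latter being valid once $d_1\ge N_{\tilde n,\,l-n_0}$; the complementary range of bounded $d_1$ is disposed of separately, since curves of bounded degree have bounded genus.

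The heart of the matter, and the main obstacle, is the combination step: the exact genus identity carries the \emph{cost term} $n_0 d_1$, which is linear in $d_1$ with coefficient as large as $l$ and must be absorbed into the target. The key observation is that expanding the target leading term $\tfrac{1}{2n}d^2=\tfrac{1}{2n}(e+d_1)^2$ produces a cross term $\tfrac1n e\,d_1$, and the bound $e\ge d/l$ makes it at least $\tfrac{d}{nl}d_1$. Thus for $d\gg 0$ this cross term dominates the cost $n_0 d_1\le l\,d_1$, while the remaining discrepancies — the leading-coefficient gaps $(\tfrac1{2n}-\tfrac1{2n_0})e^2$ and $(\tfrac1{2n}-\tfrac1{2\tilde n})d_1^2$, both $\ge 0$, the mismatched linear coefficients, and the bounded error terms $\epsilon(\cdot,\cdot)\in[0,\tfrac{n^2-n}{8}]$ — are either non-negative or $O(1)$ in $d$. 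Taking the maximum of the finitely many thresholds produced here together with the finitely many inductive thresholds $N_{\tilde n,\,l-n_0}$ (all governed by quantities bounded by $l$) yields an explicit $N_{n,l}$ depending only on $n$ and $l$.

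I expect the genuine work to lie not in any single conceptual step but in the case analysis verifying that every negative contribution is beaten by the surplus: when $n_0=n$ and $d_1$ is small the saving comes entirely from $\tfrac{d}{nl}d_1$, whereas when $n_0>n$ and $d_1$ is small it comes instead from the quadratic gap $(\tfrac1{2n}-\tfrac1{2n_0})e^2$ in $e$; since $e+d_1=d$ is large, at least one of $e,d_1$ is comparable to $d$, and one must check both regimes. Carrying this bookkeeping through cleanly, and confirming that the choice of the most-intersected component is what forces $e\ge d/l$ and hence controls the cost term, is where the delicacy of the argument resides.
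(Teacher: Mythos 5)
Your proposal is correct in substance but takes a genuinely different route from the paper's. The paper proves Proposition \ref{prop-key} by wall-crossing for $I_C$ in $\PP^3$: either $C$ is $(-\tfrac{4}{3}n)$-neutral, in which case Lemma \ref{lem-bmt-at-ab} applied at $(a,b)=(0,-\tfrac{4}{3}n)$ gives the bound outright, or an uppermost actual wall produces a divisor $Y$ of degree $t$, which the minimality of $n$ (Claim 1 in the paper's proof) forces into the narrow window $n\leq t\leq \tfrac{4}{3}n$, together with the wall-geometry estimate $\deg(C\cap Y)>t\sqrt{2d}-\tfrac{t^2}{2}$; the iteration then absorbs the cost terms $k_jd_j$ using these square-root lower bounds (see \eqref{eq-111}). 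You instead induct on $l$ and decompose along an actual irreducible component $D$ of $S$ (so the cost coefficient is $n_0\leq l$ rather than $t\leq\tfrac{4}{3}n$), replacing the wall-derived $\sqrt{d}$-estimate by the pigeonhole bound $e=\deg(C\cap D)\geq d/l$, which is linear in $d$ and therefore strong enough to absorb the larger cost; your only derived-category input is Lemma \ref{lem-ms-bound} for the integral surface $D$, and Lemma \ref{lem-bmt-at-ab} is not needed at all. What the paper's route buys is machinery that still works when no containing surface is given — which is exactly what the globalization in Theorem \ref{thm-general-bound} requires, where the divisors must be manufactured by walls — and sharper constants; what your route buys is a more elementary, essentially classical proof of this particular proposition, exploiting that here $S$ is part of the data. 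Your bookkeeping does close: the cross term $\tfrac{1}{n}ed_1\geq\tfrac{d}{nl}d_1$ dominates $n_0d_1$ plus all linear mismatches once $d$ exceeds an explicit multiple of $nl^2$, the gaps $(\tfrac{1}{2n}-\tfrac{1}{2n_0})e^2$ handle the excess linear term $\tfrac{n_0-n}{2}e$ since $e\geq d/l$ is always comparable to $d$, the $\epsilon$-terms are bounded by $\tfrac{l^2}{8}$, and the recursion produces thresholds depending only on $(n,l)$.

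One repair is needed. Your justification of $e\geq d/l$ (``the integral components cover $C$, and there are at most $l$ of them'') fails as stated for non-reduced $C$ or $S$: for the double line $C=\{x^2=y=0\}$ inside the double plane $S=\{x^2=0\}$ one has $\deg(C\cap P)=1<2=\deg C$, so covering alone does not transfer multiplicities from $C$ to $C\cap D$. The claimed inequality is nonetheless true, but the correct argument must weight by the multiplicities of $S$: writing $S=\sum_i a_iD_i$ with local equations $f_i$, so that $\prod_i f_i^{a_i}\in I_C$, filter $\oh_{C,\eta}$ at each generic point $\eta$ of $C$ by the ideals generated by partial products of the $f_i$'s; each successive quotient is a quotient of $\oh_{C,\eta}/(f_i)$, giving $\mathrm{length}(\oh_{C,\eta})\leq\sum_i a_i\,\mathrm{length}\bigl(\oh_{C,\eta}/(f_i)\bigr)$ and hence $d\leq\sum_i a_i\deg(C\cap D_i)\leq l\max_i\deg(C\cap D_i)$. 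With this fix — and with $C_1\subset S'$ justified exactly as you say, via $f_D\cdot I_{S'}=I_S\subseteq I_C$ — your argument is a valid alternative proof.
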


\begin{proof}
By \cite[Lemma 4.3]{liu-ruan:cast-bound}, to get an upper bound of the genus, we can assume that $C$ is a curve.

Let $N_0\geq 1$ be the smallest integer that satisfies
\begin{equation}\label{eq-N0}
\frac{1}{2s}d^2+\frac{s}{2}d-\epsilon(d, s)\leq \frac{1}{2n}d^2+\frac{n}{2}d-\epsilon(d, n),~\text{for any }d\geq N_0~\text{and integer }n\leq s\leq \frac{4}{3}n.
\end{equation}
Such $N_0$ exists since $0\leq \epsilon(d, s)\leq \frac{s^2-s}{8}$. Note that $N_0$ only depends on $n$.

Let $N_1\geq N_0$ be the smallest integer that satisfies
\begin{equation}\label{eq-b_d}
N_1\geq \max\{\frac{16}{9}n^2, (n-1)l+1\},
\end{equation}
\begin{equation}\label{eq-d>18epsilon}
N_{1}\geq \frac{18}{n}\epsilon(d,n), \forall 0<d\leq n,
\end{equation}
\begin{equation}\label{eq-no-wall}
k\sqrt{2 N_1}-\frac{k^2}{2}\geq kl,~\text{ for any integer }1\leq k\leq n-1,
\end{equation}
\begin{equation}\label{eq-high-deg-bound}
k\sqrt{2 N_1}-\frac{k^2}{2}\geq N_0-1,~\text{ for any integer }n\leq k\leq \frac{4}{3}n,
\end{equation}
and
\begin{equation}\label{eq-jiaocha}
k\sqrt{2 N_1}-\frac{k^2}{2}\geq \frac{4}{3}n^2-1,~\text{ for any integer }n\leq k\leq \frac{4}{3}n.
\end{equation}
Then it is clear from the construction that $N_1$ only depends on $n$ and $l$.

We start with two claims.

\medskip

\textbf{Claim 1.} \emph{If $D\in |\oh_{\PP^3}(k)|$ is an effective divisor such that $k<n$, then $\dim D\cap S=1$ and $\deg D\cap S= kl$.}

Indeed, by our definition of $n$, if $\dim D\cap S=2$, then $(D\cap S).h^2\leq D.h^2=k<n$, which makes a contradiction. Thus $\dim D\cap S=1$ and $\deg D\cap S = kl$. This proves the Claim 1. \qed

\bigskip

\textbf{Claim 2.} \emph{Let $Z\subset S$ be a curve of degree $\deg Z\geq N_{1}$. If $Z$ is not $(-\frac{4}{3}n)$-neutral, then there is a divisor $Y\in |\oh_{\PP^3}(t)|$ with $n\leq t\leq \frac{4}{3}n$ such that either 
\[Z\subset Y~\text{and }g(Z)\leq \frac{1}{2n}(\deg Z)^2+\frac{n-4}{2}\deg Z+1-\epsilon(\deg Z, n),\]
or there is a curve $Z_1\subset Z$ satisfies $\dim Z\cap Y$=1 and}
\begin{enumerate}

    \item $$g(Z\cap Y)\leq \frac{1}{2n}(\deg Z\cap Y)^2+\frac{n-4}{2}\deg Z\cap Y+1-\epsilon(\deg Z\cap Y, n),$$

    \item $$g(Z)=g(Z_1)+g(Z\cap Y)+t \deg(Z_1)-1,$$

    \item $$\deg Z_1<\deg Z+ \frac{t^2}{2}-t\sqrt{2\deg Z},\quad t\sqrt{2\deg Z}-\frac{t^2}{2}<\deg Z\cap Y,$$

    \item $$\frac{4}{3}n-\frac{\deg Z_1\cap Y}{n}\leq 0.$$
\end{enumerate}

\medskip

As $Z$ is not $(-\frac{4}{3}n)$-neutral, we know that there is an uppermost actual wall for $I_Z$ in the range $(a,b)\in \mathbb{R}_{>0}\times (-\frac{4}{3}n, 0)$. Since $b_{\deg(Z)}\leq -\frac{4}{3}n$ by \eqref{eq-b_d}, from Proposition \ref{cor_wall} there is an effective divisor $Y\in |\oh_{\PP^3}(t)|$ with $t\leq \frac{4}{3}n$ such that $Z\subset Y$ or there exists a curve $Z_1\subset Z$ satisfies (b) and (c). In the former case, if $t\leq n-1$, then by Claim 1 we have $$N_1\leq \deg Z\leq \deg Y\cap S=tl\leq (n-1)l,$$ contradicts \eqref{eq-b_d}. Thus $n\leq t\leq \frac{4}{3}n$, and by Lemma \ref{lem-ms-bound} we obtain
\[g(Z)\leq \frac{1}{2t}(\deg Z)^2+\frac{t-4}{2}\deg Z+1-\epsilon(\deg Z, t)\]
since $I_{Z/Y}$ is semistable at points on the uppermost actual wall of $I_Z$. As $\deg Z\geq N_1\geq N_0$, from \eqref{eq-N0} we get $$g(Z)\leq \frac{1}{2t}(\deg Z)^2+\frac{t-4}{2}\deg Z+1-\epsilon(\deg Z, t)\leq \frac{1}{2n}(\deg Z)^2+\frac{n-4}{2}\deg Z+1-\epsilon(\deg Z, n).$$

\bigskip

Now assume that $Z$ is not contained in $Y$. If $t\leq n-1$, then by Claim 1, we get $\dim Y\cap S=1$ and $\deg Y\cap S=tl$. Hence $\deg Z\cap Y\leq \deg S\cap Y=tl$ But this makes a contradiction since $$\deg Z\cap Y>t\sqrt{\deg Z}-\frac{t^2}{2}\geq tl$$ by (c) and \eqref{eq-no-wall}. Hence $n\leq t\leq \frac{4}{3}n$. Then from $\deg Z\geq N_1$, (c) and \eqref{eq-high-deg-bound}, we obtain $\deg Z\cap Y\geq N_0$. Thus Lemma \ref{lem-ms-bound} and \eqref{eq-N0} imply (a) as $I_{Z\cap Y/Y}$ is semistable at the uppermost wall of $I_Z$. Finally, (d) follows from (c), $\deg Z\geq N_1$ and \eqref{eq-jiaocha}. This ends the proof of Claim 2. \qed


\bigskip

Now let $N_{n,l}\geq N_1$ be the smallest integer satisfies
\begin{equation}\label{eq-first-sum}
k+\frac{k^2}{2n}-\frac{k}{n}\sqrt{2N_{n,l}}+\frac{n^2-n}{8}\leq -\frac{N_1^2}{2}~\text{for any integer }n\leq k\leq \frac{4}{3}n.
\end{equation}
Then $N_{n,l}$ only depends on $n$ and $l$ since $N_1$ does.

\bigskip

We divide the proof into finite steps inductively as follows.

\textbf{Step 0.}

Let $C\subset S$ be a curve of degree $d\geq N_{n, l}$. If $C$ is $(-\frac{4}{3}n)$-neutral, then we can apply Lemma \ref{lem-bmt-at-ab} to $(a,b)=(0, -\frac{4}{3}n)$ and get
\[g(C)\leq \frac{1}{2n}d^2+(\frac{4}{9}n-2)d+1.\]
Since $nd\geq nN_{n, l}\geq nN_1 \geq 18\epsilon(d,n)$ by \eqref{eq-d>18epsilon}, we see 
\[g(C)\leq \frac{1}{2n}d^2+(\frac{4}{9}n-2)d+1\leq \frac{1}{2n}d^2+\frac{n-4}{2}d+1-\epsilon(d, n),\]
which gives the desired bound.

Now assume that $C$ is not $(-\frac{4}{3}n)$-neutral. If the uppermost wall is given by a divisor containing $C$, i.e.~it is of form $W(\oh_{\PP^3}(-k_1), I_C)$, then by the first case of Claim 2,
\[g(C)\leq \frac{1}{2n}d^2+\frac{n-4}{2}d+1-\epsilon(d, n)\]
and we are done. If the uppermost wall is given by an effective divisor $D_1\in |\oh_{\PP^3}(k_1)|$ and a curve $C_1\subset C$ of degree $d_1$, i.e.~it is of form $W(I_{C_1}(-D_1), I_C)$, then by letting $Z:=C$, $Y:=D_1$ and $Z_1:=C_1$ in Claim 2, all properties (a)--(d) are satisfied and $n\leq k_1\leq \frac{4}{3}n$. We denote $d_0':=\deg (C\cap D_1) =d-d_1$.

Here are three possible situations of the curve $C_1$:

\begin{itemize}
    \item $d_1< N_1$,

    \item $d_1\geq N_1$, and $C_1$ is $(-\frac{4}{3}n)$-neutral or the uppermost wall is given by a line bundle,

    \item $d_1\geq N_1$, $C_1$ is not $(-\frac{4}{3}n)$-neutral, and the uppermost wall is not given by line bundles.
\end{itemize}
If we are in the first two cases, then we end our process at Step 0. If we are in the third situation, then we go to the following Step 1.

\medskip

\textbf{Step 1.} \emph{$d_1\geq N_1$, $C_1$ is not $(-\frac{4}{3}n)$-neutral, and the uppermost wall is not given by line bundles.}

Using Claim 2 again, we can find a curve $C_2\subset C_1$ of degree $d_2$ and $D_2\in |\oh_{\PP^3}(k_2)|$ satisfying the conditions in Claim 2, where $Z:=C_1, Y:=D_2$ and $Z_1:=C_2$ with $n\leq k_2\leq \frac{4}{3}n$. We denote $d_1':=\deg(C_1\cap D_2)=d_1-d_2$. Then we can run the same process for $C_2$ and we also have the following three possibilities:
\begin{itemize}
    \item $d_2< N_1$,

    \item $d_2\geq N_1$, and $C_2$ is $(-\frac{4}{3}n)$-neutral or the uppermost wall is given by a line bundle,

    \item $d_2\geq N_1$, $C_2$ is not $(-\frac{4}{3}n)$-neutral, and the uppermost wall is not given by line bundles.
\end{itemize}
If the first two cases happen, then we end our proof at Step 1. If the third situation happens, then by Claim 2 again, we can find a curve $C_3\subset C_2$ of degree $d_3$ and $D_2\in |\oh_{\PP^3}(k_2)|$ satisfy the conditions in Claim 2. Then we go to the next Step 2. 

\medskip

Inductively, for any integer $m\geq 1$, we have:

\textbf{Step m-1.} \emph{$d_{m-1}\geq N_1$, $C_{m-1}$ is not $(-\frac{4}{3}n)$-neutral, and the uppermost wall is not given by line bundles.}

By Claim 2, we can find a curve $C_{m}\subset C_{m-1}$ of degree $d_m$ and $D_m\in |\oh_{\PP^3}(k_m)|$ satisfying the conditions in Claim 2, where $Z:=C_{m-1}, Y:=D_m$ and $Z_1:=C_m$ with $n\leq k_m\leq \frac{4}{3}n$. We denote $d_{m-1}':=\deg(C_{m-1}\cap D_m)=d_{m-1}-d_m$. Then we can run the same process for $C_m$ and we have the following three possibilities:
\begin{itemize}
    \item $d_m< N_1$,

    \item $d_m\geq N_1$, and $C_m$ is $(-\frac{4}{3}n)$-neutral or the uppermost wall is given by a line bundle,

    \item $d_m\geq N_1$, $C_m$ is not $(-\frac{4}{3}n)$-neutral, and the uppermost wall is not given by line bundles.
\end{itemize}
We end at this step if the first two situations happen and go to the next Step m otherwise.

\bigskip

Note that $d>d_1>d_2>d_3>\dots$, so if we continue this process, we will end at Step m-1 for an integer $m\geq 1$, i.e.~the first two situations for $C_m$ above happen. Therefore, we finally get a sequence of curves $C_m\subset C_{m-1}\subset \cdots C_2\subset C_1\subset C_0:=C$ for an integer $m\geq 1$ and divisors $D_i\in |\oh_{\PP^3}(k_i)|$ with $n\leq k_i \leq \frac{4}{3}n$ for any $1\leq i\leq m$ such that

\begin{enumerate}[(i)]
    \item $d_m<N_1$, or $d_m\geq N_1$ and $C_m$ is $(-\frac{4}{3}n)$-neutral or the uppermost wall is given by a line bundle.

    \item $C_i$ and $D_i$ satisfy the conditions in Claim 2 for any $1\leq i\leq m$, where $Z:=C_{i-1}, Y:=D_i$ and $Z_1:=C_i$.
\end{enumerate}
In particular, by Claim 2 we have
\begin{equation}\label{eq-gCi}
g(C_{i-1})=g(C_i)+g(C_{i-1}\cap D_i)+k_id_i-1
\end{equation}
for any $1\leq i\leq m$, where $C_0:=C$. Following the notations above, we denote $$d_i':=d_i-d_{i+1}=\deg(C_i\cap D_{i+1})$$ for any $0\leq i\leq m-1$, where $d_0:=d$. By Claim 2, for any $0\leq i\leq m-1$ we have
\begin{equation}\label{eq-112}
    g(C_i\cap D_{i+1})\leq \frac{1}{2n}d_i'^2+\frac{n-4}{2}d_i'+1-\epsilon(d_i', n).
\end{equation}
Now sum up \eqref{eq-gCi} over $i$ gives
\[g(C)=\sum_{i=0}^{m-1} g(C_{i}\cap D_{i+1})+g(C_m)+\sum_{j=1}^m k_jd_j-m\]
\[\overset{\eqref{eq-112}}{\leq} \big(\sum^{m-1}_{i=0} \frac{1}{2n}d_i'^2+\frac{n-4}{2}d_i'+1-\epsilon(d_i', n)\big)+g(C_m)+\sum_{j=1}^m k_jd_j-m=\]
\begin{equation}\label{eq-11}
    \big( \frac{1}{2n}(\sum^{m-1}_{i=0} d_i')^2-\frac{\sum_{0\leq x,y\leq m-1, x\neq y}d_x'd_y'}{n}+\frac{n-4}{2}(\sum^{m-1}_{i=0} d_i')-\sum^{m-1}_{i=0}\epsilon(d_i', n)\big)+g(C_m)+\sum_{j=1}^m k_jd_j.
\end{equation}
Note that 
\begin{equation}\label{eq-sum-dj}
d_j=\sum_{i=j}^{m-1} d_i'+d_m
\end{equation}
for any $0\leq j\leq m-1$, we get
\[\eqref{eq-11}=\frac{1}{2n}(d-d_m)^2+\frac{n-4}{2}(d-d_m)+g(C_m)+\sum_{j=1}^m k_jd_j-\frac{\sum_{0\leq x,y\leq m-1, x\neq y}d_x'd_y'}{n}-\sum^{m-1}_{i=0}\epsilon(d_i', n)\]
\[=\frac{1}{2n}d^2+\frac{n-4}{2}d+1-\epsilon(d,n)\]
\[+\big(-\frac{d d_m}{n}+\frac{d_m^2}{2n}-\frac{n-4}{2}d_m-1+\epsilon(d,n)+ g(C_m)+\sum_{j=1}^m k_jd_j-\frac{\sum_{0\leq x,y\leq m-1, x\neq y}d_x'd_y'}{n}-\sum^{m-1}_{i=0}\epsilon(d_i', n) \big)\]
\[\overset{\eqref{eq-epsilon-bound}}{\leq} \frac{1}{2n}d^2+\frac{n-4}{2}d+1-\epsilon(d,n)\]
\[+\big(-\frac{d d_m}{n}+\frac{d_m^2}{2n}-\frac{n-4}{2}d_m-1+\epsilon(d,n)+ g(C_m)+\sum_{j=1}^m k_jd_j-\frac{\sum_{0\leq x,y\leq m-1, x\neq y}d_x'd_y'}{n} \big)\]
Thus we only need to prove
\[R_m:=-\frac{dd_m}{n}+\frac{d_m^2}{2n}-\frac{n-4}{2}d_m-1+\epsilon(d,n)+ g(C_m)+\sum_{j=1}^m k_jd_j-\frac{\sum_{0\leq x,y\leq m-1, x\neq y}d_x'd_y'}{n}\leq 0.\]
If $d_m\geq N_1$, and $C_m$ is $(-\frac{4}{3}n)$-neutral or the uppermost wall is given by a line bundle, from Claim 2 we obtain
\[g(C_m)\leq \frac{1}{2n}d_m^2+\frac{n-4}{2}d_m+1-\epsilon(d_m, n).\]
In this case,
\[R_m\leq -\frac{dd_m}{n}+\frac{d_m^2}{n}+\epsilon(d,n)+\sum_{j=1}^m k_jd_j-\frac{\sum_{0\leq x,y\leq m-1, x\neq y}d_x'd_y'}{n}\]
\[\overset{\eqref{eq-sum-dj}}{=}\frac{-d_m(d_m+\sum_{i=0}^{m-1}d_i')}{n}+\frac{d_m^2}{n}+\epsilon(d,n)+\sum_{j=1}^m k_jd_j-\frac{\sum_{0\leq x,y\leq m-1, x\neq y}d_x'd_y'}{n}\]
\[=\frac{-d_m\sum_{i=0}^{m-1}d_i'}{n}+\epsilon(d,n)+\sum_{j=1}^m k_jd_j-\frac{\sum_{0\leq x,y\leq m-1, x\neq y}d_x'd_y'}{n}\]
\[=-\frac{\sum^{m-1}_{i=0}d_i'(d_{i+1}'+\dots +d'_{m-1}+d_m)}{n}+\epsilon(d,n)+\sum_{j=1}^m k_jd_j\]
\[\overset{\eqref{eq-sum-dj}}{=}-\frac{\sum_{i=0}^{m-1} d_{i}'d_{i+1}}{n}+\sum_{j=1}^m k_jd_j+\epsilon(d,n)\]
\[=\sum_{j=1}^m d_j(k_j-\frac{d_{j-1}'}{n})+\epsilon(d,n)\]

Since $d_{1}>\cdots >d_{m-1}\geq N_1$, by \eqref{eq-jiaocha} and (c) of Claim 2, we have 
\begin{equation}\label{eq-111}
    d_{j-1}'>k_{j}\sqrt{2 d_{j-1}}-\frac{k_{j}^2}{2}\geq \frac{4}{3}n^2-1\geq nk_{j}-1
\end{equation}
for any $1\leq j\leq m$. Thus
\[\sum_{j=1}^m d_j(k_j-\frac{d'_{j-1}}{n})+\epsilon(d,n)\leq d_1(k_1-\frac{d_0'}{n})+\epsilon(d,n)\overset{\eqref{eq-epsilon-bound}}{\leq} d_1(k_1-\frac{d_0'}{n})+\frac{n^2-n}{8}.\]
Then $R_m\leq 0$ follows from \eqref{eq-first-sum} since
\[d_1(k_1-\frac{d_0'}{n})+\frac{n^2-n}{8}\leq k_1-\frac{d_0'}{n}+\frac{n^2-n}{8}<k_1+\frac{k_1^2}{2n}-\frac{k_1}{n}\sqrt{2d}+\frac{n^2-n}{8}\leq 0.\]

If $d_m<N_1$, then we have
\[R_m=-\frac{dd_m}{n}+\frac{d_m^2}{2n}-\frac{n-4}{2}d_m-1+\epsilon(d,n)+ g(C_m)+\sum_{j=1}^m k_jd_j-\frac{\sum_{0\leq x,y\leq m-1, x\neq y}d_x'd_y'}{n}\]
\[\overset{\eqref{eq-sum-dj}}{=}\frac{-d_m\sum^{m-1}_{i=0}d_i'}{n}-\frac{d_m^2}{2n}-\frac{n-4}{2}d_m-1+\epsilon(d,n)+ g(C_m)+\sum_{j=1}^m k_jd_j-\frac{\sum_{0\leq x,y\leq m-1, x\neq y}d_x'd_y'}{n}\]
\[=\sum_{j=1}^m d_j(k_j-\frac{d_{j-1}'}{n})-\frac{d_m^2}{2n}-\frac{n-4}{2}d_m-1+\epsilon(d,n)+ g(C_m)\]
\[\overset{\eqref{eq-111}}{\leq} k_1-\frac{d_0'}{n}-\frac{d_m^2}{2n}-\frac{n-4}{2}d_m-1+\epsilon(d,n)+ g(C_m)\]
\[\overset{\eqref{eq-epsilon-bound}\text{+}\ref{lem-cok-dim0}}{\leq} k_1-\frac{d_0'}{n}-\frac{d_m^2}{2n}-\frac{n-4}{2}d_m-1+\frac{n^2-n}{8}+\frac{(d_m-1)(d_m-2)}{2}\]
\[\leq k_1-\frac{d_0'}{n}+\frac{n^2-n}{8}+\frac{d_m^2}{2}+\frac{1-n}{2}d_m\]
\[\overset{\mathrm{d_m<N_1}}{\leq} k_1-\frac{d_0'}{n}+\frac{n^2-n}{8}+\frac{N_1^2}{2}\]
\[<k_1+\frac{k_1^2}{2n}-\frac{k_1}{n}\sqrt{2d}+\frac{n^2-n}{8}+\frac{N_1^2}{2}\leq 0,\]
where the last inequality follows from \eqref{eq-first-sum}.

Thus in both cases, we have $R_m\leq 0$ and we can conclude that
\[g(C)\leq \frac{1}{2n}d^2+\frac{n-4}{2}d+1-\epsilon(d, n)+R_m\leq \frac{1}{2n}d^2+\frac{n-4}{2}d+1-\epsilon(d, n).\]
\end{proof}


As a corollary, we can obtain a bound for curves of sufficiently large degree in projective surfaces.

\begin{corollary}\label{cor-projective-surface}
Let $S$ be a pure $2$-dimensional projective scheme with $\gedim(S) \leq 3$ and $H$ be an ample divisor on it. Fix an integer $m_H$ such that $m_HH$ is very ample. We define an integer $n_H^S$ as
\[n_H^S:=\min\{Y.(m_H H)^2~|~Y~\text{is a pure two-dimensional closed subscheme contained in }S\}.\]
Then there exists an integer $N_H^S$ such that for any $1$-dimensional closed subscheme $C\subset S$ of $H$-degree $d\geq N^S_H$ and $\gedim'(C)\leq 3$, we have
\[g(C)\leq \frac{1}{2n_H^S}(m_H d)^2+\frac{n_H^S-4}{2}(m_H d)+1-\epsilon(m_H d, n_H^S).\]
\end{corollary}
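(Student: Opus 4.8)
The plan is to reduce the statement to Proposition \ref{prop-key} by projecting $S$ birationally into $\PP^3$. First I would replace $C$ by a Cohen--Macaulay curve: since $\dim C=1$ and $\gedim'(C)\le 3$, Remark \ref{rmk-gedim'} provides a curve $C_0\subseteq C$ with $C\setminus C_0$ finite and $\gedim(C_0)=\gedim'(C)\le 3$, so that $\deg C_0=d$ and $g(C)\le g(C_0)$. Hence it suffices to bound $g(C_0)$, and we may assume from the start that $C$ is a curve with $\gedim(C)\le 3$.

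Next I would embed $S$ into $\PP^M$ by the very ample divisor $m_HH$, so that $\oh_{\PP^M}(1)|_S=m_HH$. If $M=3$ then $S$ is already a pure two-dimensional closed subscheme of the smooth threefold $\PP^3$; otherwise $\gedim(S),\gedim(C)\le 3<M$, so Lemma \ref{lem-good-proj-exist} yields a good projection $\pi\colon \PP^M\dashrightarrow \PP^3$ of $\{S,C\}$. Writing $S':=\pi(S)$ and $C':=\pi(C)$, the restrictions $S\to S'$ and $C\to C'$ are finite and birational, and $C'\subseteq S'$ since $C\subseteq S$. By Lemma \ref{lem-cok-dim0}, $g(C)\le g(C')$ and $\deg_{\PP^3}(C')=\deg_{m_HH}(C)=m_Hd$.

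The crucial point is that $S'$ is an effective divisor in $\PP^3$, so that Proposition \ref{prop-key} applies to it. Since $\pi_S$ is finite and $\oh_S$ is pure of dimension two, $(\pi_S)_*\oh_S$ is pure of dimension two; as $S'$ is the schematic image, $\oh_{S'}$ is a subsheaf of $(\pi_S)_*\oh_S$ and is therefore also pure of dimension two, so $S'$ has no embedded points. A pure-codimension-one closed subscheme without embedded points in the smooth (hence locally factorial) threefold $\PP^3$ is locally principal, whence $S'\in|\oh_{\PP^3}(l)|$ with $l=\deg S'=S.(m_HH)^2=m_H^2\deg_H(S)$. It then remains to identify the invariant $n':=\min\{D.h^2\mid D\text{ effective divisor of }\PP^3\text{ contained in }S'\}$ of Proposition \ref{prop-key} with $n_H^S$: the number $n'$ is the minimal degree of an irreducible component of $S'$, while $n_H^S$ is the minimal $m_HH$-degree of an irreducible component of $S$. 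Because $\pi$ restricts to a finite birational morphism on each component and preserves degrees (by the projection formula, using $\pi|_U^{*}\oh_{\PP^3}(1)\cong\oh_{\PP^M}(1)|_U$), the two multisets of component degrees coincide, giving $n'=n_H^S$.

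Finally I would set $N_H^S:=\lceil N_{n_H^S,\,l}/m_H\rceil$, where $N_{n',l}$ is the integer of Proposition \ref{prop-key}; since $n'=n_H^S$ and $l=m_H^2\deg_H(S)$ depend only on $S$ and $H$, so does $N_H^S$. For $d\ge N_H^S$ we have $\deg_{\PP^3}(C')=m_Hd\ge N_{n',l}$, and Proposition \ref{prop-key} gives
\[
g(C')\le \frac{1}{2n_H^S}(m_Hd)^2+\frac{n_H^S-4}{2}(m_Hd)+1-\epsilon(m_Hd,\,n_H^S),
\]
whence the claim follows from $g(C)\le g(C')$. The main obstacle is the third step: verifying that the schematic image $S'$ is genuinely an effective divisor (equivalently, pure of codimension one with no embedded points) and that its invariant $n'$ matches $n_H^S$. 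Once this is in place, the remainder is a direct combination of the projection statements (Lemmas \ref{lem-good-proj-exist} and \ref{lem-cok-dim0}) with Proposition \ref{prop-key}.
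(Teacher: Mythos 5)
Your proof is correct and follows essentially the same route as the paper's: reduce to a Cohen--Macaulay curve via Remark \ref{rmk-gedim'}, embed $S$ by $|m_H H|$, take a good projection of $\{S,C\}$ via Lemma \ref{lem-good-proj-exist}, transfer the degree, the genus bound $g(C)\leq g(C')$, and the invariant $n_H^S$ to the images using Lemma \ref{lem-cok-dim0} and birationality, and then apply Proposition \ref{prop-key} with $l=m_H^2\deg_H(S)$ and $N_H^S=\frac{1}{m_H}N_{n_H^S,\,m_H^2H^2}$. The one point where you go beyond the paper is the explicit check that the schematic image $S'$ is an effective divisor in $\PP^3$ (purity of $(\pi_S)_*\oh_S$, hence of its subsheaf $\oh_{S'}$, hence local principality in the smooth threefold), which the paper leaves implicit when invoking Proposition \ref{prop-key}; this is a worthwhile verification, not a deviation.
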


\begin{proof}
Since $\gedim'(C)\leq 3$, by Remark \ref{rmk-gedim'} we can assume that $C$ is a curve and $\gedim(C)\leq 3$ since we only need to get an upper bound of $g(C)$. 

Let $i\colon S\hookrightarrow \PP^m=\PP(|m_H H|)$ be the embedding induced by $|m_H H|$. As $\gedim(S)\leq 3$, we can apply Lemma \ref{lem-good-proj-exist} and find a good projection $\pi\colon \PP^m\dashrightarrow \PP^3$ of both $C$ and $S$ such that $\pi_S$ is regular. By definition, we have $\pi_S^*\oh_{\PP^3}(h)=\oh_S(m_H H)$. Moreover, since $\pi_C$ and $\pi_S$ are both birational onto their images $C'$ and $S'$, respectively, we have 
\[m_H d=C.(m_H H)=C.\pi_S^*h=(\pi_{S*}C). h=C'.h\]
where $\pi_S^*$ and $\pi_{S*}$ are Chow-theoretic pullback and pushforward. And since $\pi_S$ is birational onto $D'$, we see
\[n_H^S=\min\{Y.(m_H H)^2~|~Y~\text{is a pure two-dimensional closed subscheme contained in }S\}\]
\[=\min\{Y'.h^2~|~Y'~\text{is a pure two-dimensional closed subscheme contained in }S'\}.\]
Note that $S'.h^2=(h|_S)^2=m^2_HH^2$, then applying Proposition \ref{prop-key}, we can define an integer $N_H^S:=\frac{1}{m_H}N_{n^S_H, m_H^2H^2}$ such that when $\frac{1}{m_H}\deg_h(C')=d\geq N_H^S$, we have
\[g(C')\leq \frac{1}{2n_H^S}(m_H d)^2+\frac{n_H^S-4}{2}(m_H d)+1-\epsilon(m_H d, n_H^S)\]
and the result follows from $g(C)\leq g(C')$ by Lemma \ref{lem-cok-dim0}.
\end{proof}

\begin{remark}
The assumption $\gedim(S)\leq 3$ is mild, as it holds for any generically reduced $S$. When $S$ is integral and $H$ above is already very ample, i.e.~$m_H=1$, we get $n_H^S=H^2$. In this case, for any $1$-dimensional closed subscheme $C\subset S$ of $H$-degree $d\geq N^S_H$, we have
\[g(C)\leq \frac{1}{2H^2}d^2+\frac{H^2-4}{2}d+1-\epsilon(d,H^2).\]
\end{remark}

\subsection{Curves in projective 3-folds}

Next, we use the results above to give a bound for the genus of curves in projective 3-folds. We start with a direct corollary of Corollary \ref{cor-projective-surface}, which bounds the genus of curves in a divisor of a projective 3-fold.

\begin{corollary}\label{cor-in-divisor}
Let $(X, H)$ be a polarised projective 3-fold with $\dim X_{\mathrm{sing}} \leq 1$ and $S$ be an effective Cartier divisor on $X$. Fix an integer $m_H$ such that $m_HH$ is very ample. We define an integer $n_H^{|S|}$ as
\begin{equation}\label{eq-nHS}
n_H^{|S|}:=\min\{D.(m_H H)^2~|~D~\text{is an effective divisor of } X \text{ contained in a divisor in }|S|.\}
\end{equation}
Then there exists an integer $N_{H}^{|S|}$ such that for any $1$-dimensional closed subscheme $C\subset X$ of $H$-degree $d\geq N_{H}^{|S|}$ and $\gedim'(C)\leq 3$ containing in an effective divisor in $|S|$, we have
\[g(C)\leq \frac{1}{2n_H^{|S|}}(m_Hd)^2+\frac{n_H^{|S|}-4}{2}(m_Hd)+1-\epsilon(m_H d,n_H^{|S|}).\]
\end{corollary}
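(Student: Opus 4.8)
The plan is to reduce directly to the surface case, Corollary \ref{cor-projective-surface}. By hypothesis $C$ is contained in some effective divisor $S_0 \in |S|$; since $S$ is Cartier, $S_0$ is an effective Cartier divisor on the pure $3$-dimensional scheme $X$, hence is pure $2$-dimensional. As $\dim X_{\mathrm{sing}} \leq 1$, Remark \ref{rmk-gedim-divisor} gives $\gedim(S_0) \leq 3$; moreover $H|_{S_0}$ is ample and $m_H H|_{S_0}$ is very ample on $S_0$. Thus $S_0$, together with $m_H$, satisfies all the hypotheses of Corollary \ref{cor-projective-surface}.

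First I would apply Corollary \ref{cor-projective-surface} to the pair $C \subset S_0$. Writing $n_{S_0} := n_{H|_{S_0}}^{S_0}$ and $N_{S_0} := N_{H|_{S_0}}^{S_0}$ for the resulting surface invariant and threshold, this yields
\[g(C) \leq \frac{1}{2 n_{S_0}}(m_H d)^2 + \frac{n_{S_0}-4}{2}(m_H d) + 1 - \epsilon(m_H d, n_{S_0})\]
for all $d \geq N_{S_0}$. The second step is to compare $n_{S_0}$ with the three-fold invariant $n_H^{|S|}$. Every pure $2$-dimensional closed subscheme $Y \subset S_0$ is, as a cycle, an effective divisor of $X$ contained in the divisor $S_0 \in |S|$, and its intersection number $Y.(m_H H)^2$ agrees whether computed on $S_0$ or on $X$. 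Hence the family over which $n_H^{|S|}$ is minimized contains the family defining $n_{S_0}$, so $n_H^{|S|} \leq n_{S_0}$.

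The delicate point, and the main obstacle, is that $|S|$ may be positive-dimensional, so there is no single $S_0$: I must produce a threshold $N_H^{|S|}$ valid uniformly over all admissible $C$. The key observation is that $n_{S_0}$ and $N_{S_0}$ take only finitely many values as $S_0$ ranges over $|S|$. Indeed, all members of $|S|$ are numerically equivalent, so $S_0.(m_H H)^2 = S.(m_H H)^2$ is a fixed positive integer; consequently $n_{S_0}$ is a positive integer bounded by this number, and by the construction in Proposition \ref{prop-key} (recalled in the proof of Corollary \ref{cor-projective-surface}) the threshold $N_{S_0}$ depends only on the pair $(n_{S_0}, S.(m_H H)^2)$. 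Thus $(n_{S_0}, N_{S_0})$ attains finitely many values.

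Finally, for each value $\nu$ attained by $n_{S_0}$ we have $n_H^{|S|} \leq \nu$, so the leading coefficient $\tfrac{1}{2 n_H^{|S|}}$ of the target bound is at least $\tfrac{1}{2\nu}$; since $\epsilon$ is bounded by \eqref{eq-epsilon-bound}, there is a degree $D_\nu$ beyond which
\[\frac{1}{2\nu}(m_H d)^2 + \frac{\nu-4}{2}(m_H d) + 1 - \epsilon(m_H d, \nu) \leq \frac{1}{2 n_H^{|S|}}(m_H d)^2 + \frac{n_H^{|S|}-4}{2}(m_H d) + 1 - \epsilon(m_H d, n_H^{|S|}).\]
Taking $N_H^{|S|}$ to be the maximum of the finitely many numbers $N_{S_0}$ and $D_\nu / m_H$, the chain of inequalities gives the asserted bound for every $d \geq N_H^{|S|}$. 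Apart from this uniformity argument, the proof is a direct translation of the surface estimate to divisors of $X$.
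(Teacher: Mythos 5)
Your proof is correct and follows essentially the same route as the paper's: apply Corollary \ref{cor-projective-surface} to a member $S_0\in|S|$ containing $C$, observe $n_H^{|S|}\leq n_{S_0}\leq S.(m_HH)^2$, and enlarge the threshold so that the $n_{S_0}$-bound is absorbed into the $n_H^{|S|}$-bound. Your explicit finiteness argument for the pairs $(n_{S_0},N_{S_0})$ as $S_0$ varies in $|S|$ makes rigorous a uniformity point the paper's proof leaves implicit, namely that the threshold $N_{n,l}$ of Proposition \ref{prop-key} depends only on $(n,l)$, both of which take only finitely many values over the linear system.
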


\begin{proof}
Assume that $C\subset D$, where $D\in |S|$. By $\dim X_{\mathrm{sing}}\leq 1$ and Remark \ref{rmk-gedim-divisor}, we have $\gedim(D)\leq 3$. Then from Corollary \ref{cor-projective-surface}, we can find an integer $N_H^D$ such that when $d\geq N_H^D$, we have
\[g(C)\leq \frac{1}{2n_H^D}(m_H d)^2+\frac{n_H^D-4}{2}(m_H d)+1-\epsilon(m_H d, n_H^D).\]

By definition, we see $n^{|S|}_H\leq n_H^D\leq S.(m_H H)^2$. Therefore, we can find an integer $N_H^{|S|}\geq N_H^D$ such that when $d\geq N_H^{|S|}$, we have
\[\frac{1}{2n_H^D}(m_H d)^2+\frac{n_H^D-4}{2}(m_H d)+1-\epsilon(m_H d, n_H^D) \leq \frac{1}{2n_H^{|S|}}(m_Hd)^2+\frac{n_H^{|S|}-4}{2}(m_Hd)+1-\epsilon(m_H d,n_H^{|S|})\]
and the result follows.
\end{proof}

\begin{definition}\label{def-nH}
Let $(X, H)$ be a polarised projective 3-fold and $m_H$ be an integer such that $m_H H$ is very ample. We define an integer $n_H$ by
\[n_H:=\min\{D.(m_H H)^2~|~D~\text{is a prime divisor contained in a divisor in }|k m_H H|,~\forall  k\in [1, \frac{4 (m_H H)^3}{3}]\}\]
\[=\min\{D.(m_H H)^2~|~D~\text{is an effective divisor contained in a divisor in }|k m_H H|,~\forall  k\in [1, \frac{4 (m_H H)^3}{3}]\}.\]
\end{definition}

\begin{remark}\label{rmk-nH}
The equality in the above definition is obvious since every effective divisor is a positive linear combination of prime divisors. 

As $H$ is ample, it is clear that $n_H\geq 1$. By definition, we have $n_H\leq m_H^3 H^3$ since $|m_H H|$ is non-empty. More generally, if $|sH|\neq \varnothing$ for a positive integer $s$, then by definition, we have $n_H\leq sH.(m_H H)^2=sm^2_H H^3$. Furthermore, if $X$ is factorial and $\NS(X)=\ZZ H$ with $s$ be the least integer such that $|sH|\neq \varnothing$, then it is clear that $sm^2_H H^3\leq n_H$, which implies $n_H=sm^2_H H^3$ in this case.
\end{remark}

Now we are ready to state and prove one of our main theorems for projective 3-folds:

\begin{theorem}\label{thm-general-bound}
Let $(X, H)$ be a polarised projective 3-fold with $\dim X_{\mathrm{sing}}\leq 1$ and $m_H$ be an integer such that $m_H H$ very ample. Then there exists an integer $N_{H}$ such that for any $1$-dimensional closed subscheme $C\subset X$ of $H$-degree $d\geq N_{H}$ and $\gedim'(C)\leq 3$, we have
\[g(C)\leq \frac{1}{2n_H}(m_H d)^2+\frac{n_H-4}{2}(m_H d)+1-\epsilon(m_H d, n_H).\]
\end{theorem}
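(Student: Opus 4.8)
The plan is to run the iterative algorithm of Section~\ref{intro-subsec-method}, using good projections to reduce the genus bound on $X$ to the $\PP^3$-bound of Proposition~\ref{prop-key}. By Remark~\ref{rmk-gedim'} I may assume throughout that the closed subscheme under consideration is a curve with $\gedim\le 3$, since only an upper bound for $g$ is wanted; write $n:=n_H$ and embed $X\hookrightarrow \PP^m$ via $|m_H H|$. For a curve $Z\subset X$ with $\gedim(Z)\le 3$, a good projection $\pi_Z\colon Z\to Z'\subset\PP^3$ (Lemma~\ref{lem-good-proj-exist}) maps $Z$ birationally onto a curve with $\deg_h(Z')=m_H\deg_H(Z)$, exactly as in the proof of Corollary~\ref{cor-projective-surface}, so that all $\PP^3$-degrees below are $m_H$ times the corresponding $H$-degrees on $X$.

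The first task is to prove the following analogue of Claim~2 in the proof of Proposition~\ref{prop-key}. Suppose $Z'$ is not $(-\tfrac43 n)$-neutral. Then the wall-crossing of $I_{Z'}$ in $\PP^3$ (Proposition~\ref{cor_wall}) produces an uppermost wall supported on an effective divisor $S'\in|\oh_{\PP^3}(k)|$ with $1\le k\le \tfrac43 n$, and pulling $S'$ back through the projection (Proposition~\ref{prop-decompose-wall}) yields an effective divisor $D$ on $X$ in the class $|k\, m_H H|$ together with, in the non-containment case, a subcurve $Z_1\subset Z$ satisfying
\[g(Z)=g(Z_1)+g(Z\cap D)+k\, m_H\deg_H(Z_1)-1\]
and the degree estimates inherited from Proposition~\ref{cor_wall}(a). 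The key point is that $k\le\tfrac43 n=\tfrac43 n_H\le\tfrac43 (m_H H)^3$, so $|k\,m_H H|$ lies in the range of Definition~\ref{def-nH} and hence $n_H\le n_H^{|k\,m_H H|}$; consequently Corollary~\ref{cor-in-divisor} bounds $g(Z\cap D)$ (and $g(Z)$ in the containment case) by the expression attached to $n_H^{|k\,m_H H|}$, which for degrees beyond a fixed threshold is dominated by the target $n_H$-expression, exactly as in the comparison~\eqref{eq-N0}. In contrast to Proposition~\ref{prop-key}, no analogue of Claim~1 is needed here: because $n_H$ is a global minimum over all $k\in[1,\tfrac43 (m_H H)^3]$, every value $1\le k\le\tfrac43 n$ is automatically admissible, which removes the lower bound on the divisor degree that Claim~1 had to supply.

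With this dichotomy in hand, I would iterate precisely as in Proposition~\ref{prop-key}. Starting from $C$, project to $\PP^3$; if the image is $(-\tfrac43 n)$-neutral I apply Lemma~\ref{lem-bmt-at-ab} at $(a,b)=(0,-\tfrac43 n)$ to $I_{C'}$ and combine with $g(C)\le g(C')$ (Lemma~\ref{lem-cok-dim0}) to conclude directly; otherwise I extract $(D_1,C_1)$ as above and repeat on $C_1$, restoring $\gedim'\le 3$ at each stage via Remark~\ref{rmk-gedim'}. Since the degrees strictly decrease, after finitely many steps I obtain a chain $C=C_0\supset C_1\supset\cdots\supset C_m$, divisors $D_i\in|k_i m_H H|$ with $1\le k_i\le\tfrac43 n$, the terminal curve $C_m$ falling into one of the base cases (neutral, contained in a divisor, or of controlled low degree), and the genus relations~\eqref{eq-gCi}. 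As every quantity is now expressed through the $\PP^3$-degrees $m_H d_i$ and the single constant $n=n_H$, the concluding summation is formally identical to the one ending the proof of Proposition~\ref{prop-key} after the substitution $d\mapsto m_H d$; choosing $N_H$ to satisfy the corresponding analogues of \eqref{eq-N0}--\eqref{eq-first-sum} forces the remainder $R_m\le 0$ and gives the stated bound.

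The hard part will be the faithful transport of the wall-crossing between $X$ and $\PP^3$ in the dichotomy above: one must verify that a good projection of $Z$ exists at every stage, that the wall divisor $S'$ on $\PP^3$ really pulls back to a divisor in the controlled class $|k\,m_H H|$ on $X$ with matching intersection and genus data (the content of Proposition~\ref{prop-decompose-wall}), and that the subcurves $C_i$ keep $\gedim'\le 3$ so the iteration does not stall. Once these geometric compatibilities are secured, the remaining work is exactly the bookkeeping already carried out at the end of the proof of Proposition~\ref{prop-key}.
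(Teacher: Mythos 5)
Your proposal is correct and follows essentially the same route as the paper's proof of Theorem \ref{thm-general-bound}: reduce to curves via Remark \ref{rmk-gedim'}, embed by $|m_H H|$ and take good projections to $\PP^3$, run the trichotomy (neutral via Lemma \ref{lem-bmt-at-ab}, contained in a divisor via Corollary \ref{cor-in-divisor} plus the threshold comparison \eqref{eq-N3}, or a wall transported back to $X$ by Proposition \ref{prop-decompose-wall}), iterate on strictly decreasing degrees, and close with the same summation as Proposition \ref{prop-key} after substituting $d\mapsto m_H d$, with $N_H$ chosen by the analogous explicit inequalities \eqref{eq-N3}--\eqref{eq-sum-thm}. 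You also correctly identify the one structural deviation from Proposition \ref{prop-key} — that no analogue of its Claim 1 is needed since $n_H$ is a minimum over all $k\in[1,\tfrac{4}{3}(m_H H)^3]$, so every $1\le k\le\tfrac{4}{3}n$ is admissible — which is exactly how the paper handles it.
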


\begin{proof}
In this proof, we denote by $n:=n_H$ for simplicity. Since $\gedim'(C)\leq 3$, by Remark \ref{rmk-gedim'} we can assume that $C$ is a curve and $\gedim(C)\leq 3$, since we aim to get an upper bound of $g(C)$. Let $N_2$ be the integer defined by
\begin{equation}\label{eq-N0-thm}
N_2:=\max\{N_H^{|km_H H|}\}_{1\leq k \leq \frac{4}{3}n},
\end{equation}
where $N_H^{|km_H H|}$ is defined in Corollary \ref{cor-in-divisor}.

Let $N_3\geq N_2$ be the smallest integer that satisfies
\begin{equation}\label{eq-N3}
\frac{1}{2s}(m_H d)^2+\frac{s}{2}(m_Hd)-\epsilon(m_H d, s)\leq \frac{1}{2n}(m_H d)^2+\frac{n}{2}(m_H d)-\epsilon(m_H d, n)
\end{equation}
for any $d\geq N_3$ and integer $n\leq s \leq \max\{n_H^{|km_H H|}\}_{1\leq k \leq \frac{4}{3}n}$, where $n_H^{|km_H H|}$ is defined in \eqref{eq-nHS}. Note that by definition, we have $n=\min\{n_H^{|km_H H|}\}_{1\leq k \leq \frac{4}{3}(m_H H)^3}\leq \min\{n_H^{|km_H H|}\}_{1\leq k \leq \frac{4}{3}n}$.

We start with two lemmas.

\bigskip

\textbf{Claim 1.} \emph{Let $C\subset X$ be a $1$-dimensional closed subscheme of $H$-degree $d\geq N_3$ satisfying $\gedim'(C)\leq 3$. If $C$ is contained in an effective divisor in $|km_H H|$ for $1\leq k\leq \frac{4}{3}n$, we have
\[g(C)\leq \frac{m_H^2}{2n}d^2+\frac{n m_H-4m_H}{2}d+1-\epsilon(m_H d, n).\]
}

Since $C$ is contained in an effective Cartier divisor in $|km_H H|$ for $d\geq N_2\geq N_H^{|km_H H|}$, by Corollary \ref{cor-in-divisor} we have
\[g(C)\leq \frac{m_H^2}{2n_H^{|km_H H|}}d^2+\frac{n_H^{|km_H H|} m_H-4m_H}{2}d+1-\epsilon(m_H d,n_H^{|km_H H|}).\]
Then the claim follows from $d\geq N_3$ and \eqref{eq-N3}. \qed

\bigskip

Now let $N_4\geq N_3$ be the smallest integer that satisfies
\begin{equation}\label{eq-b_d-thm}
N_4\geq \frac{16}{9}n^2,
\end{equation}
\begin{equation}\label{eq-d>18epsilon-thm}
N_4 \geq \frac{18}{n}\epsilon(m_H d,n) \text{ for any integer } 0<d\leq n,
\end{equation}
\begin{equation}\label{eq-high-deg-bound-thm}
k\sqrt{2 m_H N_4}-\frac{k^2}{2}\geq m_H(N_3-1)\text{ for any integer }1\leq k\leq \frac{4}{3}n,
\end{equation}
and
\begin{equation}\label{eq-jiaocha-thm}
k\sqrt{2 m_H N_4}-\frac{k^2}{2}\geq kn-1\text{ for any integer }1\leq k\leq \frac{4}{3}n.
\end{equation}

Finally, let $N_H\geq N_4$ be the smallest integer that satisfies
\begin{equation}\label{eq-sum-thm}
k+\frac{k^2}{2n}-\frac{k}{n}\sqrt{2m_H N_H}+\frac{n^2-n}{8m_H}\leq -\frac{m_H N_4^2}{2}~\text{for any integer }1\leq k\leq \frac{4}{3}n.
\end{equation}

\bigskip

As in the proof of Proposition \ref{prop-key}, we divide our proof into finite steps as follows. 

\textbf{Step 0.}

Let $C_0:=C\subset X$ be a $1$-dimensional closed subscheme of $H$-degree $d\geq N_H$ and $\gedim'(C)\leq 3$. Without loss of generality, we can assume that $C$ is a curve and $\gedim(C)\leq 3$. We denote by $\iota \colon X\hookrightarrow \PP(|m_H H|)$ the embedding induced by $|m_H H|$. According to Lemma \ref{lem-good-proj-exist}, we can find a good projection $\pi\colon \PP(|m_H H|)\dashrightarrow \PP^3$ of $C$ such that $\pi_X$ is regular. By definition, we have $\pi_X^*\oh_{\PP^3}(h)=\oh_X(m_H H)$. Since $\pi_C$ is birational onto its image $C'$, we have 
\[m_H d=C.(m_H H)=C.\pi_X^*h=(\pi_{X*}C). h=C'.h=\deg_h C'\]
where $\pi_X^*$ and $\pi_{X*}$ are Chow-theoretic pullback and pushforward. Here are only three possible situations for $C_0':=C'$:

\begin{itemize}
    \item Type I: $C'_0$ is $(-\frac{4}{3}n)$-neutral.

    \item Type II: $C_0'$ is contained in an effective divisor in $|\oh_{\PP^3}(t)|$, where $1\leq t\leq \frac{4}{3}n$.

    \item Type III: $C'_0$ is not contained in any effective divisor in $|\oh_{\PP^3}(t)|$ for any $1\leq t\leq \frac{4}{3}n$, and it is not $(-\frac{4}{3}n)$-neutral.
\end{itemize}

If $C'_0$ is of Type I or Type II, we end the proof at this step.

If $C'_0$ is of Type III, as $d\geq N_1$, by \eqref{eq-b_d-thm} and Proposition \ref{cor_wall} the uppermost actual wall is of form $W(I_{C'_1}(-D'_1), I_{C'_0})$, where $D'_1\in |\oh_{\PP^3}(k_1)|$ for $1\leq k_1\leq \frac{4}{3}n$ and $C'_1\subset C'_0$ is a $1$-dimensional closed subscheme of $h$-degree $\deg_h C'_1=m_H d_1$. Moreover, applying Proposition \ref{prop-decompose-wall}, we can find a closed subscheme $C_1\subset C_0$ and $D_1\in |k_1m_H H|$ such that $\deg_H(C_1)=d_1$, $\deg_H(C\cap D_1)=d_0':=d-d_1$ and
\[g(C_0)=g(C_1)+g(C\cap D_1)+k_1m_Hd_1-1.\]

Note that $\pi$ is also a good projection of $C_1$. If $d_1<N_4$, or $d_1\geq N_4$ and $C_1'$ is of Type I or II, then we end our proof at this step as well. Otherwise, we replace the role of $C_0$ with $C_1$ and go to the following Step 1.

\bigskip

\textbf{Step 1.} \emph{$d_1\geq N_4$ and $C_1'$ is of Type III.}

In this case, by \eqref{eq-b_d-thm} and Proposition \ref{cor_wall}, the uppermost actual wall for $I_{C_1'}$ is of the form $W(I_{C'_2}(-D'_2), I_{C'_1})$, where $D'_2\in |\oh_{\PP^3}(k_2)|$ for $1\leq k_2\leq \frac{4}{3}n$ and $C'_2\subset C'_1$ is a $1$-dimensional closed subscheme of $h$-degree $\deg_h C'_2=m_H d_2$. Moreover, applying Proposition \ref{prop-decompose-wall} as the previous step, we can find a closed subscheme $C_2\subset C_1$ and $D_2\in |k_2m_H H|$ such that $\deg_H(C_2)=d_2$, $\deg_H(C_1\cap D_2)=d_1':=d_1-d_2$ and
\[g(C_1)=g(C_2)+g(C_1\cap D_2)+k_2m_Hd_2-1.\]

Note that $\pi$ is also a good projection of $C_2$. If $d_2<N_4$, or $d_2\geq N_4$ and $C_2'$ is of Type I or II, then we end our process at this step. Otherwise, we replace the role of $C_1$ with $C_2$ and go to the next Step 2.


\bigskip

Inductively, for any integer $m\geq 1$, we have:

\textbf{Step m-1.} \emph{$d_{m-1}\geq N_4$ and $C_{m-1}'$ is of Type III.}

In this case, the uppermost actual wall for $I_{C_{m-1}'}$ is of the form $W(I_{C'_m}(-D'_m), I_{C'_{m-1}})$, where $D'_m\in |\oh_{\PP^3}(k_m)|$ for $1\leq k_m\leq \frac{4}{3}n$ and $C'_m\subset C'_{m-1}$ is a $1$-dimensional closed subscheme of $h$-degree $\deg_h C'_m=m_H d_m$. Moreover, applying Proposition \ref{prop-decompose-wall} again as in the previous step, we can find a closed subscheme $C_m\subset C_{m-1}$ and $D_m\in |k_m m_H H|$ such that $\deg_H(C_m)=d_m$, $\deg_H(C_{m-1}\cap D_m)=d_{m-1}':=d_{m-1}-d_m$ and
\[g(C_{m-1})=g(C_m)+g(C_{m-1}\cap D_m)+k_m m_Hd_m-1.\]

Note that $\pi$ is also a good projection of $C_m$. If $d_m<N_4$, or $d_m\geq N_4$ and $C_m'$ is of Type I or II, then we end our process as well. Otherwise, we replace the role of $C_{m-1}$ with $C_m$ and go to the next Step m.

\bigskip

Since $d_0:=d>d_1>d_2>d_3>\dots$, so if we continue this process, we will end at Step m-1 for an integer $m\geq 1$. Therefore, we finally get sequences of curves $$C_m\subset C_{m-1}\subset \dots C_2\subset C_1\subset C_0:=C$$ and $$C'_m\subset C'_{m-1}\subset \dots C'_2\subset C'_1\subset C'_0:=C'$$ and divisors $D'_i\in |\oh_{\PP^3}(k_i)|$ and $D_i\in |\oh_{X}(k_i m_H H)|$ for any $1\leq i\leq m$ such that $$\deg_h C'_i=m_H \deg_H C_i=m_H d_i$$ satisfying

\begin{enumerate}[(i)]
    \item $d_m<N_4$, or $d_m\geq N_4$ and $C_m'$ is of Type I or II,

    \item $g(C_{i-1})=g(C_i)+g(C_{i-1}\cap D_i)+k_im_Hd_i-1$ for any $1\leq i\leq m$,

    \item $d_i\geq N_4$ for each $0\leq i\leq m-1$,

    \item $1\leq k_i\leq \frac{4}{3}n$ for any $1\leq i\leq m$, and

    \item $m_H d_i'>k_{i+1}\sqrt{2m_H d_i}-\frac{k_{i+1}^2}{2}$ for any $0\leq i\leq m-1$.
\end{enumerate}
Here, (i) is the condition that we stop our process, and (v) follows from Proposition \ref{cor_wall}(a). By (iii), (v), and \eqref{eq-high-deg-bound-thm}, we see
\[\deg_h(C_i'\cap D'_{i+1})=m_H d_i'>k_{i+1}\sqrt{2m_Hd_i}-\frac{k_{i+1}^2}{2}\geq m_H(N_3-1),\]
which implies $d_i'\geq N_3$ for any $0\leq i\leq m$. Thus from Claim 1, we have
\begin{equation}\label{eq-gcidi+1}
    g(C_{i}\cap D_{i+1})\leq \frac{m_H^2}{2n}d_i'^2+\frac{n m_H-4m_H}{2}d_i'+1-\epsilon(m_H d_i', n)
\end{equation}
for any $0\leq i\leq m-1$. Note that 
\begin{equation}\label{eq-dj'}
    d_j=\sum_{i=j}^{m-1} d_i'+d_m
\end{equation}
for any $0\leq j\leq m-1$. Then as in the proof of Proposition \ref{prop-key}, we have
\[g(C)=\sum_{i=0}^{m-1} g(C_{i}\cap D_{i+1})+g(C_m)+\sum_{j=1}^m k_jm_Hd_j-m\]
\[\overset{\eqref{eq-gcidi+1}}{\leq} \big(\sum^{m-1}_{i=0} \frac{1}{2n}(m_Hd_i')^2+\frac{n-4}{2}(m_Hd_i')+1-\epsilon(m_Hd_i', n)\big)+g(C_m)+\sum_{j=1}^m k_jm_Hd_j-m\]
\[= \big( \frac{1}{2n}(\sum^{m-1}_{i=0} m_Hd_i')^2-\frac{\sum_{0\leq x,y\leq m-1, x\neq y}(m_Hd_x')(m_Hd_y')}{n}+\frac{n-4}{2}(m_H\sum^{m-1}_{i=0} d_i')-\sum^{m-1}_{i=0}\epsilon(m_Hd_i', n)\big)\]
\[+g(C_m)+\sum_{j=1}^m k_jm_Hd_j\]
\[\overset{\eqref{eq-dj'}}{=}\frac{m_H^2}{2n}(d-d_m)^2+\frac{nm_H-4m_H}{2}(d-d_m)+g(C_m)+\sum_{j=1}^m k_jm_Hd_j-\frac{m_H^2\sum_{0\leq x,y\leq m-1, x\neq y}d_x'd_y'}{n}-\sum^{m-1}_{i=0}\epsilon(m_Hd_i', n)\]
\[=\frac{m_H^2}{2n}d^2+\frac{nm_H-4m_H}{2}d+1-\epsilon(m_Hd,n)\]
\[+\big(-\frac{m_H^2 d d_m}{n}+\frac{m_H^2d_m^2}{2n}-\frac{m_Hn-4m_H}{2}d_m+\sum_{j=1}^m k_jm_Hd_j-\frac{m_H^2\sum_{0\leq x,y\leq m-1, x\neq y}d_x'd_y'}{n}-\sum^{m-1}_{i=0}\epsilon(m_Hd_i', n) \big)\]
\[-1+\epsilon(m_Hd,n)+ g(C_m)\]
\[\overset{\eqref{eq-epsilon-bound}}{\leq} \frac{m_H^2}{2n}d^2+\frac{nm_H-4m_H}{2}d+1-\epsilon(m_Hd,n)\]
\[+\big(-\frac{m_H^2 d d_m}{n}+\frac{m_H^2d_m^2}{2n}-\frac{m_Hn-4m_H}{2}d_m-1+\epsilon(m_Hd,n)+ g(C_m)+\sum_{j=1}^m k_jm_Hd_j-\frac{m_H^2\sum_{0\leq x,y\leq m-1, x\neq y}d_x'd_y'}{n} \big).\]
Thus, we only need to prove $R'_m\leq 0$, where
\[R'_m:=-\frac{m_H^2 d d_m}{n}+\frac{m_H^2d_m^2}{2n}-\frac{m_Hn-4m_H}{2}d_m-1+\epsilon(m_Hd,n)+ g(C_m)+\sum_{j=1}^m k_jm_Hd_j-\frac{m_H^2\sum_{0\leq x,y\leq m-1, x\neq y}d_x'd_y'}{n}.\]
Note that
\[-\frac{m_H^2 d d_m}{n}+\frac{m_H^2d_m^2}{2n}\overset{\eqref{eq-dj'}}{=}\frac{-m_H^2d_m\sum^{m-1}_{i=0}d_i'}{n}-\frac{m_H^2d_m^2}{2n}\]
and
\[\frac{-m_H^2d_m\sum^{m-1}_{i=0}d_i'}{n}+\sum_{j=1}^m k_jm_Hd_j-\frac{m_H^2\sum_{0\leq x,y\leq m-1, x\neq y}d_x'd_y'}{n}\]
\[\overset{\eqref{eq-dj'}}{=}-\frac{m_H^2\sum^{m-1}_{i=0} d_i'd_{i+1}}{n}+\sum_{j=1}^m k_jm_Hd_j=\sum^{m}_{j=1} m_Hd_j(k_j-\frac{m_Hd'_{j-1}}{n}),\]
then we obtain
\[R'_m=\sum^{m}_{j=1} m_Hd_j(k_j-\frac{m_Hd'_{j-1}}{n})-\frac{m_H^2d_m^2}{2n}-\frac{m_Hn-4m_H}{2}d_m-1+\epsilon(m_Hd,n)+ g(C_m).\]
And from (v) and \eqref{eq-jiaocha-thm}, we get $k_j-\frac{m_Hd'_{j-1}}{n}\leq 0$, then combing with $\epsilon(m_Hd,n)\leq \frac{n^2-n}{8}$ in \eqref{eq-epsilon-bound}, we see

\begin{equation}\label{eq-222}
R'_m\leq m_H(k_1-\frac{m_H d_0'}{n})-\frac{m_H^2d_m^2}{2n}-\frac{m_Hn-4m_H}{2}d_m-1+\frac{n^2-n}{8}+ g(C_m).
\end{equation}

If $d_m<N_4$, then by $g(C_m)\leq g(C_m')$, $\deg_h C_m'=m_Hd_m$ and Lemma \ref{lem-cok-dim0}, the the inequality \eqref{eq-222} gives
\[R'_m\leq m_H(k_1-\frac{m_H d_0'}{n})-\frac{m_H^2d_m^2}{2n}-\frac{m_Hn-4m_H}{2}d_m-1+\frac{n^2-n}{8}+ \frac{(m_Hd_m-1)(m_Hd_m-2)}{2}\]
\[\leq m_H(k_1-\frac{m_H d_0'}{n})+\frac{n^2-n}{8}+\frac{m^2_H N^2_4}{2},\]
hence (v) and \eqref{eq-sum-thm} gives $R'_m\leq 0$.

If $d_m\geq N_4$ and $C_m'$ is of Type I or II, by Claim 1 or applying Lemma \ref{lem-bmt-at-ab} to $I_{C'_m}$ at the point $(a,b)=(0,-\frac{4}{3}n)$, we obtain
\[g(C_m)\leq g(C_m')\leq \frac{m_H^2}{2n}d_m^2+\frac{nm_H-4m_H}{2}d_m+1-\epsilon(m_Hd_m, n).\]
Thus,
\[R'_m\leq m_H(k_1-\frac{m_H d_0'}{n})-\frac{m_H^2d_m^2}{2n}-\frac{m_Hn-4m_H}{2}d_m-1+\frac{n^2-n}{8}+ g(C_m)\]
\[\leq  m_H(k_1-\frac{m_H d_0'}{n})+\frac{n^2-n}{8}-\epsilon(m_Hd_m, n)\]
\[\overset{\eqref{eq-epsilon-bound}}{\leq}  m_H(k_1-\frac{m_H d_0'}{n})+\frac{n^2-n}{8}.\]
Then in this case, $R'_m\leq 0$ follows from (v) and \eqref{eq-sum-thm} as well.

To conclude, in both cases, we have $R'_m\leq 0$ and we can deduce that
\[g(C)\leq \frac{m_H^2}{2n}d^2+\frac{n m_H-4m_H}{2}d+1-\epsilon(m_H d, n).\]
\end{proof}

When $X$ has at most isolated singularities, we obtain:

\begin{theorem}\label{thm-general-bound-isolated}
Let $(X, H)$ be a polarised projective 3-fold with at worst isolated singularities and $m_H$ be an integer such that $m_H H$ is very ample. Then there exists an integer $N_H$ defined in \eqref{eq-sum-thm} such that for any $1$-dimensional closed subscheme $C\subset X$ of $H$-degree $d\geq N_H$, we have
\[g(C)\leq \frac{1}{2n_H}(m_H d)^2+\frac{n_H-4}{2}(m_H d)+1-\epsilon(m_H d, n_H).\]
\end{theorem}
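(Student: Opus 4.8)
The plan is to reduce the statement directly to Theorem~\ref{thm-general-bound}. That theorem imposes two conditions beyond those assumed here, namely $\dim X_{\mathrm{sing}}\leq 1$ and $\gedim'(C)\leq 3$. Since $X$ has at worst isolated singularities, $X_{\mathrm{sing}}$ is a finite set of closed points, so in particular $\dim X_{\mathrm{sing}}\leq 1$; hence $(X,H)$ already satisfies the hypotheses of Theorem~\ref{thm-general-bound}, and the same constant $N_H$ produced there in \eqref{eq-sum-thm} will serve. The only thing left to check is that the condition $\gedim'(C)\leq 3$ is automatic, i.e.\ that it holds for every $1$-dimensional closed subscheme $C\subset X$.

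To verify $\gedim'(C)\leq 3$, I would exhibit an open subset of $C$ with finite complement on which the embedding dimension is at most $3$. Set $U:=C\setminus X_{\mathrm{sing}}$. As $C$ is $1$-dimensional and $X_{\mathrm{sing}}$ is finite, $C\cap X_{\mathrm{sing}}$ is a finite set, so $U$ is open in $C$ with $\dim(C\setminus U)=0$. For any closed point $x\in U$ the local ring $\oh_{X,x}$ is regular of Krull dimension $3$, because $X$ is pure $3$-dimensional and $\kk$ is algebraically closed, whence $\dim_{\kk}\mathrm{T}_x X=3$. The closed immersion $C\hookrightarrow X$ induces a surjection $\mathfrak{m}_{X,x}/\mathfrak{m}_{X,x}^2\twoheadrightarrow \mathfrak{m}_{C,x}/\mathfrak{m}_{C,x}^2$, and dualizing over $\kappa(x)=\kk$ yields an injection $\mathrm{T}_x C\hookrightarrow \mathrm{T}_x X$. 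Therefore $\dim_{\kk}\mathrm{T}_x C\leq 3$ for every closed point $x\in U$, and since $\gedim'$ is defined as a minimum over such pairs $(U,x)$, we conclude $\gedim'(C)\leq 3$.

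With this verified, I would simply apply Theorem~\ref{thm-general-bound} to $(X,H)$ and to $C$ of $H$-degree $d\geq N_H$, obtaining
\[g(C)\leq \frac{1}{2n_H}(m_H d)^2+\frac{n_H-4}{2}(m_H d)+1-\epsilon(m_H d, n_H).\]
The proof is essentially formal once the geometric observation is made; the only delicate point is the correct use of the definition of $\gedim'$, where it is enough to produce a single admissible open subset $U$ together with one point (here in fact every point of $U$ works) at which the embedding dimension is $3$. I do not expect a genuine obstacle: the entire gain over Theorem~\ref{thm-general-bound} comes from the remark that a curve in a $3$-fold with only isolated singularities meets the singular locus in at most finitely many points, and so is, away from those points, contained in the smooth locus, where the ambient embedding dimension equals $3$.
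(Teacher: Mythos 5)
Your proof is correct and is essentially the paper's own argument: the paper likewise disposes of Theorem \ref{thm-general-bound-isolated} in one line by observing that finiteness of $X_{\mathrm{sing}}$ forces $\gedim'(C)\leq 3$ for every $1$-dimensional closed subscheme $C\subset X$, and then invokes Theorem \ref{thm-general-bound} with the same integer $N_H$ from \eqref{eq-sum-thm}. One small caution: reading $\gedim'$ as requiring only a \emph{single} point of small embedding dimension would be too weak for its intended use downstream (via Remark \ref{rmk-gedim'}, one needs the bound generically on every component so that good projections exist), but this is harmless in your write-up since you actually verify $\dim_{\kk}\mathrm{T}_x C\leq 3$ at \emph{every} closed point of $U=C\setminus X_{\mathrm{sing}}$.
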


\begin{proof}
Since $X_{\mathrm{sing}}$ is finite, we always have $\gedim'(C)\leq 3$, and the result follows from Theorem \ref{thm-general-bound}.
\end{proof}

\subsection{Projective 3-folds of Picard number one}\label{subsec-pic-1}

We end this section by discussing corollaries of Theorem \ref{thm-general-bound} and Theorem \ref{thm-general-bound-isolated} when $X$ is of Picard number one.


\begin{corollary}\label{cor-pic-rk-1}
Let $(X, H)$ be a polarised factorial projective 3-fold with at worst isolated singularities. Assume that $\NS(X)=\ZZ H$ and $n:=H^3$. Let $s$ be the least integer such that $|sH|\neq \varnothing$ and $m_H$ be an integer such that $m_{H} H$ is very ample.

Then there exists an integer $N_H$ defined in \eqref{eq-sum-thm} such that for any $1$-dimensional closed subscheme $C\subset X$ of $H$-degree $d\geq N_{H}$, we have
\[g(C)\leq \frac{1}{2sn}d^2+\frac{snm^3_{H}-4m_{H}}{2}d+1-\epsilon(m_{H} d, snm^2_{H}).\]

\end{corollary}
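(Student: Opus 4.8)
The plan is to obtain this as an immediate specialization of Theorem \ref{thm-general-bound-isolated}, the only real work being to evaluate the invariant $n_H$ of Definition \ref{def-nH} under the hypotheses that $X$ is factorial with $\NS(X) = \ZZ H$.

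First I would apply Theorem \ref{thm-general-bound-isolated}. Since $X$ has at worst isolated singularities (so that the condition $\gedim'(C) \leq 3$ is automatic) and $m_H H$ is very ample, that theorem produces the integer $N_H$ of \eqref{eq-sum-thm} such that every $1$-dimensional closed subscheme $C \subset X$ of $H$-degree $d \geq N_H$ satisfies
\[g(C)\leq \frac{1}{2n_H}(m_H d)^2+\frac{n_H-4}{2}(m_H d)+1-\epsilon(m_H d, n_H).\]

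Next I would compute $n_H$ explicitly. This is exactly the content of Remark \ref{rmk-nH}: on the one hand, since $|sH| \neq \varnothing$, any effective divisor in $|sH|$ shows $n_H \leq sH.(m_H H)^2 = sm_H^2 H^3 = snm_H^2$; on the other hand, factoriality together with $\NS(X) = \ZZ H$ forces every prime divisor $D$ to have class a multiple $tH$ with $t \geq s$, whence $D.(m_H H)^2 = tm_H^2 H^3 \geq snm_H^2$. Combining the two inequalities gives $n_H = snm_H^2$.

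Finally, substituting $n_H = snm_H^2$ and $n = H^3$ into the bound above, the quadratic term collapses to $\frac{(m_H d)^2}{2snm_H^2} = \frac{d^2}{2sn}$ and the linear term to $\frac{(snm_H^2 - 4)m_H}{2}d = \frac{snm_H^3 - 4m_H}{2}d$, producing exactly
\[g(C)\leq \frac{1}{2sn}d^2+\frac{snm^3_{H}-4m_{H}}{2}d+1-\epsilon(m_{H} d, snm^2_{H}).\]
There is no substantive obstacle here: once Theorem \ref{thm-general-bound-isolated} and Remark \ref{rmk-nH} are in hand, the corollary is a one-line algebraic substitution. The only point demanding care is the bookkeeping of the normalisation in $n_H$ — verifying that the minimum over prime divisors is genuinely attained at an effective member of $|sH|$ rather than at something of smaller $H$-degree — but this is precisely what the factoriality and Picard-number-one assumptions guarantee via Remark \ref{rmk-nH}.
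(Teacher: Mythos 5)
Your proposal is correct and matches the paper's own proof exactly: the paper also deduces the corollary from Theorem \ref{thm-general-bound-isolated} by invoking Remark \ref{rmk-nH} to get $n_H = snm_H^2$ and substituting. Your expanded justification of the identity $n_H = snm_H^2$ (upper bound from $|sH|\neq\varnothing$, lower bound from factoriality and $\NS(X)=\ZZ H$) is precisely the content of that remark.
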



\begin{proof}
By Remark \ref{rmk-nH}, we have $n_{H}=snm^2_{H}$, then the result follows from Theorem \ref{thm-general-bound-isolated}.
\end{proof}

\begin{remark}\label{rmk-determine-s}
In practice, the integer $s\geq 1$ can be easily determined. For example, $s=1$ for any complete intersection 3-folds in projective spaces. And by \cite[Theorem 1.5]{horing:effective-vanish}, we also have $s=1$ when $\mathrm{char}(\kk)=0$ and $X$ has at worst canonical singularities and $-K_X$ nef. In particular, all smooth Fano or Calabi--Yau 3-folds over $\CC$ satisfying $s=1$.
\end{remark}

In the situation of Corollary \ref{cor-pic-rk-1}, we can show that this upper bound is asymptotically optimal. Let $\mathsf{D}_{X}$ be the defining domain of $\gmax^{X}(d)$. In other words,
\[\mathsf{D}_{X}:=\{d\in \ZZ_{\geq 1}~|~\text{there is a }1\text{-dimensional closed subscheme }C\subset X \text{ of }\deg(C)=d\}.\]
It is clear that the number in $\mathsf{D}_{X}$ can be arbitrarily large.

\begin{theorem}\label{thm-limit}
Let $(X, H)$ be a polarised factorial projective 3-fold with at worst isolated singularities. Assume that $\NS(X)=\ZZ H$ and $n:=H^3$.  Let $s$ be the least integer such that $|sH|\neq \varnothing$. Then we have
\[\underset{d\in \mathsf{D}_{X}}{\lim_{d\to +\infty}} \frac{\gmax^{X}(d)}{d^2}=\frac{1}{2sn}.\]
\end{theorem}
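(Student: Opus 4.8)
The plan is to prove the two one-sided estimates
\[
\limsup_{d\to+\infty}\frac{\gmax^{X}(d)}{d^2}\le \frac{1}{2sn}
\qquad\text{and}\qquad
\liminf_{\substack{d\to+\infty\\ d\in\mathsf{D}_{X}}}\frac{\gmax^{X}(d)}{d^2}\ge \frac{1}{2sn}
\]
separately. The upper bound is immediate from Corollary \ref{cor-pic-rk-1}: since $X$ is factorial with at worst isolated singularities and $\NS(X)=\ZZ H$, that corollary gives, for every $d\ge N_H$,
\[
\gmax^{X}(d)\le \frac{1}{2sn}d^2+\frac{snm_H^3-4m_H}{2}\,d+1-\epsilon(m_H d,\,snm_H^2).
\]
As $\epsilon\ge 0$ by \eqref{eq-epsilon-bound}, the right-hand side is $\tfrac{1}{2sn}d^2+O(d)$; dividing by $d^2$ and letting $d\to+\infty$ yields the $\limsup$ bound.

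For the lower bound I would fix an integral surface $S\in|sH|$. Such an $S$ is automatically integral: writing an effective member as $\sum a_iS_i$ with $S_i\sim t_iH$ prime forces $s=\sum a_it_i$ with each $t_i\ge s$ (by minimality of $s$), so the member is prime. Then $(H|_S)^2=H^2.S=sH^3=sn$. For $k\ge 1$ and a fixed effective curve $\Gamma\subset S$, set $C:=C_k+\Gamma$ with $C_k\in|k(H|_S)|$; this is an effective divisor on $S$, hence a $1$-dimensional closed subscheme of $X$ of degree $(kH|_S+\Gamma).H=skn+\deg\Gamma$. From $0\to\oh_S(-C)\to\oh_S\to\oh_C\to 0$ and asymptotic (Snapper) Riemann--Roch, $\chi(\oh_S(-C))$ is, for fixed $\Gamma$, a quadratic polynomial in $k$ with leading coefficient $\tfrac12(H|_S)^2=\tfrac{sn}{2}$, so
\[
g(C)=1-\chi(\oh_C)=\frac{sn}{2}\,k^2+O_{\Gamma}(k),
\]
the implied constant depending only on $\deg\Gamma$. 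This computation uses no regularity of $S$.

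The remaining, and most delicate, point is gap-filling: I must realize \emph{every} large $d\in\mathsf{D}_{X}$ by such a $C$ of degree exactly $d$. The degrees of curves on $S$ form a numerical sub-semigroup $\mathsf{D}_S\subseteq\mathsf{D}_{X}$; by the Lefschetz hyperplane theorem the restriction $N_1(S)\to N_1(X)$ is surjective, so effective curve classes on $S$ generate the same subgroup of $\ZZ$ under $(-).H$ as those on $X$, giving $\gcd\mathsf{D}_S=\gcd\mathsf{D}_{X}=:g$. Hence $\mathsf{D}_S$ and $\mathsf{D}_{X}$ contain exactly the same sufficiently large integers. I would then fix finitely many curves $\Gamma_1,\dots,\Gamma_p\subset S$ whose degrees represent all residues modulo $sn$ occurring among large elements of $\mathsf{D}_{X}$ (all multiples of $g$, and since $sn\in g\ZZ$ the values $skn+\deg\Gamma_i$ exhaust every large multiple of $g$). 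For large $d\in\mathsf{D}_{X}$, choose $i$ with $\deg\Gamma_i\equiv d\pmod{sn}$, put $k=(d-\deg\Gamma_i)/(sn)\ge 0$, and take $C=C_k+\Gamma_i$. Then $\deg C=d$ and, as only finitely many $\Gamma_i$ occur, the error terms are uniformly bounded, so $g(C)=\tfrac{sn}{2}k^2+O(k)=\tfrac{d^2}{2sn}+O(d)$. Thus $\gmax^{X}(d)\ge\tfrac{d^2}{2sn}-O(d)$, giving the $\liminf$ bound and hence the theorem.

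I expect the main obstacle to be precisely this gap-filling step, i.e.\ matching the semigroup of degrees realizable \emph{on the fixed surface} $S$ with all of $\mathsf{D}_{X}$ (the Lefschetz surjectivity together with the numerical-semigroup bookkeeping modulo $sn$), and verifying that the lower-order terms in the genus formula stay uniformly bounded across the finite family $\{\Gamma_i\}$. By contrast, the adjunction-type genus computation is routine once phrased through Hilbert polynomials, requiring no smoothness of $S$.
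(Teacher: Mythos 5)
Your upper bound is exactly the paper's (Corollary \ref{cor-pic-rk-1} together with $\epsilon\geq 0$), and your Snapper-theorem computation $g(C_k+\Gamma)=\tfrac{sn}{2}k^2+O_\Gamma(k)$ is sound even on a singular $S$. The genuine gap is the step you flagged yourself: gap-filling. Your justification --- ``by the Lefschetz hyperplane theorem the restriction $N_1(S)\to N_1(X)$ is surjective, so $\gcd\mathsf{D}_S=\gcd\mathsf{D}_{X}$'' --- fails on two levels. Lefschetz gives surjectivity of $H_2(S,\ZZ)\to H_2(X,\ZZ)$, a topological statement; it does not give surjectivity on \emph{algebraic} curve classes, and it says nothing about \emph{effective} classes, which is what $\mathsf{D}_S$ records. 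Worse, the conclusion is false: take $X$ a smooth quintic threefold (so $s=1$, $n=5$; every smooth quintic contains a line, so $1\in\mathsf{D}_{X}$). By the Noether--Lefschetz theorem in the form of Ravindra--Srinivas (applicable since $H^0(K_X+H)=H^0(\oh_X(1))\neq 0$), a very general $S\in|H|$ has $\Pic(S)=\ZZ\,\oh_S(1)$, so every curve on $S$ has degree divisible by $5$ and $\gcd\mathsf{D}_S=5\neq 1=\gcd\mathsf{D}_{X}$; your construction then misses all large $d\not\equiv 0 \pmod 5$. In this example one could rescue matters by fixing $S$ through a line, but you give no argument --- and nothing you cite provides one --- that in general a single member of $|sH|$ carries curves of every residue class occurring in $\mathsf{D}_{X}$.

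The paper's proof sidesteps this entirely by never requiring the residue-representing curves to lie on the surface. It fixes $D\in|sH|$ (integral since $\NS(X)=\ZZ H$) and $m$ least with $mH$ very ample, chooses for each residue modulo $msn$ occurring in $\mathsf{D}_{X}$ an \emph{arbitrary} $1$-dimensional closed subscheme $C_i\subset X$ of that residue, and sets $C^k_i:=C_i\cup(D\cap S_k)$ with $S_k\in|kmH|$ generic, meeting $D$ and $C_i$ properly. Then $\deg(C^k_i)=d_i+kmsn$ sweeps out all sufficiently large elements of $\mathsf{D}_{X}$, and the Mayer--Vietoris sequence for the union gives $g(C^k_i)=g(C_i)+g(D\cap S_k)+\mathrm{length}\big(C_i\cap(D\cap S_k)\big)+O(1)$, where the complete-intersection curve $D\cap S_k$ supplies the leading term $\tfrac{1}{2sn}d^2$ and the correction satisfies $0\leq \mathrm{length}\big(C_i\cap(D\cap S_k)\big)\leq km\,d_i=O(d)$. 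Replacing your $C_k+\Gamma_i$ (with $\Gamma_i\subset S$) by this union of an off-surface residual curve with an on-surface complete intersection repairs your argument; the rest of what you wrote goes through.
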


\begin{proof}
According to Corollary \ref{cor-pic-rk-1}, we have
\[\gmax^{X}(d)\leq \frac{1}{2sn}d^2+\text{linear term}+\text{constant}\]
when $d\gg 0$ and $d\in \mathsf{D}_{X}$. Then to prove the statement, it is enough to find out a function $f(d)\colon \mathsf{D}_{X}\to \mathbb{R}$ such that
\[\gmax^{X}(d)\geq f(d)\]
when $d\gg 0$ and  
\[\underset{d\in \mathsf{D}_{X}}{\lim_{d\to +\infty}} \frac{f(d)}{d^2}=\frac{1}{2sn}.\]
To this end, let $m$ be the least integer such that $mH$ is very ample and $D\in |sH|$. As $\NS(X)=\ZZ H$, the surface $D$ is integral. We define a set
\[\mathsf{S}:=\{l~|~d \equiv l\mod{msn},~0\leq l\leq msn-1,~d\in \mathsf{D}_{X}\}.\]
Let $t=\mathrm{card}(\mathsf{S})$ and  $C_1,\cdots, C_t$ be $1$-dimensional closed subschemes of $X$ such that $\deg(C_i)=d_i$ and
\[\{l_i~|~d_i \equiv l_i\mod{msn},~0\leq l_i\leq msn-1,~1\leq i\leq t\}=\mathsf{S}.\]
Then for any integer $k\geq 1$ and $1\leq i\leq t$, we define a $1$-dimensional closed subscheme $C^k_i$ as $C^k_i:=C_i\cup (D\cap S_k)$, where $S_k\in |kmH|$ is a generic divisor that intersects $D$ and $C_i$ properly. From the construction, we see
\[\deg(C^k_i)=d_i+kmsn\]
and 
\[g(C^k_i)=g(C_i)+\frac{1}{2sn}(kmsn)^2+\frac{s+\frac{K_X.H^2}{n}}{2}(kmsn)+1+\mathrm{length}(C_i\cap (D\cap S_k)).\]
Moreover, by definition, we see
\[\{d_i+kmsn~|~k\geq 1,~1\leq i\leq t\}=\mathsf{D}_{X}\cap \ZZ_{\geq N}\]
for a sufficiently large integer number $N$. Then if we define
\[f(d):=g(C_i)+\frac{1}{2sn}(d-d_i)^2+\frac{s+\frac{K_X.H^2}{n}}{2}(d-d_i)+1+\mathrm{length}(C_i\cap (D\cap S_k))\]
for $d\in \mathsf{D}_{X}\cap \ZZ_{\geq N}$ and $d\equiv d_i \mod{msn}$, and 
\[f(d):=g^{X}_{\max}(d)\]
for $d\in \mathsf{D}_{X}\cap \ZZ_{<N}$, we get a function $f(d)\colon \mathsf{D}_{X}\to \mathbb{R}$ such that
\[f(d)\leq g^{X}_{\max}(d)\]
when $d\gg 0$. Moreover, as 
\[0\leq \mathrm{length}(C_i\cap (D\cap S_k))\leq kmd_i=\frac{d-d_i}{sn}d_i,\]
we obtain
\[g(C_i)+\frac{1}{2sn}(d-d_i)^2+\frac{s+\frac{K_X.H^2}{n}}{2}(d-d_i)+1\leq f(d)\leq g(C_i)+\frac{1}{2sn}(d-d_i)^2+\frac{s+\frac{K_X.H^2}{n}}{2}(d-d_i)+1+\frac{d-d_i}{sn}d_i\]
for $d\gg 0$, which implies 
\[\underset{d\in \mathsf{D}_{X}}{\lim_{d\to +\infty}} \frac{f(d)}{d^2}=\frac{1}{2sn}\]
as desired.
\end{proof}

\begin{remark}
From the proof above, we also obtain $\frac{K_X.H^2}{n}\leq snm_H^3-4m_H-s$.
\end{remark}

If we further assume that $m_H=1$, i.e.~$H$ is very ample, we obtain the following simpler bound.

\begin{corollary}\label{cor-very-ample}
Let $(X, H)$ be a polarised factorial projective 3-fold with at worst isolated singularities. Assume that $\NS(X)=\ZZ H$ with $H$ very ample and $n:=H^3$. 

Then there exists an integer $N_H$ defined in \eqref{eq-sum-thm} such that for any $1$-dimensional closed subscheme $C\subset X$ of $H$-degree $d\geq N_H$, we have
\[g(C)\leq \frac{1}{2n}d^2+\frac{n-4}{2}d+1-\epsilon(d, n).\]
\end{corollary}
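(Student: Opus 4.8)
The plan is to obtain this as an immediate specialization of Corollary \ref{cor-pic-rk-1}, whose bound already contains two free auxiliary integers $s$ and $m_H$; the whole point is that the very ampleness hypothesis on $H$ pins both of them down to the value $1$. So no new geometric input is needed, and I would simply verify that the two specializations are legitimate and then substitute.

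First I would fix the two integers. In the setup of Corollary \ref{cor-pic-rk-1}, $m_H$ is only required to be an integer such that $m_H H$ is very ample; since $H$ itself is very ample by hypothesis, the choice $m_H = 1$ is admissible. Likewise, $s$ denotes the least positive integer with $|sH| \neq \varnothing$, and very ampleness of $H$ forces $|H| \neq \varnothing$, hence $s = 1$. (If one wishes to trace the intermediate quantity, Remark \ref{rmk-nH} then gives $n_H = s n m_H^2 = n$, consistent with the normalization in Corollary \ref{cor-pic-rk-1}.)

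Next I would substitute $s = m_H = 1$ into the conclusion of Corollary \ref{cor-pic-rk-1}. The quadratic coefficient $\tfrac{1}{2sn}$ becomes $\tfrac{1}{2n}$, the linear coefficient $\tfrac{snm_H^3 - 4m_H}{2}$ becomes $\tfrac{n-4}{2}$, and the correction term $\epsilon(m_H d,\, s n m_H^2)$ becomes $\epsilon(d, n)$. This yields, for every $1$-dimensional closed subscheme $C \subset X$ of $H$-degree $d \geq N_H$,
\[
g(C) \leq \frac{1}{2n}d^2 + \frac{n-4}{2}d + 1 - \epsilon(d, n),
\]
which is precisely the asserted inequality. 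The threshold $N_H$ is unchanged: it is the same integer defined in \eqref{eq-sum-thm} and used in Corollary \ref{cor-pic-rk-1}.

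I do not expect any genuine obstacle here, since all of the analytic content — the projection/wall-crossing argument and the inductive genus estimate — is already carried out in Theorem \ref{thm-general-bound} and packaged into Corollary \ref{cor-pic-rk-1}. The only thing to check is that the hypotheses of Corollary \ref{cor-pic-rk-1} are satisfied (factoriality, at worst isolated singularities, and $\NS(X) = \ZZ H$ are all assumed here), so that the substitution $s = m_H = 1$ is valid; this is immediate from the definitions.
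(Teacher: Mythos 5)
Your proposal is correct and matches the paper's own proof, which likewise deduces the statement directly from Corollary \ref{cor-pic-rk-1} by taking $m_H=1$ (legitimate since $H$ is very ample) and $s=1$ (since very ampleness gives $|H|\neq \varnothing$), so that $n_H = snm_H^2 = n$ and the bound specializes as you computed. Your verification that the remaining hypotheses of Corollary \ref{cor-pic-rk-1} are satisfied, and that $N_H$ from \eqref{eq-sum-thm} is unchanged, is exactly the substance of the paper's one-line argument.
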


\begin{proof}
The statement directly follows from Corollary \ref{cor-pic-rk-1} since $m_H=1$ and $|H|\neq \varnothing$.
\end{proof}

\section{Low-degree curves}\label{sec-low-deg}

In this section, we study low-degree curves on some complete intersection Calabi--Yau 3-folds in projective spaces and a pair of Calabi--Yau 3-folds in the Pfaffian--Grassmannian correspondence. We still assume that the base field $\kk$ is any algebraically closed field.

For a sequence of integers $2\leq k_1\leq k_2\leq \dots \leq k_r$, we denote by $X_{k_1,k_2,\dots, k_r}\subset \PP^{r+3}$ a complete intersection 3-fold of multi-degree $(k_1,k_2,\dots, k_r)$. The following conjecture is a refined version of Conjecture \ref{conj-cast}, which is motivated by the physical predictions in \cite{HKQ09,soheyla:cast}.

\begin{conjecture}[{Optimal Castelnuovo Bound Conjecture}]\label{conj-optimal-bound}
Let $X:=X_{k_1,k_2,\dots, k_r}\subset \PP^{r+3}$ be a smooth complete intersection Calabi--Yau 3-fold over $\CC$ and $n:=k_1k_2\dots k_r$. Then
\[g_{\max}^{X}(d)\leq \frac{1}{2n}d^2+\frac{1}{2}d+1-\epsilon_X(d),\]
where $\epsilon_X(d)=\epsilon_X(f)=\epsilon_X(n-f)$ for $f\equiv d \mod n$ with $1\leq f\leq n$ and $\epsilon_X(n)=0$, satisfying

\begin{itemize}
    \item $X_5$: $\epsilon_X(1)=\frac{8}{5}$ and $\epsilon_X(2)=\frac{12}{5}$,

    \item $X_{2,4}$: $\epsilon_X(1)=\frac{25}{16}$, $\epsilon_X(2)=\frac{9}{4}$, $\epsilon_X(3)=\frac{33}{16}$, and $\epsilon_X(4)=1$,

    \item $X_{3,3}$: $\epsilon_X(1)=\frac{14}{9}$, $\epsilon_X(2)=\frac{20}{9}$, $\epsilon_X(3)=2$, and $\epsilon_X(4)=\frac{26}{9}$,

    \item $X_{2,2,3}$: $\epsilon_X(1)=\frac{37}{24}$, $\epsilon_X(2)=\frac{13}{6}$, $\epsilon_X(3)=\frac{15}{8}$, $\epsilon_X(4)=\frac{8}{3}$, $\epsilon_X(5)=\frac{61}{24}$, and $\epsilon_X(6)=\frac{3}{2}$,

    \item $X_{2,2,2,2}$: $\epsilon_X(1)=\frac{49}{32}$, $\epsilon_X(2)=\frac{17}{8}$, $\epsilon_X(3)=\frac{89}{32}$, $\epsilon_X(4)=\frac{5}{2}$, $\epsilon_X(5)=\frac{105}{32}$, $\epsilon_X(6)=\frac{25}{8}$, $\epsilon_X(7)=\frac{97}{32}$, and $\epsilon_X(8)=2$.
\end{itemize}
\end{conjecture}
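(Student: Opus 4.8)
The plan is to upgrade the unconditional iterative method behind Corollary~\ref{cor-very-ample} by feeding in the Calabi--Yau condition $K_X=0$, which sharpens the linear coefficient from the general value $\tfrac{n-4}{2}$ to the conjectural $\tfrac12$. Since $X=X_{k_1,\dots,k_r}$ is Calabi--Yau we have $\sum_i k_i=r+4$, so any surface $S\in|H|$ in the $3$-fold satisfies, by adjunction, $K_S=(K_X+S)|_S=H|_S$. Hence the Castelnuovo-type bound for a curve $C\subset S$ of degree $d$ carries the adjunction term $\tfrac12 K_S.C=\tfrac12 d$, rather than the $\tfrac{n-4}{2}d$ produced by a degree-$n$ surface in $\PP^3$ (compare Lemma~\ref{lem-ms-bound}). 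Note that for the quintic $X_5$ one has $n=5$ and $\tfrac{n-4}{2}=\tfrac12$, so Corollary~\ref{cor-very-ample} already gives the correct linear coefficient; the content of the conjecture for $n>5$ is precisely that only hyperplane sections, and never surfaces of intermediate degree, should govern the extremal genus.

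First I would run the algorithm of Section~\ref{intro-subsec-method} exactly as in the proof of Theorem~\ref{thm-general-bound}: embed $X$ by $|H|$ (here $m_H=1$ and $s=1$, so $n_H=n$), take a good projection to $\PP^3$, and analyze the walls of the ideal sheaf $I_{C'}$ via Proposition~\ref{cor_wall}. Each wall is induced by a divisor $D'\in|\oh_{\PP^3}(k)|$ and pulls back, via Proposition~\ref{prop-decompose-wall}, to an effective $D\in|kH|$ on $X$ together with a decomposition $g(C)=g(C_1)+g(C\cap D)+k\deg(C_1)-1$. The aim is to show that, in the relevant degree range, every wall can be realized with $k=1$, i.e.\ by a genuine hyperplane section of $X$, so that the sharp adjunction bound above applies to each piece $C\cap D$; the remaining bookkeeping---summing the local contributions and absorbing the cross terms---then proceeds as in the control of the error term $R_m$ in Proposition~\ref{prop-key} and Theorem~\ref{thm-general-bound}.

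Next, to obtain the explicit corrections I would use the periodicity built into the definition, $\epsilon_X(d)=\epsilon_X(f)=\epsilon_X(n-f)$ for $f\equiv d\pmod n$, to reduce the inequality to the finitely many residues $1\le f\le n$, and for each such residue pin down $\epsilon_X(f)$ as the sharp gap between $\tfrac{1}{2n}d^2+\tfrac12 d+1$ and the maximal genus of curves in that class, matching the physical predictions of \cite{HKQ09,soheyla:cast}. Sharpness, namely that the bound is attained and $\epsilon_X$ cannot be increased, would follow by gluing an extremal degree-$f$ curve to a complete-intersection curve $D\cap S_k$ on a hyperplane section $D\in|H|$, in the spirit of the construction proving Theorem~\ref{thm-limit}; the same extremal configurations supply the base cases for the induction on $d$ underlying Theorem~\ref{thm-low-deg}.

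The hard part will be enforcing that no surface of intermediate degree ever contributes, which is what separates the \emph{sharp} bound here from the asymptotically optimal but linearly weaker statement of Theorem~\ref{thm-limit}. The unconditional wall-crossing takes place on $\PP^3$, where a wall can a priori arise from a surface of any degree up to $\tfrac{4}{3}n$, and nothing in the present method rules this out once $d$ is large; performing the wall-crossing directly on $X$ so as to retain $K_X=0$ would require the Bayer--Macr\`i--Toda conjecture for $X$, which is exactly the input used in the sequel \cite{fey-liu:sharp}. For this reason the realistic unconditional target is the low-degree range, where the wall structure can be completely enumerated, and it is precisely this range that is settled in Theorem~\ref{thm-low-deg}.
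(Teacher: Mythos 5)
The statement you are attempting is a \emph{conjecture} in the paper: the authors do not prove it, and explicitly attribute the values $\epsilon_X(f)$ to the physical predictions of \cite{HKQ09,soheyla:cast}. What the paper proves unconditionally is only Theorem \ref{thm-low-deg}, i.e.\ the inequality for $d\leq D_1$ under the extra geometric hypothesis that $X$ contains no planes or quadric surfaces, with the sharp bound in general degree deferred to the sequel \cite{fey-liu:sharp} assuming the BMT conjecture for $X$. Your final paragraph concedes exactly this, so the honest content of your plan reproduces the paper's partial result; but the first three paragraphs overreach, and as a proof of the conjecture the proposal has an irreparable gap at its pivotal step.

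Concretely: (i) there is no mechanism behind the claim that ``every wall can be realized with $k=1$.'' Proposition \ref{cor_wall} permits walls induced by divisors $D'\in|\oh_{\PP^3}(k)|$ for any $1\leq k\leq -\lceil b\rceil$, and since $b_d=-\sqrt{d/n}\to-\infty$, intermediate degrees proliferate as $d$ grows; even after the truncation to $k\leq\tfrac{4}{3}n$ used in Proposition \ref{prop-key} and Theorem \ref{thm-general-bound}, these walls force Lemma \ref{lem-ms-bound} to be applied to surfaces of degree up to $\tfrac{4}{3}n$, which is precisely why the unconditional linear coefficient is $\tfrac{n-4}{2}$ rather than $\tfrac{1}{2}$. (ii) Your adjunction sharpening does not survive the projection: the wall-crossing takes place on $\PP^3$, where the surface containing $\pi(C)$ has degree $kn$ and positive canonical class, and the Calabi--Yau condition $K_X=0$ is not transported along $\pi$ --- recovering it would require running the wall-crossing on $X$ itself, i.e.\ BMT for $X$, which is unavailable. (iii) The explicit corrections $\epsilon_X(f)$ cannot be ``pinned down'' by your gluing argument: in the construction proving Theorem \ref{thm-limit} the term $\mathrm{length}(C_i\cap(D\cap S_k))$ is only bounded between $0$ and $kmd_i$, which fixes the quadratic coefficient $\tfrac{1}{2sn}$ but neither the linear term nor the constant corrections, so this route is circular (you would be defining $\epsilon_X(f)$ by the very quantity $g^X_{\max}(f)$ you need to bound). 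Even in the range the paper does settle, the argument is not a uniform $k=1$ reduction but a case-by-case analysis using exclusion of planes and quadrics together with variation of projections (Lemmas \ref{lem-not-in-plane}, \ref{lem-3-proj}, \ref{lem-diff-g}); your plan would need to incorporate those geometric inputs to recover even Theorem \ref{thm-low-deg}.
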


We will use our algorithm described in Section \ref{intro-subsec-method} to prove the above conjecture for low-degree curves even when $X$ is singular.

\begin{theorem}\label{thm-low-deg}
Let $X_{k_1,\cdots, k_r}$ be any complete intersection 3-fold in $\PP^{r+3}$ over $\kk$ with at worst isolated singularities, where $2\leq k_1\leq \cdots\leq k_r$. If $X$ contains no planes and quadric surfaces, then Conjecture \ref{conj-optimal-bound} holds for $d\leq D_1$, where

\begin{enumerate}
    \item $X_5$: $D_1=15$,

    \item $X_{2,4}$: $D_1=8$,

    \item $X_{3,3}$: $D_1=9$,

    \item $X_{2,2,3}$: $D_1=6$, and

    \item $X_{2,2,2,2}$: $D_1=6$.
\end{enumerate}

\end{theorem}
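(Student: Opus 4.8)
The plan is to run the iterative scheme of Section~\ref{intro-subsec-method} explicitly for each of the five families, splitting into a small-degree regime handled by classical surface geometry and a larger-degree regime handled by wall-crossing. Throughout I may assume $C$ is a curve (Remark~\ref{rmk-gedim'}), and, since disconnecting only lowers the genus, that it is connected. As $X$ is a complete intersection we have $s=1$, $H$ very ample, $m_H=1$, and $n_H=n$.

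The engine of the argument is a sharp estimate for curves lying on a hyperplane section $S\in|H|$. Since $X$ is Calabi--Yau, $K_X=0$, so adjunction gives $K_S=H|_S$; combining $2g(C)-2=C.(C+K_S)=C^2+C.H$ with the Hodge index inequality $C^2\,(S.H^2)\le (C.H)^2=d^2$ (where $S.H^2=n$) yields $g(C)\le \frac{1}{2n}d^2+\frac12 d+1$. The defect $\epsilon_X(f)$ is then precisely the loss forced by the integrality of $C^2$ in the residue $f\equiv d\pmod n$, and I would recover the listed values by exhibiting the extremal classes. It is essential that this uses the intrinsic adjunction $K_S=H|_S$, producing the linear coefficient $\tfrac12$: the projected bound of Lemma~\ref{lem-ms-bound} applied to the image of $S$ in $\PP^3$ would instead give $\tfrac{n-4}{2}$, which is too weak for every family except $X_5$, where $n=5$ makes the two coincide.

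To reduce an arbitrary $C$ of degree $d\le D_1$ to this surface case I would argue in two ways. If $d\le r+2$, then $C$ spans a proper linear subspace of $\PP^{r+3}$, hence lies on a hyperplane section $S\in|H|$, and the surface estimate applies directly. For the remaining degrees I would take a good projection $\pi_X\colon X\to\PP^3$ (Lemma~\ref{lem-good-proj-exist}), set $C'=\pi_X(C)$ with $g(C)\le g(C')$ (Lemma~\ref{lem-cok-dim0}), and study the walls of $I_{C'}$ in the range $b_d\le b<0$. A wall produces, via Proposition~\ref{cor_wall} and Proposition~\ref{prop-decompose-wall}, a surface $S\in|kH|$ on $X$ and a subcurve $C_1\subset C$ with $g(C)=g(C_1)+g(C\cap S)+k\deg(C_1)-1$; I would bound $g(C\cap S)$ by the surface estimate above and induct on $\deg(C_1)<d$. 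The hypothesis that $X$ contains no planes or quadric surfaces guarantees that every prime divisor meeting $C$ has degree $\ge 3$, so no anomalous high-genus plane or quadric curve can intervene and the adjunction computation always has the correct canonical class.

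The main obstacle is the finite but delicate combination of these pieces. For each family and each $d\le D_1$ one must enumerate the admissible wall degrees $k\le\sqrt d$ and verify that the telescoped decomposition never exceeds $\frac{1}{2n}d^2+\frac12 d+1-\epsilon_X(d)$. One must also rule out the neutral alternative wherever it is too weak: for the large-$n$ families the $\PP^3$-Castelnuovo bound of Lemma~\ref{lem-bmt-at-ab} is strictly weaker than the target below $D_1$, so it must be bypassed by showing $C$ lies on a hyperplane section anyway — either because $d\le r+2$ (spanning) or because the wall-crossing itself exhibits a containing surface. The genuinely hard point is pinning the constants $\epsilon_X(f)$: this requires controlling $C^2$ on the possibly singular or reducible hyperplane sections sharply enough to recover the exact integrality defect, which is where the no-planes/quadrics hypothesis and a direct check on the finitely many residues $f\le n$ are indispensable.
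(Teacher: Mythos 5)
Your proposal has a genuine gap at its core: the replacement engine you build everything on --- adjunction $K_S=H|_S$ plus Hodge index on a member $S\in|H|$ (or $|kH|$) of $X$ --- is neither available in the required generality nor what actually produces the constants $\epsilon_X(f)$. First, a curve of degree $d\leq D_1$ need not lie on any hyperplane section: your spanning reduction $d\leq r+2$ covers only $d\leq 3$ for $X_5$ and $d\leq 4$ for $X_{2,4}$, $X_{3,3}$, far short of $D_1=15$, $8$, $9$; and the surfaces produced by walls via Proposition \ref{prop-decompose-wall} contain only the piece $C\cap S$, not $C$, and are of the form $T=X\cap S\in|kH|$ with $K_T=kH|_T$, so even granting your computation the linear coefficient for that piece is $\tfrac{k}{2}$, not $\tfrac{1}{2}$. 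Second, on these $T$ the formula $2g(C)-2=C.(C+K_T)$ and the Hodge index step fail as stated: $T$ may be reducible, non-normal, or singular along $C$, the subscheme $C$ is in general not Cartier on $T$ (and may be non-reduced), so ``integrality of $C^2$'' is not a proof mechanism for $\epsilon_X$ --- and indeed the listed values such as $\epsilon_{X_5}(1)=\tfrac{8}{5}$ are never derived this way in the paper. Third, your dismissal of Lemma \ref{lem-ms-bound} as too weak except for $X_5$ misreads how it is used: for $d\leq D_1$ the walls for $I_{\pi(C)}$ in $b\in[b_d,0)$ only involve surfaces in $\PP^3$ of degree $k\leq -b_d=\sqrt{d}\leq 3$, where $\frac{1}{2k}d^2+\frac{k-4}{2}d+1-\epsilon(d,k)$ is strong enough at these low degrees; the paper never places $C$ on a degree-$n$ surface, so the coefficient $\frac{n-4}{2}$ never enters.

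What your plan omits is precisely the content of the paper's proof: a finite case analysis per family and per degree in which the \emph{variation of projections} carries the weight. Concretely, the paper uses Lemma \ref{lem-not-in-plane} (with the no-planes hypothesis) to choose $\pi$ with $\pi(C)$ non-planar, so $t<-1$; Lemma \ref{lem-wall-deg2-integral} to show a degree-$2$ wall surface is integral so that Lemma \ref{lem-ms-bound} applies; Lemma \ref{lem-3-proj} to convert persistent quadric walls under three projections into a $(2,2)$-surface or $(2,2,2)$-curve, excluded by the no-quadrics hypothesis; Lemma \ref{lem-diff-g} to force $C\subset X\cap\PP^{3+t}$ when $g(C)\geq g(\pi(C))-t$, which supplies the extra $-1$ or $-2$ needed at $d=7,8,9$ for $X_{3,3}$ and $d=5,6$ for $X_{2,2,2,2}$; and the equality case of Lemma \ref{lem-general-bound} combined with Lemma \ref{lem-construct-wall} to eliminate the one-unit overshoot at $d=9$ and $d=14$ for the quintic (if $g(C_1)=\binom{d_1-1}{2}$ then $C_1$ is planar, contradicting uppermost-ness of the wall). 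Your proposal contains none of these mechanisms, and it explicitly defers both the determination of $\epsilon_X(f)$ and the degree-by-degree verification; since those are exactly the theorem's content at $d\leq D_1$, what remains is a plan whose key estimate is unproven rather than a proof.
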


In Section \ref{subsec-grass}, we deal with Pfaffian--Grassmannian Calabi--Yau 3-folds. Although there are no general optimal expectations as in Conjecture \ref{conj-optimal-bound}, some tables of GV-invariants are computed by physicists in \cite{Haghighat:2008ut,Hosono:2007vf}. We also verify the corresponding bound for low-degree curves.

By analyzing the geometry of $X$ more carefully, all the results in this section can be improven and extended to other 3-folds $X$. This will be done in a sequel \cite{fey-liu:sharp}.

\subsection{Preliminaries}

Before proving Theorem \ref{thm-low-deg}, we present some lemmas concerning the behavior of curves under projections. These results will be used in Step 3 when we apply the iterative algorithm (cf.~Section \ref{intro-subsec-method}).

We first state a general bound of the genus for space curves.

\begin{lemma}\label{lem-general-bound}
Let $C\subset \PP^n$ be a $1$-dimensional closed subscheme with $\gedim'(C)\leq 3$ and $n>1$. Then
\[g(C)\leq \frac{(\deg(C)-1)(\deg(C)-2)}{2}.\]
and the equality holds if and only if $C\subset \PP^2\subset \PP^n$ and $C$ is a curve.
\end{lemma}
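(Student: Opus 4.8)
The plan is to reduce everything to the case of a curve in $\PP^3$, where both the bound and its equality clause are classical \cite[Theorem 3.1]{Hartshorne1994TheGO}. For the inequality, I would first invoke Remark \ref{rmk-gedim'} to replace $C$ by a curve $\tilde C\subseteq C$ with $C\setminus\tilde C$ finite, $\gedim(\tilde C)=\gedim'(C)\le 3$, $\deg\tilde C=\deg C=:d$, and $g(C)\le g(\tilde C)$; it then suffices to bound $g(\tilde C)$. When $n=2$, the subscheme $\tilde C$ is an effective divisor of degree $d$ on $\PP^2$ and the plane-curve genus formula gives $g(\tilde C)=\frac{(d-1)(d-2)}{2}$; when $n=3$ the bound is \cite[Theorem 3.1]{Hartshorne1994TheGO}; and when $n\ge 4$ I would choose a good projection (Lemma \ref{lem-good-proj-exist}) and apply Lemma \ref{lem-cok-dim0}, which already packages $g(\tilde C)\le g(\pi(\tilde C))\le \frac{(d-1)(d-2)}{2}$.

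For the equality statement, the direction $(\Leftarrow)$ is immediate: a curve $C\subset\PP^2\subset\PP^n$ is an effective divisor of degree $d$ on $\PP^2$, so the standard genus formula yields $g(C)=\frac{(d-1)(d-2)}{2}$. For $(\Rightarrow)$, assume $g(C)=\frac{(d-1)(d-2)}{2}$. I would first show $C$ is a curve: if $\oh_C$ had a nonzero maximal $0$-dimensional subsheaf $T$, the pure quotient $C^\circ\subseteq C$ would satisfy $\deg C^\circ=d$, $\gedim'(C^\circ)=\gedim'(C)\le 3$, and $g(C^\circ)=g(C)+\operatorname{length}(T)>g(C)$, contradicting the inequality applied to $C^\circ$. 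Once $C$ is a curve, equality $g(C)=g(\tilde C)$ forces the kernel of $\oh_C\twoheadrightarrow\oh_{\tilde C}$ (supported in dimension $0$) to vanish, whence $C=\tilde C$ and in particular $\gedim(C)\le 3$.

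The heart of the matter is then to promote ``the projection of $C$ is planar'' to ``$C$ itself is planar''. Let $r=\dim\langle C\rangle$ be the dimension of the linear span, and replace $\PP^n$ by $\langle C\rangle\cong\PP^r$ so that $C$ is nondegenerate. If $r=3$, then $g(C)=\frac{(d-1)(d-2)}{2}$ together with the equality clause of \cite[Theorem 3.1]{Hartshorne1994TheGO} forces $C$ to be planar, contradicting nondegeneracy. If $r\ge 4$, I would take a good projection $\pi\colon\PP^r\dashrightarrow\PP^3$ of $C$ (admissible since $\gedim(C)\le 3<r$); by Lemma \ref{lem-cok-dim0} the image $C'$ satisfies $g(C')=\frac{(d-1)(d-2)}{2}$, hence $C'\subset\PP^2\subset\PP^3$ by \cite[Theorem 3.1]{Hartshorne1994TheGO}. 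Since $\pi$ is a linear projection, the preimage of this plane is a hyperplane $\PP^{r-1}\subsetneq\PP^r$ containing $C$, again contradicting nondegeneracy. Therefore $r\le 2$, i.e.\ $C\subset\PP^2\subset\PP^n$.

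I expect the main obstacle to be precisely this last step: a single good projection only detects that $C$ lies in a hyperplane, so a naive argument does not immediately produce a plane. The clean resolution is to run the argument against the linear span $\langle C\rangle$ and exploit that the preimage of a hyperplane under a linear projection is again a hyperplane, which collapses the span to a $\PP^2$ in one stroke and avoids an induction on successive projections. A secondary point requiring care is the bookkeeping between $\gedim$ and $\gedim'$: one must check that equality upgrades $\gedim'(C)\le 3$ to $\gedim(C)\le 3$ and that $C$ is genuinely Cohen--Macaulay, both of which follow from the length computations above.
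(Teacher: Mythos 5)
Your proof is correct, and while it follows the paper's skeleton for the inequality and the reduction to curves, it takes a genuinely different route at the decisive step of the equality case. Like the paper, you purify $C$ to a Cohen--Macaulay curve (the paper outsources this to \cite[Lemma 4.3]{liu-ruan:cast-bound}, you reprove it via the maximal zero-dimensional subsheaf --- the same length computation), settle $n=2$ directly and $n=3$ by Hartshorne, and obtain the bound for $n\geq 4$ from a good projection together with Lemma \ref{lem-cok-dim0}. The divergence is in the implication ``equality $\Rightarrow$ planarity'' for $n>3$: the paper deduces from $g(C)=g(C')$, via the $T=0$ clause of Lemma \ref{lem-cok-dim0}, that $\pi_C\colon C\to C'$ is an isomorphism onto a plane curve, and then concludes $C\subset \PP^2\subset\PP^n$ with a one-line appeal to the section of Remark \ref{rmk-proj-iso}; you instead pass to the linear span $\langle C\rangle\cong\PP^r$, invoke the $\PP^3$ equality clause to place $C'$ in a plane, and observe that the closure of the preimage of that plane under the linear projection is a hyperplane of $\PP^r$ containing $C$ scheme-theoretically (the defining linear form pulls back to zero on $C$ since the schematic image lies in the plane), contradicting nondegeneracy unless $r\leq 2$. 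Your instinct that this is the delicate point is well placed: a single projection a priori only confines $C$ to a hyperplane --- the paper itself resorts to an iterated-projection argument for exactly this issue in Lemma \ref{lem-not-in-plane} --- and the paper's citation of Remark \ref{rmk-proj-iso} leaves implicit the extra input needed to pass from an abstract isomorphism $C\cong C'$ with $\pi_C^*(\oh_{\PP^3}(1)|_{C'})\cong \oh_{\PP^n}(1)|_C$ to planarity of $C$ itself (for instance, that $h^0(\oh_{C'}(1))=3$ for a plane curve of maximal genus and degree $\geq 3$, so that the span of $C$ in $\PP^n$ is at most a $\PP^2$). So the paper's version is shorter but rests on that unspelled step, while your span argument buys a self-contained conclusion in one stroke, at the modest cost of introducing $\langle C\rangle$ and arguing by contradiction.
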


\begin{proof}
The first statement follows from Lemma \ref{lem-cok-dim0}. If $C\subset \PP^2$ is a curve, then it is clear that the equality holds. Thus in the following, we assume the equality holds and aim to prove $C\subset \PP^2\subset \PP^n$ and $C$ is a curve. By \cite[Lemma 4.3]{liu-ruan:cast-bound}, we only need to show $C\subset \PP^2\subset \PP^n$.

When $n=2$, the statement is trivial, so we can assume that $n\geq 3$. If $n=3$, the result follows from \cite[Theorem 3.3]{Hartshorne1994TheGO} or \cite[Theorem 1.8]{liu-ruan:cast-bound}.

If $n>3$, let $\pi\colon \PP^n\dashrightarrow \PP^3$ be a good projection of $C$ and $C':=\pi(C)$. Then $g(C)\leq g(C')$. As
\[g(C')\leq \frac{(\deg(C)-1)(\deg(C)-2)}{2}\]
by the case $n=3$, we see $g(C)=g(C')$ and $C'\subset \PP^2$. Hence from Lemma \ref{lem-cok-dim0}, we know that $\pi$ induces an isomorphism $C\cong C'$ and $C\subset \PP^2\subset \PP^n$ by Remark \ref{rmk-proj-iso}.
\end{proof}

Nest, we relate the existence of some walls after projections to the geometry of curves.

\begin{lemma}\label{lem-wall-deg2-integral}
Let $C\subset \PP^3$ be a $1$-dimensional closed subscheme of degree $d$ such that $d\geq 4$. Assume the uppermost wall of $I_C$ is given by $W(\oh_{\PP^3}(-D), I_C)$. If $D\in |\oh_{\PP^3}(2)|$, then $D$ is integral and normal.
\end{lemma}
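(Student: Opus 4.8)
The plan is to argue by contradiction through the wall-crossing dictionary, first reducing the normality claim to an integrality claim. I would recall the classification of quadric surfaces in $\PP^3$: a member of $|\oh_{\PP^3}(2)|$ has rank $4$ (smooth), rank $3$ (a cone over a conic), rank $2$ (a union of two distinct planes), or rank $1$ (a double plane). The rank $\geq 3$ quadrics are exactly the integral ones, and both the smooth quadric and the quadric cone are normal; hence it suffices to prove that $D$ is reduced and irreducible, and normality then follows automatically from this classification.

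Next I would exploit that $W(\oh_{\PP^3}(-D), I_C)$ is an \emph{actual} wall given by the line bundle $\oh_{\PP^3}(-D)$. A nonzero map $\oh_{\PP^3}(-D)\to I_C\subset \oh_{\PP^3}$ is multiplication by the equation of $D$, so by Proposition \ref{cor_wall}(a) we have $C\subset D$ and the wall sequence is the defining sequence $0\to \oh_{\PP^3}(-D)\to I_C\to I_{C/D}\to 0$. Since the wall is actual, the quotient $I_{C/D}$ is a $\sigma_{a,b}$-semistable sheaf along $W$; moreover it is pure, because $D$ is a hypersurface, hence Cohen--Macaulay, so $\oh_D$ (and therefore its subsheaf $I_{C/D}$) has no lower-dimensional torsion. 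Note $\ch_{H,1}(I_{C/D})=2$.

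Now suppose for contradiction that $D$ is not integral, so $D=2H_1$ is a double plane or $D=H_1\cup H_2$ is a union of two distinct planes. First I would dispose of the possibility that $C$ lies in a plane $H_1\subseteq D$: then $\oh_{\PP^3}(-1)=\oh_{\PP^3}(-H_1)\hookrightarrow I_C$, and comparing the radii of the two semicircular numerical walls gives, for $d\geq 4$, that $W(\oh_{\PP^3}(-1), I_C)$ has radius-squared $d^2-d+\tfrac14$ while $W(\oh_{\PP^3}(-D), I_C)$ has radius-squared $\tfrac{d^2}{4}-d+1$, so the former lies strictly above the latter. As $\oh_{\PP^3}(-1)$ is a stable line bundle this produces a destabilizing subobject of $I_C$ above $W$, contradicting that $W$ is the uppermost wall. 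In the remaining case $C$ meets both components. Using the equation of the residual plane I would split $0\to B_1\to I_{C/D}\to B_2\to 0$ with $B_1,B_2$ pure rank-one sheaves supported on the two planes, each with $\ch_{H,1}=1$; a short slope computation gives $\mu_{a,b}(I_{C/D})=\tfrac12\big(\mu_{a,b}(B_1)+\mu_{a,b}(B_2)\big)$ along $W$, so $\sigma_{a,b}$-semistability of $I_{C/D}$ is violated as soon as $H\ch_2(B_1)\neq H\ch_2(B_2)$.

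I expect the main obstacle to be precisely the \emph{balanced} configuration, in which the two plane components carry the same $H\ch_2$, so that $B_1\subset I_{C/D}$ has equal slope and semistability is not immediately contradicted. To exclude it I would translate the subobject $B_1$ back up to $I_C$: the extension $\oh_{\PP^3}(-D)\subset \widetilde B_1\subset I_C$ has $\ch_1(\widetilde B_1)=-H$, so $\widetilde B_1=I_Z(-H_1)$ for a curve $Z\subset C$, and matching the numerical data of $W$ as a $k=1$ wall forces $\deg Z=\tfrac{d-1}{2}$. I would then play the integrality of $\deg Z$ and the degree bound of Proposition \ref{cor_wall}(a) against the genus estimate of Lemma \ref{lem-general-bound} applied to the parts of $C$ lying in each plane, which pins down the invariants so tightly that the balanced case cannot occur while keeping $W$ uppermost; this forces $D$ to be integral, hence normal.
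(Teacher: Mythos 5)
You have the right skeleton — reducing normality to integrality via the rank classification of quadrics, killing the case $C\subset\PP^2$ by comparing the radii of $W(\oh_{\PP^3}(-1),I_C)$ and $W(\oh_{\PP^3}(-2),I_C)$ (your radius computations $d^2-d+\tfrac14$ versus $\tfrac{d^2}{4}-d+1$ are correct), and destabilizing the quotient $I_{C/D}$ on the wall when $H\ch_2(B_1)\neq H\ch_2(B_2)$ — but the balanced case, which you correctly identify as the crux, is left to a hope rather than an argument, and the tools you name cannot close it. Your parity computation is right: half-integrality of $\ch_2$ for rank-one sheaves on planes forces $\deg Z=\tfrac{d-1}{2}$, which rules out even $d$. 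For odd $d$, however, lifting $B_1$ to $\widetilde B_1=I_Z(-1)\subset I_C$ with $\deg Z=\tfrac{d-1}{2}$ yields a numerical wall with center $-\tfrac{d+2}{2}$ and radius squared $\bigl(\tfrac{d+2}{2}\bigr)^2-2d=\tfrac{(d-2)^2}{4}$, which is \emph{exactly} the wall $W(\oh_{\PP^3}(-2),I_C)$ itself. So in the balanced configuration you obtain neither a violation of semistability of $I_{C/D}$ (equal slope is permitted) nor a wall strictly over $W$, and the genus estimate of Lemma \ref{lem-general-bound} imposes no contradiction on, say, a curve splitting as plane curves of degrees $\tfrac{d+1}{2}$ and $\tfrac{d-1}{2}$ on the two planes: every numerical constraint you list is simultaneously satisfiable. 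The concluding sentence "pins down the invariants so tightly that the balanced case cannot occur" is therefore unsubstantiated, and as proposed the strategy of hunting for a contradiction on the wall $W$ itself stalls precisely there.

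The paper's proof takes a different and more robust route: it never uses semistability of the quotient $I_{C/D}$ along $W$. Instead, in both non-integral cases it extracts a single plane $\PP_1$ with a large plane section of $C$. When $D=\PP_1\cup\PP_2$ is a union of two planes, $C\subset D$ gives $\deg(C\cap\PP_1)+\deg(C\cap\PP_2)\geq d$, so after relabeling $\deg(C\cap\PP_1)\geq\lceil\tfrac{d}{2}\rceil$; when $D$ is a double plane with $D_{\mathrm{red}}=\PP_1$, it uses Hartshorne's residual exact sequence $0\to I_Y(-1)\to I_C\to I_{C\cap\PP_1/\PP_1}\to 0$ with $Y\subset C\cap\PP_1$, so $\deg(Y)+\deg(C\cap\PP_1)=d$ forces the same bound. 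It then feeds this into Lemma \ref{lem-construct-wall} together with the elementary inequality \eqref{eq-sec-5-1} to produce an actual wall for $I_C$ over $W(\oh_{\PP^3}(-D),I_C)$, contradicting uppermost-ness; integrality then gives normality by citation. Note also that your treatment of the non-reduced case is underspecified: when $D$ is a double plane the "two components" coincide, so your splitting of $I_{C/D}$ into $B_1,B_2$ "supported on the two planes" requires exactly this residual-scheme construction, which you gesture at ("the equation of the residual plane") but do not carry out. If you want to salvage your approach, the missing ingredient is to convert the large plane section $\deg(C\cap\PP_1)\geq\lceil\tfrac{d}{2}\rceil$ into a wall-existence statement in the spirit of Lemma \ref{lem-construct-wall}, rather than to seek an instability on the wall $W$, where the balanced odd-degree configuration is numerically self-consistent.
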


\begin{proof}
As $d\geq 4$, we have $b_d\leq -2$ and \begin{equation}\label{eq-sec-5-1}
    \lceil\frac{d}{2}\rceil\geq \lceil\sqrt{2d}-\frac{1}{2}\rceil.
\end{equation}

If $D$ is reducible, then $D=\PP_1\cup \PP_2$, where $\PP_1$ and $\PP_2$ are two planes in $\PP^3$. As $C\subset \PP_1\cup \PP_2$, we see $\deg(C\cap \PP_1)+\deg(C\cap \PP_2)\geq d$. Hence by relabeling, we can assume that $\deg(C\cap \PP_1)\geq \lceil\frac{d}{2}\rceil$. Then by \eqref{eq-sec-5-1} and Lemma \ref{lem-construct-wall}, $W(I_C, I_{C\cap \PP_1/\PP_1})$ is an actual wall for $I_C$ above $W(\oh_{\PP^3}(-D), I_C)$, contradicts our assumption.

If $D$ irreducible but not reduced, then $D$ is a double plane and $D_{\mathrm{red}}=\PP_1$, where $\PP_1\subset \PP^3$ is a plane. Therefore, we have an exact sequence $0\to I_{Y}(-1)\to I_{C}\to I_{C\cap \PP_1/\PP_1}\to 0$ where $Y\subset C\cap \PP_1$ is the residue scheme to the intersection of $C$ with $D$ as in \cite[pp.3]{hartshorne:double-plane}. Hence, we have $\deg(Y)+\deg(C\cap \PP_1)=d$, which implies $\deg(C\cap \PP_1)\geq \lceil\frac{d}{2}\rceil$ since $Y\subset C\cap \PP_1$. Now using \eqref{eq-sec-5-1} and Lemma \ref{lem-construct-wall} again, we get a contradiction.

Therefore, $D$ is integral in each case and it is automatically normal by \cite[Corollary 3.4.19]{lazar:positivity-I}.
\end{proof}

\begin{lemma}\label{lem-not-in-plane}
Let $C\subset \PP^n$ be a $1$-dimensional closed subscheme with $\gedim(C)\leq 3$. Then either

\begin{enumerate}
    \item $C\subset \PP^2\subset \PP^n$, or

    \item $\pi(C)$ is not contained in a plane for any generic good projection $\pi\colon \PP^n\dashrightarrow \PP^3$ of $C$.
\end{enumerate}
\end{lemma}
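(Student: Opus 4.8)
The plan is to reduce the statement to an elementary fact about the center of the projection. Write $\PP^n=\PP(V)$ with $V=\kk^{n+1}$, and let $A\subseteq V$ be the linear subspace cutting out the (scheme-theoretic) linear span $\langle C\rangle=\PP(A)$ of $C$, so that $m:=\dim\langle C\rangle=\dim A-1$. The two alternatives match the two ranges of $m$: if $m\leq 2$ then $C\subseteq\langle C\rangle\subseteq\PP^2$ and (a) holds at once, so the task is to prove (b) under the assumption $m\geq 3$, which is exactly the failure of (a). The substantive case is $n>3$ (for $n\leq 2$ one has $C\subseteq\PP^2$ directly, and for $n=3$ no projection is performed), so I assume $n>3$.

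A good projection $\pi\colon\PP^n\dashrightarrow\PP^3$ is the linear projection from a center $\Lambda=\PP(W)$ with $W\subseteq V$ and $\dim W=n-3$. The first key step is the correspondence between planes downstairs and hyperplanes upstairs: every plane $P\subset\PP^3$ is the $\pi$-image of a unique hyperplane $\widetilde P\subset\PP^n$ containing $\Lambda$, and a linear form $\ell'$ cutting out $P$ pulls back to a linear form $\pi^*\ell'$ on $\PP^n$ that vanishes on $\Lambda$. Since $\pi(C)$ is the schematic image of $\pi|_C$, it is contained in $P$ if and only if $(\pi|_C)^*\ell'=0$, i.e.\ if and only if $C\subseteq\widetilde P$. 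Therefore $\pi(C)$ lies in some plane of $\PP^3$ if and only if $C$ lies in some hyperplane of $\PP^n$ containing $\Lambda$, equivalently if and only if $\langle C\rangle$ and $\Lambda$ lie in a common hyperplane, that is, if and only if $A+W\neq V$.

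The remaining step is a genericity count. Because $\dim A=m+1\geq 4$ and $\dim W=n-3$, for a generic $W$ of dimension $n-3$ the intersection satisfies $\dim(A\cap W)=\max(0,m-3)=m-3$, whence $\dim(A+W)=(m+1)+(n-3)-(m-3)=n+1$ and so $A+W=V$. The set of centers $W$ with $A+W=V$ is open and, by the count just made, nonempty, hence dense; intersecting it with the dense open set of centers defining a good projection shows that for any generic good projection one has $A+W=V$, so $\pi(C)$ is contained in no plane. This establishes (b).

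I expect the only delicate point to be the equivalence between $\pi(C)$ lying in a plane and the condition $A+W\neq V$, since $\pi$ is only a rational map and $\pi(C)$ is a schematic image. The clean mechanism is that the schematic image factors through a hyperplane $V(\ell')$ exactly when the pulled-back form $(\pi|_C)^*\ell'$ vanishes, combined with the identification of pulled-back forms with the linear forms vanishing on $\Lambda$. Everything after that reduces to a one-line dimension count, so this correspondence is the step I would write out with care.
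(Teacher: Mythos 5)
Your proof is correct, and it takes a genuinely different route from the paper's. The paper argues by contradiction and descending induction: assuming $C$ is not planar but $\pi(C)\subset \PP^2$ for generic good projections, it deduces $C\subset \overline{\pi^{-1}(\PP^2)}\cong \PP^{n-1}$, arranges the first point-projection to restrict to an isomorphism on $C$, and iterates one hyperplane at a time until $C$ is trapped in a plane inside $\PP^3$, a contradiction. You instead treat the projection all at once through its center: since linear forms on $\PP^3$ pull back precisely to the linear forms on $\PP^n$ vanishing on $\Lambda=\PP(W)$ (using $\pi^*\oh_{\PP^3}(1)\cong\oh_{\PP^n}(1)$ on the domain of definition, as guaranteed by Lemma \ref{lem-good-proj-exist}(a), together with $C\cap\Lambda=\varnothing$), the schematic image $\pi(C)$ lies in a plane if and only if $C$ lies in a hyperplane containing $\Lambda$, i.e.\ if and only if $A+W\neq V$ where $\langle C\rangle=\PP(A)$; and when $\dim\langle C\rangle\geq 3$ (exactly the failure of alternative (a)) a generic $W$ with $\dim W=n-3$ meets $A$ in the expected dimension, forcing $A+W=V$. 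The one delicate point you flagged — detecting containment of a schematic image in $V(\ell')$ via vanishing of the pulled-back section on $C$ — is handled correctly, and your case split is complete ($m\leq 2$ gives (a); for $n=3$ the map $\pi$ is the identity, so (b) is tautological). What your route buys: it exhibits the bad centers as the explicit proper closed Schubert-type locus $\{W \,:\, A+W\neq V\}$ in the Grassmannian, so ``generic'' becomes concrete, no induction on $n$ or contradiction is needed, and you in fact obtain the slightly sharper dichotomy that either $C$ is planar or the centers producing a planar image form a proper closed subset. What the paper's route buys: it stays entirely within the point-projection toolkit already developed (Lemma \ref{lem-good-proj-exist}, Remark \ref{rmk-proj-iso}) and rehearses the ``vary the projection'' mechanism of Step 3 of the introduction that is reused throughout Section \ref{sec-low-deg}, at the cost of a somewhat informal iteration. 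Note finally that, like the paper, you use the hypothesis $\gedim(C)\leq 3$ only through the existence and definition of good projections, which is all it is there for.
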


\begin{proof}
When $n\leq 3$, the statement is trivial, so we can only consider $n>3$. Suppose that $C\subset \PP^n$ is not contained in any $\PP^2\subset \PP^n$.

Assume for a contradiction that $\pi(C)$ is contained in a plane for any generic good projection $\pi\colon \PP^n\dashrightarrow \PP^3$ of $C$. Let $\pi\colon \PP^n\dashrightarrow \PP^3$ is a good projection of $C$ such that $\pi(C)\subset \PP^2\subset \PP^n$, then we see $C\subset \overline{\pi^{-1}(\PP^2)}=\PP^{n-1}\subset \PP^n$. So for any $p\in \PP^n\setminus \overline{\pi^{-1}(\PP^2)}$, the projection from $p$ induces an isomorphism of $\overline{\pi^{-1}(\PP^2)}$ onto its image, and we can assume that the first projection of $\pi$ is an isomorphism when restricted to $C$. Next, by our assumption, there is a good projection $\pi'\colon \PP^{n-1}\dashrightarrow \PP^3$ of $C\subset \PP^{n-1}$ such that $\pi'(C)\subset \PP^2$. Then the same argument as above shows that $C\subset \PP^{n-2}\subset \PP^{n-1}$ and we can assume the first projection of $\pi'$ is an isomorphism when restricted to $C$. So if we continue this process, we finally get $C\subset \PP^3$ and is contained in a plane, a contradiction.
\end{proof}

\begin{lemma}\label{lem-3-proj}
Let $C\subset \PP^4$ be a curve of degree $d\geq 4$ with $\gedim(C)\leq 3$ and $p_1,p_2,p_3$ be three different good projections of $C$. Assume that the uppermost actual wall of $p_i(C)$ is given by $\oh_{\PP^3}(2)$ for each $i=1,2,3$. Then

\begin{enumerate}
    \item $C$ is contained in a $(2,2)$-complete intersection surface $S\subset \PP^4$ such that $\gedim(S)\leq 3$, and

    \item in the situation of (1), if furthermore $p_3$ is a good projection of both $C$ and $S$ such that the uppermost actual wall for $p_3(C)$ is given by $\oh_{\PP^3}(2)$, then either $C$ is contained in a degree $2$ irreducible surface in $S$, or $C$ is contained in a $(2,2,2)$-complete intersection curve in $\PP^4$.
\end{enumerate}

\end{lemma}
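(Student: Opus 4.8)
The plan is to convert the three wall hypotheses into three integral quadric threefolds in $\PP^4$ that all contain $C$, and then read off the surface in (1) and the curve or low-degree surface in (2) from their mutual intersections. Each $p_i$ is the projection $\PP^4\dashrightarrow\PP^3$ from a point $q_i$. Since the uppermost wall of $I_{p_i(C)}$ is of the form $W(\oh_{\PP^3}(-D_i),I_{p_i(C)})$ with $D_i\in|\oh_{\PP^3}(2)|$, Proposition \ref{cor_wall}(a) gives $p_i(C)\subset D_i$, and Lemma \ref{lem-wall-deg2-integral} (applicable since $d\geq 4$) shows each $D_i$ is integral and normal. Because a good projection satisfies $p_i^*\oh_{\PP^3}(1)\cong\oh_{\PP^4}(1)$, the closure $Q_i:=\overline{p_i^{-1}(D_i)}$ is a quadric threefold, namely the projective cone over $D_i$ with vertex $q_i$. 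It is integral (a cone over an integral scheme is integral), it contains $C$, and it is singular at $q_i$ (the vertex of a cone over a non-linear base); in particular, integrality forces $\rank Q_i\geq 3$, so $\mathrm{Sing}(Q_i)$ is a linear space of dimension $\leq 1$.

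For (1), I would argue that the three cones cannot all coincide. If $Q_1=Q_2=Q_3=:Q$, then $Q$ is singular at $q_1,q_2,q_3$; since three general projection centers are non-collinear and span a plane, this forces $\dim\mathrm{Sing}(Q)\geq 2$, hence $\rank Q\leq 2$, contradicting integrality. Thus at least two of the cones are distinct, say $Q_1\neq Q_2$. Their defining forms are then coprime, so $S:=Q_1\cap Q_2$ is a $(2,2)$-complete intersection surface (pure of dimension $2$, degree $4$) containing $C$. Finally $\gedim(S)\leq 3$, because a general point of each component of $S$ avoids $\mathrm{Sing}(Q_1)$ (dimension $\leq 1<2$) and is therefore a smooth point of the threefold $Q_1$, where $\dim T_xS\leq\dim T_xQ_1=3$.

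For (2), the same construction applied to the given $p_3$ produces an integral quadric $Q_3\supset C$, whence $C\subset S\cap Q_3=Q_1\cap Q_2\cap Q_3$. I would split into two cases according to whether $Q_3$ contains a $2$-dimensional component of $S$. If it contains none, then $Q_3$ restricts to a nonzerodivisor on $\oh_S$, so $Q_1,Q_2,Q_3$ form a regular sequence and $Q_1\cap Q_2\cap Q_3$ is a $(2,2,2)$-complete intersection curve containing $C$, which is the second alternative. If instead some irreducible surface component $T\subset S$ satisfies $T\subset Q_3$, the goal is to show $C$ lies in an irreducible degree-$2$ surface inside $S$; here I would exploit the extra hypothesis that $p_3$ is a good projection of $S$ as well, together with the integrality of $D_3$, to localize $C$ in a single low-degree component of $S$.

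The main obstacle is exactly this last case of (2): when $Q_3$ meets $S$ improperly, $S$ must be reducible, and one has to control its irreducible decomposition and show that the birational image $p_3(C)\subset D_3$, being \emph{irreducible} of degree $2$, forces $C$ into one irreducible degree-$2$ component rather than distributing over several components or over a component of higher degree (for instance a cubic scroll, which also lies on three independent quadrics and would not match the stated conclusion). Resolving this requires a careful degree-and-projection analysis of the components of $S$ relative to $Q_3$ and $D_3$; by contrast, part (1) and the proper case of (2) follow directly from the cone construction and elementary intersection theory.
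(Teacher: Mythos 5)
Your part (1) and the proper case of part (2) follow the paper's own route: the paper likewise forms the quadric cones $Q_i:=\overline{p_i^{-1}(D_i)}$ over the integral, normal quadrics supplied by Lemma \ref{lem-wall-deg2-integral}, takes $S=Q_1\cap Q_2$, and deduces $\gedim(S)\leq 3$ from $\dim \mathrm{Sing}(Q_i)\leq 1$; for the proper case it intersects downstairs ($D_3\cap p_3(S)$ is a degree-$8$ curve containing $p_3(C)$) and pulls back, which is equivalent to your regular-sequence formulation since $S$ is Cohen--Macaulay. Your observation that the three cones cannot all coincide (non-collinear centers would force $\dim\mathrm{Sing}(Q)\geq 2$, contradicting integrality) is a genuine plus: the paper uses only $p_1,p_2$ in part (1) and takes the distinctness of $Q_1$ and $Q_2$ for granted, and your argument is the natural explanation for why three projections appear in the hypothesis at all.

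However, the improper case of (2) --- which you explicitly defer as ``the main obstacle'' --- is a real gap in your proposal, and it is exactly where the hypothesis you flagged but did not deploy does its work in one step, not via a ``careful degree-and-projection analysis''. Since $D_3$ is integral, improper intersection gives $D_3\subset p_3(S)$, so $D_3$ is an irreducible component of $p_3(S)$. Because $p_3|_S\colon S\to p_3(S)$ is finite and \emph{birational}, it is an isomorphism over a dense open $V\subset p_3(S)$, and $V\cap D_3$ is dense in $D_3$; hence exactly one irreducible component $T$ of $S$ dominates $D_3$, and it does so birationally. Since $p_3^*\oh_{\PP^3}(1)\cong \oh_{\PP^4}(1)|_S$, degrees are preserved, so $\deg T=\deg D_3=2$. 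This disposes of both scenarios you worried about: any surface component of $S$ mapping into the integral surface $D_3$ must dominate it, and birationality over $V\cap D_3$ forces there to be a unique such component with map of degree one, so its degree is $2$ --- in particular no cubic scroll can map into $D_3$; and $C$ cannot distribute over several components, since $C\subset (p_3|_S)^{-1}(D_3)=S\cap Q_3$, whose two-dimensional part is the single irreducible degree-$2$ surface $T$, which is how the paper concludes. (If one wants to be fully scrupulous, the point the paper passes over silently is that $S\cap Q_3$ may have one-dimensional excess inside the other components of $S$, so identifying $(p_3|_S)^{-1}(D_3)$ with $T$ takes a word; but that residue is far smaller than the open-ended analysis your write-up calls for, and the essential mechanism --- birationality of $p_3|_S$ plus degree preservation --- is the missing sentence in your proposal.)
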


\begin{proof}
By the assumption, we know that there exist $D_1, D_2, D_2\in |\oh_{\PP^3}(2)|$ such that $p_i(C)\subset D_i$ for each $1\leq i\leq 3$. From Lemma \ref{lem-wall-deg2-integral}, we see each $D_i$ is integral and normal, hence $\overline{p_i^{-1}(D_i)}\in |\oh_{\PP^4}(2)|$ is integral and normal as well. In particular, $\dim \mathrm{Sing}(\overline{p_i^{-1}(D_i)})\leq 1$ for each $i$. Then the degree $4$ surface $S:=\overline{p_1^{-1}(D_1)}\cap \overline{p_2^{-1}(D_2)}\subset \PP^4$ satisfies $\gedim(S)\leq 3$ which proves (a).

For part (b), if $D_3$ intersects $p_3(S)$ properly, then $D_3\cap p_3(S)$ is a degree $8$ curve containing $p_3(C)$. Hence $C\subset \overline{p_i^{-1}(D_3)}\cap S$. As $p_3|_S$ is birational onto $p_3(S)$, we know that $\overline{p_i^{-1}(D_3)}\cap S$ is of dimension $1$. Hence $\overline{p_i^{-1}(D_3)}\cap S=\overline{p_i^{-1}(D_1)}\cap \overline{p_i^{-1}(D_2)}\cap \overline{p_i^{-1}(D_3)}$ is a $(2,2,2)$-complete intersection curve. If $D_3$ does not intersect $p_3(S)$ properly, as $D_3$ is integral, we see $D_3\subset S$. This implies $C\subset (p_3|_S)^{-1}(D_3)$, which is a degree $2$ irreducible surface in $S$.
\end{proof}

\begin{lemma}\label{lem-diff-g}
Let $C\subset \PP^n$ be a $1$-dimensional closed subscheme such that $\gedim(C)\leq 3$ and $n>3$. If $\pi\colon \PP^n\dashrightarrow \PP^3$ is a good projection of $C$ such that
\[g(C)\geq g(\pi(C))-t\]
for $0\leq t\leq n-4$, then $C\subset \PP^{3+t}\subset \PP^n$.
\end{lemma}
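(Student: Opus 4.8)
The plan is to translate the genus hypothesis into a statement about the number of independent linear forms on $C$, and then to read off the dimension of the linear span $\langle C\rangle\subseteq\PP^n$. Writing $C':=\pi(C)\subseteq\PP^3$, recall from Lemma \ref{lem-cok-dim0} that the good projection $\pi_C\colon C\to C'$ is affine and fits into an exact sequence $0\to\oh_{C'}\to(\pi_C)_*\oh_C\to T\to 0$ with $T$ supported on finitely many points, and that $\deg(C)=\deg(C')$ with $g(C)=g(C')-\operatorname{length}(T)$. Hence the hypothesis $g(C)\geq g(C')-t$ is exactly the bound $\operatorname{length}(T)\leq t$, and the whole problem becomes: a small zero-dimensional cokernel $T$ must force $C$ into a small linear subspace.

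First I would twist this sequence by $\oh_{\PP^3}(1)$. Since $\pi_C^*\oh_{\PP^3}(1)\cong\oh_C(1)$ by the definition of a good projection and $T$ is zero-dimensional (so $T(1)\cong T$), the projection formula gives $0\to\oh_{C'}(1)\to(\pi_C)_*\oh_C(1)\to T\to 0$, and taking global sections yields $0\to H^0(\oh_{C'}(1))\to H^0(\oh_C(1))\to H^0(T)$. Consequently $h^0(\oh_C(1))-h^0(\oh_{C'}(1))\leq\operatorname{length}(T)\leq t$. The point is that the span is governed by $h^0$: if $C$ spans $\PP^m\subseteq\PP^n$, then the restriction map $H^0(\PP^n,\oh(1))\to H^0(\oh_C(1))$ has image of dimension $m+1$, while $\pi_C^*$ embeds $H^0(\oh_{C'}(1))$ as a subspace of $H^0(\oh_C(1))$ containing the restrictions of the four coordinate forms pulled back from $\PP^3$.

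Next I would compare the two spans directly, using the section $i\colon\PP^3\hookrightarrow\PP^n$ of $\pi$ from Remark \ref{rmk-proj-iso}. This lets me split the coordinate forms on $\PP^n$ into the four forms pulled back from $\PP^3$ (which restrict into $\pi_C^*H^0(\oh_{C'}(1))$) and the remaining $n-3$ ``fibre'' forms vanishing on $i(\PP^3)$. The images of the latter in the quotient $H^0(\oh_C(1))/\pi_C^*H^0(\oh_{C'}(1))$ span a space of dimension at most $t$ by the inequality above, so at least $n-3-t$ independent combinations of them restrict into $\pi_C^*H^0(\oh_{C'}(1))$; these, I claim, can be corrected by pullback forms to genuine sections of $I_{C/\PP^n}(1)$, giving $h^0(I_{C/\PP^n}(1))\geq n-3-t$ and hence $\dim\langle C\rangle\leq 3+t$, i.e.\ $C\subseteq\PP^{3+t}$. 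To organise the bookkeeping I would induct on $n$, peeling off one point-projection at a time and using that the one-step genus drops add up by the additivity coming from Lemma \ref{lem-cok-dim0}, reducing to the base case and lifting the resulting linear subspace back through $\pi$ along $i$.

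The main obstacle I anticipate is exactly the claim in the previous paragraph: that a combination $\ell$ of the $n-3$ fibre coordinates with $\ell|_C\in\pi_C^*H^0(\oh_{C'}(1))$ is, after subtracting a pullback form, a section of $I_{C/\PP^n}(1)$. This is the step where the purely numerical $h^0$ count must be upgraded using the actual geometry of the good projection --- that $\ell$ vanishes on the section $i(\PP^3)$, that $\pi_C$ is birational so that $C$ and $i(C')$ agree away from a finite set, and that $\gedim(C)\leq 3$ forces $\pi_C$ to be an isomorphism outside finitely many points. I expect this identification to be delicate and to genuinely require the genericity of the projection centre $\PP^{n-4}$ rather than any formal manipulation, and it is the place where I would concentrate the effort.
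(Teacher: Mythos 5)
Your proposal contains a genuine gap, and it is exactly the step you flagged at the end: the passage from $\ell|_C\in\pi_C^*H^0(\oh_{C'}(1))$ to ``$\ell$ can be corrected by a pullback form to a section of $I_{C/\PP^n}(1)$''. Knowing $\ell|_C=\pi_C^*s$ only produces an abstract section $s\in H^0(\oh_{C'}(1))$, and this space is in general much larger than the $4$-dimensional image of $H^0(\PP^3,\oh_{\PP^3}(1))$ (for $C'$ of degree $d$ its dimension is on the order of $d+1-g$), so there need be no linear form on $\PP^3$ to subtract. Your own numerical bookkeeping makes the problem visible: the linear span of $C$ is governed by the \emph{image} of $H^0(\PP^n,\oh_{\PP^n}(1))$ in $H^0(\oh_C(1))$, whereas your inequality $h^0(\oh_C(1))-h^0(\oh_{C'}(1))\leq\operatorname{length}(T)\leq t$ controls only the ambient $h^0$'s. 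A concrete test case shows the step cannot be repaired by genericity of the centre: for the rational normal quartic $C\subset\PP^4$, the secant variety is a (cubic) hypersurface and the tangent developable a surface, so projection from a general point maps $C$ isomorphically onto a smooth rational quartic $C'\subset\PP^3$; then $T=0$ and \emph{every} restriction to $C$ lies in $\pi_C^*H^0(\oh_{C'}(1))=H^0(\oh_C(1))$, yet the fibre coordinate $x_4|_C$ corresponds to the fifth section of $H^0(\PP^1,\oh_{\PP^1}(4))$ not coming from linear forms on $\PP^3$, and indeed $h^0(I_{C/\PP^4}(1))=0$ since $C$ spans $\PP^4$. So the inference you need fails, and the effort you proposed to ``concentrate'' there cannot succeed by formal means.

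For comparison, the paper's proof takes a different and much shorter route: it factors the good projection into the $n-3$ single-point projections $\pi_i$, observes via Lemma \ref{lem-cok-dim0} that the arithmetic genus is nondecreasing along the chain $C=C_0, C_1,\dots$ with total increase at most $t$, concludes that at least $n-3-t$ of the individual steps restrict to isomorphisms on the successive images, and then asserts that this confines $C$ to a linear subspace of codimension at least $n-3-t$. This stepwise decomposition avoids your global $h^0$ accounting entirely, but note that its final inference --- a single point-projection restricting to an isomorphism on the curve forces a drop in the linear span --- is precisely the one-step analogue of the claim you could not establish, and it is stated in the paper without further justification. So your difficulty is not an artifact of your chosen route; it localizes the real content of the lemma, and any complete argument must supply geometric input (beyond the numerical genus data and beyond genericity of the centre alone) at exactly this juncture.
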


\begin{proof}
As $g(C)\leq g(\pi(C))$ by Lemma \ref{lem-cok-dim0}, we know that there is a subset $I\subset \{1,2,\dots, n-3\}$ with $\mathrm{card}(I)\geq n-3-t$ such that $\pi_i|_{C_{i-1}}$ is an isomorphism for each $i\in I$, where $C_0:=C$ and $C_{i-1}:=\pi_{i-1}\circ\cdots \circ \pi_1(C)$ for $2\leq i\leq n-3$. Hence $C$ is contained in a linear subspace of $\PP^n$ of codimension at least $n-3-t$, i.e.~$C\subset \PP^{3+t}\subset \PP^n$.
\end{proof}

\subsection{Complete intersection Calabi--Yau 3-folds}

Now we are ready to prove Theorem \ref{thm-low-deg}. Although the argument is similar, we present the proof for each case separately as the geometry is different.

Let $C\subset X$ be a curve of degree $d$ and genus $g$ and $$\pi\colon X\to \PP^3$$ be a good projection of $C$. Let $t$ be the minimal number such that $\pi(C)$ is $(t)$-neutral and $b_d\leq t\leq -1$. Then the uppermost wall in the range $b\in [b_d, -1]$, if it exists, tangents with the line $b=t$.

\subsubsection{$X_5$}

\begin{proof}[{Proof of Theorem \ref{thm-low-deg}(a)}]

When $d\leq 5$, the result follows from Lemma \ref{lem-general-bound}, so we can assume that $d\geq 6$. Since $X$ contains no planes, the intersection of $X$ with any plane is contained in a planar curve of degree $5$. So $C$ can not be planar, and we can take $\pi$ such that $\pi(C)$ is not contained in any plane by Lemma \ref{lem-not-in-plane}, i.e.~$W(\oh_{\PP^3}(-1), I_{\pi(C)})$ is not an actual wall for $\pi(C)$. Therefore, we can assume that $t<-1$.


Note that when $6\leq d\leq 15$ and $t=b_d$, applying Lemma \ref{lem-bmt-at-ab} to $\pi(C)$ directly gives the desired bound in Theorem \ref{thm-low-deg}(a), so we may assume that $b_d<t$.

\begin{itemize}
    \item $6\leq d\leq 8$: If $-2<t$, then the uppermost wall of $\pi(C)$ is given by $W(I_{C_1}(-1), I_{\pi(C)})$ for a curve $C_1\subset \pi(C)$ such that $d_1\leq 3$ when $d=6,7$, and $d_1\leq 4$ when $d=8$ (cf.~Proposition \ref{cor_wall}). Then applying Proposition \ref{prop-decompose-wall} and the result of Theorem \ref{thm-low-deg}(a) for lower-degree subschemes, we can conclude that Theorem \ref{thm-low-deg}(a) holds for $C$.

    If $t=-2$, the uppermost wall of $\pi(C)$ is given by $W(\oh_{\PP^3}(-2), I_{\pi(C)})$. Then the result follows from Lemma \ref{lem-ms-bound}.

    If $t<-2$, then $-3<b_d< t<-2$. By Proposition \ref{cor_wall}, the uppermost wall of $\pi(C)$ is given by $W(I_{C_1}(-2), I_{\pi(C)})$ for a curve $C_1\subset \pi(C)$ such that $d_1\leq 1$. Then the result also follows from applying Proposition \ref{prop-decompose-wall} and the result of Theorem \ref{thm-low-deg}(a) for lower-degree subschemes.

    \item $d=9$: In this case, we need to show $g\leq 12$. When $-3<t\leq -2$, the argument is the same as the case $6\leq d\leq 8$ by using Lemma \ref{lem-ms-bound}, Proposition \ref{prop-decompose-wall}, and the result of Theorem \ref{thm-low-deg}(a) for lower-degree subschemes. 

    When $-2<-t<-1$, the uppermost wall of $\pi(C)$ is given by $W(I_{C_1}(-1), I_{\pi(C)})$ for a curve $C_1\subset \pi(C)$ such that $d_1\leq 5$. If $d_1\leq 4$, the result still follows from Proposition \ref{prop-decompose-wall} and the result of Theorem \ref{thm-low-deg}(a) for lower-degree. However, when $d_1=5$, this only gives $g\leq 13$. 
    
    To fix this, we assume that $g(C)=13$ and try to find a contradiction. Recall that by Proposition \ref{cor_wall}, we have $g(\pi(C))=g(C_1)+g(C_2)+d_1-1$ and $d_1=5$, $d_2=4$. From Lemma \ref{lem-general-bound}, we get $g(C_1)\leq 6$ and $g(C_2)\leq 3$, hence $g(\pi(C))\leq 13$. So we have $g(\pi(C))=13$ and $g(C_1)=6$ by the assumption. But Lemma \ref{lem-general-bound} implies that $C_1\subset P\cong \PP^2\subset \PP^3$. In other words, $\deg(\pi(C)\cap P)\geq 5$, which means $W(I_{C_1}(-1), I_{\pi(C)})$ is not the uppermost wall by Lemma \ref{lem-construct-wall}. This gives a contradiction.
    

    \item $10\leq d\leq 13$ or $d=15$: We can assume that $t\neq -2$. Indeed, if $d=10$, then Lemma \ref{lem-ms-bound} gives $g\leq 16$ as desired. If $11\leq d\leq 13$ and for any good projection $\pi$ of $C$, we have $t=-2$, then by Lemma \ref{lem-3-proj}, $C$ is either contained in a quadric surface $S$ or a $(2,2,2)$-complete intersection curve. Since $S\cap X$ is a curve of degree $10$, we get a contradiction for $11\leq d\leq 13$.
    
    Now we have $b_d<t<-1$ and $t\neq -2$. If $t=-3$, applying Lemma \ref{lem-ms-bound} gives the result. For other values of $t$, we directly use Proposition \ref{prop-decompose-wall} and the result of Theorem \ref{thm-low-deg}(a) for lower-degree as in previous cases to conclude the result.

    \item $d=14$: We need to show $g(C)\leq 26$ in this case. The argument is the same as the above case, except when $-3<t<-2$.
    
    When $-3<t<-2$, we have $d_1\leq 5$. The problem only occurs when $d_1=5$. In this situation, using Proposition \ref{prop-decompose-wall}, we have a $1$-dimensional closed subscheme $C_1'\subset C$ of degree $d_1$ and a divisor $S\in |\oh_X(2)|$ with 
    \[g(C)=g(C_1')+g(C\cap S)+9.\]
    By Lemma \ref{lem-general-bound}, we obtain $g(C_1')\leq 6$. If $g(C_1')\leq 5$, we get $g(C)\leq 26$ from Theorem \ref{thm-low-deg}(a) for lower-degree. If $g(C_1')=6$, then $C_1'\subset P\cong \PP^2\subset \PP^4$ by Lemma \ref{lem-general-bound}, and we have $\deg(C\cap P)\geq d_1=5$. Then by replacing $\pi$ with another generic projection, we can assume that $\pi$ induces an isomorphism $P\cong \pi(P)$. By Lemma \ref{lem-construct-wall}, we back to the case $-2<t<-1$ and the result follows from Proposition \ref{prop-decompose-wall} and the result of Theorem \ref{thm-low-deg}(a) for lower-degree closed subschemes.
 
\end{itemize}

\end{proof}


\subsubsection{$X_{2,4}$}

\begin{proof}[{Proof of Theorem \ref{thm-low-deg}(b)}]

When $d\leq 4$, the result follows from Lemma \ref{lem-general-bound}. So we can assume that $d\geq 5$. Since $X$ contains no planes, the intersection of $X$ with any plane is contained in a planar curve of degree $4$. So $C$ can not be planar, and we can take $\pi$ such that $\pi(C)$ is not contained in any plane by Lemma \ref{lem-not-in-plane}, i.e.~$W(\oh_{\PP^3}(-1), I_{\pi(C)})$ is not an actual wall for $\pi(C)$. Therefore, we can assume that $t<-1$.

\begin{itemize}
    \item $5\leq d\leq 8$: If $t=b_d$, applying Lemma \ref{lem-bmt-at-ab} to $\pi(C)$ directly gives the desired bound in Theorem \ref{thm-low-deg}(b).
    
    If $t=-2$, the uppermost wall of $\pi(C)$ is given by $W(\oh_{\PP^3}(-2), I_{\pi(C)})$. Then the result follows from Lemma \ref{lem-ms-bound}. If $-2<t<-1$, the uppermost wall of $\pi(C)$ is given by $W(I_{C_1}(-1), I_{\pi(C)})$ for a curve $C_1\subset \pi(C)$ such that $d_1\leq 2$ when $d=5$, $d_1\leq 3$ when $d=6,7$, and $d_1\leq 4$ when $d=8$ (cf.~Proposition \ref{cor_wall}). Then applying Proposition \ref{prop-decompose-wall} and the result of Theorem \ref{thm-low-deg}(b) for lower-degree subschemes, we can conclude that Theorem \ref{thm-low-deg}(b) holds for $C$. The argument for $-3<b_d< t<-2$ is also similar.

\end{itemize}

\end{proof}

\subsubsection{$X_{3,3}$}

\begin{proof}[{Proof of Theorem \ref{thm-low-deg}(c)}]

When $d\leq 3$, the result follows from Lemma \ref{lem-general-bound}. So we can assume that $d\geq 4$. Since $X$ contains no planes, the intersection of $X$ with any plane is contained in a planar curve of degree $3$. So $C$ can not be planar, and we can take $\pi$ such that $\pi(C)$ is not contained in any plane by Lemma \ref{lem-not-in-plane}. Therefore, we may further assume that $t<-1$.

Note that when $d\leq 9$ and $t=b_d$, applying Lemma \ref{lem-bmt-at-ab} to $\pi(C)$ directly gives the desired bound in Theorem \ref{thm-low-deg}(a), so we may assume that $b_d<t$.

\begin{itemize}
    \item $4\leq d \leq 6$: If $t=-2$, the uppermost wall of $\pi(C)$ is given by $W(\oh_{\PP^3}(-2), I_{\pi(C)})$. Then the result follows from Lemma \ref{lem-ms-bound}. If $-2<t<-1$, the uppermost wall of $\pi(C)$ is given by $W(I_{C_1}(-1), I_{\pi(C)})$ for a curve $C_1\subset \pi(C)$ such that $d_1\leq 1$ when $d=4$, $d_1\leq 2$ when $d=5$, and $d_1\leq 3$ when $d=6$ (cf.~Proposition \ref{cor_wall}). Then applying Proposition \ref{prop-decompose-wall} and the result of Theorem \ref{thm-low-deg}(c) for lower-degree subschemes, we can conclude that Theorem \ref{thm-low-deg}(c) holds for $C$. The argument for $-3<b_d< t<-2$ is also similar. 

    \item $7\leq d\leq 9$: If after replacing $\pi$ with a generic good projection, we can assume that $t\neq 2$, then the result follows from the same argument as above by using Proposition \ref{prop-decompose-wall} and Theorem \ref{thm-low-deg}(c) for lower-degree.

    Now assume that $t=-2$ for any generic good projection of $C$. If $g(C)= g(\pi(C))$, then $C\subset X\cap \PP^3\subset \PP^5$ by Lemma \ref{lem-diff-g}. As $t=-2$ and $X$ does not contain quadric surfaces, we see that $C\subset S\cap X\subset \PP^3\cap X$ for a quadric surface $S\subset \PP^3$. But this makes a contradiction since $S\cap X$ is a curve of degree $6$. Hence, we have $g(C)\leq g(\pi(C))-1$, which by Lemma \ref{lem-ms-bound} gives $g(C)\leq 5$ for $d=7$.
    
    
    If $8\leq d\leq 9$, we claim that we can assume $g(C)\leq g(\pi(C))-2$. If $g(C)=g(\pi(C))-1$, from Lemma \ref{lem-diff-g} we have $C\subset X\cap \PP^4\subset \PP^5$. By Lemma \ref{lem-3-proj}, we have either $C\subset S\subset \PP^4$ for a degree $2$ surface $S$, or $C\subset C'$ for a $(2,2,2)$-complete intersection curve $C'\subset \PP^4$. The former case is impossible since $S\cap X$ is contained in a curve of degree $6$. And the latter case is possible only when $d=8$, which implies $C=C'$ and $g(C)\leq 5$ since $C$ and $C'$ are curves of the same degree.
    
    Now we have $g(C)\leq g(\pi(C))-2$. Then the result follows from $t=-2$ and applying Lemma \ref{lem-ms-bound}.

\end{itemize}

\end{proof}

\subsubsection{$X_{2,2,3}$}

\begin{proof}[{Proof of Theorem \ref{thm-low-deg}(d)}]

When $d\leq 3$, the result follows from Lemma \ref{lem-general-bound}. Now assume that $4\leq d\leq 6$. Since $X$ contains no planes, the intersection of $X$ with any plane is contained in a planar curve of degree $3$. So $C$ can not be planar, and we can take $\pi$ such that $\pi(C)$ is not contained in any plane by Lemma \ref{lem-not-in-plane}, i.e.~$W(\oh_{\PP^3}(-1), I_{\pi(C)})$ is not an actual wall for $\pi(C)$. Therefore, we can assume that $t<-1$. Then the rest of the argument is the same as the proof of Theorem \ref{thm-low-deg}(b) above.

\end{proof}

\subsubsection{$X_{2,2,2,2}$}

\begin{proof}[{Proof of Theorem \ref{thm-low-deg}(e)}]
When $d\leq 2$, the result follows from Lemma \ref{lem-general-bound}. When $d=3$, we have $g(C)\leq 1$. If $g(C)=1$, then $C\subset P\cong \PP^2\subset \PP^7$ by Lemma \ref{lem-general-bound}. However, as $X$ does not contain a plane, $P\cap X$ is a degree $2$ curve, which makes a contradiction. Hence $g(C)\leq 0$.

Now we assume that $3\leq d\leq 6$. As in the above cases, we can take $\pi$ such that $\pi(C)$ is not contained in any plane by Lemma \ref{lem-not-in-plane}. Hence, we can further assume that $t<-1$.

\begin{itemize}
    \item $d=4$: If $-2<t<-1$, the result follows from Proposition \ref{prop-decompose-wall} and the result of Theorem \ref{thm-low-deg}(e) for lower-degree cases. If $t=-2$, then applying Lemma \ref{lem-ms-bound} gives the desired bound.

    \item $5\leq d\leq 6$: First, we assume that $d=5$. In this case, we need to prove $g(C)\leq 1$. If $g(C)=g(\pi(C))$, by Lemma \ref{lem-diff-g} we have $C\subset \PP^3\cap X\subset \PP^7$. As $\PP^3\cap X$ is contained is a $(2,2)$-complete intersection curve in $\PP^3$, we get a contradiction. Thus, we see $g(C)\leq g(\pi(C))-1$.

    If $t=-2$, then by Lemma \ref{lem-ms-bound} we get $g(C)\leq g(\pi(C))-1\leq 1$. If $-2<t<-1$, then the result follows from Proposition \ref{prop-decompose-wall} and Theorem \ref{thm-low-deg}(e) for lower-degree. If $b_d\leq t<-2$, then applying Lemma \ref{lem-bmt-at-ab}, we get $g(C)\leq g(\pi(C))-1\leq 1$ as well.

    The argument for $d=6$ is similar.
\end{itemize}

\end{proof}

\begin{remark}\label{rmk-weak-bound}
As in Theorem \ref{thm-low-deg}, we can continue the computations for curves of degree $>D_1$. It is not hard to see that similar arguments as in the above cases imply Conjecture \ref{conj-cast} when $d\leq D_2$, where $D_2=D_1+3$. We will continue these computations in a sequel \cite{fey-liu:sharp}.
\end{remark}

\subsection{Pfaffian--Grassmannian Calabi--Yau 3-folds}\label{subsec-grass}

In this subsection, we consider the pair of Calabi--Yau 3-folds in the Pfaffian--Grassmannian correspondence. 

More precisely, we take $X$ to be a smooth codimension $7$ linear section of $\Gr(2,7)$, which is a Calabi--Yau 3-fold. On the other hand, there is another associated Calabi--Yau 3-fold $Y$ given by a smooth complete intersection of the Pfaffian variety $\mathrm{Pf}(7)\subset \PP^{20}$ of $\Gr(2,7)$ with $\PP^6$. It is known that $X$ and $Y$ are derived equivalent but not birational (cf.~\cite{calda:pfaffian}).

Although there is no complete expectation of an optimal bound of $\gmax$ in these two cases as Conjecture \ref{conj-optimal-bound}, the low-genus GV-invariants of $X$ and $Y$ are computed in \cite{Hosono:2007vf,Haghighat:2008ut} via physical approaches and we can verify the corresponding bound of $\gmax$ in low-degree.

\begin{proposition}
Let $X\subset \Gr(2,7)$ be a smooth codimension $7$ linear section. Then $\gmax^X(d)\leq 0$ when $1\leq d\leq 3$ and $\gmax^X(d)\leq 1$ when $4\leq d\leq 5$.
\end{proposition}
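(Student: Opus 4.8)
The plan is to run the iterative projection and wall-crossing algorithm of Section \ref{intro-subsec-method} with target $\PP^3$, using that $X$ carries the very ample Plücker polarization $H$ with $H^3=\deg\Gr(2,7)=42$ and $\NS(X)=\ZZ H$. First I would record the geometric inputs needed from the Grassmannian. Since $\Gr(2,7)$ is cut out by the Plücker quadrics, for any plane $P\cong\PP^2\subset\PP^{20}$ with $P\not\subset\Gr(2,7)$ the intersection $P\cap\Gr(2,7)$ lies on a conic, so $\deg(P\cap X)\le 2$; moreover a dimension count of the families of planes and of quadric surfaces contained in $\Gr(2,7)$ against the $7$ linear conditions defining $X$ shows that $X$ contains no plane and no quadric surface. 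These facts are what replace the ``contains no planes and quadric surfaces'' hypothesis used in Theorem \ref{thm-low-deg}.

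For $d\le 2$, Lemma \ref{lem-general-bound} already gives $g(C)\le 0$. For $d=3$ the same lemma gives $g(C)\le 1$ with equality only when $C$ is a plane cubic; since $X$ contains no plane and $\deg(P\cap X)\le 2$, this is impossible, so $g(C)\le 0$. For $d=4$ I would use the no-plane fact to ensure $C$ is non-degenerate, then pick a generic good projection $\pi\colon X\dashrightarrow\PP^3$ with $C'=\pi(C)$ not contained in a plane (Lemma \ref{lem-not-in-plane}), and examine the uppermost wall of $I_{C'}$ in the range $b_d=-2\le b<0$. If $C'$ is $(-2)$-neutral, Lemma \ref{lem-bmt-at-ab} at $b_0=-2$ gives exactly $g(C)\le g(C')\le 1$. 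A $k=1$ wall produces, via Proposition \ref{cor_wall} and Proposition \ref{prop-decompose-wall}, a splitting $g(C)=g(C_1)+g(C\cap S)+\deg(C_1)-1$ with $\deg(C_1)\le 1$ and $\deg(C\cap S)=3$, so the degree-$3$ bound gives $g(C)\le 0$. A $k=2$ wall means $C'$ lies on an integral quadric (Lemma \ref{lem-wall-deg2-integral}), and Lemma \ref{lem-ms-bound} yields $g(C)\le g(C')\le 1$.

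The degree-$5$ case is where the real work lies. A $k=1$ uppermost wall again splits off a piece of degree $\le 2$ and reduces to the degrees $\le 4$ already handled, giving $g(C)\le 1$; one checks that a $k=2$ wall whose subobject is $I_{C_1}(-D)$ with $C_1$ a curve cannot occur for $d=5$. The two remaining possibilities, namely $C'$ being $(b_d)$-neutral or $C'$ lying on an integral quadric, only give $g(C)\le 2$ from Lemma \ref{lem-bmt-at-ab} and Lemma \ref{lem-ms-bound}, so the crux is to exclude $g(C)=2$. Here I would argue as follows: if $g(C)=2$ then $g(C)=g(C')$, so by Lemma \ref{lem-diff-g} the projection is an isomorphism and $C\cong C'$ is a non-degenerate degree-$5$, genus-$2$ curve in some $\PP^3\subset\PP^{20}$; Riemann--Roch gives $h^0(I_{C'}(2))\ge 10-\chi(\oh_{C'}(2))=1$, so $C'$ lies on a quadric. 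In the neutral case this quadric produces an actual $k=2$ wall inside $b_d<b<0$, contradicting neutrality. In the quadric case the quadric through $C'$ is unique, which forces $\Gr(2,7)\cap\PP^3$ to equal that quadric surface $D\subset\Gr(2,7)$, again because $\Gr(2,7)$ is cut out by quadrics; then $C\subset D\cap L$, where $L$ is the codimension-$7$ linear space cutting out $X$. Since $\deg D=2$, either $\PP^3\subset L$, which would put $D$ inside $X$ and contradict the no-quadric-surface fact, or $D\cap L$ has degree $\le 2<5$; in either case we reach a contradiction, so $g(C)\le 1$.

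The main obstacle is precisely this last step for $d=5$: the general tilt-stability estimates are not sharp enough to beat $g=2$, and closing the gap forces one to invoke the specific geometry of $\Gr(2,7)$, namely the structure and dimension of the quadric surfaces it contains and the way they meet the linear section $L$. I expect verifying the genericity inputs (no plane, no quadric surface in $X$, and the degree-$\le 2$ intersection property) to be the most delicate auxiliary point, but these should follow from straightforward dimension counts on the relevant incidence varieties in $\Gr(2,7)$.
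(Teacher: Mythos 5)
Your proposal is correct in outline and runs on the same machinery as the paper (good projections, Lemma \ref{lem-general-bound} for $d\leq 3$, Lemma \ref{lem-not-in-plane} to kill $k=1$ walls from planes, then Lemma \ref{lem-bmt-at-ab}, Lemma \ref{lem-ms-bound}, Proposition \ref{cor_wall} and Proposition \ref{prop-decompose-wall} for the wall analysis); for $d\leq 4$ your case division matches the paper's almost verbatim. Where you genuinely diverge is the crux at $d=5$. The paper disposes of the critical value $g=2$ \emph{before} any case analysis, by a single observation: if $g(C)=g(\pi(C))$ then Lemma \ref{lem-diff-g} puts $C$ inside $X\cap\PP^3$, and since $\Gr(2,7)$ is an intersection of quadrics with $\dim X\cap\PP^3\leq 1$, that intersection sits inside a $(2,2)$-complete intersection curve of degree $4<5$ --- so $g(C)\leq g(\pi(C))-1$ uniformly, and then every wall case only needs the crude bound $g(\pi(C))\leq 2$. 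You instead exclude $g(C)=2$ case by case: Riemann--Roch to put the degree-$5$, genus-$2$ curve $C'$ on a quadric, then a wall contradiction in the neutral case and a uniqueness-of-the-quadric argument forcing $\Gr(2,7)\cap\PP^3$ to be a quadric surface in the other case. This works but costs you extra inputs the paper never needs: the step $h^0(I_{C'}(2))\geq 1$ requires $h^1(\oh_{C'}(2))=0$, which is automatic for integral curves (as $\deg\oh_{C'}(2)=10>2g-2$) but not for arbitrary Cohen--Macaulay curves, so that point needs patching (e.g.\ by handling reducible or non-reduced $C'$ separately, or better, by noting that in your neutral case you already have $C\cong C'\subset X\cap\PP^3$ and could invoke the paper's $(2,2)$-curve observation directly); likewise your appeal to ``no quadric surface in $X$'' needs no dimension count at all --- any surface in $X$ is a divisor, hence of degree at least $H^3=42$ by $\NS(X)=\ZZ H$, exactly as for planes. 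In short: your route is viable and self-contained up to the $h^1$-vanishing caveat, but the paper's strict inequality $g(C)<g(\pi(C))$ is the cleaner key lemma, replacing your entire quadric-geometry analysis with one degree count.
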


\begin{proof}
We use the natural embedding $X\hookrightarrow \Gr(2,7)\hookrightarrow \PP^{20}$. When $1\leq d\leq 2$, the result follows from Lemma \ref{lem-general-bound}. When $d=3$, we have $\gmax^X(3)\leq 1$, and the equality holds only for planar cubic curves by Lemma \ref{lem-general-bound}. As $X$ does not contain any plane and $\Gr(2,7)$ is an intersection of quadric hypersurfaces in $\PP^{20}$, any planar curve in $X$ is contained in a curve of degree $2$. Hence, we get $\gmax^X(3)\leq 0$.

Now let $C\subset X$ be a curve of degree $d$. We fix a good projection $\pi\colon X\to \PP^3$ of $C$. Let $t$ be the minimal number such that $\pi(C)$ is $(t)$-neutral and $b_d\leq t\leq -1$. The argument above shows that $C$ is not contained in any plane when $d\geq 3$, hence $t\neq -1$ by Lemma \ref{lem-not-in-plane}.

\begin{itemize}
    \item If $d=4$, we have $b_d=-2\leq t<-1$. When $t=-2$, Lemma \ref{lem-bmt-at-ab} gives $g(\pi(C))\leq 1$. When $-2<t<-1$, the uppermost wall of $\pi(C)$ is given by $W(I_{C_1}(-1), I_{\pi(C)})$ for a curve $C_1\subset \pi(C)$ of degree $d_1\leq 1$. Then Proposition \ref{prop-decompose-wall} and the result for low-degree curves give $g(C)\leq 1$. 

    \item Now we assume that $d=5$. We claim that $g(C)< g(\pi(C))$. Indeed, if $g(C)= g(\pi(C))$, Lemma \ref{lem-diff-g} implies that $C\subset X\cap \PP^3$. As $\Gr(2,7)$ is an intersection of quadrics and $\dim  X\cap \PP^3\leq 1$, we know that $ X\cap \PP^3$ is contained in a $(2,2)$-complete intersection curve in $\PP^3$, which is of degree $4$ and contradicts $C\subset X\cap \PP^3$.
    
    If $t\leq -2$, then by applying Lemma \ref{lem-bmt-at-ab} to $(a,b)=(0,-2)$, we have $g(\pi(C))\leq 2$, which gives $g(C)\leq 1$. If $-2<t<-1$, the uppermost wall of $\pi(C)$ is given by $W(I_{C_1}(-1), I_{\pi(C)})$ for a curve $C_1\subset \pi(C)$ of degree $d_1\leq 2$. Then Proposition \ref{prop-decompose-wall} and the result for low-degree curves imply $g(C)\leq 1$.
\end{itemize}
\end{proof}

\begin{proposition}
    Let $Y\subset \PP^6$ be a smooth codimension $14$ linear section of $\mathrm{Pf}(7)\subset \PP^{20}$. Then $\gmax^Y(d)\leq 0$ when $1\leq d\leq 2$, $\gmax^Y(d)\leq 1$ when $3\leq d\leq 4$, and $\gmax^Y(5)\leq 2$.
\end{proposition}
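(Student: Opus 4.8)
The plan is to reuse the projection-and-wall-crossing strategy just applied to $X$, the only new ingredient being the geometry of $Y$. The key fact is that the Pfaffian variety $\Pf(7)\subset\PP^{20}$ has ideal generated by the seven $6\times 6$ sub-Pfaffians of the generic skew-symmetric $7\times 7$ matrix, which are cubics; hence the ideal of $Y=\Pf(7)\cap\PP^6$ in $\PP^6$ is generated by cubics. Granting (as on the Grassmannian side) that $Y$ contains no plane, for any plane $P\cong\PP^2\subset\PP^6$ some defining cubic of $Y$ does not vanish on $P$, so $P\cap Y$ lies on a plane cubic and every $1$-dimensional closed subscheme of $Y$ contained in a plane has degree at most $3$. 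I would first clear the low degrees directly from Lemma \ref{lem-general-bound}: for $1\le d\le 2$ it gives $g(C)\le 0$, and for $d=3$ it gives $g(C)\le 1$. For $d\in\{4,5\}$ such a curve $C$ is then non-planar, so I fix a good projection $\pi\colon\PP^6\dashrightarrow\PP^3$ with $C':=\pi(C)$ not contained in any plane (Lemma \ref{lem-not-in-plane}); then $\deg C'=d$, $g(C)\le g(C')$ by Lemma \ref{lem-cok-dim0}, the wall $W(\oh_{\PP^3}(-1),I_{C'})$ never occurs, and on $\PP^3$ the relevant parameter is $b_d=-\sqrt d$, so the minimal $t$ with $C'$ being $(t)$-neutral satisfies $t<-1$.

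For $d=4$ one has $b_4=-2$, hence $t\in[-2,-1)$. If $t=-2$, then $C'$ is $\sigma_{a,-2}$-semistable for every $a>0$ and Lemma \ref{lem-bmt-at-ab} at $b_0=-2$ gives $g(C')\le\tfrac13 d^2-\tfrac43 d+1=1$. If $-2<t<-1$, then by Proposition \ref{cor_wall} the uppermost wall is $W(I_{C_1}(-D),I_{C'})$ with $D$ a plane (the alternative $A=\oh_{\PP^3}(-D)$ is ruled out by non-planarity), and the subcurve has degree $d_1<\min\{d-\tfrac12,\,d+\tfrac12-\sqrt{2d}\}$, forcing $d_1=1$; lifting through $\pi$ by Proposition \ref{prop-decompose-wall} produces $C_1\subset C$ of degree $1$ and $S\in|\oh_{\PP^6}(1)|$ with $g(C)=g(C_1)+g(C\cap S)$, and since $g(C_1)\le 0$ while $\deg(C\cap S)=3$ gives $g(C\cap S)\le 1$, I conclude $g(C)\le 1$.

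For $d=5$ one has $b_5=-\sqrt5$, hence $t\in[-\sqrt5,-1)$. If $t\le -2$, then $C'$ is $\sigma_{a,-2}$-semistable for all $a>0$ and Lemma \ref{lem-bmt-at-ab} at $b_0=-2$ gives $g(C')\le\tfrac13\cdot 25-\tfrac43\cdot 5+1=\tfrac83$, so $g(C)\le g(C')\le 2$. If $-2<t<-1$, the uppermost wall is again $W(I_{C_1}(-D),I_{C'})$ with $D$ a plane and $d_1<\min\{d-\tfrac12,\,d+\tfrac12-\sqrt{2d}\}$, i.e. $d_1\in\{1,2\}$; lifting through $\pi$ gives $C_1\subset C$ and $S\in|\oh_{\PP^6}(1)|$ with $g(C)=g(C_1)+g(C\cap S)+d_1-1$, and feeding in $g(C_1)\le 0$ together with the already-settled cases $d=3,4$ (so $g(C\cap S)\le 1$ for $\deg(C\cap S)=5-d_1\in\{3,4\}$) yields $g(C)\le 0+1+(d_1-1)=d_1\le 2$. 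This exhausts all cases.

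The step I expect to matter most is recognising where $Y$ differs from $X$. On the Grassmannian side one uses that $\Gr(2,7)$ is cut by quadrics, so $\Gr(2,7)\cap\PP^3$ sits inside a $(2,2)$-complete-intersection curve of degree $4$, which at $d=5$ forces the strict drop $g(C)<g(\pi(C))$ and hence the sharper bound $1$. For $Y$ no such drop is available: being cut out only by cubics, $Y\cap\PP^3$ is no longer confined to a low-degree complete-intersection curve, and a degree-$5$ curve may project isomorphically; this is precisely why the stated bound weakens to $\gmax^{Y}(5)\le 2$. What rescues the argument is the numerical fact that the plain Bogomolov--Gieseker estimate of Lemma \ref{lem-bmt-at-ab} at $b_0=-2$ already gives $g(C')\le\lfloor 8/3\rfloor=2$ once $C'$ is $(-2)$-neutral, leaving only the plane-wall case, which closes by the degree induction through Proposition \ref{prop-decompose-wall}. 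Thus the only genuinely geometric obstacle is the input that $Y$ contains no plane; with that in hand, the remaining cases are finite bookkeeping over the degrees $d\le 5$.
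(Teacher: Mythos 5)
Your proposal is correct and follows essentially the same route as the paper's proof: the same good projection to $\PP^3$, the same trichotomy on the neutrality threshold $t$ (applying Lemma \ref{lem-bmt-at-ab} at $b_0=-2$ when $t\leq -2$, and Proposition \ref{cor_wall} plus Proposition \ref{prop-decompose-wall} with induction on degree when $-2<t<-1$), with identical numerical bounds $d_1\leq 1$ for $d=4$ and $d_1\leq 2$ for $d=5$. The extra details you supply — that the Pfaffian ideal is generated by cubics so plane sections of $Y$ lie on plane cubics, and the explicit exclusion of the wall $W(\oh_{\PP^3}(-1),I_{C'})$ — are exactly the steps the paper invokes implicitly via its citation of Hosono--Takagi and Lemma \ref{lem-not-in-plane}.
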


\begin{proof}
When $1\leq d\leq 3$, the result follows from Lemma \ref{lem-general-bound}. Now let $C\subset Y$ be a curve of degree $d\geq 4$. We fix a good projection $\pi\colon Y\to \PP^3$ of $C$. Let $t$ be the minimal number such that $\pi(C)$ is $(t)$-neutral and $b_d\leq t\leq -1$. Note that $Y$ is an intersection of cubic hypersurfaces in $\PP^6$ (cf.~\cite[Section 2.1]{Hosono:2007vf}), hence $C$ is not contained in any plane and $t\neq -1$ by Lemma \ref{lem-not-in-plane}.

\begin{itemize}
    \item If $d=4$, we have $b_d=-2\leq t<-1$. When $t=-2$, Lemma \ref{lem-bmt-at-ab} gives $g(\pi(C))\leq 1$. When $-2<t<-1$, the uppermost wall of $\pi(C)$ is given by $W(I_{C_1}(-1), I_{\pi(C)})$ for a curve $C_1\subset \pi(C)$ of degree $d_1\leq 1$. Then Proposition \ref{prop-decompose-wall} and the result for low-degree curves give $g(C)\leq 1$.

    \item Now we assume that $d=5$. If $t\leq -2$, then by applying Lemma \ref{lem-bmt-at-ab} to $(a,b)=(0,-2)$, we have $g(C)\leq g(\pi(C))\leq 2$. If $-2<t<-1$, the uppermost wall of $\pi(C)$ is given by $W(I_{C_1}(-1), I_{\pi(C)})$ for a curve $C_1\subset \pi(C)$ of degree $d_1\leq 2$. Then Proposition \ref{prop-decompose-wall} and the result for low-degree curves imply $g(C)\leq 2$.
\end{itemize}
\end{proof}

\section{Applications} \label{sec_app}

In this section, we discuss some applications of our results. We continue to assume that $\kk$ is an algebraically closed field of any characteristic, except $\kk=\CC$ in Section \ref{subsec-vanish}.

\subsection{Higher-dimensional varieties}\label{subsec-high-dim}

By taking general hyperplane sections, it is not hard to generalize the results in Section \ref{subsec-pic-1} to higher-dimensional varieties. Recall that for a smooth projective variety $X$ and a $1$-dimensional closed subscheme $C\subset X$, we have
\[g(C)=1+\frac{K_X.C}{2}+\ch_{\dim X}(I_C)=1+\frac{K_X.C}{2}-\ch_{\dim X}(\oh_C).\]
On the other hand, if $H$ is a very ample divisor and $Z\subset X$ is a codimension two closed subscheme, then
\[\ch_{\dim X-1}(\oh_{Z'})=Z.H^{\dim X-3},\quad \ch_{\dim X}(\oh_{Z'})=\ch_3(\oh_Z).H^{\dim X-3}-\frac{\dim X-3}{2}Z.H^{\dim X-2},\]
where $Z'$ is the intersection of $Z$ with $X'$ such that $X'\subset X$ is an intersection of $\dim X-3$ general divisors in $|H|$. When $\NS(X)=\ZZ H$, by Grothendieck--Lefschetz theorem \cite[Expos\'e XII, Corollaire 3.6]{sga2}, we have $\NS(X')=\ZZ H|_{X'}$. Applying Corollary \ref{cor-very-ample} to $Z'\subset X'$, we obtain:

\begin{corollary}
Let $(X, H)$ be a polarised smooth projective variety. Assume that $\NS(X)=\ZZ H$ with $H$ very ample and $n:=H^{\dim X}$. 

Then there exist an integer $N_{H}$ defined in \eqref{eq-sum-thm} such that for any codimension $2$ closed subscheme $Z\subset X$ with $d:=Z.H^{\dim X-2}\geq N_{H}$, we have
\[\ch_3(I_Z).H^{\dim X-3}+\frac{K_X.Z.H^{\dim X-3}}{2}\leq \frac{1}{2n}d^2+\frac{n-\dim X-1}{2}d-\epsilon(d, n).\]
\end{corollary}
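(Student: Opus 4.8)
The plan is to reduce the statement to the $3$-fold bound of Corollary \ref{cor-very-ample} by cutting $X$ down with a general complete intersection of members of $|H|$. I may assume $\dim X \geq 4$, since when $\dim X = 3$ one has $H^{\dim X - 3} = 1$ and the asserted inequality is literally Corollary \ref{cor-very-ample} after rewriting $g(Z)-1$ through the genus formula $g(C) = 1 + \tfrac12 K_X.C + \ch_{\dim X}(I_C)$.

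First I would choose general divisors $D_1, \dots, D_{\dim X - 3} \in |H|$ and set $X' := D_1 \cap \cdots \cap D_{\dim X - 3}$. As $H$ is very ample, Bertini's theorem shows $X'$ is a smooth projective $3$-fold, hence factorial with empty singular locus. Since $H|_{X'}$ is again very ample with $(H|_{X'})^3 = H^{\dim X} = n$, and since iterating the Grothendieck--Lefschetz theorem \cite[Expos\'e XII, Corollaire 3.6]{sga2} along the chain of slices (each of dimension $\geq 3$) gives $\NS(X') = \ZZ\, H|_{X'}$, the $3$-fold $X'$ meets all the hypotheses of Corollary \ref{cor-very-ample}, and the constant $N_H$ attached to it by \eqref{eq-sum-thm} depends only on $n$.

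Next I would set $Z' := Z \cap X'$. For a sufficiently general choice the $\dim X - 3$ sections avoid the finitely many associated points of $Z$, so each hyperplane-section sequence $0 \to \oh_Z(-H) \to \oh_Z \to \oh_{Z \cap D} \to 0$ stays exact; iterating gives $\ch(\oh_{Z'}) = \ch(\oh_Z)\,(1 - e^{-H})^{\dim X - 3}$, which is exactly the source of the two Chern-character identities recorded just before the statement and shows that $Z'$ is a $1$-dimensional closed subscheme of $X'$ of $H|_{X'}$-degree $\ch_{\dim X - 1}(\oh_{Z'}) . H = Z.H^{\dim X - 2} = d$. Feeding the identity $\ch_{\dim X}(\oh_{Z'}) = \ch_3(\oh_Z).H^{\dim X - 3} - \tfrac{\dim X - 3}{2} d$ together with $\ch_3(\oh_Z) = -\ch_3(I_Z)$ into $g(Z') = 1 + \tfrac12 K_X.Z' - \ch_{\dim X}(\oh_{Z'})$ yields
\[
g(Z') = 1 + \tfrac12 K_X.Z.H^{\dim X - 3} + \ch_3(I_Z).H^{\dim X - 3} + \tfrac{\dim X - 3}{2}\, d .
\]
Finally I would apply Corollary \ref{cor-very-ample} to $Z' \subset X'$, valid once $d \geq N_H$, bounding $g(Z')$ above by $\tfrac{1}{2n}d^2 + \tfrac{n-4}{2}d + 1 - \epsilon(d,n)$; moving the linear term $\tfrac{\dim X - 3}{2}d$ to the right turns $\tfrac{n-4}{2}d$ into $\tfrac{n - \dim X - 1}{2}d$ and produces precisely the claimed inequality.

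The only genuinely delicate point is the reduction in the second paragraph: one needs the general slice $X'$ to be a smooth $3$-fold whose Néron--Severi group is still generated by the polarization. Smoothness is Bertini and the lattice statement is Grothendieck--Lefschetz, but it is essential that $\dim X \geq 4$, so that even the last step, from a smooth $4$-fold to its $3$-dimensional hyperplane section, falls in the range where the Lefschetz restriction map on Picard groups is an isomorphism; one must also keep the sections general enough that $Z \cap X'$ has the expected dimension one and the section sequences remain exact. Everything downstream is bookkeeping with Hirzebruch--Riemann--Roch and the definition of $\epsilon(d,n)$.
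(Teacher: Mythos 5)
Your proof is correct and is essentially the paper's own argument: the paper likewise slices $X$ by $\dim X-3$ general members of $|H|$, uses the Grothendieck--Lefschetz theorem to get $\NS(X')=\ZZ\, H|_{X'}$, records exactly the two Chern-character identities you derive from the section sequences, and then applies Corollary \ref{cor-very-ample} to $Z'=Z\cap X'$. The only difference is that you spell out the genericity and Bertini details (smoothness of $X'$, exactness of the iterated section sequences, and the $\dim X\geq 4$ range for the Lefschetz restriction) that the paper leaves implicit.
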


In particular, when $X$ is a Calabi--Yau 4-fold, we have:

\begin{corollary}\label{cor-bound-cy4}
Let $(X, H)$ be a polarised projective Calabi--Yau 4-fold. Assume that $\NS(X)=\ZZ H$ with $H$ very ample and $n:=H^{4}$. 

Then there exist an integer $N_{H}$ defined in \eqref{eq-sum-thm} such that for any $2$-dimensional closed subscheme $Z\subset X$ with $d:=Z.H^{2}\geq N_{H}$, we have
\[\ch_3(I_Z).H\leq \frac{1}{2n}d^2+\frac{n-5}{2}d-\epsilon(d, n).\]
\end{corollary}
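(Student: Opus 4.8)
The plan is to obtain Corollary \ref{cor-bound-cy4} as an immediate specialization of the preceding general corollary, which bounds $\ch_3(I_Z).H^{\dim X-3}+\frac{1}{2}K_X.Z.H^{\dim X-3}$ for codimension two closed subschemes $Z$ in an arbitrary smooth polarised variety $(X,H)$ with $\NS(X)=\ZZ H$ and $H$ very ample. First I would check that a projective Calabi--Yau 4-fold satisfies the hypotheses of that corollary: it is smooth and projective of dimension $4$, and the assumptions $\NS(X)=\ZZ H$ and $H$ very ample are exactly those posited here. A $2$-dimensional closed subscheme $Z\subset X$ is then precisely a codimension two closed subscheme, and $d:=Z.H^{2}$ coincides with $Z.H^{\dim X-2}$, the degree appearing in the general statement.

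The two simplifications that convert the general inequality into the stated one both follow from $\dim X=4$ together with the Calabi--Yau condition. Since $\dim X-3=1$, the leading term $\ch_3(I_Z).H^{\dim X-3}$ becomes $\ch_3(I_Z).H$, and the linear coefficient $\frac{n-\dim X-1}{2}$ becomes $\frac{n-5}{2}$. Moreover, a Calabi--Yau 4-fold has $K_X=0$, so the correction term $\frac{1}{2}K_X.Z.H^{\dim X-3}$ vanishes identically. Substituting these into the general corollary yields exactly
\[\ch_3(I_Z).H\leq \frac{1}{2n}d^2+\frac{n-5}{2}d-\epsilon(d,n)\]
for every $d\geq N_H$, with the same threshold $N_H$ of \eqref{eq-sum-thm}.

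There is essentially no new obstacle at this stage, since all the substantive work is already packaged in the general corollary: its proof cuts $X$ down to a smooth $3$-fold $X'$ by intersecting with $\dim X-3$ general members of $|H|$, invokes the Grothendieck--Lefschetz theorem to ensure $\NS(X')=\ZZ H|_{X'}$, translates the Chern-character data of $Z$ into that of the induced curve $Z'=Z\cap X'$ via the displayed restriction formulas, and then applies Corollary \ref{cor-very-ample} to $Z'\subset X'$. The only place where the Calabi--Yau hypothesis is genuinely used is the vanishing $K_X=0$, which I would state explicitly; everything else is a dimension count and a bookkeeping substitution. The one item worth double-checking is that very ampleness of $H$ on $X$ restricts to a very ample class on the $3$-fold section $X'$, so that Corollary \ref{cor-very-ample} applies with $m_H=1$; this is immediate, as the restriction of a very ample divisor to a subvariety remains very ample.
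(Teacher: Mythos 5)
Your proposal is correct and takes essentially the same route as the paper, which states Corollary \ref{cor-bound-cy4} as an immediate specialization of the preceding general corollary: setting $\dim X=4$ turns $\ch_3(I_Z).H^{\dim X-3}$ into $\ch_3(I_Z).H$ and the coefficient $\frac{n-\dim X-1}{2}$ into $\frac{n-5}{2}$, while the Calabi--Yau condition $K_X=0$ removes the correction term. Your recapitulation of the hyperplane-section argument (Grothendieck--Lefschetz, the restriction formulas, and Corollary \ref{cor-very-ample} with $m_H=1$) is exactly the content the paper packages into that general corollary, so no further verification is required.
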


When $X\subset \PP^5$ is a sextic 4-fold, we expect that the bound in Corollary \ref{cor-bound-cy4} is optimal.

Other results in Section \ref{subsec-pic-1} can be easily generalized to higher dimensions as well. We do not state them here since we will not use them in the rest of the paper.

\subsection{Vanishing of enumerative invariants}\label{subsec-vanish}

Next, we discuss some applications in enumerative geometry. In this subsection, we set $\kk=\CC$.

\subsubsection{Curve-counting invariants of Calabi--Yau 3-folds}

Let $X$ be a projective Calabi--Yau 3-fold. Recall that a two-term complex of coherent sheaves on $X$
\[[\oh_X\to F]\]
is called a \emph{stable pair} if $F$ is pure one-dimensional and the cokernel of the above map is zero-dimensional.

Let $C$ be the scheme-theoretic support of $F$ and $\mathrm{H}_2(X, \mathbb{Z})_{c}$ be the subset of $\mathrm{H}_2(X, \mathbb{Z})$ consists of curve classes. For any integer $s$ and curve class $\beta\in \mathrm{H}_2(X, \mathbb{Z})_c$, we denote by $P_s(X, \beta)$ the moduli space of stable pairs $[\oh_X\to F]$
on $X$ with
\[\chi(F)=s,\quad[C]=\beta.\]
It is shown in \cite{PT07} that the moduli space $P_s(X, \beta)$ is a projective scheme with a virtual cycle $[P_s(X, \beta)]^{vir}$ of virtual dimension zero. We define a \emph{Pandharipande--Thomas (PT) invariant} by
\[\PT_{s,\beta}:=\int_{[P_s(X, \beta)]^{vir}} 1.\]
The generating series of PT-invariants is
\[\PT(q, t):=1+\sum_{\beta\neq 0}\sum_{s\in \ZZ} \PT_{s, \beta} q^s t^{\beta}.\]

Recall that for any non-zero effective 1-cycle $\beta\in \mathrm{H}_2(X,\ZZ)$ and $g\geq 0$, the \emph{Gopakumar--Vafa (GV) invariant} $\GV_{g,\beta}$ is defined as the coefficient in an expansion of the series
\begin{equation} \label{GV_from_GW}
    \GW(\lambda, t)=\sum_{g\geq 0, \beta\neq 0} \GW_{g,\beta}\lambda^{2g-2}t^{\beta}=\sum_{g\geq 0,r\geq 1, \beta\neq 0}  \frac{\GV_{g,\beta}}{r}\cdot (2\sin(\frac{r\lambda}{2}))^{2g-2}\cdot t^{r\beta},
\end{equation}
where $\GW_{g,\beta}$ is the Gromov--Witten invariant of $X$ of genus $g$ and class $\beta$.

As in \cite[Section 3]{PT07}, we can write the connected PT series $\sF_{P}(q, t)$ uniquely as
\begin{equation} \label{GV_PT_1030}
    \sF_{P}(q, t):=\log \PT(q, t)=\sum_{g>-\infty} \sum_{\beta\neq 0} \sum_{r\geq 1} \GV'_{g,\beta} \frac{(-1)^{g-1}}{r} ((-q)^{\frac{r}{2}}-(-q)^{-\frac{r}{2}})^{2g-2} t^{r\beta},
\end{equation}
such that $\GV'_{g,\beta}=0$ for a fixed $\beta$ and $g\gg 0$.

By \cite{pardon:mnop}, the GW/PT correspondence \cite{mnop:I,mnop:II} holds for $X$. After changing the variable $q=-\exp(i\lambda)$, the parts of series \eqref{GV_from_GW} and \eqref{GV_PT_1030} containing $t^{\beta}$ are equal as rational functions in $q$ for any $\beta\neq 0$. Now by \cite{DIW21}, we know that $\GV_{g,\beta}=0$ for any fixed $\beta$ and $g\gg 0$. Thus from the uniqueness of the expansions \cite[Lemma 3.9, 3.11]{PT07}, we have
\[\GV'_{g,\beta}=\GV_{g,\beta},\]
or in other words,
\begin{equation} \label{GV_PT_5}
    \sF_{P}(q, t)=\log \PT(q, t)=\sum_{g\geq 0} \sum_{\beta\neq 0} \sum_{r\geq 1} \GV_{g,\beta} \frac{(-1)^{g-1}}{r} ((-q)^{\frac{r}{2}}-(-q)^{-\frac{r}{2}})^{2g-2} t^{r\beta}.
\end{equation}

On the other hand, in \cite{thomasDT}, Thomas constructed a zero-dimensional virtual cycle $[M_{s,\beta}(X)]^{vir}$ on the moduli space $M_{s,\beta}(X)$ of closed subschemes $Z\subset X$ with $[Z]=\beta$ and $\chi(\oh_Z)=s$. Then we define a \emph{Donaldson--Thomas (DT) invariant} by
\[\DT_{s, \beta}:=\int_{[M_{s,\beta}(X)]^{vir}} 1.\]
When the torsion-free part of $\mathrm{H}_2(X, \ZZ)$ is isomorphic to $\ZZ$ (e.g.~$X$ is of Picard number one), we simplify the notations as  $\PT_{s,d}:=\PT_{s,\beta}$ and $\DT_{s,d}:=\DT_{s,\beta}$, where $d$ is the image of $\beta\in \mathrm{H}_2(X, \ZZ)$ in $\ZZ$.

As a consequence of Corollary \ref{cor-pic-rk-1}, we prove:

\begin{theorem}\label{thm-vanishing}
Let $X$ be a projective Calabi--Yau 3-fold of Picard number one and degree $n$. Let $\NS(X)=\ZZ H$ and $m_H\in \ZZ_{>0}$ with $m_H H$ very ample.

Then $$\DT_{s,d}=\PT_{s,d}=0$$
when $d\geq N_H$ and
\[s<-(\frac{1}{2n}d^2+\frac{nm_H^3-4m_H}{2}d-\epsilon(m_{H} d, nm^2_{H})),\]
where $N_H$ is defined in \eqref{eq-sum-thm} and $\epsilon(-,-)$ is defined in \eqref{eq-epsilon-def}.
\end{theorem}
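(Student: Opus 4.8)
The plan is to deduce the vanishing from the \emph{emptiness} of the relevant moduli spaces, using the genus bound of Corollary~\ref{cor-pic-rk-1}. Since $X$ is a smooth projective Calabi--Yau 3-fold over $\CC$, it is factorial with no singular points, and by Remark~\ref{rmk-determine-s} the least integer $s$ with $|sH|\neq\varnothing$ equals $1$. Specializing Corollary~\ref{cor-pic-rk-1} to this value, every $1$-dimensional closed subscheme $C\subset X$ of degree $d\geq N_H$ satisfies
\[g(C)\leq \frac{1}{2n}d^2+\frac{nm_H^3-4m_H}{2}d+1-\epsilon(m_Hd,nm_H^2),\]
which, writing $g(C)=1-\chi(\oh_C)$, is equivalent to
\[\chi(\oh_C)\geq -\Big(\frac{1}{2n}d^2+\frac{nm_H^3-4m_H}{2}d-\epsilon(m_Hd,nm_H^2)\Big).\]
Thus it suffices to show that both moduli spaces are empty once $s$ lies strictly below this lower bound for $\chi(\oh_C)$, since the invariants are integrals of $1$ against virtual cycles that must then vanish.

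For the DT side, a point of $M_{s,\beta}(X)$ is a closed subscheme $Z\subset X$ with $[Z]=\beta$ (of degree $d$) and $\chi(\oh_Z)=s$. I would extract the maximal $0$-dimensional subsheaf $T\subset\oh_Z$ and set $\oh_C:=\oh_Z/T$, so that $C\subset X$ is the pure $1$-dimensional (Cohen--Macaulay) part of $Z$; it carries the same $1$-cycle class, hence $\deg(C)=d$, and the short exact sequence $0\to T\to\oh_Z\to\oh_C\to 0$ gives $\chi(\oh_Z)=\mathrm{length}(T)+\chi(\oh_C)\geq\chi(\oh_C)$. Applying the displayed bound to $C$ forces $s=\chi(\oh_Z)\geq\chi(\oh_C)\geq-(\tfrac{1}{2n}d^2+\tfrac{nm_H^3-4m_H}{2}d-\epsilon(m_Hd,nm_H^2))$, contradicting the hypothesis on $s$; hence $M_{s,\beta}(X)=\varnothing$ and $\DT_{s,d}=0$. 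The PT side is parallel: a stable pair $[\oh_X\to F]$ has $F$ pure of dimension $1$ with scheme-theoretic support $C$ of degree $d$, and its defining property yields $0\to\oh_C\to F\to Q\to 0$ with $Q$ supported in dimension $0$, so $\chi(F)=\chi(\oh_C)+\mathrm{length}(Q)\geq\chi(\oh_C)$; the same inequality then shows $P_s(X,\beta)=\varnothing$ and $\PT_{s,d}=0$.

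The only genuinely delicate point is the bookkeeping in these two exact sequences: one must verify that passing from $Z$ (resp.\ from the sheaf $F$) to its pure $1$-dimensional part $C$ preserves the degree $d$ while only decreasing the Euler characteristic, so that Corollary~\ref{cor-pic-rk-1} applies to $C$ and the resulting lower bound on $\chi$ transfers back to $\chi(\oh_Z)$ (resp.\ to $\chi(F)=s$). Everything else is a direct rewriting of the genus inequality together with the standard fact that a virtual cycle on an empty moduli space is zero, so I do not anticipate any serious obstacle beyond making these structural decompositions precise.
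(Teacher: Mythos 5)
Your proposal is correct and takes essentially the same approach as the paper: both deduce $\DT_{s,d}=\PT_{s,d}=0$ from emptiness of the relevant moduli spaces, forced by the genus bound of Corollary \ref{cor-pic-rk-1} specialized via $s=1$ (cf.~Remark \ref{rmk-determine-s}), rewritten as a lower bound on $\chi$. The only difference is cosmetic: where you unwind the stable-pair sequence $0\to\oh_C\to F\to Q\to 0$ by hand (and also pass to the pure $1$-dimensional part on the DT side, which is not even necessary since Corollary \ref{cor-pic-rk-1} applies to arbitrary $1$-dimensional closed subschemes, embedded points included), the paper simply cites \cite[Corollary 5.1]{liu-ruan:cast-bound}, whose content is exactly your argument.
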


\begin{proof}
The vanishing of $\DT_{s,d}$ directly follows from Corollary \ref{cor-pic-rk-1}, since the corresponding moduli space is empty. By \cite[Corollary 5.1]{liu-ruan:cast-bound}, we also have the desired result for $\PT_{s,d}$.
\end{proof}

Now, by Theorem \ref{thm-vanishing} and a calculation of generating series \eqref{GV_PT_5}, we have the following result.

\begin{theorem}\label{thm-vanishing-GV}
Let $X$ be a projective Calabi--Yau 3-fold of Picard number one and degree $n$. Let $\NS(X)=\ZZ H$ and $m_H\in \ZZ_{>0}$ with $m_H H$ very ample.

Then $$\GV_{g,d}=0$$
when $d\geq \frac{n^3m_H^5-n^2m_H^3}{4}+nm_H(m_H^2N_H^2-n+1)$ and
\[g> \frac{1}{2n}d^2+\frac{nm_H^3-4m_H}{2}d+1-\epsilon(m_{H} d, nm^2_{H}),\]
where $N_H$ is defined in \eqref{eq-sum-thm} and $\epsilon(-,-)$ is defined in \eqref{eq-epsilon-def}.
\end{theorem}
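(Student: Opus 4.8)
The plan is to extract the top nonvanishing Gopakumar--Vafa genus in degree $d$ from the lowest power of $q$ in the connected series $\sF_P(q,t)=\log\PT(q,t)$, feeding in the stable-pair vanishing of Theorem \ref{thm-vanishing}. Write $B(d):=\frac{1}{2n}d^2+\frac{nm_H^3-4m_H}{2}d+1-\epsilon(m_Hd,nm_H^2)$ for the quantity in the asserted bound. Theorem \ref{thm-vanishing} says precisely that $\PT_{s,d}=0$ for $s<1-B(d)$, so for $d\geq N_H$ the series $\PT_d(q):=\sum_s\PT_{s,d}q^s$ has all of its $q$-exponents $\geq 1-B(d)$; the same holds for every smaller degree $d'$ with $d'\geq N_H$, while for the finitely many $d'<N_H$ the crude Castelnuovo estimate $\gmax^{X}(d')\leq\frac{(m_Hd'-1)(m_Hd'-2)}{2}$ of Lemma \ref{lem-general-bound} bounds the lowest exponent of $\PT_{d'}$ from below by a constant.

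First I would analyze the right-hand side of \eqref{GV_PT_5}. Setting $p:=(-q)^{1/2}$, the coefficient of $t^d$ is $\sum_{r\mid d}\sum_{g}\GV_{g,d/r}\frac{(-1)^{g-1}}{r}(p^r-p^{-r})^{2g-2}$. Expanded near $q=0$, the factor $(p^r-p^{-r})^{2g-2}$ contributes only nonnegative powers of $q$ when $g=0$ (indeed $(p^r-p^{-r})^{-2}=p^{2r}(1-p^{2r})^{-2}$), the constant $1$ when $g=1$, and for $g\geq 2$ has most negative exponent $-r(g-1)$. Hence, letting $g_0$ be the largest genus with $\GV_{g_0,d}\neq 0$, the $r=1$ summands contribute a monomial $q^{1-g_0}$ with coefficient $\pm\GV_{g_0,d}\neq 0$; and provided the multiple-cover summands with $r\geq 2$ (whose degrees $d/r$ carry genera bounded by $\gmax^{X}(d/r)$) reach down only to exponents strictly above $1-g_0$, there is no cancellation and $q^{1-g_0}$ survives in $[\sF_P]_{t^d}$.

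Next I would bound the lowest exponent of the same series from below. Writing $\log\PT=\sum_{k\geq 1}\frac{(-1)^{k-1}}{k}(\PT-1)^k$ and collecting $t^d$ expresses $[\log\PT]_{t^d}$ through products $\PT_{d_1}(q)\cdots\PT_{d_k}(q)$ with $d_1+\cdots+d_k=d$, whose lowest exponent is the sum of the lowest exponents of the factors. Bounding each factor below as in the first paragraph yields a lower bound $\sum_i(1-\beta(d_i))$, with $\beta(d_i)=B(d_i)$ for $d_i\geq N_H$ and $\beta(d_i)=\frac{(m_Hd_i-1)(m_Hd_i-2)}{2}$ otherwise; using $\sum_i d_i^2\leq d^2$ together with the uniform bound $0\leq\epsilon\leq\frac{(nm_H^2)^2-nm_H^2}{8}$, one checks that once $d$ exceeds the stated threshold one has $\sum_i\beta(d_i)\leq B(d)+(k-1)$ for every partition, so that $[\sF_P]_{t^d}$ has no exponent below $1-B(d)$. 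Comparing with the surviving monomial $q^{1-g_0}$ of the previous paragraph gives $1-g_0\geq 1-B(d)$, that is $g_0\leq B(d)$, which is exactly $\GV_{g,d}=0$ for $g>B(d)$.

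The main obstacle will be making the two interference phenomena harmless at once and turning these asymptotic inequalities into an effective threshold. On the one hand the reducible products on the $\log$-side are subdominant because $B$ loses the cross terms $d_id_j/n$, and making $\sum_i\beta(d_i)\leq B(d)+(k-1)$ hold for every partition — including those with small parts controlled only by the Castelnuovo estimate — is what forces $d$ past roughly $\frac{n^3m_H^5-n^2m_H^3}{4}$ and $nm_H(m_H^2N_H^2-n+1)$, the two summands of the threshold. On the other hand the multiple covers on the Gopakumar--Vafa side are subdominant because $r\gmax^{X}(d/r)$ loses a factor $1/r$ in its leading term, but pinning down that no $r\geq 2$ contribution descends to $1-g_0$ requires the bound $\gmax^{X}(d/r)\leq B(d/r)$ from Corollary \ref{cor-pic-rk-1} for the large covers and the crude bound of Lemma \ref{lem-general-bound} for the small ones; carrying the constants from $\epsilon$ and from these genus estimates honestly through both comparisons is the delicate part.
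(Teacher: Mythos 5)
Your overall architecture is exactly the paper's: feed the stable-pair vanishing of Theorem \ref{thm-vanishing} (plus the crude estimate $s\geq 1-\gmax^X(d')\geq 1-\frac{(m_Hd'-1)(m_Hd'-2)}{2}$ in degrees below $N_H$) into the connected series \eqref{GV_PT_5}, bound the lowest $q$-exponent of $[\sF_P]_{t^d}$ from below via the $\log$-expansion and a partition inequality, and extract the surviving monomial $q^{1-g_0}$ from the $r=1$ terms. Your inequality $\sum_i\beta(d_i)\leq B(d)+(k-1)$ is literally the paper's Lemma \ref{lem-gv} (stated there as $f(x)-1\geq\sum_i\bigl(f(x_i)-1\bigr)$ with the same piecewise function, quadratic past the threshold and planar below it), and your identification of where the threshold $\frac{n^3m_H^5-n^2m_H^3}{4}+nm_H(m_H^2N_H^2-n+1)$ comes from is correct.

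There is, however, one genuine gap, in the multiple-cover step. To show that no $r\geq 2$ summand of \eqref{GV_PT_5} descends to the exponent $1-g_0$, you bound the genus support of $\GV_{g,d/r}$ by $\gmax^X(d/r)$, invoking Corollary \ref{cor-pic-rk-1} and Lemma \ref{lem-general-bound}. That move is legitimate on the PT side (a stable pair $[\oh_X\to F]$ with support curve $C$ satisfies $\chi(F)\geq\chi(\oh_C)=1-g(C)$, so $\gmax$ genuinely truncates the $q$-exponents of $\PT_{d'}(q)$), but it is circular on the GV side: $\GV_{g,d'}$ is defined purely from the Gromov--Witten series \eqref{GV_from_GW}, and the statement ``$\GV_{g,d'}=0$ for $g>\gmax^X(d')$'' is precisely the Castelnuovo-type vanishing you are trying to prove, now invoked at the smaller degree $d'=d/r$ --- including degrees below the threshold where the theorem is not even asserted. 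The paper closes this loop differently: it runs a strong induction on $d$ (following the computation of \cite[Proposition 5.3]{liu-ruan:cast-bound}), in which the genus truncation at degree $d/r$ is supplied by the inductive GV statement itself, and the displayed inequality $\frac{f(x)-1}{r}+1\geq f\bigl(\frac{x}{r}\bigr)$ is exactly what guarantees that these inductively bounded $r$-cover contributions reach only exponents $\geq 1-f(m_Hd)>1-g_0$, so the induction closes. Your proof becomes correct once you replace the appeal to $\gmax^X(d/r)$ by this induction (with the planar function handling the low-degree base cases, where superadditivity of $\frac{x(x-3)}{2}$ does the work); as written, the $r\geq 2$ step assumes what is to be proved.
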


\begin{proof}
We define a function $f(x)\colon \ZZ_{\geq 1}\to \QQ$ by
\[f(x)=\frac{1}{2n_H}x^2+\frac{n_H-4}{2}x+1-\epsilon(x,n_H)\]
when $x\geq m_HN_H$ and
\[f(x)=\frac{(x-1)(x-2)}{2}\]
otherwise. Note that $f(m_H d)=\frac{1}{2n}d^2+\frac{nm_H^3-4m_H}{2}d+1-\epsilon(m_{H} d, nm^2_{H})$ and
\[\frac{f(x)-1}{r}+1\geq f(\frac{x}{r})\]
for any integers $r\geq 1$ and $x\geq \frac{n^3m_H^5-n^2m_H^3}{4}+nm_H(m_H^2N_H^2-n+1)$. Then as in \cite[Proposition 5.3]{liu-ruan:cast-bound}, the desired vanishing of $\GV_{g,d}$ follows from a computation using \eqref{GV_PT_5}, Lemma \ref{lem-gv}, and the vanishing result for $\PT_{s,d}$ in Theorem \ref{thm-vanishing}.
\end{proof}

\begin{lemma}\label{lem-gv}
Given positive integers $N$ and $n$ with $N^2\geq n-1$, we define $f(x)\colon \ZZ_{\geq 1}\to \QQ$ by
\[f(x)=\frac{1}{2n}x^2+\frac{n-4}{2}x+1-\epsilon(x,n)\]
when $x\geq N$ and
\[f(x)=\frac{(x-1)(x-2)}{2}\]
otherwise. Then for any integer $x\geq \frac{n^3-n^2}{4}+ n(N^2-n+1)$ and any partition $\sum^s_{i=1} x_i=x$ with $x_i\in \ZZ_{\geq 1}$, we have
\begin{equation}\label{eq-lem-gv}
    f(x)-1\geq \sum^s_{i=1} (f(x_i)-1).
\end{equation}
\end{lemma}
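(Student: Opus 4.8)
The plan is to set $g(x):=f(x)-1$ and prove the equivalent superadditivity statement $g(x)\geq \sum_{i=1}^s g(x_i)$; explicitly $g(x)=\frac{1}{2n}x^2+\frac{n-4}{2}x-\epsilon(x,n)$ for $x\geq N$ and $g(x)=\frac{x^2-3x}{2}$ for $x<N$. The structural difficulty I would flag at the outset is that the small-degree branch has leading coefficient $\tfrac12$ rather than $\tfrac1{2n}$, so splitting $x$ into many pieces $x_i<N$ makes $\sum g(x_i)$ grow like $\tfrac12\sum x_i^2$, which could a priori exceed $g(x)\sim\tfrac1{2n}x^2$. The hypotheses that $x$ is large (at least $T:=\frac{n^3-n^2}{4}+n(N^2-n+1)$) and that $N^2\geq n-1$, together with the cross term $x^2-\sum x_i^2=2\sum_{i<j}x_ix_j$, are exactly what compensate; making this quantitative is the whole point.

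First I would record elementary bounds. Since $x\geq T\geq N$ the large formula applies to $g(x)$, and $\epsilon(x,n)\leq\frac{n^2-n}{8}$ by \eqref{eq-epsilon-bound}. For each part: if $x_i\geq N$ then $g(x_i)\leq\frac{x_i^2}{2n}+\frac{n-4}{2}x_i$ (dropping $\epsilon\geq0$); if $x_i<N$ then $g(x_i)=\frac{x_i(x_i-3)}{2}\leq\frac{N-4}{2}x_i$ using $x_i\leq N-1$. Splitting the indices into the large parts $L$ and small parts $S$, and setting $Y=\sum_{i\in L}x_i$, $Z=\sum_{i\in S}x_i$ with $x=Y+Z$, these bounds and $x-Y=Z$ combine to give
\[
g(x)-\sum_i g(x_i)\ \geq\ \frac{x^2-\sum_{i\in L}x_i^2}{2n}+\frac{n-N}{2}Z-\epsilon(x,n).
\]

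Then I would split into two cases. If $Z\geq1$, I use $\sum_{i\in L}x_i^2\leq Y^2$, so $x^2-\sum_{i\in L}x_i^2\geq x^2-Y^2=Z(x+Y)\geq Zx$, and the right-hand side is at least $Z\big(\tfrac{x}{2n}+\tfrac{n-N}{2}\big)-\tfrac{n^2-n}{8}$. Since $Z\geq1$, it suffices to show the bracket $B:=\frac{x}{2n}+\frac{n-N}{2}$ satisfies $B\geq\frac{n^2-n}{8}$ (then $ZB\geq B\geq\frac{n^2-n}{8}$); this reduces to $x\geq\frac{n^3-n^2}{4}+nN-n^2$, which follows from $x\geq T$ because $T-\big(\frac{n^3-n^2}{4}+nN-n^2\big)=n(N^2-N+1)\geq0$. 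If $Z=0$ (all parts large) the claim is trivial for $s=1$; for $s\geq2$ I bound $\sum_i x_i^2\leq x_1^2+(x-x_1)^2$ for the largest part $x_1\in[N,x-N]$, giving $x^2-\sum_i x_i^2\geq 2x_1(x-x_1)\geq 2N(x-N)$, and since $s\geq2$ forces $x\geq2N$, hence $x-N\geq\frac{x}{2}$, I get $2N(x-N)\geq Nx\geq x\geq T\geq\frac{n^3-n^2}{4}$ (the last inequality using $N^2\geq n-1$). Dividing by $2n$ yields $\frac{x^2-\sum_i x_i^2}{2n}\geq\frac{n^2-n}{8}\geq\epsilon(x,n)$, and since the displayed bound reads $\frac{x^2-\sum_i x_i^2}{2n}-\epsilon(x,n)$ here, this case closes. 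The main obstacle is precisely the bookkeeping ensuring $T$ and $N^2\geq n-1$ suffice simultaneously in both cases; once the two reductions $x^2-\sum_{i\in L}x_i^2\geq Zx$ and $x^2-\sum_i x_i^2\geq 2N(x-N)$ are established, everything collapses to the two displayed numerical inequalities.
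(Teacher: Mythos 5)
Your proof is correct, and I checked each step: the linear small-part bound $\tfrac{x_i(x_i-3)}{2}\le \tfrac{N-4}{2}x_i$ for $1\le x_i\le N-1$; the master inequality $g(x)-\sum_i g(x_i)\ \ge\ \tfrac{1}{2n}\bigl(x^2-\sum_{i\in L}x_i^2\bigr)+\tfrac{n-N}{2}Z-\epsilon(x,n)$; the case $Z\ge 1$ via $x^2-Y^2\ge Zx$ together with the identity $T-\bigl(\tfrac{n^3-n^2}{4}+nN-n^2\bigr)=n(N^2-N+1)\ge 0$, where $T:=\tfrac{n^3-n^2}{4}+n(N^2-n+1)$; and the case $Z=0$, $s\ge 2$ via $x^2-\sum_i x_i^2\ge 2N(x-N)\ge Nx\ge x\ge T\ge \tfrac{n^3-n^2}{4}$, which is in fact the only place the hypothesis $N^2\ge n-1$ is invoked. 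Your argument shares the paper's skeleton — split the parts at the threshold $N$, use $0\le\epsilon(\cdot,n)\le\tfrac{n^2-n}{8}$ from \eqref{eq-epsilon-bound}, and harvest the cross terms of $x^2-\sum_i x_i^2$ — but the mixed case is run genuinely differently. The paper caps each small part quadratically by $\tfrac12 x_i^2\le\tfrac{N^2}{2}$, aggregates by the \emph{number} $t$ of small parts, and must verify the scalar inequality $x\ge\tfrac{t}{2}+\tfrac{n\epsilon(x,n)}{t}+\tfrac{n}{2}(N^2-n+1)$ over the whole range $1\le t\le x$; you instead bound small parts \emph{linearly} and aggregate by their total degree $Z$, so one displayed inequality covers both cases and the mixed case closes with the single condition $x\ge\tfrac{n^3-n^2}{4}+nN-n^2$, which needs only the free inequality $N^2-N+1\ge 0$ rather than the hypothesis. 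This buys a shorter endgame with no optimization over an auxiliary parameter, a localization of the hypothesis $N^2\ge n-1$ to exactly one line, and a cleaner treatment of the boundary: $x_i=N$ correctly sits in the large branch in your split, whereas the paper nominally routes $x_i\le N$ through the small-branch cap (harmless, since the shifted large-branch value at $x=N$ is still at most $\tfrac{N^2}{2}$, but sloppier). Two trivial points to make explicit when writing it up: justify the assertion $T\ge N$, so that the large formula really applies to $g(x)$ (for $n\ge 4$ one has $T\ge nN^2\ge N$ because $\tfrac{n(n-1)(n-4)}{4}\ge 0$, and $n\le 3$ is a direct check), and in the all-large case record that $s\ge 2$ with all $x_i\ge N$ forces $x-x_1\ge N$, which is what places the largest part in the interval $[N,\,x-N]$.
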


\begin{proof}
When $x_i\geq N$ for any $1\leq i\leq s$, \eqref{eq-lem-gv} is equivalent to
\[\frac{1}{2n}x^2-\epsilon(x,n)\geq \sum^s_{i=1}\frac{1}{2n}x_i^2-\epsilon(x_i,n).\]
As $0\leq \epsilon(x,n)\leq \frac{n^2-n}{8}$ by \eqref{eq-epsilon-bound}, to prove \eqref{eq-lem-gv}, we only need to show
\[\frac{\sum_{i\neq j} x_i x_j}{n}\geq \frac{n^2-n}{8}.\]
Note that
\[\frac{\sum_{i\neq j} x_i x_j}{n}\geq \frac{Nx}{n},\]
then the assumption
\[x\geq \frac{n^3-n^2}{4}+ n(N^2-n+1)\geq \frac{n^3-n^2}{8N}\]
gives the result.

Now we assume that there exists $i$ such that $x_i\leq N$. Without loss of generality, we can assume that there is an integer $1\leq t\leq s$ such that $x_i\leq N$ for each $1\leq i\leq t$. Then \eqref{eq-lem-gv} is equivalent to
\begin{equation}
    \frac{1}{2n}x^2+\frac{n-1}{2}\sum^t_{i=1} x_i-\epsilon(x,n)\geq \sum^t_{i=1} \frac{1}{2}x^2_i+\sum^s_{i=t+1} (\frac{1}{2n}x^2_i-\epsilon(x_i,n)),
\end{equation}
which is also equivalent to
\begin{equation}\label{eq-lem-gv-1}
    \frac{1}{2n}x^2+\frac{n-1}{2}x-\epsilon(x,n)\geq \sum^t_{i=1} \frac{1}{2}x^2_i+\sum^s_{i=t+1} (\frac{1}{2n}x^2_i+\frac{n-1}{2}x_i-\epsilon(x_i,n)).
\end{equation}
As $0\leq \epsilon(x_i,n)$ and $x_i\leq N$, we have $\sum^t_{i=1} \frac{1}{2}x^2_i\leq \frac{t}{2}N^2$. Thus from \eqref{eq-lem-gv-1}, to prove \eqref{eq-lem-gv}, it suffices to prove
\begin{equation}
\frac{1}{2n}x^2+\frac{n-1}{2}x-\epsilon(x,n)\geq  \frac{t}{2}N^2+ \frac{1}{2n}(\sum^s_{i=t+1}x_i)^2+\sum^s_{i=t+1} \frac{n-1}{2}x_i.
\end{equation}
To this end, note that $\sum^t_{i=1} x_i\geq t$, hence $\sum^s_{i=t+1}x_i\leq x-t$, and we only need to prove
\begin{equation}\label{eq-lem-gv-3}
\frac{1}{2n}x^2+\frac{n-1}{2}x-\epsilon(x,n)\geq  \frac{t}{2}N^2+ \frac{1}{2n}(x-t)^2+\frac{n-1}{2}(x-t).
\end{equation}
Now we know that \eqref{eq-lem-gv-3} is equivalent to
\begin{equation}
    -\epsilon(x,n)\geq \frac{t}{2}N^2-\frac{t}{n}x+\frac{t^2}{2n}-\frac{(n-1)t}{2},
\end{equation}
which is also equivalent to 
\begin{equation}\label{eq-lem-gv-4}
    x\geq \frac{t}{2}+\frac{n\epsilon(x,n)}{t}+\frac{n}{2}(N^2-n+1).
\end{equation}
As $1\leq t\leq x$ and $\epsilon(x,n)\leq \frac{n^2-n}{8}$ by \eqref{eq-epsilon-bound}, the assumption $x\geq \frac{n^3-n^2}{4}+ n(N^2-n+1)$ implies \eqref{eq-lem-gv-4} as desired.
\end{proof}

\subsubsection{Surface-counting invariants of Calabi--Yau 4-folds}

Finally, we assume that $X$ is a projective Calabi--Yau 4-fold and fix a class 
\[v=\big(0,0,\gamma,\beta, n-\gamma.\td_2(X)\big)\in \mathrm{H}^*(X, \QQ).\]
For any $q\in \{-1,0,1\}$, let $\mathscr{P}_v^{(q)}(X)$ be the corresponding moduli space of $\PT_q$-stable pair on $X$ (cf.~\cite[Definition 2.1]{bae-kool-park:count-surface-I}).

As a direct consequence of Corollary \ref{cor-bound-cy4}, we get a vanishing theorem for any enumerative invariant defined over $\mathscr{P}_v^{(q)}(X)$:

\begin{corollary}\label{cor-vanish-cy4}
Let $X$ be projective Calabi--Yau 4-fold of Picard number one. Assume that $\NS(X)=\ZZ H$ with $H$ very ample and $n:=H^4$. Then there exists an integer $N_H$ defined in \eqref{eq-sum-thm}, such that for any class
\[v=\big(0,0,\gamma,\beta, n-\gamma.\td_2(X)\big)\in \mathrm{H}^*(X, \QQ),\]
if $d:=\gamma.H^2\geq N_H$ and 
\[\beta.H<-(\frac{1}{2n}d^2+\frac{n-5}{2}d-\epsilon(d, n)),\]
then
\[\mathscr{P}_v^{(q)}(X)=\varnothing\]
for any $q\in \{-1,0,1\}$.
\end{corollary}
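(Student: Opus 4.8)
The plan is to deduce the emptiness statement from the bound on $\ch_3(I_Z)$ for $2$-dimensional closed subschemes $Z\subset X$ established in Corollary \ref{cor-bound-cy4}, by extracting such a $Z$ from an arbitrary $\PT_q$-stable pair and controlling the difference between $\beta$ and $\ch_3(\oh_Z)$. First I would argue by contradiction: suppose $\mathscr{P}_v^{(q)}(X)\neq \varnothing$ for some $q\in\{-1,0,1\}$ and fix a $\PT_q$-stable pair. By \cite[Definition 2.1]{bae-kool-park:count-surface-I} this is a section $s\colon \oh_X\to F$ whose cokernel is supported in dimension $\leq 1$. For $q=-1$ the pair is simply a closed subscheme $Z\subset X$ with $\ch(\oh_Z)=v$, so that $\ch_2(\oh_Z)=\gamma$ and $\ch_3(\oh_Z)=\beta$. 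For $q\in\{0,1\}$, let $Z\subset X$ be the closed subscheme defined by $\ker(s)\subset \oh_X$, so that $\oh_Z=\operatorname{im}(s)$; this gives a genuine $2$-dimensional closed subscheme to which Corollary \ref{cor-bound-cy4} can be applied.

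The numerical heart of the argument is the comparison between $v=\ch(F)$ and $\ch(\oh_Z)$. Since $\operatorname{coker}(s)=F/\oh_Z$ is supported in dimension $\leq 1$, we have $\ch_2(\oh_Z)=\ch_2(F)=\gamma$, so $Z$ is $2$-dimensional with $Z.H^2=\gamma.H^2=d\geq N_H$, as required by Corollary \ref{cor-bound-cy4}. Moreover $\ch_3(F)-\ch_3(\oh_Z)=\ch_3(\operatorname{coker}(s))$ is the class of the (at most) $1$-dimensional support of the cokernel counted with multiplicities, hence an effective $1$-cycle, and therefore pairs non-negatively with the ample class $H$. As the $\ch_3$-component of $v$ is $\beta=\ch_3(F)$, this yields
\[\ch_3(\oh_Z).H\leq \beta.H.\]

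Finally I would apply Corollary \ref{cor-bound-cy4} to $Z$. From the exact sequence $0\to I_Z\to \oh_X\to \oh_Z\to 0$ together with $\ch_3(\oh_X)=0$ we get $\ch_3(I_Z)=-\ch_3(\oh_Z)$, so the corollary reads
\[-\ch_3(\oh_Z).H=\ch_3(I_Z).H\leq \frac{1}{2n}d^2+\frac{n-5}{2}d-\epsilon(d,n),\]
that is, $\ch_3(\oh_Z).H\geq -\big(\frac{1}{2n}d^2+\frac{n-5}{2}d-\epsilon(d,n)\big)$. Combining this with the previous inequality gives $\beta.H\geq -\big(\frac{1}{2n}d^2+\frac{n-5}{2}d-\epsilon(d,n)\big)$, which directly contradicts the hypothesis $\beta.H<-\big(\frac{1}{2n}d^2+\frac{n-5}{2}d-\epsilon(d,n)\big)$. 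Hence no $\PT_q$-stable pair of class $v$ exists and $\mathscr{P}_v^{(q)}(X)=\varnothing$ for every $q\in\{-1,0,1\}$.

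The step I expect to be the main obstacle is the very first one: checking carefully, from \cite[Definition 2.1]{bae-kool-park:count-surface-I}, that for each $q\in\{-1,0,1\}$ the cokernel of the section of a $\PT_q$-stable pair is genuinely supported in dimension $\leq 1$, and that $\operatorname{im}(s)$ inherits the same $2$-dimensional support class $\gamma$ as $F$. Once this structural input is in place, everything else is formal bookkeeping together with a single invocation of Corollary \ref{cor-bound-cy4}, so the content of the proof is concentrated entirely in translating the three stability notions into the uniform statement that the pair produces a $2$-dimensional subscheme $Z$ with $\ch_2(\oh_Z)=\gamma$ and $\ch_3(\oh_Z).H\leq \beta.H$.
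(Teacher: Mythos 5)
Your proof is correct and takes essentially the same approach as the paper: the paper deduces Corollary \ref{cor-vanish-cy4} as a direct consequence of Corollary \ref{cor-bound-cy4}, with precisely the reduction you spell out (passing from a $\PT_q$-stable pair to the subscheme $Z$ with $\oh_Z=\operatorname{im}(s)$, using that $\operatorname{coker}(s)$ is supported in dimension $\leq 1$ so that $\ch_2(\oh_Z)=\gamma$, $\ch_3(\oh_Z).H\leq \beta.H$, and $\ch_3(I_Z)=-\ch_3(\oh_Z)$). The paper leaves this bookkeeping implicit, so your writeup is simply a more detailed rendering of the same argument.
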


\bibliographystyle{plain}
{\small{\bibliography{Cast}}}

\end{document}